\newcommand{\cP}{\mathcal{P}}
\newcommand{\cS}{\mathcal{S}}
\newcommand{\C}{\mathcal{C}}
\newcommand{\F}{\mathcal{F}}            
\newcommand{\G}{\mathcal{G}}            
\renewcommand{\H}{\mathcal{H}}          
\newcommand{\M}{\mathcal{M}}
\newcommand{\N}{\mathcal{N}}
\newcommand{\I}{\mathcal{I}}
\renewcommand{\O}{\mathcal{O}}
\renewcommand{\S}{\mathcal{S}}
\newcommand{\rmap}{\longrightarrow}
\DeclareMathOperator{\Spec}{Spec}
\DeclareMathOperator{\Hom}{Hom}
\renewcommand{\dim}{\text{\rm dim}\,}  % dimension
\newcommand{\dezero}{\frac{d}{d\epsilon}_{|\epsilon=0}}
\newcommand{\arrows}{\rightrightarrows}
\newcommand{\ot}{\longleftarrow}
\newcommand{\tto}{\rightrightarrows}    % Arrows of a groupoid
\newcommand{\x}{\times}
\newcommand{\lx}{\ltimes}
\newcommand{\Z}{\mathbb{Z}}
\newcommand{\R}{\mathbb{R}}
\newcommand{\g}{\mathfrak{g}}           % Lie algebra
\renewcommand{\gg}{\mathfrak{g}}        % Lie algebra
\newcommand{\m}{\mathfrak{m}}
\renewcommand{\a}{\mathfrak{a}}
\newcommand{\CCC}{\mathfrak{C}}
\newcommand{\CC}{C^\infty}
\newcommand{\CR}[1]{C^\infty(\mathbb{R}^{#1})}
\theoremstyle{remark}
\newtheorem{rmk}{Remark}
\newtheorem{remark}[rmk]{Remark}
\theoremstyle{plain}
\newtheorem{thm}{Theorem}[section]
\newtheorem{theorem}[thm]{Theorem}
\newtheorem{lemma}[thm]{Lemma}
\newtheorem{corollary}[thm]{Corollary}
\newtheorem{proposition}[thm]{Proposition}
\theoremstyle{definition}
\newtheorem{definition}[thm]{Definition}
\newtheorem{example}[thm]{Example}
\newtheorem{MT}{Main Theorem}{\bf}{\it}
\newtheorem*{MC}{Main Conclusion}{\bf}{\it}
\begin{document}
%%%%%%%%%%%%%%%%%%%%%%%%%%%%%%%%%%%%%%%%%%
\title{Orbispaces as differentiable stratified spaces}

% AUTHOR INFORMATION

\author{Marius Crainic}
\address{Depart of Math., Utrecht University, 3508 TA Utrecht,
The Netherlands}
\email{crainic@math.uu.nl}

% author two information
\author{Jo\~ao Nuno Mestre}
\address{CMUC, Department of Mathematics, University of Coimbra,  Portugal}
\curraddr{MPIM, Vivatsgasse 7, 53111 Bonn, Germany}
\email{jnmestre@gmail.com}

\thanks{The authors would like to thank Rui Loja Fernandes, David Martinez Torres and Hessel Posthuma for useful discussions related to this paper.\newline
The first author was supported by the Nederlandse Organisatie voor Wetenschappelijk Onderzoek - Vici grant no. 639.033.312. 
The second author was supported by Funda\c{c}\~ao para a Ci\^encia e Tecnologia grant  SFRH/BD/71257/2010 under the POPH/FSE programmes and by the Centre for Mathematics of the University
of Coimbra - UID/MAT/00324/2013, funded by the Portuguese Government through
FCT/MEC and co-funded by the European Regional Development Fund through the Partnership
Agreement PT2020.}
%\author{Marius Crainic \and Jo\~ao Nuno Mestre}

%\institute{Marius Crainic \at Depart. of Math., Utrecht University, 3508 TA Utrecht, The Netherlands \\
              %Tel.: +31 30 2531429\\
%              \email{crainic@math.uu.nl} \and Jo\~ao Nuno Mestre \at CMUC, %Department of Mathematics, University of Coimbra,  Portugal \\
%              Tel.: +49-228-402-266\\
 %             \email{jnmestre@gmail.com}\\
  %            \emph{Present address: MPIM,
%Vivatsgasse 7,
%53111 Bonn,
%Germany}}

% ABSTRACT
\begin{abstract} We present some features of the smooth structure, and of the canonical stratification on the orbit space of a proper Lie groupoid. One of the main features is that of Morita invariance of these structures - it allows us to talk about the canonical structure of differentiable stratified space on the orbispace (an object analogous to a separated stack in algebraic geometry) presented by the proper Lie groupoid. 

The canonical smooth structure on an orbispace is studied mainly via Spallek's framework of differentiable spaces, and two alternative frameworks are then presented. For the canonical stratification on an orbispace, we extend the similar theory coming from proper Lie group actions. We make no claim to originality. The goal of these notes is simply to give a complementary exposition to those available, and to clarify some subtle points where the literature can sometimes be confusing, even in the classical case of proper Lie group actions.
%\keywords{Lie groupoids \and stacks \and orbispaces \and  Morita types}
%\subclass{58H05 \and 57N80}

\end{abstract}

\maketitle

\setcounter{tocdepth}{1}
\tableofcontents

%%%%%%%%%%%%%%%
%%%%%%%%%%%%%%%
%%%%%%%%%%%%%%%
%%%%%%%%%%%%%%%
\section{Introduction}\label{sec:introduction}
%%%%%%%%%%%%%%%
%%%%%%%%%%%%%%%
%%%%%%%%%%%%%%%
%%%%%%%%%%%%%%%

Lie groupoids are geometric objects that generalize Lie groups and smooth manifolds, and permit a unified approach to the study of several objects of interest in differential geometry, such as Lie group actions, foliations and principal bundles (see for example \cite{cannas,mackenzie,ieke_mrcun} and the references therein). They have found wide use in Poisson and Dirac geometry (e.g. \cite{GS}) and noncommutative geometry \cite{Connes}.

One of the main features of Lie groupoids is that they permit studying singular objects, in particular quotients, as if they were smooth. This is because groupoids also generalize equivalence relations on manifolds, but keep record of the different ways in which points can be equivalent. As such, every Lie groupoid $\G\arrows M$ has an associated quotient space $M/\G$ by the equivalence relation it defines - its orbit space. Due to their unifying nature, Lie groupoids are then useful in studying a variety of singular spaces such as leaf spaces of foliations, orbit spaces of actions, and orbifolds.

As a topological space, the orbit space $M/\G$ of a Lie groupoid can be very uninteresting, so when talking about its orbit space, it is common to consider extra structure,  constructed from the Lie groupoid.
There are several different approaches to building a good model for singular, or quotient spaces. To name a few, one can build Haefliger's classifying space \cite{haefliger2} (also related to Van Est's $S$-atlases \cite{VE-atlas}), or a noncommutative algebra of functions as a model for the quotient, as in the Noncommutative geometry of Connes \cite{Connes}. There is also the theory of stacks, which grew out of algebraic geometry \cite{Artin,SGA,Deligne-Mumford,Giraud} (where algebraic groupoids are used instead) and has recently gained increasing interest in the context of differential geometry (cf. e.g. \cite{behrend_xu,heinloth,lerman-stacks,metzler}). The common feature to all these approaches is that they start by modelling the situation at hand by the appropriate Lie groupoid (the holonomy groupoid of a foliation \cite{haefliger2}, or an orbifold groupoid as an orbifold atlas \cite{pronk}, for example). The way in which a good model $M//\G$ for the quotient is then constructed out of the Lie groupoid is what differs.
We refer to \cite{Moerd-Top} for a comparison of these and some other approaches to modelling quotients via groupoids.

Let us mention in more detail the role of Lie groupoids in studying orbifolds. The idea is that these are spaces which are locally modelled by quotients of Euclidean space by actions of finite groups (where the actions are considered part of the structure!). Simple as it may seem, making this description precise has been subtle. Manifestations of orbifolds were first considered in classical algebraic geometry, where they were called by the name of varieties with quotient singularities (See for example \cite{Ruan} and the references therein for an account). But these consisted only of the underlying quotient space. Taking into account the group actions, classical orbifolds were studied by Satake \cite{Satake} and Thurston \cite{Thurston}, and were defined in terms of charts and atlases, similar to manifold atlases. However, this definition still had serious limitations. 

The modern take on orbifolds, following the work of Haefliger \cite{haefliger2} and Moerdijk-Pronk \cite{pronk}, bypasses those limitations by using a special class of Lie groupoids to describe orbifolds - that of proper \'etale Lie groupoids. In modern terminology, an orbifold atlas on a topological space $X$ is a proper \'etale Lie groupoid $\G\arrows M$, together with an homeomorphism between its orbit space $M/ \G$ and $X$. An orbifold structure on $X$ is then defined as an equivalence class of orbifold atlases on $X$. The correct notion of equivalence between atlases is provided by the notion of Morita equivalence between the corresponding Lie groupoids. This makes sense because, in particular, a Morita equivalence between Lie groupoids induces an homeomorphism between their orbit spaces. Indeed, Morita invariant information encodes the transverse geometry of a Lie groupoid, meaning that geometry of its orbit space which is independent of a particular choice of atlas (see Theorem 4.3.1 in \cite{matias} for a precise statement of this fact).

The definition of orbifolds using Lie groupoids may seem more complicated than the one in terms of local charts, but it has some crucial advantages. First of all, it allows naturally for the treatment of orbifolds where the local actions are non-effective. This is a situation which occurs in several important examples, such as sub-orbifolds, weighted projective spaces, or simple moduli spaces. It also makes possible to correctly deal with morphisms of orbifolds. Some standard textbook references on orbifolds are \cite{Ruan,ieke_mrcun}.

In a similar fashion, we can describe atlases for differentiable stacks if we now allow for the use of general Lie groupoids. Hence, studying the differential geometry of a differentiable stack can essentially be done by studying Morita invariant geometry on Lie groupoids.

There are some very recent developments in the treatment of the differential geometry of singular spaces in this way. Let us name a few examples (which certainly do not constitute a complete list). Vector fields on differentiable stacks have been studied via multiplicative vector fields on Lie groupoids \cite{lerman2,hepworth,lerman,ortiz-waldron}. There are now Riemannian metrics for differentiable stacks, studied via Riemannian groupoids \cite{matias_rui_metrics,matias_rui_fibrations}. Integral affine structures on orbifolds appear via (special classes of) symplectic groupoids \cite{PMCT1,PMCT2,PMCT3}. Measures and densities on differentiable stacks can be studied via transverse measures for Lie groupoids \cite{measures_stacks,alan-volume}.

In these notes we focus on two aspects of the geometry of a particular class of differentiable stacks, called orbispaces. Those are the differentiable stacks which have an atlas given by a proper Lie groupoid. 
We describe and present some features of the smooth structure, and of the canonical stratification on an orbispace (cf. \cite{hessel}) which we get associated to any Lie groupoid presenting it.

Orbispaces generalize orbifolds and have particularly good features among differentiable stacks. Among them, let us mention that a linearization result for proper Lie groupoids \cite{ivan,alan,zung} permits the analysis of the local geometry of orbispaces. Essentially, the normal form given by the linearization provides adapted coordinates to the structure of the orbispace. As in the case of orbifolds, early definitions for orbispaces have appeared in terms of charts \cite{Chen,Pflaum2,PHD-Schwarz}, but nowadays they are usually treated using the language of Lie groupoids (cf. e.g. \cite{matias,thesis,Giorgio}).

In order to study the smooth structure of an orbispace, we take an approach inspired by algebraic geometry and see the orbispace as a locally ringed space, equipped with a sheaf of smooth functions. As is the general philosophy, the sheaf of smooth functions on the orbispace can be described in terms of smooth invariant functions on any Lie groupoid presenting it. In this way we frame orbispaces in the theory of differentiable spaces, which is a simple version of scheme theory for differential geometry \cite{juan}. We then present two alternative ways to describe the smooth structure, each with its own advantages. A different strategy, which also proves useful, but that we shall not discuss in this text is that of equipping an orbispace with the structure of a diffeology. We refer to \cite{Watts3,Watts,Watts4} for details on this approach.

The canonical stratification of an orbispace is a decomposition of the orbispace into subspaces which carry a smooth manifold structure and which fit together nicely. When studying an orbispace, there is a canonical stratification which appears. It is closely connected to the stratification induced by the partition by orbit types in the theory of proper Lie group actions.

We explain how to extend the constructions of the canonical stratification of a proper Lie group action into the context of proper Lie groupoids and orbispaces (cf. e.g. \cite{DK,Pflaum}). This gives a different (but equivalent) take on the stratification of \cite{hessel}. More specifically, we use the description of stratifications via partitions by manifolds (cf. e.g. \cite{DK}) instead of the approach using germs of decompositions (cf. e.g. \cite{Pflaum,hessel}).

We also present proofs for some results that seem to be commonly accepted as an extension of the theory of orbit type stratifications for proper Lie group actions, but which, to the best of our knowledge, were not readily available. This is the case for example with the principal Morita type theorem (\ref{thm-principal_type}), which states the existence of a connected open dense stratum of the canonical stratification of an orbispace by Morita type.

\paragraph{\textbf{Outline of the paper and of the main results}}

In section 2 we present some background material on Lie groupoids, including some basics on Morita equivalences, proper Lie groupoids, and orbispaces.

In section 3 we describe ``the canonical smooth structure'' that orbispaces are endowed with. Several approaches to smooth structures on singular spaces will be recalled in the paper. We focus on the framework provided by differentiable spaces (cf. \cite{juan}) and prove Main theorem 1. We then move to other settings and derive two variations of the main theorem 1. 

\begin{MT}[\textbf{and variations}]
The orbit space $X$ of a proper Lie groupoid $\G\arrows M$, together with the sheaf $\C^\infty_X$ on $X$ (Definition \ref{dfn-functions-on-orbispace}), is a reduced differentiable space, a locally fair affine $C^\infty$-scheme, and a subcartesian space.

These smooth structures are Morita invariant, thus associated to the orbispace presented by $\G$.
\end{MT}

More refined versions of this statement can be found inside the paper (Theorem \ref{Xissmooth} and Propositions \ref{Prop-Var1} and \ref{Prop-Var2}).

In section 4 we move to the canonical stratification on orbispaces. Its description is inspired by the similar stratifications for proper Lie group actions (cf. e.g. \cite{DK}), that it generalizes, but adapted to the groupoid context. The main idea here is that of ``Morita types'' - the pieces of the partition giving rise to the canonical stratification which, by construction, will be Morita invariant.

\begin{MT}
Let $\G\arrows M$ be a proper Lie groupoid. Then the partitions of $M$ and of $X=M/\G$ by connected components of Morita types are stratifications. 

Given a Morita equivalence between two Lie groupoids $\G$ and $\H$, the induced homeomorphism at the level of orbit spaces preserves this stratification.
\end{MT}

We will also prove a principal type theorem for the canonical stratification on the orbit space (Theorem \ref{thm-principal_type}).

In section 5 we combine the previous two sections, looking at the interplay between the smooth structure and the canonical stratification on orbispaces.

\begin{MC}
Let $\G\arrows M$ be a proper Lie groupoid. Then $M$ and the orbit space $X=M/\G$, together with the canonical stratifications, are differentiable stratified spaces. Moreover, the canonical stratifications of $M$ and $X$ are Whitney stratifications.

Any Morita equivalence between two proper Lie groupoids induces an isomorphism of differentiable stratified spaces between their orbit spaces.
\end{MC}

\newpage

%%%%%%%%%%%%%%%
%%%%%%%%%%%%%%%
%%%%%%%%%%%%%%%
\section{Background}\label{sec:preliminaries}
%%%%%%%%%%%%%%%
%%%%%%%%%%%%%%%
%%%%%%%%%%%%%%%

\subsection{Lie groupoids}\label{sec-liegpd}

Recall that a \textbf{Lie groupoid} consists of two smooth manifolds, $\G$ and $M$, called the space of arrows and the space of objects respectively, together with submersions $s,t:\G\to M$, called the \textbf{source} and \textbf{target} respectively, a partially defined \textbf{multiplication} $m:\G^{(2)}\to\G$ (defined on the \textbf{space of composable arrows} $\G^{(2)}=\{(g,h)\in\G\ |\ s(g)=t(h)\}$), a \textbf{unit} section $u:M\to \G$, and an \textbf{inversion} $i:\G\to\G$, satisfying group-like axioms (see e.g. \cite{ieke_mrcun}).
%A concise way to encode those axioms is to say that a groupoid is a small category in which all arrows are invertible. It is a Lie groupoid if the space of arrows and the space of objects are both smooth manifolds, all structural maps are smooth, and the source and target maps are submersions.

We will also use the notations $u(x)=1_x$, $i(g)=g^{-1}$ and $m(g,h)=gh$. 
An arrow $g$ with source $x$ and target $y$ is sometimes denoted more graphically by $g:x\to y$, $x\stackrel{g}{\to} y$ or $y\stackrel{g}{\ot} x$ and we commonly denote the groupoid $\G$ over $M$ by $\G \arrows M$.

The space of arrows $\G$ is not required to be Hausdorff, but the space of objects $M$ and the fibres of the source map $s:\G\to M$ are. This is done in order to accommodate several natural examples of groupoids for which the space of arrows may fail to be Hausdorff. A typical source of such examples is foliation theory.

From the definition of a Lie groupoid $\G\arrows M$ we can conclude that the inversion map is a diffeomorphism of $\G$ and that the unit map is an embedding $u:M \hookrightarrow \G$. We often identify the base of a groupoid with its image by the unit embedding.

\begin{example} \ \
\begin{itemize}
\item[1.] (Lie groups) Any Lie group $G$ can be seen as a Lie groupoid over a point $G\arrows \{*\}$.

%\item (Bundles of Lie groups; vector bundles) A bundle of Lie groups parameterized by a manifold $M$ is the same as a Lie groupoid $\G$ for which the source map coincides with the target map. As a particular case, any vector bundle gives rise to a Lie groupoid in this way, with source and target equal to the projection of the vector bundle, and fibrewise addition as the composition.

%\item (Unit groupoids) Any smooth manifold $M$ can be seen as a Lie groupoid $M\arrows M$, with only the identity arrows, called the unit groupoid of $M$.

%\item (Pair groupoids) Associated to any manifold $M$ we can also construct the pair groupoid $M\x M\arrows M$, with structure maps $s(x,y)=y$, $t(x,y)=x$, $(x,y)^{-1}=(y,x)$, $1_x=(x,x)$ and $(x,y)(y,z)=(x,z)$.

\item[2.] (Submersion groupoids) Given any submersion $\pi: M\to B$ there is a groupoid $M\x_\pi M\arrows M$, for which the arrows are the pairs $(x,y)$ such that $\pi(x)=\pi(y)$, and the structure maps are defined by $s(x,y)=y$, $t(x,y)=x$ and $(x,y)\cdot(y,z)=(x,z)$. This is called the submersion groupoid of $\pi$ and it is sometimes denoted by $\G(\pi)$. In the particular case of $\pi$ being the identity map of $M$ we obtain the so-called unit groupoid; when $B$ is a point, we obtain the pair groupoid of $M$.

\item[3.] (Action groupoids) Let $G$ be a Lie group acting smoothly on a manifold $M$. Then we can form the action Lie groupoid $G\lx M\arrows M$. The objects are the points of $M$, and the arrows are pairs $(g,x)\in G\x M$. The structure maps are defined by $s(g,x)=x$, $t(g,x)=g\cdot x$, $1_x=(e,x)$, $(g,x)^{-1}=(g^{-1},g\cdot x)$ and $(g,h\cdot x)(h,x)=(gh, x)$.

\item[4.] (Gauge groupoids) Let $P\to M$ be a principal bundle with structure group $G$. Then we can take the quotient $(P\x P)/G$ of the pair groupoid of $P$ by the  diagonal action of $G$, to obtain a Lie groupoid over $M$ called the gauge groupoid of $P$ and denoted by $Gauge(P)\arrows M$.

%\item (Restrictions) Whenever we have a Lie groupoid $\G\arrows M$ and $S\subset M$ a submanifold such that $S\x S$ is transverse to the map $(s,t):\G\to M\x M$, we can restrict $\G$ to a Lie groupoid over $S$, $$\G_S=\{g\in\G : s(g),t(g)\in \S\}.$$ In particular, we can always restrict a Lie groupoid to an open subset of the base.

\item[5.] (Tangent groupoids)  For any Lie groupoid $\G\arrows M$, applying the tangent functor gives us a groupoid $T\G\arrows TM$, which we call the tangent groupoid of $\G$, for which the structural maps are the differential of the structural maps of $\G$.

\end{itemize}
\end{example}

\begin{definition}%(Source and target fibres; isotropy groups)
Let $\G\arrows M$ be a Lie groupoid and $x\in M$. The subsets  $s^{-1}(x)$ and $t^{-1}(x)$ of $\G$ are called the \textbf{source-fibre} of $x$ and the \textbf{target-fibre} of $x$ respectively (or $s$-fibre and $t$-fibre).
The subset $\G_x:=\{g\in \G\ |\ s(g)=t(g)=x\}\subset \G$ is called the \textbf{isotropy group} of $x$.
\end{definition}

\begin{definition}%(Orbits; invariant subsets; saturation; orbit space)
Any Lie groupoid $\G\arrows M$ defines an equivalence relation on $M$ such that two points $x$ and $y$ are related if and only if there is an arrow $g\in\G$ such that $s(g)=x$ and $t(g)=y$. The equivalence classes are called the \textbf{orbits} of the groupoid and the orbit of a point $x\in M$ is denoted by $\O_x$. A subset of $M$ is said to be \textbf{invariant} if it is a union of orbits. Given a subset $U$ of $M$, the \textbf{saturation} of $U$, denoted by $\langle U\rangle$, is the smallest invariant subset of $M$ containing $U$.

The quotient of $M$ by this relation, endowed with the quotient topology, is called the \textbf{orbit space} of $\G$ and is denoted by $M/\G$.  
\end{definition}

The following result describes these pieces of a groupoid (cf. e.g. \cite{ieke_mrcun}).

\begin{proposition}[Structure of Lie groupoids]\label{prop-fund-str-gpd}
Let $\G\arrows M$ be a Lie groupoid and $x,y\in M$. Then:
\begin{enumerate}

\item[1.] the set of arrows from $x$ to $y$, $s^{-1}(x)\cap t^{-1}(y)$ is a Hausdorff submanifold of $\G$;
\item[2.] the isotropy group $\G_x$ is a Lie group;
\item[3.] the orbit $\O_x$ through $x$ is an immersed submanifold of $M$; 
\item[4.] the $s$-fibre of $x$ is a principal $\G_x$-bundle over $\O_x$, with projection the target map $t$.

\end{enumerate}
\end{proposition}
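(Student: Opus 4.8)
The plan is to reduce all four statements to the behaviour of the target map restricted to a single source-fibre. Since $s$ is a submersion, $S:=s^{-1}(x)$ is a closed embedded submanifold of $\G$, and it is Hausdorff by the standing assumption on source-fibres; write $\tau := t|_{S}\colon S\to M$. Everything will follow once we know that $\tau$ has \emph{constant rank}, so I would first isolate and prove that claim.

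Establishing the constant rank is the crux, and the main obstacle. The key tool is right translation: for an arrow $g\colon x\to z$ the map $R_g\colon s^{-1}(z)\to S$, $h\mapsto hg$, is a diffeomorphism (with inverse $R_{g^{-1}}$) satisfying $t\circ R_g=t$, so it intertwines $t|_{s^{-1}(z)}$ with $\tau$. As $R_g(1_z)=g$, the rank of $\tau$ at $g$ equals the rank of $t|_{s^{-1}(z)}$ at the unit $1_z$; it therefore suffices to show that the rank at units is constant as the base point ranges over the orbit $\O_x$. For this I would use local bisections: because $s$ and $t$ are submersions, any arrow $a\colon z_0\to z_1$ extends to a local bisection, and right translation by it is a diffeomorphism between open subsets of $\G$ carrying source-fibres to source-fibres and covering a local diffeomorphism of $M$ sending $z_0$ to $z_1$. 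Being an inner automorphism of the groupoid structure near these points, it preserves the rank at the corresponding units. Since every point of $\O_x$ is joined to $x$ by an arrow, the rank at units is constant along $\O_x$, and hence $\tau$ has constant rank on all of $S$.

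With constant rank in hand the remaining items are formal. By the constant rank theorem every fibre $\tau^{-1}(y)=s^{-1}(x)\cap t^{-1}(y)$ is a closed embedded submanifold of $S$, hence of $\G$, and is Hausdorff as a subspace of $S$; this is part (1). Part (2) is the special case $y=x$: the isotropy set $\G_x=\tau^{-1}(x)$ is then an embedded Hausdorff submanifold, and since the multiplication and inversion of $\G$ restrict to smooth maps on it, it is a Lie group. For part (3), the rank theorem supplies a normal form showing that the image $\O_x=\tau(S)$ is locally an embedded submanifold; endowing $\O_x$ with the quotient structure of $S/\G_x$ (equivalently, the unique smooth structure for which $\tau$ is a submersion) exhibits it as an injectively immersed submanifold of $M$.

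Finally, for part (4), the isotropy group $\G_x$ acts on $S=s^{-1}(x)$ on the right by $h\cdot g=hg$; this action is smooth (a restriction of $m$) and free (by cancellation), and its orbits are exactly the fibres of $\tau$, since $t(hg)=t(h)$ while any two arrows of $S$ with the same target differ by an element of $\G_x$. The orbit projection is precisely $\tau\colon S\to\O_x$, which we have shown to be a surjective submersion; choosing local sections $\sigma\colon U\to S$ of $\tau$ and forming $(y,g)\mapsto\sigma(y)g$ yields local trivialisations $U\times\G_x\xrightarrow{\ \sim\ }\tau^{-1}(U)$, so that $t\colon s^{-1}(x)\to\O_x$ is a principal $\G_x$-bundle. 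The only delicate point in the whole argument is the constant-rank claim; once it is secured, parts (1)--(4) are standard consequences of the constant rank theorem and the quotient and principal-bundle machinery.
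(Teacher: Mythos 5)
The paper does not actually prove this proposition: it is stated as a standard structure theorem with a pointer to the literature (Moerdijk--Mr\v{c}un), so there is no in-paper argument to compare against. Your overall route --- reduce everything to the claim that $\tau=t|_{s^{-1}(x)}$ has constant rank, then harvest (1), (2), (4) and the immersed-submanifold structure in (3) from the constant rank theorem and the free $\G_x$-action on $s^{-1}(x)$ --- is exactly the standard one, and those deductions are fine.

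The one step that does not work as written is your justification that the rank at units is constant along the orbit. Right translation by a local bisection through an arrow $a\colon z_0\to z_1$ is \emph{not} an inner automorphism of $\G$ and does \emph{not} carry units to units: it sends $1_{z_1}$ to $a$ (it commutes with $t$ and covers a local diffeomorphism of $M$ via $s$, but it moves the unit section off itself). So it only reproduces the identity ``$\operatorname{rank}$ of $t|_{s^{-1}(z_1)}$ at $1_{z_1}$ equals $\operatorname{rank}$ of $t|_{s^{-1}(z_0)}$ at $a$'' that you already obtained from right translation by the single arrow; it does not compare the ranks at $1_{z_0}$ and $1_{z_1}$. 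What does the job is \emph{conjugation} by the bisection, $h\mapsto \sigma(t(h))\, h\, \sigma(s(h))^{-1}$: this is a genuine local automorphism of $\G$ covering a local diffeomorphism of $M$, it fixes the unit section (sending $1_{z_0}$ to $1_{z_1}$) and maps $s$-fibres to $s$-fibres, hence preserves the rank at the corresponding units. Even more economically, you can collapse the whole constant-rank argument into one step and avoid comparing different base points altogether: given $g\in s^{-1}(x)$, choose a local bisection $\sigma$ of $s$ with $\sigma(x)=g$ and $t\circ\sigma$ a local diffeomorphism; then $L_\sigma(h)=\sigma(t(h))\,h$ is a diffeomorphism between open subsets of $s^{-1}(x)$ \emph{itself}, sends $1_x$ to $g$, and satisfies $t\circ L_\sigma=(t\circ\sigma)\circ t$, whence the rank of $\tau$ at $g$ equals its rank at $1_x$ directly. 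With that repair, the rest of your argument goes through.
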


The partition of the manifolds into connected components of the orbits forms a foliation, which is possibly singular, in the sense that different leaves might have different dimension. To give an idea of some different kinds of singular foliations that might occur let us look at two very simple examples coming from group actions.

\begin{example}Let the circle $S^1$ act on the plane $\mathbb{R}^2$ by rotations. Then the leaves of the singular foliation on the plane corresponding to the associated action groupoid are the orbits, i.e., the origin and the concentric circles centred on it.

Let now $(\mathbb{R}_+,\x)$ act on the plane $\mathbb{R}^2$ by scalar multiplication. The leaves of the corresponding singular foliation are the origin and the radial open half-lines. 

Note that the first example has a Hausdorff orbit space; in the second example, on the other hand, there is a point in the orbit space which is dense, defined by the orbit consisting of the origin.
\end{example}

\begin{definition}A \textbf{Lie groupoid morphism} between $\G\arrows M$ and $\H\arrows N$ is a smooth functor, i.e., a pair of smooth maps $\Phi:\G\to \H$ and $\phi:M\to N$ commuting with all the structure maps. An \textbf{isomorphism} is an invertible Lie groupoid morphism.
\end{definition}

%\begin{definition}%(Lie subgroupoid)
%Let $\G\arrows M$ be a Lie groupoid. A \textbf{Lie subgroupoid} of $\G$ is a pair $(\H,j)$ consisting of a Lie groupoid $\H$ and an injective immersive groupoid morphism $j:\H\to\G$.
%\end{definition}

%\paragraph{The Lie algebroid of a Lie groupoid}

%\ \

%Any Lie group can be differentiated at the unit in order to obtain its infinitesimal counterpart, a Lie algebra. The corresponding notion for a Lie groupoid $\G\arrows M$ is that of a Lie algebroid.

%\paragraph{Special classes of groupoids}

%\begin{itemize}
%\item (\'Etale groupoids) $\G$ is called \textbf{\'etale} if the source map $s:\G\to M$ is a local diffeomorphism.
%\end{itemize}

%\begin{example}An action groupoid $G\lx M\arrows M$ is \'etale if and only if the group $G$ is discrete.
%\end{example}

\subsection{Actions and representations}

A groupoid can act on a space fibred  over its base, with an arrow $g:x\to y$ mapping the fibre over $x$ onto the fibre over $y$. 

\begin{definition}%(Lie groupoid action)
Let $\G\arrows M$ be a Lie groupoid and consider a surjective smooth map $\mu: P\to M$. A \textbf{(left) action} of $\G$ on $P$ along the map $\mu$, which is called the \textbf{moment map}, is a smooth map $$\G\x_M P = \{(g,p)\in \G\x P\ |\ s(g)=\mu(p)\}\to P,$$ denoted by $(g,p)\mapsto g\cdot p=gp$, such that $\mu(gp)=t(g)$, and satisfying the usual action axioms $(gh)p=g(hp)$ and $1_{\mu(p)}p=p$. We then say that $P$ is a \textbf{left $\G$-space}.
\[
\begin{tikzcd}\G \arrow[xshift=0.7ex]{d} \arrow[draw=none]{r}{\curvearrowright}  \arrow[xshift=-0.7ex]{d} & P \arrow{ld}{\mu}\\
M & 
\end{tikzcd}
\]
\end{definition}

\begin{example} Any Lie groupoid $\G\arrows M$ acts canonically on its base, with moment map the identity on $M$, by letting $g:x\to y$ act by $gx=y$; it also acts on $\G$ itself by left translations, with the target map $t:\G\to M$ as moment map, and action $g\cdot h=gh$.
\end{example}

\begin{definition}%(representations of Lie groupoids)
Let $\G\arrows M$ be a Lie groupoid. A \textbf{representation} of $\G$ is a vector bundle $E$ over $M$, together with a linear action of $\G$ on $E$, meaning that for each arrow $g:x\to y$, the induced map $g:E_x\to E_y$ is a linear isomorphism.  
\end{definition}

In general, given a groupoid $\G$, there might not be many interesting representations, so let us focus on some particular classes that have natural examples.

\begin{definition} (Regular and transitive groupoids)
A Lie groupoid is called \textbf{regular} if all the orbits have the same dimension. It is called \textbf{transitive} if it has only one orbit. 
\end{definition}

\begin{example} The gauge groupoid $Gauge(P)$ of a principal bundle $P$ is transitive. Conversely, if $\G\arrows M$ is a transitive groupoid, then $\G$ is isomorphic to $Gauge(s^{-1}(x))$, the gauge groupoid of the $\G_x$-principal bundle $s^{-1}(x)\stackrel{t}{\to} M$, for any object $x\in M$.
\end{example}

\begin{example}[Representations of regular groupoids] Let $\G\arrows M$ be a regular Lie groupoid, with Lie algebroid $A$. Then $\G$ has natural representations on the kernel of the anchor map of $A$, denoted by $\mathfrak{i}$, and on the normal bundle to the orbits (which is the cokernel of the anchor), denoted by $\nu$. An arrow $g\in \G$ acts on $\alpha\in \mathfrak{i}_{s(g)}$ by conjugation, $$g\cdot\alpha=dR_{g^{-1}}\circ dL_{g}\alpha$$ and it acts on $[v]\in \nu_{s(g)}$ by the so called \textbf{normal representation}: if $g(\epsilon)$ is a curve on $\G$ with $g(0)=g$ such that $[v]=\left[\dezero s(g(\epsilon)) \right]$, then $$g\cdot[v]=\left[\dezero t(g(\epsilon)) \right].$$ In other words, $g\cdot [v]$ can be defined as $[dt(X)]$, where $X\in T_g\G$ is any $s$-lift of $v$, meaning that $ds(X)=v$.
\end{example}

\begin{example}[Restriction to an orbit]\label{normalrep} If $\G\arrows M$ is any Lie groupoid, not necessarily regular, then the normal spaces to the orbits may no longer form a vector bundle. Nonetheless, we can still get a representation of an appropriate restriction of $\G$ on some appropriate normal bundle. To be precise, if $\O$ is an orbit of $\G$, then the restriction $$\G_\O=\{g\in\G\  |\ s(g),t(g)\in\O\}$$ is a Lie groupoid over $\O$ (isomorphic to the gauge groupoid of the $\G_x$-principal bundle $s^{-1}(x)\stackrel{t}{\to} \O$, for any object $x\in \O$). It has a natural representation on $\N\O$, the normal bundle to the orbit inside of $M$, defined in the following way. Denote by $\N_x:=T_xM/T_x\O_x$ the fibre of $\N\O$ at $x$. Let $g\in \G_\O$ and $[v]\in \N_{s(g)}$.  Then, just as in the regular case, we define $g\cdot [v]$ to be $[dt(X)]$, for any $s$-lift $X\in T_g\G$ of $v$. Furthermore given any point $x$ in the orbit $\O$ we can restrict this representation to a representation of the isotropy Lie group $\G_x$ on the normal space $\N_x$,  also called the \textbf{normal representation} (or isotropy representation) of $\G_x$.
\end{example}

\subsection{Morita equivalences}

\begin{definition}%(Principal $\G$-bundle)
A \textbf{left $\G$-bundle} is a left $\G$-space $P$ together with a $\G$-invariant surjective submersion $\pi: P\to B$. 
A left $\G$-bundle is called \textbf{principal} if the map $\G\x_M P\to P\x_\pi P$, $(g,p)\mapsto (gp,p)$ is a diffeomorphism. So for a principal $\G$-bundle, each fibre of $\pi$ is an orbit of the $\G$-action and all the stabilizers of the action are trivial.
\[
\begin{tikzcd}\G \arrow[xshift=0.7ex]{d} \arrow[draw=none]{r}{\curvearrowright}  \arrow[xshift=-0.7ex]{d} & P \arrow{ld}\arrow{d}{\pi} \\
M & B
\end{tikzcd}
\]
\end{definition}

The notions of right action and right principal $\G$-bundle are defined in an analogous way.

\begin{definition}\label{dfn-bibundle-morita} A \textbf{Morita equivalence} between two Lie groupoids $\G\arrows M$ and $\H\arrows N$ is given by a \textbf{principal $\G-\H$-bibundle}, %(which is to be thought of as the "graph" of a generalized morphism)
i.e., a manifold $P$ together with moment maps $\alpha: P\to M$ and $\beta: P\to N$, such that $\beta: P\to N$ is a left principal $\G$-bundle, $\alpha: P\to M$ is a right principal $\H$-bundle and the two actions commute: $g\cdot(p\cdot h)=(g\cdot p)\cdot h$ for any $g\in \G,\  p\in P$ and $h\in \H$. We say that $P$ is a bibundle realising the Morita equivalence.
\[
\begin{tikzcd}\G \arrow[xshift=0.7ex]{d} \arrow[draw=none]{r}{\curvearrowright}  \arrow[xshift=-0.7ex]{d} & P \arrow{ld}{\alpha}\arrow{rd}[swap]{\beta} \arrow[draw=none]{r}{\curvearrowleft} & \H\arrow[xshift=0.7ex]{d} \arrow[xshift=-0.7ex]{d} \\
M & & N
\end{tikzcd}
\]
\end{definition}

\begin{example}[Isomorphisms] If $f: \G \to \H$ is an isomorphism of Lie groupoids, then $\G$ and $\H$ are Morita equivalent. A bibundle can be given by the graph $Graph(f)\subset \G\times\H$, with moment maps $t\circ pr_1$ and $s\circ pr_2$, and the natural actions induced by the multiplications of $\G$ and $\H$. 
\end{example}

\begin{example}[Pullback groupoids] Let $\G$ be a Lie groupoid over $M$ and let $\alpha: P\to M$ be a surjective submersion. Then we can form the \textit{pullback groupoid} $\alpha^*\G\arrows P$, that has as space of arrows $P\times_M \G\times_M P$, meaning that arrows are triples $(p,g,q)$ with $\alpha(p)=t(g)$ and $s(g)=\alpha(q)$. The structure maps are determined by $s(p,g,q)=q$, $t(p,g,q)=p$ and  $(p,g_1,q)(q,g_2,r)=(p,g_1g_2,r)$.

The groupoids $\G$ and $\alpha^*\G$ are Morita equivalent, a bibundle being given by $\G\times_M P$. The left action of $\G$ has moment map $t\circ pr_1:\G\times_M P \to M$ is given by $g\cdot(h,p)=(gh,p)$ and the right action of $\alpha^*\G$  has moment map $pr_2:\G\times_M P\to P$ and is given by $(h,p)\cdot(p,k,q)=(hk,q)$.
\end{example}

\begin{remark}[Decomposing Morita equivalences]\label{simplemorita}

Let $\G$ and $\H$ be Morita equivalent, with bibundle $P$ as above. Since $P$ is a principal bibundle, it is easy to check that $$\alpha^* \G\, =\, P\times_M \G\times_M P\, \cong\,  P\times_M P\times_N P\, \cong\,  P\times_N \H\times_N P\,  =\, \beta^*\H,$$ as Lie groupoids over $P$.

This means that we can break any Morita equivalence between $\G$ and $\H$, using a bibundle $P$, into a chain of simpler Morita equivalences: $\G$ is Morita equivalent to $\alpha^*\G\cong\beta^*\H$, which is Morita equivalent to $\H$.
% Therefore, in order to check invariance under Morita equivalences of a property or construction associated to a Lie groupoid, it is enough to check invariance under isomorphisms and under Morita equivalences between a Lie groupoid and its pullback by a surjective submersion.
\end{remark}
\newpage

\begin{example}\label{moritaex1} \ \
\begin{enumerate}
\item[1.] Two Lie groups are Morita equivalent if and only if they are isomorphic.

\item[2.] Any transitive Lie groupoid $\G$ is Morita equivalent to the isotropy group $\G_x$ of any point  $x$ in the base.

\item[3.] Let $\G\arrows M$ be a Lie groupoid, let $N\subset M$ be a submanifold that intersects transversely every orbit it meets and let $\langle N\rangle$ denote the saturation of $N$. Then  $\G_N\arrows N$ is Morita equivalent to $\G_{\langle  N\rangle}\arrows \langle N\rangle$. As a particular case, we can take $N$ to be any open subset of $M$.

\item[4.] The groupoid $\G(\pi)$ associated to a submersion $\pi: M\to N$ is Morita equivalent to the unit groupoid $\pi(M)$. 
\end{enumerate}
\end{example}

\begin{lemma}[Morita equivalences preserve transverse geometry]\label{lemma-moritarmk} Let $\G\arrows M$ and $\H\arrows N$ be Morita equivalent Lie groupoids and let $P$ be a bibundle realising the equivalence. Then $P$ induces:
\begin{enumerate} \item[1.] A homeomorphism between the orbit spaces of $\G$ and $\H$,
\[\Phi: M/\G \rmap N/\H;\]
\item[2.] isomorphisms $\phi:\G_x\rmap \H_y$ between the isotropy groups at any points $x\in M$ and $y\in N$ whose orbits are related by $\Phi$, i.e., for which $\Phi(\O_x)=\O_y$;

\item[3.] isomorphisms $\tilde{\phi}:\N_x\rmap \N_y$ between the normal representations at any points $x$ and $y$ in the same conditions as in point 2, which are compatible with the isomorphism $\phi:\G_x\rmap \H_y$.
\end{enumerate}
\end{lemma}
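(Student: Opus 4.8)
The plan is to exploit the decomposition of a Morita equivalence recorded in Remark \ref{simplemorita}. Writing the bibundle as $P$ with moment maps $\alpha\colon P\to M$ and $\beta\colon P\to N$, the equivalence factors as a chain
\[
\G\ \sim\ \alpha^*\G\ \cong\ \beta^*\H\ \sim\ \H,
\]
whose two outer links are the canonical equivalences between a groupoid and a pullback along a surjective submersion, and whose middle link is an isomorphism of Lie groupoids over $P$. Since all three assertions of the lemma (a homeomorphism of orbit spaces, isomorphisms of isotropy groups, and compatible isomorphisms of normal representations) transfer verbatim across an isomorphism of groupoids and compose under concatenation of equivalences, it suffices to prove the lemma in the single case of a pullback equivalence $\G\sim\alpha^*\G$ along a surjective submersion $\alpha\colon P\to M$. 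The full statement then follows by tracking a point $x\in M$ through a lift $p\in\alpha^{-1}(x)$ to $y=\beta(p)\in N$, using that every pair of points lying on $\Phi$-related orbits arises from such a common $p\in P$ (indeed $\alpha^{-1}(x)$ is a single $\H$-orbit and $\beta(\alpha^{-1}(x))=\O_y$).

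For the pullback case I would first observe that two points $p,q\in P$ lie in the same $\alpha^*\G$-orbit precisely when $\alpha(p)$ and $\alpha(q)$ lie in the same $\G$-orbit, so $\alpha$ descends to a continuous bijection $P/\alpha^*\G\to M/\G$; as $\alpha$ is an open surjection, this bijection is a homeomorphism, giving (1). For (2), the isotropy group of $p$ consists exactly of the arrows $(p,g,p)$ with $g\in\G_{\alpha(p)}$, so $(p,g,p)\mapsto g$ is an isomorphism $(\alpha^*\G)_p\xrightarrow{\ \sim\ }\G_{\alpha(p)}$. For (3), the $\alpha^*\G$-orbit through $p$ is $\alpha^{-1}(\O_{\alpha(p)})$, whose tangent space is $d\alpha_p^{-1}(T_{\alpha(p)}\O_{\alpha(p)})$; since $\alpha$ is a submersion, $d\alpha_p$ descends to an isomorphism $\N_p\xrightarrow{\ \sim\ }\N_{\alpha(p)}$ of normal spaces.

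The compatibility clause in (3) is the only point requiring care, and I would handle it through the naturality of the normal representation. The projection $\Phi\colon\alpha^*\G\to\G$, $(p,g,q)\mapsto g$, is a morphism of Lie groupoids covering $\alpha$. For any such morphism $(\Phi,\phi)$ and any $s$-lift $X$ of a vector $v$, the image $d\Phi(X)$ is an $s$-lift of $d\phi(v)$ because $s\circ\Phi=\phi\circ s$; applying $dt$ and using $t\circ\Phi=\phi\circ t$ then gives $\overline{d\phi}\bigl(g'\cdot[v]\bigr)=\Phi(g')\cdot\overline{d\phi}([v])$, so the induced map on normal spaces intertwines the normal representations. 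Specializing to the $\Phi$ above shows that the isomorphism $\N_p\cong\N_{\alpha(p)}$ of (3) is equivariant with respect to the isotropy isomorphism of (2), as required.

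I would expect the main obstacle to be exactly this equivariance: keeping straight the various $s$-lifts and quotients entering the normal representation. The reduction via Remark \ref{simplemorita} is precisely what converts the otherwise fiddly symmetric computation into the transparent naturality argument above; in the direct bibundle formulation the same content is encoded in the identification of both $\N_x$ and $\N_y$ with the common quotient $T_pP/(\ker d\alpha_p+\ker d\beta_p)$, which one would have to unwind by hand.
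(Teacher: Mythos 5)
Your proof is correct, but it takes a different route from the one in the paper. The paper works directly with the bibundle $P$: the homeomorphism $\Phi$ is defined by sending $\O_x$ to the $\beta$-image of the single $\H$-orbit $\alpha^{-1}(x')$; the isotropy isomorphism $\phi_p:\G_x\to\H_y$ is characterized by the identity $gp=p\,\phi_p(g)$ (using principality of both actions); and $\tilde{\phi}_p$ is defined by the formula $[(d\alpha)_p(X_p)]\mapsto[(d\beta)_p(X_p)]$ for $X_p\in T_pP$ --- which is exactly the identification of $\N_x$ and $\N_y$ with $T_pP/(\ker d\alpha_p+\ker d\beta_p)$ that you mention at the end, with the well-definedness and equivariance checks left implicit. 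You instead invoke Remark \ref{simplemorita} to factor the equivalence as $\G\sim\alpha^*\G\cong\beta^*\H\sim\H$ and reduce to the case of a pullback along a surjective submersion, where the three assertions are transparent and the compatibility in (3) follows from the naturality of the normal representation under a groupoid morphism covering a submersion. (The paper does use this reduction strategy, but only in the subsequent proposition on Morita invariance of properness, not here.) What your route buys is a clean, checkable treatment of precisely the points the paper's proof glosses over --- well-definedness of $\tilde{\phi}_p$ and its equivariance with respect to $\phi_p$; what it costs is the extra bookkeeping of verifying that the composite maps through the chain coincide with the ones induced directly by $P$, which you correctly address by tracking a point $x$ through a lift $p\in\alpha^{-1}(x)$ to $y=\beta(p)$. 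Both arguments are sound; yours is arguably the more robust template, since the same naturality lemma handles the equivariance once and for all.
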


\begin{proof}
First, let us define the map $\Phi$: fix a point $x$ in the base of $\G$. Then for any point $x'$ in the orbit of $x$, the fibre $\alpha^{-1}(x')$ is a single orbit for the $\H$-action on $P$; it projects via $\beta$ to a unique orbit of $\H$, which we define to be $\Phi(\O_x)$. Invariance of $\beta$ under the action of $\G$ implies that $\Phi(\O_x)$ does not depend on the choice of $x'$, so $\Phi$ is well defined. In order to see that $\Phi$ is a homeomorphism, note that bi-invariant open sets on $P$ correspond to invariant opens on $\G$ and $\H$.

Let $x\in M$ and $y\in N$ be points such that their orbits are related by $\Phi$. Then there is a $p\in P$ such that $\alpha(p)=x$ and $\beta(p)=y$. Any such $p$ induces an isomorphism $\phi_p: \G_x\to \H_y$ between the isotropy groups at $x$ and $y$, uniquely determined by the condition $gp=p\phi_p(g)$.

Moreover, $p$ induces an isomorphism $\tilde{\phi}_p: \N_x\to\N_y$ between the normal representations at $x$ and $y$, uniquely determined by \[\tilde{\phi}([(d\alpha)_p (X_p)])=[(d\beta)_p (X_p)],\ \  \mathrm{for\  all}\ X_p \in T_p P.\qedhere\]
\end{proof}

\subsection{Proper Lie groupoids}

\begin{definition}A Lie groupoid $\G\arrows M$ is called \textbf{proper} if it is Hausdorff and $(s,t):\G\to M\x M$ is a proper map.
\end{definition}

\begin{example}\label{exm-proper-gpd} For several of the examples of Lie groupoids described before the condition of properness becomes some sort of familiar compactness condition.

\begin{itemize}\item[1.] A Lie group $G$ is proper when seen as a Lie groupoid if and only if it is compact.

\item[2.] The submersion groupoid $\G(\pi)$ associated to a submersion $\pi:M\to B$  is always proper.

\item[3.] An action groupoid is proper if and only if it is associated to a proper Lie group action.

\item[4.] The gauge groupoid of a principal $G$-bundle is proper if and only if $G$ is compact.

\item[5.] If $\G\arrows M$ is a proper Lie groupoid and $S\subset M$ a submanifold such that the restriction $\G_S\arrows S$ is a Lie groupoid, then $\G_S$ is proper as well.
\end{itemize}
\end{example}

The following result gives a first glimpse on how proper Lie groupoids are better behaved than general ones.

\begin{proposition}\label{hausdorff}
Let $\G\arrows M$ be a proper Lie groupoid. Then the orbit space $M/\G$ is Hausdorff and the isotropy group $\G_x$ is compact for every $x\in M$. 
\end{proposition}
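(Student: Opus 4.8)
The plan is to dispatch the two assertions separately. For the compactness of the isotropy groups, I would simply note that $\G_x=(s,t)^{-1}(x,x)$ is the preimage of the single point $(x,x)\in M\x M$. Since $\{(x,x)\}$ is compact and $(s,t)\colon\G\to M\x M$ is proper by hypothesis, $\G_x$ is compact. This settles the second claim in its entirety.

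For the Hausdorffness of $M/\G$ I would rely on the standard criterion for quotient topologies: if the quotient map $q\colon M\to M/\G$ is open, then $M/\G$ is Hausdorff if and only if the graph of the orbit equivalence relation, $R=\{(x,y)\in M\x M:\ \O_x=\O_y\}$, is closed. So it suffices to verify these two conditions. Openness of $q$ is a general feature of Lie groupoids: for an open set $U\subseteq M$ one has $q^{-1}(q(U))=\langle U\rangle=t(s^{-1}(U))$, which is open because $s^{-1}(U)$ is open and $t$, being a submersion, is an open map; by the definition of the quotient topology $q(U)$ is then open.

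It remains to check that $R$ is closed, and this is where properness does the essential work. I would observe that $R$ is precisely the image of $(s,t)\colon\G\to M\x M$. Since a proper groupoid is Hausdorff and $M\x M$ is locally compact Hausdorff, the proper continuous map $(s,t)$ is automatically closed, so its image $R$ is closed. Feeding the openness of $q$ and the closedness of $R$ into the criterion above yields that $M/\G$ is Hausdorff.

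The one point that is not purely formal is the implication ``proper $\Rightarrow$ closed image'', which I expect to be the main (and essentially only) obstacle. I would settle it by a local argument: given $y\in M\x M$, pick a compact neighbourhood $K$ of $y$; then $(s,t)^{-1}(K)$ is compact, so for any closed $C\subseteq\G$ the set $(s,t)(C\cap (s,t)^{-1}(K))$ is compact, hence closed in the Hausdorff space $M\x M$, and it coincides with $(s,t)(C)$ inside the interior of $K$. This localizes closedness of the image and shows that $(s,t)$ is a closed map; applying it to $C=\G$ gives the closedness of $R$ needed above.
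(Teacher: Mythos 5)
Your proof is correct and follows essentially the same route as the paper: properness of $(s,t)$ gives compact fibres (hence compact isotropy groups) and closedness of the map (hence closedness of the orbit relation $(s,t)(\G)$), and the quotient of $M$ by a closed relation under the open projection $\pi$ is Hausdorff. You have merely spelled out the details (openness of $\pi$, the ``proper $\Rightarrow$ closed'' implication) that the paper leaves implicit.
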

\begin{proof}Since the map $(s,t):\G\to M\x M$ is proper, it is closed, and has compact fibres. This automatically implies that the isotropy groups are compact and since the orbit space is the quotient of $M$ by the closed relation $(s,t)(\G)\subset M\x M$, it is Hausdorff.
\end{proof}

\begin{proposition} Let $\G$ and $\H$ be Morita equivalent Lie groupoids. If one of them is proper, then the other one is proper as well.
\end{proposition}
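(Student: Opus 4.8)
The plan is to reduce properness to a statement about the bibundle $P$ realising the Morita equivalence, using the decomposition of Morita equivalences recorded in Remark \ref{simplemorita}. By symmetry it suffices to prove one implication; suppose $\G\arrows M$ is proper and show that $\H\arrows N$ is proper. Since properness is manifestly invariant under isomorphism of Lie groupoids, and since any Morita equivalence between $\G$ and $\H$ factors through the chain $\G \leftrightarrow \alpha^*\G \cong \beta^*\H \leftrightarrow \H$, it is enough to prove two things: first, that a pullback groupoid $\alpha^*\G$ along a surjective submersion $\alpha\colon P\to M$ is proper whenever $\G$ is; and second, by the same statement applied on the $\H$-side, that properness of $\beta^*\H$ forces properness of $\H$. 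Thus the whole problem collapses to a single lemma: \emph{properness passes back and forth between a groupoid and its pullback along a surjective submersion.}

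For the pullback groupoid $\alpha^*\G \arrows P$, recall that its arrow space is $P\times_M \G \times_M P$ with $s(p,g,q)=q$ and $t(p,g,q)=p$. First I would establish the forward direction: if $\G$ is proper, then $\alpha^*\G$ is proper. The anchor $(s,t)\colon \alpha^*\G \to P\times P$ sends $(p,g,q)\mapsto (p,q)$, and I would show it is proper by exhibiting the relevant map as (locally) a pullback of the proper map $(s,t)_\G\colon \G\to M\times M$ along $\alpha\times\alpha\colon P\times P\to M\times M$. Concretely, $P\times_M\G\times_M P$ is the fibre product of $\G$ with $P\times P$ over $M\times M$ via $(\alpha,\alpha)$ and $(s,t)_\G$, and its anchor is exactly the projection of this fibre product to $P\times P$; a base change of a proper map is proper, so $\alpha^*\G$ is proper. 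Hausdorffness of $P\times_M\G\times_MP$ follows since it is a closed subspace of the Hausdorff manifold $P\times\G\times P$ (here $\G$ is Hausdorff because $\G$ is proper).

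The reverse direction is where I expect the genuine, if mild, obstacle to sit: deducing properness of $\G$ from properness of $\alpha^*\G$. The point is that $\alpha\colon P\to M$ is only a surjective submersion, not a proper map, so one cannot naively push the properness of $(s,t)_{\alpha^*\G}$ forward along $\alpha\times\alpha$. The cleanest route is to argue locally using submersion charts: since $\alpha$ is a surjective submersion it admits local sections, so around any point of $M\times M$ one can choose local sections $\sigma,\tau$ of $\alpha$ and realise $\G$ locally as a retract of $\alpha^*\G$ via $g\mapsto(\sigma(t(g)),g,\tau(s(g)))$. I would then verify that the compactness-of-preimages characterisation of properness for $(s,t)_\G$ over a compact set $K\subset M\times M$ follows from the compactness of the corresponding preimage in $\alpha^*\G$ over a suitable compact set in $P\times P$ built from continuous local sections over $K$; Hausdorffness of $M$ (hence of $M\times M$) is given, and Hausdorffness of $\G$ follows from that of $\alpha^*\G$ together with the local section description. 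The only care required is the standard one of patching local sections and invoking compactness on a finite subcover, which is routine rather than deep. With both directions of the pullback lemma in hand, applying it on the $\G$-side and the $\H$-side and chaining through $\alpha^*\G\cong\beta^*\H$ completes the argument.
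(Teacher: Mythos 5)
Your argument is correct and follows essentially the same route as the paper: reduce via Remark \ref{simplemorita} to the case of a pullback groupoid along a surjective submersion, and then invoke stability of proper maps (with Hausdorff domain) under base change, since the anchor of $\alpha^*\G$ is precisely the pullback of $(s,t)\colon\G\to M\times M$ along $\alpha\times\alpha$. The one point where you go beyond the paper's (very terse) proof is in explicitly treating the reverse implication --- properness of $\alpha^*\G$ forcing properness of $\G$ --- via local sections of $\alpha$; this direction is genuinely needed to traverse the chain $\G\sim\alpha^*\G\cong\beta^*\H\sim\H$, and your local-sections argument for it is sound.
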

\begin{proof} As mentioned in Remark \ref{simplemorita}, in order to prove invariance of a property, we may assume that $\H\tto N$ is equal to the pullback of $\G\tto M$ via a surjective submersion $\alpha:N\to M$. But then we have a pullback diagram relating the maps $(s,t):\G\rmap M\x M$ and $(s',t'):\H\rmap N\x N$. The result follows from stability of proper maps (with Hausdorff domain) under pullback. 
\end{proof}

Before looking at the local structure of a proper Lie groupoid, let us briefly recall the local structure of proper Lie group actions.
Whenever a Lie group $G$ acts on a manifold $M$, we can differentiate the action to get an induced action of $G$ on $TM$, the \textbf{tangent action} of $G$, defined by \[g\cdot X=\dezero (g\cdot x(\epsilon)),\] where $X\in T_xM$ and $x(\epsilon)$ is a curve representing $X$.

For any point $x\in M$, if we restrict this action to an action of the isotropy group $G_x$, then we obtain a representation of $G_x$ on $T_xM$. Since the action of $G_x$ leaves the tangent space to the orbit through $x$ invariant, we obtain an induced representation on the quotient $\N_x=T_xM/T_xO_x$, called the \textbf{isotropy representation} at $x$. This representation is used in the normal form around an orbit for a proper action of a Lie group, described by the Slice theorem, also called Tube theorem \cite[p.\ 109]{DK}.

\begin{theorem}[Slice theorem for proper actions]\label{thm-tube}
Let a Lie group $G$ act properly on a manifold $M$ and let $x\in M$. Then there is an invariant open neighbourhood of $x$ (called a \textit{tube} for the action) which is equivariantly diffeomorphic to $G\x_{G_x}B$, where $B$ is a $G_x$-invariant open neighbourhood of $0$ in $\N_x$ (the isotropy representation).
\end{theorem}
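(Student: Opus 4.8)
The plan is to realise the tube as a $G$-invariant tubular neighbourhood of the orbit $\O_x$, obtained by exponentiating the normal directions with respect to an invariant metric; properness will enter in two essential places. First I would record its structural consequences: the isotropy group $G_x$ is compact (it is the fibre over $(x,x)$ of the proper map $G\x M\rmap M\x M$, $(g,m)\mapsto(g\cdot m,m)$), and the orbit $\O_x$ is a closed embedded submanifold, $G$-equivariantly diffeomorphic to $G/G_x$ via $gG_x\mapsto g\cdot x$. In particular the isotropy representation on $\N_x=T_xM/T_x\O_x$ is well defined and the normal bundle $\N\O_x$ of the orbit is $G$-equivariantly isomorphic to the associated bundle $G\x_{G_x}\N_x$.

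The analytic input is a $G$-invariant Riemannian metric $h$ on $M$. I would construct it by averaging an arbitrary background metric $h_0$: properness yields a cutoff function, that is a nonnegative $c\in\CC(M)$ whose support meets each orbit in a relatively compact set, normalised so that $\int_G c(g^{-1}\cdot m)\,dg=1$ for all $m$; then $h_m(v,w)=\int_G c(g^{-1}\cdot m)\,(g^*h_0)_m(v,w)\,dg$ converges and defines a $G$-invariant metric. With respect to $h$ the Riemannian exponential map is $G$-equivariant, and the $h$-orthogonal complement of $T_x\O_x$ in $T_xM$ provides the $G_x$-invariant realisation of $\N_x$.

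I would then consider the candidate tube map $\Psi:G\x_{G_x}\N_x\rmap M$, $[g,v]\mapsto g\cdot\exp_x(v)$. Equivariance of $\exp$ makes $\Psi$ well defined and $G$-equivariant; a dimension count together with the transversality $T_x\O_x\oplus\N_x=T_xM$ shows that $\Psi$ is a local diffeomorphism at $[e,x]$, and by equivariance --- $G$ acting transitively on the zero section $G\x_{G_x}\{0\}\cong G/G_x$, which $\Psi$ identifies with $\O_x$ --- it is a local diffeomorphism along the whole zero section, hence on some $G$-invariant neighbourhood of it.

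The main obstacle is to upgrade this local statement to a global one, namely to find a $G_x$-invariant ball $B\subseteq\N_x$ such that, with $S=\exp_x(B)$, the map $\Psi$ restricts to a diffeomorphism of $G\x_{G_x}B$ onto the open invariant set $G\cdot S$; equivalently $S$ must be a genuine slice, $g\cdot S\cap S\neq\emptyset\Rightarrow g\in G_x$, and this is exactly where properness is indispensable. I would argue by contradiction: were there no such $B$, one would find $g_n\notin G_x$ and $s_n=\exp_x(v_n)$, $s_n'=\exp_x(w_n)$ in ever smaller slices, so $v_n,w_n\to 0$, with $g_n\cdot s_n=s_n'$ and hence $s_n,s_n'\to x$. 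Properness confines $\{g_n\}$ to a relatively compact set, so along a subsequence $g_n\to g_\infty$ with $g_\infty\cdot x=x$, i.e. $g_\infty\in G_x$. Setting $h_n:=g_\infty^{-1}g_n\to e$, which still lies outside the subgroup $G_x$, and using equivariance of $\exp$, one rewrites the relation as $\Psi([h_n,v_n])=\Psi([e,\,g_\infty^{-1}\cdot w_n])$ with both arguments near the zero section; injectivity of $\Psi$ there forces $h_n\in G_x$, a contradiction. Taking $B$ inside the invariant neighbourhood where $\Psi$ is a local diffeomorphism and this slice property holds then produces the required equivariant diffeomorphism $G\x_{G_x}B\cong G\cdot S$ onto a tube, completing the proof.
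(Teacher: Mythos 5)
Your argument is correct and complete in outline: properness gives the compact isotropy group, the closed embedded orbit, the averaged invariant metric via a cutoff function, the equivariant exponential producing a local diffeomorphism $G\x_{G_x}\N_x\to M$ along the zero section, and the properness-based contradiction argument that shrinks the normal ball until the slice condition $g\cdot S\cap S\neq\emptyset\Rightarrow g\in G_x$ holds. The paper itself offers no proof of this statement — it quotes it from \cite[p.\ 109]{DK} — and your proof is essentially the standard argument given in that reference, so there is nothing to compare beyond noting the agreement.
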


Let us return to the case of a proper Lie groupoid. First, let us recall that there is a pointwise version of properness.

\begin{definition}
Let $\G\arrows M$ be a Lie groupoid and $x\in M$. The groupoid $\G$ is \textbf{proper at $x$} if every sequence $(g_n)\in \G$ such that $(s,t)(g_n)\to (x,x)$ has a converging subsequence.
\end{definition}

\begin{lemma}(\cite{matias}) A Lie groupoid is proper if and only if it is proper at every point and its orbit space is Hausdorff.
\end{lemma}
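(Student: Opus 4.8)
The plan is to prove the two implications separately, treating the backward direction as the substantial one and isolating the Hausdorffness of $\G$ as the delicate point.

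For the forward implication, suppose $\G\arrows M$ is proper. Then its orbit space is Hausdorff by Proposition \ref{hausdorff}, so only properness at each point remains. Given $x\in M$ and a sequence $(g_n)$ with $(s,t)(g_n)\to(x,x)$, I would note that $K=\{(x,x)\}\cup\{(s,t)(g_n):n\in\mathbb{N}\}$ is compact in $M\times M$; since $(s,t)$ is proper, $(s,t)^{-1}(K)$ is compact and contains every $g_n$. As $\G$ is Hausdorff, this set is sequentially compact, so $(g_n)$ admits a convergent subsequence, which is exactly properness at $x$.

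For the backward implication I would first establish that $(s,t)\colon\G\to M\times M$ is proper by checking that the preimage of a compact $K\subseteq M\times M$ is sequentially compact. Take $(g_n)$ with $(s,t)(g_n)\in K$ and, using compactness of $K$, pass to a subsequence with $(s,t)(g_n)\to(a,b)\in K$. Since $s(g_n)$ and $t(g_n)$ lie in a common orbit and the quotient map $M\to M/\G$ is continuous into a Hausdorff space, the classes $[a]$ and $[b]$ coincide; hence there is an arrow $h$ with $s(h)=a$ and $t(h)=b$. Because $t$ is a submersion and $t(g_n)\to b=t(h)$, I would lift this convergent sequence to $h_n\to h$ with $t(h_n)=t(g_n)$, so that $h_n^{-1}g_n$ is defined and $(s,t)(h_n^{-1}g_n)=(s(g_n),s(h_n))\to(a,a)$. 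Properness at $a$ then yields a convergent subsequence $h_{n_k}^{-1}g_{n_k}\to\gamma$, and continuity of multiplication gives $g_{n_k}=h_{n_k}(h_{n_k}^{-1}g_{n_k})\to h\gamma\in(s,t)^{-1}(K)$. This is the heart of the argument: the orbit-space Hausdorffness is precisely what lets me align the limits of source and target on a single orbit, and properness at a point then supplies compactness once the sequence has been translated to sit over the diagonal.

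It remains to check that $\G$ itself is Hausdorff, which I expect to be the main obstacle, since properness of a groupoid includes this condition while the literal notion of properness at a point permits sequences with non-unique limits. Two arrows with distinct image under $(s,t)$ are separated by continuity and the Hausdorffness of $M\times M$, so the problem reduces to separating $g\neq h$ with $s(g)=s(h)=x$ and $t(g)=t(h)=y$ (forced by $M$ being Hausdorff); such arrows lie in the single $s$-fibre $s^{-1}(x)$, which is Hausdorff by hypothesis. To promote this to separation in $\G$ I would argue by contradiction: a sequence $\gamma_n\to g$ and $\gamma_n\to h$, together with a lift $g_n\to g$ satisfying $t(g_n)=t(\gamma_n)$, produces $g_n^{-1}\gamma_n$ converging simultaneously to $1_x$ and to $g^{-1}h\in\G_x$, with $(s,t)(g_n^{-1}\gamma_n)\to(x,x)$; properness at $x$, controlling the compact isotropy group $\G_x\subseteq s^{-1}(x)$, is then what should force $g^{-1}h=1_x$, i.e. $g=h$. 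Finally, once $\G$ is known to be Hausdorff it is metrizable (being second countable and locally Euclidean), so the sequential compactness obtained above is genuine compactness and $(s,t)$ is proper; together these show that $\G$ is a proper Lie groupoid.
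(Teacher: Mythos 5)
The paper itself offers no proof of this lemma---it is quoted directly from \cite{matias}---so your argument can only be judged on its own terms. The forward implication and the properness of $(s,t)$ in the backward implication are correct and well organised: using Hausdorffness of $M/\G$ to force the limits $a$ and $b$ of $s(g_n)$ and $t(g_n)$ onto a single orbit, choosing an arrow $h:a\to b$, lifting it through the submersion $t$ to $h_n\to h$, and translating so that $h_n^{-1}g_n$ sits over a neighbourhood of the diagonal where pointwise properness applies, is exactly the right mechanism; the remark that sequential compactness upgrades to compactness once $\G$ is metrizable is also fine.

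The gap is the Hausdorffness of $\G$, and it is a genuine one: the step you defer to ``properness at $x$, controlling the compact isotropy group'' is not available. Properness at $x$ only asserts that a sequence lying over pairs converging to $(x,x)$ has \emph{some} convergent subsequence; your sequence $g_n^{-1}\gamma_n$ already converges (to both $1_x$ and $g^{-1}h$), so pointwise properness yields nothing new, and Hausdorffness of the fibre $s^{-1}(x)$ does not help because $g_n^{-1}\gamma_n$ does not lie in that fibre. Worse, no argument can close this gap from the stated hypotheses under the conventions of this paper: take $M=\R$ and let $\G$ be the bundle of groups whose arrow space is the line with two origins, with $s=t$ the obvious \'etale map, the units forming one copy of $\R$, and the second origin a nontrivial element $k\in\G_0$ with $k^2=1_0$ (all other isotropy trivial). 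Here $M$ and all $s$-fibres are Hausdorff, the orbit space is $\R$, and the groupoid is proper at every point---any sequence $1_{x_n}$ with $x_n\to 0$ converges, indeed to both $1_0$ and $k$---yet $\G$ is not Hausdorff, hence not proper in the sense of this paper. So either one reads the lemma with Hausdorffness of $\G$ as a standing hypothesis (which is how the cited source should be understood, and then your first two paragraphs already constitute a complete proof), or the statement itself needs repair; in neither case can your third paragraph be completed as written.
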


\begin{definition}\label{def-slice}
Let $\G$ be a Lie groupoid over $M$ and $x\in M$. A \textbf{slice} at $x$ is an embedded submanifold $\Sigma\subset M$ of dimension complementary to $\O_x$ such that it is transverse to every orbit it meets and $\Sigma\cap \O_x=\{x\}$.  
\end{definition}

The following result gives us some information about the longitudinal (along the orbits) and the transverse structure of a groupoid $\G$, at a point $x$ at which $\G$ is proper. For a proof we refer to \cite{ivan}.

\begin{proposition}Let $\G\arrows M$ be a Lie groupoid which is proper at $x\in M$. Then
\begin{enumerate}
\item[1.] The orbit $\O_x$ is an embedded closed submanifold of $M$;
\item[2.] there is a slice $\Sigma$ at $x$.
\end{enumerate}
\end{proposition}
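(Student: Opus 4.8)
\emph{Proof proposal.} The plan is to extract two consequences of properness at $x$ and build everything on them: the isotropy group $\G_x$ is compact, and any sequence of arrows whose source and target both tend to $x$ has a convergent subsequence. The first is immediate, since $\G_x=(s,t)^{-1}(x,x)$ is closed in $\G$ (as $M$ is Hausdorff), so a sequence in $\G_x$ subconverges by properness at $x$, with limit again in $\G_x$. I would then prove part~1 (embeddedness) as the core step, and deduce part~2 (the slice) from it by an essentially formal construction.

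For embeddedness, recall from Proposition~\ref{prop-fund-str-gpd} that $\O_x$ is an immersed submanifold and that $t\colon s^{-1}(x)\to\O_x$ is a principal $\G_x$-bundle, hence continuous and open onto $\O_x$ endowed with its intrinsic manifold topology. The immersion $\O_x\hookrightarrow M$ fails to be an embedding near $x$ exactly when there is a sequence $y_n\in\O_x$ with $y_n\to x$ in $M$ but $y_n\not\to x$ intrinsically. Picking arrows $g_n$ with $s(g_n)=x$ and $t(g_n)=y_n$, we have $(s,t)(g_n)=(x,y_n)\to(x,x)$, so properness at $x$ gives a subsequence $g_{n_k}\to g_\infty$; since $s^{-1}(x)$ is closed in $\G$, the limit lies in $s^{-1}(x)$, and $t(g_\infty)=x$ forces $g_\infty\in\G_x$. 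Continuity of $t$ into the intrinsic topology then yields $y_{n_k}=t(g_{n_k})\to x$ intrinsically, contradicting the choice of the $y_n$. Thus $\O_x$ is embedded near $x$. To globalise, for $z\in\O_x$ I would take a local bisection $B$ through an arrow $x\to z$; the induced translation $\tau_B=t|_B\circ(s|_B)^{-1}$ is a diffeomorphism between open subsets of $M$ carrying $\O_x$ to $\O_x$ and $x$ to $z$, so embeddedness transports from $x$ to $z$ and $\O_x$ is an embedded submanifold. Closedness as a subset of $M$ is then the manifestation of $(s,t)$ being a closed map: a proper map is closed, so the orbit relation $(s,t)(\G)\subset M\x M$ is closed and each orbit, as a fibre of it over $M$, is closed.

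For the slice, choose a linear complement $W$ to $T_x\O_x$ in $T_xM$, fix any Riemannian metric on $M$, and set $\Sigma=\exp_x(W\cap B_\epsilon(0))$. For small $\epsilon$ this is an embedded submanifold through $x$ of dimension complementary to $\O_x$, with $T_x\Sigma=W$ transverse to $T_x\O_x$. That $\Sigma\cap\O_x=\{x\}$ (after shrinking $\epsilon$) is exactly where embeddedness of $\O_x$ at $x$ enters: the plaque $\O_x\cap U$ is all of $\O_x$ near $x$, and two complementary transverse embedded submanifolds meet only at their common point. Transversality to every orbit $\Sigma$ meets is the remaining point: writing $\rho\colon A\to TM$ for the anchor of the Lie algebroid, one has that $T_y\O_y$ is the image of $\rho_y$, and transversality of $\Sigma$ to $\O_y$ at $y$ means the composite $A_y\to T_yM\to T_yM/T_y\Sigma$ is surjective; this holds at $x$ by construction, and surjectivity of a bundle map is an open condition, so it persists on a neighbourhood of $x$. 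Shrinking $\Sigma$ into that neighbourhood makes it transverse to all orbits it meets.

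The main obstacle is the embeddedness step: it is the only place where properness is genuinely used, and its subtlety is that one must upgrade subspace convergence of the $y_n$ to convergence in the intrinsic (immersed) topology, which is precisely what the convergent-subsequence property supplies through the limit arrow landing in $\G_x$. One should also be careful about what ``closed'' requires: pointwise properness already delivers that $\O_x$ is embedded (locally closed), while closedness as a subset of $M$ reflects the closedness of the map $(s,t)$ along the orbit. Once embeddedness is secured, the slice construction and the transversality persistence are routine, and compactness of $\G_x$ is the ingredient one carries forward to promote this slice to the equivariant tube of the linearization theorem (the groupoid analogue of Theorem~\ref{thm-tube}).
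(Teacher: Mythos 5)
The paper itself gives no proof of this proposition --- it defers to \cite{ivan} --- so there is nothing in the text to compare against line by line; your argument has to stand on its own. The core of it does: compactness of $\G_x$, the upgrade of subspace convergence $y_n\to x$ to intrinsic convergence via a limit arrow in $\G_x$, the propagation of embeddedness along the orbit by bisection translations, and the slice obtained from a transverse disc by shrinking until the anchor composed with $T_yM\to T_yM/T_y\Sigma$ stays surjective --- this is the standard route and the details are in order. (Minor point: for $\tau_B$ to transport embeddedness you also need that it restricts to a homeomorphism of $\O_x$ for the \emph{intrinsic} topologies; this follows because the translation is covered by the smooth map $h\mapsto \sigma(t(h))h$ on $s^{-1}(x)$ and descends through the principal bundle $t:s^{-1}(x)\to\O_x$, but it deserves a sentence.)

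The genuine gap is the closedness step. You derive it from ``a proper map is closed, so $(s,t)(\G)\subset M\x M$ is closed'', which uses \emph{global} properness of $(s,t)$; the hypothesis is only properness \emph{at} $x$, which controls sequences with $(s,t)(g_n)\to(x,x)$ and says nothing about sequences with $(s,t)(g_n)\to(x,y)$ for $y\neq x$ --- and those are exactly what govern whether $\overline{\O_x}\setminus\O_x$ is empty. The conclusion itself fails under the pointwise hypothesis: for the scaling action of $(\mathbb{R}_+,\x)$ on $\mathbb{R}^2$ discussed in Section 2, the action groupoid is proper at $x=(1,0)$ (if $v_n\to x$ and $\lambda_n v_n\to x$ then $\lambda_n\to1$, so the arrows converge), yet $\O_x$ is the open ray $\{(\lambda,0)\ |\ \lambda>0\}$, whose closure contains the origin. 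So properness at $x$ yields an embedded --- hence locally closed --- submanifold, which is what your argument actually establishes and what the Linearization Theorem needs, but not a closed one; the word ``closed'' requires a stronger hypothesis, e.g.\ properness of $(s,t)$ at every point of $\{x\}\x M$, or global properness as in your argument. You should flag this discrepancy in the statement rather than quietly borrow the global hypothesis to close the orbit.
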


Let $\G\arrows M$ be a Lie groupoid and $\O$ an orbit of $\G$. We recall that the restriction $$\G_\O=\{g\in\G\ |\ s(g),t(g)\in\O\}$$ is a Lie groupoid over $\O$. The normal bundle of $\G_\O$ in $\G$ is naturally a Lie groupoid over the normal bundle of $\O$ in $M$:
$$\N(\G_\O):=T\G/T\G_\O\arrows \N\O:=TM/T\O,$$ with the groupoid structure induced from that of $T\G\arrows TM$. 
The groupoid $\N(\G_\O)$ is called the \textbf{local model}, or the linearization, of $\G$ at $\O$.

Recall also that the restricted groupoid $\G_\O$ has a natural representation on the normal bundle to the orbit, called the normal representation, defined by $g\cdot[v]=[dt(X)]$, for any $v\in T_{s(g)}M$ and any $s$-lift $X\in T_g\G$ of $v$.

This representation can be restricted, for any $x\in M$, to a representation of the isotropy group  $\G_x$ on $\N_x$, also called the \textbf{normal representation}, or isotropy representation, at $x$.

Using this representation, there is a more explicit description of the local model using the isotropy bundle by choosing a point $x$ of $\O$. Let $P_x$ denote the $s$-fibre of $x$ and recall that it is a principal $\G_x$-bundle over $\O_x$ (Proposition \ref{prop-fund-str-gpd}).
Then the normal bundle to $\O$ is isomorphic to the associated vector bundle $$\N_\O\cong P_x\x_{\G_x}\N_x,$$ and the local model is $$\N(\G_\O)\cong (P_x\x P_x)\x_{\G_x}\N_x.$$ The structure on the local model is given by $$s([p,q,v])=[q,v], \ t([p,q,v])=[p,v],\ [p,q,v]\cdot[q,r,v]=[p,r,v].$$

\begin{remark}
Since $\G_\O$ is transitive, it is Morita equivalent to $\G_x$.
Moreover, the linearization $\N(\G_\O)$ is Morita equivalent to $\G_x\lx\N_x$.
\end{remark}

The following linearization result is an essential tool for proving most of the results in this text.

\begin{theorem}[Linearization theorem for proper groupoids]\label{lin}
Let $\G\arrows M$ be a Lie groupoid and let $\O$ be the orbit through $x\in M$. If $\G$ is proper at $x$, then there are neighbourhoods $U$ and $V$ of $\O$ such that $\G_U\cong \N(\G_\O)_V$.
\end{theorem}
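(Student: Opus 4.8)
The plan is to realise the linearization as the exponential map of a Riemannian structure on the groupoid that is compatible with all of the structure maps, following the metric approach to the problem. The input is exactly what properness at $x$ provides: by the structure results recalled above, $\O$ is an embedded closed submanifold and a slice $\Sigma$ exists, while $\G_x$ is compact (Proposition \ref{hausdorff}). Crucially, properness also yields a Haar system on the $s$-fibres together with an invariant cutoff function supported near $\O$, which is what makes the averaging arguments below converge.

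First I would construct a Riemannian metric on $\G$ that is \emph{compatible} with the groupoid structure, meaning that $s$ and $t$ are Riemannian submersions and the metric is invariant under inversion and the partial multiplication. One obtains such a metric by starting from an arbitrary metric and averaging it over the $s$-fibres against the Haar system, using the cutoff to control the region near $\O$; properness at $x$ is precisely what makes this averaging well defined on a neighbourhood of $\O$. This compatible metric descends to a metric $\eta$ on $M$ for which $\O$ is totally geodesic and the orbits stay equidistant. Taking the normal exponential map $\exp^{\nu}\colon \N\O\to M$ of $\eta$ then yields a diffeomorphism from a neighbourhood $V_0$ of the zero section onto a tubular neighbourhood $U$ of $\O$.

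Next, exponentiating simultaneously on $\G$ and on $M$ produces a map $\N(\G_\O)_V\to \G_U$, and the claim is that this is the desired isomorphism. Compatibility of the metric forces $\exp^{\nu}$ to intertwine the normal representation on $\N\O$ with the $\G$-action on $U$; commutation of the exponentiated map with $s$ and $t$ is then immediate from the submersion property, whereas commutation with the partial multiplication is where the invariance of the metric, and in particular its coherence across $\G$, $M$ and the space of composable arrows $\G^{(2)}$, is used.

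The main obstacle is exactly this last point: producing a metric whose averaged form is genuinely invariant under multiplication, not merely intertwining the infinitesimal data, so that the resulting exponential is a morphism of groupoids rather than only a diffeomorphism. This requires setting up the compatible (``$2$''-)metric coherently on the first levels of the nerve and checking that the averaging preserves every compatibility; domain and completeness issues then force one to shrink $U$ and $V$ so that $\exp^{\nu}$ is defined and a diffeomorphism throughout. If this bookkeeping became unwieldy I would fall back on reduction to a point: passing to a slice $\Sigma$, the restriction $\G_\Sigma$ is proper with $\{x\}$ as an orbit and compact isotropy $\G_x$, so its local model $\G_x\ltimes\N_x$ is reached by the slice theorem \ref{thm-tube} applied to the fixed point of the compact group $\G_x$ (Bochner linearization); one would then transport the statement back across the Morita equivalence between $\G_\Sigma$ and $\G_{\langle\Sigma\rangle}$ of Example \ref{moritaex1}. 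In that route the delicate step becomes upgrading the Morita equivalence of the two local models into an honest isomorphism of groupoids on a full neighbourhood of $\O$.
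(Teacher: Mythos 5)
The first thing to say is that the paper does not prove Theorem \ref{lin} at all: it is quoted from the literature (Weinstein \cite{alan}, Zung \cite{zung}, and the final form from \cite{ivan}), so there is no in-text argument to measure yours against. Judged on its own terms, your primary route is a recognisable outline of the metric proof of linearization via groupoid metrics (cf.\ \cite{matias_rui_metrics}), and you flag the right difficulties; but as written, the two steps that carry all the weight --- the existence of a $2$-metric on the nerve obtained by averaging against the Haar system, and the verification that the normal exponential maps of the induced metrics on $\G$, $M$ and $\G^{(2)}$ assemble into a morphism of groupoids rather than a mere diffeomorphism --- are left as acknowledged black boxes. Note also that a single metric on $\G$ cannot literally be ``invariant under the partial multiplication''; the coherence has to be phrased on $\G^{(2)}$ from the start, as you concede only at the end. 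So the primary route is a plan, not a proof.

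The fallback route contains a genuine error. After cutting down to a slice $\Sigma$, the restriction $\G_\Sigma$ is a proper Lie groupoid with $x$ as a fixed point, but it is \emph{not} the action groupoid of the compact group $\G_x$ on $\Sigma$ --- that identification is precisely what linearization at a fixed point asserts. It therefore cannot be obtained from the Slice theorem \ref{thm-tube} or from Bochner linearization, both of which presuppose that a compact group action is already given. The fixed-point case for a general proper groupoid is Zung's theorem and is the hard analytic core of the whole result (its proof requires averaging together with a fast-convergence iteration, or an equivalent vanishing/rigidity argument); it is not a corollary of the group case. So the sentence claiming that the local model $\G_x\ltimes\N_x$ ``is reached by the slice theorem applied to the fixed point of the compact group $\G_x$'' begs the question. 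You do correctly identify the second delicate point of this route --- upgrading the Morita equivalence between $\G_\Sigma$ and $\G_{\langle\Sigma\rangle}$ of Example \ref{moritaex1} into an isomorphism on an honest invariant neighbourhood of $\O$, which is exactly the issue about ``correct neighbourhoods'' resolved in \cite{ivan} --- but without an independent argument for the fixed-point case the fallback does not close.
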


The proof of the linearization result around a fixed point (an orbit consisting of a single point) was first completed by Zung \cite{zung}; together with previous results of Weinstein \cite{alan}, it gave rise to a similar result to the one we present here. 

The final version of the Linearization theorem that we present here has appeared in \cite{ivan}, where issues regarding which were the correct neighbourhoods of the orbits to be taken were solved;

\begin{remark} Combining the Linearization theorem with the previous remarks on Morita equivalence, we conclude that any orbit $\O_x$ of a proper groupoid $\G$ has an invariant neighbourhood such that the restriction of $\G$ to it is Morita equivalent to $\G_x\lx \N_x$. For this we use also that $\N_x$ admits arbitrarily small $\G_x$-invariant open neighbourhoods of the origin which are equivariantly diffeomorphic to $\N_x$.
\end{remark}

When we are interested in local properties of a groupoid, it is often enough to have an open around a point in the base, not necessarily containing the whole orbit, and the restriction of the groupoid to it. In this case it is possible to give a simpler model for the restricted groupoid \cite[Cor.\ 3.11]{hessel}:

\begin{proposition}[Local model around a point]\label{prop-local_model_at_point} Let $x\in M$ be a point in the base of a proper groupoid $\G$. There is a neighbourhood $U$ of $x$ in $M$, diffeomorphic to $O\x W$, where $O$ is an open ball in $\O_x$ centred at $x$ and $W$ is a $\G_x$-invariant open ball in $\N_x$ centred at the origin, such that under this diffeomorphism the restricted groupoid $\G_U$ is isomorphic to the product of the pair groupoid $O\x O\arrows O$ with the action groupoid $\G_x \x W\arrows W$.
\end{proposition}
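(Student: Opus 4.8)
The plan is to deduce the statement from the Linearization theorem (Theorem \ref{lin}) together with the explicit description of the local model by the isotropy bundle, reducing everything to a computation inside $(P_x \times P_x) \times_{\G_x} \N_x$. Since $\G$ is proper it is in particular proper at $x$, so Theorem \ref{lin} supplies neighbourhoods $U'$ of $\O = \O_x$ in $M$ and $V'$ of the zero section $\O$ in $\N\O$, and an isomorphism $\G_{U'} \cong \N(\G_\O)_{V'}$ covering a diffeomorphism $U' \cong V'$ which we may take to be the identity on $\O$, so that $x \in M$ corresponds to $x \in \O \subset \N\O$. Hence it suffices to produce, inside the local model, a product neighbourhood $V = O \times W \subset V'$ of $x$ over which $\N(\G_\O)_V$ has the asserted product form; pulling back along the linearization then yields the required $U \subset M$, diffeomorphic to $O \times W$, with $\G_U$ of the stated shape.

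Next I would fix this product neighbourhood. By Proposition \ref{prop-fund-str-gpd}(4) the $s$-fibre $P_x$ is a principal $\G_x$-bundle over $\O_x$, hence locally trivial, so I choose a trivialization $P_x|_O \cong O \times \G_x$ over an open ball $O \subset \O_x$ centred at $x$ (sending $x \mapsto (x,e)$). This induces $\N\O|_O \cong O \times \N_x$ through $[(o,h),v] \mapsto (o, h\cdot v)$. Because $\G$ is proper, $\G_x$ is compact (Proposition \ref{hausdorff}), so $\N_x$ admits a $\G_x$-invariant inner product; I then let $W \subset \N_x$ be the $\G_x$-invariant open ball of a suitably small radius and set $V := O \times W$, shrinking $O$ and $W$ so that $V \subset V'$. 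This is an open neighbourhood of $x$ in $\N\O$ of the desired product type.

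Finally I would exhibit the isomorphism explicitly. Writing an arrow of the local model as $[p,q,v]$ with $p = (o_p, h_p)$ and $q = (o_q, h_q)$ in the trivialization, a short computation shows that $s[p,q,v], t[p,q,v] \in V$ exactly when $p, q \in P_x|_O$ and $v \in W$; here the $\G_x$-invariance of $W$ is precisely what makes the condition on $v$ independent of the group coordinates $h_p, h_q$. On the restricted groupoid I define
\[
[p,q,v] \;\longmapsto\; \big((o_p, o_q),\ (h_p h_q^{-1},\ h_q\cdot v)\big) \in (O \times O) \times (\G_x \lx W),
\]
with candidate inverse $((o_1,o_2),(k,w)) \mapsto [(o_1,k),(o_2,e),w]$. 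I would check that this is well defined (invariant under the balanced-product relation $[p,q,v] = [pm, qm, m^{-1}v]$), that it intertwines source, target and multiplication with those of the product of the pair groupoid $O \times O$ and the action groupoid $\G_x \lx W$, and that it is a diffeomorphism with the stated inverse.

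The hard part will be the bookkeeping in this last step: keeping the right-action convention on $P_x$, the balanced-product equivalence on $\N(\G_\O)$, and the action-groupoid composition mutually consistent, so that the map is genuinely a functor rather than merely a bijection of arrow sets. The one conceptually essential ingredient, as opposed to routine computation, is the reduction of the restriction condition $s,t \in V$ to \textquotedblleft$p,q$ lie over $O$ and $v \in W$\textquotedblright; this would fail without the compactness of $\G_x$ furnished by properness and the consequent existence of the $\G_x$-invariant ball $W$.
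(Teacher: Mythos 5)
The paper does not actually prove this proposition — it is quoted from \cite[Cor.\ 3.11]{hessel} — so there is no internal proof to compare against. Your argument is correct and is the natural derivation: linearize around $\O_x$, trivialize the principal $\G_x$-bundle $P_x\to\O_x$ over a ball $O$ to identify $\N\O|_O\cong O\x\N_x$, use compactness of $\G_x$ to get an invariant ball $W$, and check that the explicit formula $[p,q,v]\mapsto\big((o_p,o_q),(h_ph_q^{-1},h_q\cdot v)\big)$ is a well-defined groupoid isomorphism (your computations of well-definedness, compatibility with source, target and multiplication, and the inverse all check out, and the observation that invariance of $W$ is what makes the restriction condition reduce to $p,q\in P_x|_O$, $v\in W$ is exactly the right point). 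The only implicit ingredient worth making explicit is that the linearization isomorphism of Theorem \ref{lin} restricts to the canonical identification along the orbit $\O_x$, which is part of the standard statement and is needed so that $O$ is a ball in $\O_x$ centred at $x$.
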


One of the main features of proper groupoids is that it is possible to take averages of several objects (functions, Riemannian metrics, ...) to produce invariant versions of the same objects. This is done using a Haar system, in an analogous way to how one uses a Haar measure on a compact Lie group (cf. \cite{tu}, and also the appendix in \cite{measures_stacks} for further clarification on the constructions).

\begin{definition}\label{Haarsystem}%(Smooth Haar systems, cut-off functions)

Given a Lie groupoid $\G$ over $M$, a (right) \textbf{Haar system} $\mu$ is a family of smooth measures $\{\ \mu^x\ |\ x\in M\}$ with each $\mu^x$ supported on the $s$-fibre of $x$, satisfying the properties

\begin{enumerate}
\item[1.] (Smoothness) For any $f\in \CC_c(\G)$, the formula $$I_\mu (f )(x):=\int_{s^{-1}(x)}f(g)\ d\mu^x(g)$$ defines a smooth function $I_\mu(\phi)$ on $M$.

\item[2.] (Right-invariance) For any $h\in \G$ with $h:x\to y$ and any $f\in \CC_c(s^{-1}(x))$ we have $$\int_{s^{-1}(y)}f(gh)\ d\mu^y(g)=\int_{s^{-1}(x)}f(g)\ d\mu^x(g)$$

For such a Haar system, a \textbf{cut-off function} is a smooth function $c$ on $M$ satisfying 

\item[3.] $s: supp(c\circ t)\to M$ is a proper map;
\item[4.] $\int_{s^{-1}(x)}c(t(g))\ d\mu^x(g)=1$ for all $x\in M$.
\end{enumerate}
\end{definition}

%\begin{remark} Equivalently, condition $3$ says that the intersection of the support of $c$ with the saturation of any compact set is compact. In other words, the projection map onto the orbit space is a proper map when restricted to the support of $c$.
%\end{remark}

\begin{proposition} Any Lie groupoid admits a Haar system and cut-off functions exist for any proper Lie groupoid.
\end{proposition}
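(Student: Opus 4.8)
The plan is to treat the two assertions separately. For the existence of a (right) Haar system on an arbitrary Lie groupoid $\G \arrows M$, the starting point is that $s \colon \G \to M$ is a submersion, so the $s$-vertical bundle $\ker(ds)$ is a smooth vector bundle over $\G$ whose restriction to the unit section is the Lie algebroid $A$. I would first choose a strictly positive smooth density $\rho$ on the vector bundle $A$ (a nowhere-vanishing section of its density line bundle, which exists by a partition of unity argument on $M$), and then transport it by right translations. Concretely, for $g \in \G$ the right translation $R_g \colon s^{-1}(t(g)) \to s^{-1}(s(g))$, $h \mapsto hg$, satisfies $R_g(1_{t(g)}) = g$, so its differential identifies $A_{t(g)} = T_{1_{t(g)}}s^{-1}(t(g))$ with $T_g s^{-1}(s(g))$; pushing $\rho_{t(g)}$ forward along this isomorphism defines a density at $g$, hence a family of measures $\mu^x$ on the fibres $s^{-1}(x)$. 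Smoothness (property 1) is checked in a local trivialisation of the submersion $s$, and right-invariance (property 2) is automatic from the cocycle identity $R_{gh} = R_h \circ R_g$: the density assigned to $gh$ is exactly the pushforward under $R_h$ of the density assigned to $g$, which is precisely the statement that $R_h$ carries $\mu^{t(h)}$ to $\mu^{s(h)}$.

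For the existence of cut-off functions I would reduce everything to producing a single auxiliary function and then normalising. Suppose I have a smooth $\chi \geq 0$ on $M$ such that (a) $s \colon t^{-1}(\mathrm{supp}\,\chi) \to M$ is proper and (b) $\chi$ is positive somewhere on every orbit. Then set
\[
\psi(x) := I_\mu(\chi \circ t)(x) = \int_{s^{-1}(x)} \chi(t(g))\, d\mu^x(g).
\]
Each integral is finite because condition (a), together with properness of $(s,t)$, forces $\mathrm{supp}(\chi \circ t) \cap s^{-1}(x)$ to be compact, so $\psi$ is a well-defined smooth function by property 1 of the Haar system. A short computation using right-invariance of $\mu$ and the identity $t(gh) = t(g)$ shows that $\psi$ is constant along orbits, and condition (b) together with positivity of the measures $\mu^x$ gives $\psi > 0$ everywhere. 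Hence $c := \chi/\psi$ is a well-defined non-negative smooth function with $\mathrm{supp}\, c = \mathrm{supp}\,\chi$, so that condition 3 holds by (a); and since $\psi$ is orbit-invariant it may be pulled out of the fibre integral, yielding $\int_{s^{-1}(x)} c(t(g))\, d\mu^x(g) = \psi(x)/\psi(x) = 1$, which is condition 4.

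The crux, and the only place where properness of $\G$ is genuinely used, is the construction of $\chi$ satisfying (a) and (b) simultaneously: the support must be ``small enough'' transversally for (a) yet ``large enough'' to meet every orbit for (b). Here I would exploit that, for a proper groupoid, the orbit space $X = M/\G$ is Hausdorff (Proposition \ref{hausdorff}) and, as the open continuous image of the manifold $M$, locally compact and paracompact, so it admits a locally finite cover $\{V_i\}$ by relatively compact opens. Since the projection $\pi \colon M \to X$ is open and surjective, each $\overline{V_i}$ can be covered by the images of finitely many relatively compact opens of $M$; choosing $\chi_i \in \CC_c(M)$ supported in their union and positive on a set projecting onto $V_i$, I would set $\chi := \sum_i \chi_i$. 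One must check that the family of supports can be arranged to be locally finite in $M$ so that $\chi$ is smooth, and that for every compact $L \subseteq M$ only finitely many $\chi_i$ meet the saturation $\langle L\rangle$; the latter gives condition (a), because then $t^{-1}(\mathrm{supp}\,\chi) \cap s^{-1}(L) = (s,t)^{-1}\big(L \times (\mathrm{supp}\,\chi \cap \langle L\rangle)\big)$ has compact closure by properness of $(s,t)$. Condition (b) is immediate since the sets $\{\chi_i > 0\}$ project onto a cover of $X$. I expect the bookkeeping in this last step --- arranging the cover and the bumps so that both local finiteness in $M$ and the transverse properness of the support hold --- to be the main technical obstacle; everything else is a routine consequence of the Haar-system machinery.
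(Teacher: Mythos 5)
The paper states this proposition without proof, deferring to \cite{tu} and to the appendix of \cite{measures_stacks}; your argument is exactly the standard construction given in those references: a right-invariant Haar system obtained by right-translating a strictly positive density on the Lie algebroid $A=\ker(ds)|_M$ along the source fibres, and a cut-off function obtained by normalising an auxiliary function $\chi$ whose support is transversally proper and meets every orbit, using that $\psi=I_\mu(\chi\circ t)$ is strictly positive and constant along orbits. The proof is correct; the two points you leave implicit --- extending the smoothness of $I_\mu(f)$ from compactly supported $f$ to the $s$-properly supported function $\chi\circ t$, and arranging the supports of the $\chi_i$ to be locally finite in $M$ (e.g.\ by shrinking the cover $\{V_i\}$ of $X$ and keeping $\operatorname{supp}\chi_i\subset\pi^{-1}(V_i)$) --- are routine and you have flagged them yourself.
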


\newpage

\subsection{Orbifolds, orbispaces and differentiable stacks}

Lie groupoids can be used in order to conduct differential geometry on singular (i.e. not smooth) spaces. The way to do so is to model the singular space we wish to study as the orbit space of a Lie groupoid, bearing in mind that Morita equivalent Lie groupoids will describe the ``same'' space.

We are interested in spaces which are locally modelled on quotients of Euclidean spaces by smooth actions of \emph{compact} Lie groups (and such that the actions are part of the structure), called \textbf{orbispaces}. 
A particular class of such spaces is that of orbifolds. These are spaces which are locally modelled on quotients of Euclidean spaces by smooth actions of \emph{finite} groups. Orbifolds are more widespread in the literature (see \cite[Ch.\ $8$]{Joyce} for a review of the several definitions of orbifold, and of the category or 2-category of orbifolds in the literature). It is by generalizing their definition in terms of groupoids (cf. \cite{pronk}) that we arrive at the following definition of orbispace.

\begin{definition}\label{dfn-orbispace1}An \textbf{orbispace atlas} on a topological space $X$ is given by a proper Lie  groupoid $\G\arrows M$ and a homeomorphism $f:M/\G\rmap X$.

Two orbispace atlases $(\G,f)$ and $(\H,f')$ are \textbf{equivalent} if $\G\arrows M$ and $\H\arrows N$ are Morita equivalent, and the homeomorphism $\Phi:N/\H\rmap M/\G$ induced by the Morita equivalence (see Lemma \ref{lemma-moritarmk}) satisfies $f\circ \Phi=f'$.

 An \textbf{orbispace} is a topological space equipped with an equivalence class of orbispace atlases. 
Given any proper Lie groupoid $\G\arrows M$, the orbispace associated to it by using the atlas $(\G, id_{M/\G})$ on $M/\G$ is denoted by $M//\G$.
\end{definition}

In this language, an orbifold is simply an orbispace which admits an atlas $(\G,f)$ such that all the isotropy groups of $\G$ are discrete (cf. \cite{pronk,marius_ieke2}).
On the other hand, dropping the condition of properness in the definition of orbispace atlas, we arrive at the notion of differentiable stack. Therefore, orbispaces are examples of differentiable stacks, and orbifolds are examples of orbispaces.

%The following result is a converse to Lemma \ref{lemma-moritarmk}: it says that a morphism that preserves transverse geometry is a weak equivalence. This result can also be interpreted as a sort of inverse function theorem for orbispaces. If one interprets the normal representations as describing the ``tangent space'' of an orbispace, then it says that a map between groupoids $\G\arrows M$ and $\H\arrows N$ induces an isomorphism between the orbispaces they represent if and only if it induces a homeomorphism between the underlying topological spaces, and isomorphims at the level of ``tangent spaces''.

%\begin{theorem}[\cite{matias}, Theorem 4.3.1]\label{thm-matias-morita} Let $\G\arrows M$ and $\H\arrows N$ be Lie groupoids and $\phi:\G\to \H$ be a Lie groupoid morphism. Then $\phi$ is a weak equivalence if and only if it induces a homeomorphism between the orbit spaces $M/\G$ and $N/\H$ and isomorphisms between the normal representations $\G_x\act \N_x \to H_{\phi_0(x)}\act \N_{\phi_0(x)}$, for all $x\in M$.
%\end{theorem}

\begin{remark} We warn the reader about the fact that we are avoiding all technicalities related with defining morphisms (and $2$-morphisms) between differentiable stacks (and orbispaces), but we implicitly identify isomorphic orbispaces. Nonetheless, the definitions presented here are sufficient for the scope of this exposition. We refer to \cite{behrend_xu,heinloth,metzler} for comprehensive introductions to the theory of differentiable stacks, and to \cite{matias} for the specific case of orbispaces.
\end{remark}

%%%%%%%%%%%%%%%%%%%%%%%
%%%%%%%%%%%%%%%%%%%%%%%
%%%%%%%%%%%%%%%%%%%%%%%
%%%%%%%%%%%%%%%%%%%%%%%
%%%%%%%%%%%%%%%%%%%%%%%
%%%%%%%%%%%%%%%%%%%%%%%
%%%%%%%%%%%%%%%%%%%%%%%
%%%%%%%%%%%%%%%%%%%%%%%
\section{Orbispaces as differentiable spaces}
\label{chp3-smooth structures}
%%%%%%%%%%%%%%%%%%%%%%%
%%%%%%%%%%%%%%%%%%%%%%%
%%%%%%%%%%%%%%%%%%%%%%%
%%%%%%%%%%%%%%%%%%%%%%%
%%%%%%%%%%%%%%%%%%%%%%%
%%%%%%%%%%%%%%%%%%%%%%%
%%%%%%%%%%%%%%%%%%%%%%%
%%%%%%%%%%%%%%%%%%%%%%%
%%%%%%%%%%%%%%%%%%%%%%%

We study the smooth structure of an orbispace $X$. The approach we follow is to single out the sheaf $\C^\infty_X$ of smooth functions on $X$, and study $(X, \C^\infty_X)$ as a locally ringed space. Throughout this section let $\G\arrows M$ be a proper Lie groupoid with orbit space $X$.

\subsection{Smooth functions on orbit spaces of proper groupoids}

\begin{definition}\label{dfn-functions-on-orbispace} The \textbf{algebra of smooth functions on $X$} is defined as 
\[ C^{\infty}(X):= \{ f: X \rightarrow \mathbb{R}\ |\ f\circ \pi \in C^{\infty}(M)\},\] where $\pi:M\rmap X$ denotes the canonical projection map. 

The \textbf{sheaf of smooth functions on $X$}, denoted by $\C_X^\infty$, is defined by letting \[\C^\infty_X(U):=C^{\infty}(\pi^{-1}(U)/\G_{|\pi^{-1}(U)}).\]
\end{definition}

Note that the pullback map $\pi^*:C^{\infty}(X)\rmap C^{\infty}(M)$ identifies the algebra of smooth functions on $X$ with the algebra of $\G$-invariant smooth functions on $M$, denoted by $\CC(M)^{\G-\mathrm{inv}}$.

The orbit space $X$ of a proper Lie groupoid is Hausdorff, second-countable, and locally compact (since the quotient map $\pi: M \longrightarrow X$ is open for any groupoid $\G\arrows M$ ), hence also paracompact. Using these properties, we are able to guarantee the existence of several useful smooth functions on $X$.

\begin{proposition}\label{CC(X)isnormal} The algebra $\CC(X)$ is \textbf{normal}, i.e., for any disjoint closed subsets $A,B\subset X$ there is a function $f\in\CC(X)$ with values in $[0,1]$ such that $f_{|A}=0$ and $f_{|B}=1$.
\end{proposition}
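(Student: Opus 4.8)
The plan is to reduce the statement to the analogous (and standard) fact on the manifold $M$, and then to use the averaging afforded by properness to turn the resulting function into a $\G$-invariant one, so that it descends to $X$. Throughout I will use the identification of $\CC(X)$ with the $\G$-invariant smooth functions on $M$ via $\pi^*$.

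First I would pull the closed sets back to $M$. Given disjoint closed sets $A,B\subset X$, their preimages $A':=\pi^{-1}(A)$ and $B':=\pi^{-1}(B)$ are disjoint closed subsets of $M$, and they are automatically saturated (invariant), being preimages under $\pi$. Since $M$ is a paracompact manifold, its algebra $\CC(M)$ is normal: by the smooth Urysohn lemma (a consequence of smooth partitions of unity subordinate to the cover $\{M\setminus A',\,M\setminus B'\}$) there is $h\in\CC(M)$ with values in $[0,1]$ such that $h|_{A'}=0$ and $h|_{B'}=1$. This $h$ need not be $\G$-invariant, so it does not yet descend to $X$.

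Next I would average $h$ over the groupoid. Choosing a Haar system $\mu=\{\mu^x\}$ together with a \emph{nonnegative} cut-off function $c$ (both exist by properness), I set
\[\bar h(x):=\int_{s^{-1}(x)} c(t(g))\,h(t(g))\ d\mu^x(g).\]
The properness clause for the cut-off makes the integrand effectively compactly supported along $s$-fibres, so the smoothness axiom of the Haar system gives $\bar h\in\CC(M)$; since $c\ge 0$ and $\int_{s^{-1}(x)}c(t(g))\,d\mu^x(g)=1$, each value $\bar h(x)$ is a convex combination of values of $h$ and hence lies in $[0,1]$. The crucial point is invariance: for an arrow $h\colon x\to y$ one applies right-invariance of $\mu$ to the function $g\mapsto c(t(g))h(t(g))$, using that $t(g\,h)=t(g)$, to conclude $\bar h(y)=\bar h(x)$, so $\bar h$ is $\G$-invariant. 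Finally, saturation makes the separation survive the averaging: for $x\in A'$ the whole orbit $\O_x$ lies in $A'$, so $h(t(g))=0$ for all $g\in s^{-1}(x)$ and $\bar h(x)=0$; for $x\in B'$ one has $h(t(g))=1$, whence $\bar h(x)=\int_{s^{-1}(x)}c(t(g))\,d\mu^x(g)=1$ by the cut-off normalization. Being invariant, $\bar h$ descends to $f\in\CC(X)$ with $f|_A=0$, $f|_B=1$ and values in $[0,1]$.

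The main obstacle is precisely this invariance step: the averaging must be arranged so that right-invariance of the Haar system applies cleanly, which is why the integrand is written as a function of $t(g)$ (constant under right translation) and the cut-off is used to normalize the total mass to $1$. The only subsidiary points to keep in mind are to take the cut-off nonnegative (so the convex-combination estimate keeps $\bar h$ in $[0,1]$) and to note that $M$, being a paracompact manifold, admits the smooth Urysohn functions used at the start; everything else is a routine verification.
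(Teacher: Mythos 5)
Your proof is correct and follows essentially the same route as the paper: pull the closed sets back to saturated closed subsets of $M$, separate them with a smooth Urysohn function, and average over the groupoid using a Haar system (with cut-off) to obtain an invariant separating function that descends to $X$. The paper states the averaging step more tersely, but your detailed verification of smoothness, invariance via right-invariance of the Haar system, and preservation of the values $0$ and $1$ on the saturated preimages is exactly what is implicitly being used.
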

\begin{proof}Let $A,B\subset X$ be closed and disjoint. The sets $\pi^{-1}(A)$ and $\pi^{-1}(B)$ are closed and disjoint in $M$. Since $M$ is a manifold, we can find a function $h\in \CC(M)$ separating these two sets. Now average $h$ with respect to a Haar system on $\G$. We obtain in this way an invariant smooth function $\tilde{h}$ on $M$ which still separates $\pi^{-1}(A)$ and $\pi^{-1}(B)$. Being invariant, it corresponds via the pullback by the projection $\pi$ to a function $f\in \CC(X)$ that separates $A$ and $B$. 
\end{proof}

The fact that $\CC(X)$ is normal is used to prove the existence of  useful smooth functions on $X$: partitions of unity and proper functions. Before we see how, let us recall the Shrinking Lemma (see for example \cite{dugundji} for a proof).
We say that a cover $\{U_i\}_{i\in I}$ of a space $X$ is locally finite if for each $x\in X$ has a neighbourhood $V_x$ such that there are finitely many indices $i\in I$ for which $V_x\cap U_i\neq \emptyset$.

\begin{lemma}[Shrinking lemma]
Let $X$ be a paracompact Hausdorff space. Then $X$ is normal and for any open cover $\{U_i\}_{i\in I}$ of $X$ there is a locally finite open cover $\{V_i\}_{i\in I}$ with the property that $\{\overline{V_i}\subset U_i\}$ for all $i\in I$.
\end{lemma}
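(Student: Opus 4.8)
The plan is to prove the two assertions separately: first that a paracompact Hausdorff space $X$ is normal, and then the shrinking statement for arbitrary open covers, reducing the latter to the case of a locally finite cover and closing it with a Zorn's lemma argument.

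The technical backbone for the normality part is the elementary observation that for a \emph{locally finite} family of subsets $\{W_\alpha\}$ one has $\overline{\bigcup_\alpha W_\alpha}=\bigcup_\alpha \overline{W_\alpha}$: the inclusion $\supseteq$ is automatic, and for $\subseteq$ one uses a neighbourhood of $x$ meeting only finitely many $W_\alpha$ together with the fact that finite unions commute with closure. With this in hand I would first upgrade Hausdorffness to \emph{regularity}: given $x$ and a closed set $C$ with $x\notin C$, cover $X$ by $X\setminus C$ together with open sets $V_y$ ($y\in C$), each chosen disjoint from some open $U_y\ni x$; pass to a locally finite open refinement by paracompactness, let $W$ be the union of the refinement members that meet $C$, and note each such member sits inside some $V_y$ so that $U_y$ witnesses $x\notin\overline{W_\alpha}$. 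The closure lemma then gives $x\notin\overline{W}$, so $W$ and $X\setminus\overline{W}$ separate $C$ from $x$. Running the very same cover–refinement–closure routine once more, now with regularity in place of Hausdorffness (separating a point $a$ of a closed set $A$ from a disjoint closed set $B$), yields normality of $X$.

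For the shrinking statement I would first use paracompactness to replace $\{U_i\}_{i\in I}$ by a locally finite cover indexed by the \emph{same} set: take a locally finite open refinement $\{W_j\}$, choose for each $j$ an index $i(j)$ with $W_j\subseteq U_{i(j)}$, and set $U_i'=\bigcup_{i(j)=i}W_j$; then $\{U_i'\}$ is a locally finite open cover with $U_i'\subseteq U_i$, so it suffices to shrink a locally finite (hence point-finite) cover. I would then run the standard Zorn argument on the poset of \emph{partial shrinkings}: pairs $(J,\{V_i\}_{i\in J})$ with $J\subseteq I$, each $V_i$ open, $\overline{V_i}\subseteq U_i$, and $\{V_i\}_{i\in J}\cup\{U_i\}_{i\notin J}$ still a cover of $X$, ordered by extension. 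Given a maximal element with $J\neq I$, pick $i_0\notin J$; the set $C=X\setminus\big(\bigcup_{i\in J}V_i\cup\bigcup_{i\notin J,\,i\neq i_0}U_i\big)$ is closed and contained in $U_{i_0}$, so normality provides an open $V_{i_0}$ with $C\subseteq V_{i_0}\subseteq\overline{V_{i_0}}\subseteq U_{i_0}$, contradicting maximality; hence $J=I$. Finally, $\overline{V_i}\subseteq U_i$ together with local finiteness of $\{U_i\}$ forces $\{V_i\}$ to be locally finite as well.

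The step I expect to be the main obstacle is verifying that chains in the poset of partial shrinkings admit upper bounds, i.e.\ that the union of a chain is again a \emph{cover}. This is exactly where point-finiteness is essential: a point $x$ lies in only finitely many $U_i$, so if all of those indices already belong to the union $J=\bigcup_\lambda J_\lambda$ of the chain, they appear together at a single stage $J_\lambda$, at which the corresponding $V_i$ already cover $x$ (the $V_i$ being unchanged along the chain); otherwise $x$ is still covered by some unshrunk $U_i$. Getting this bookkeeping right is the only genuinely delicate point; everything else is routine once the locally-finite closure lemma is available.
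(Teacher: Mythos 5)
Your proof is correct. Note that the paper does not actually prove this lemma — it only cites Dugundji for it — so there is no in-paper argument to compare against; your route (the locally finite closure lemma to get Hausdorff $\Rightarrow$ regular $\Rightarrow$ normal, then reduction to a point-finite cover and the Zorn argument on partial shrinkings, with point-finiteness used exactly where you say it is, to bound chains) is the standard textbook proof, and you have handled the one genuinely delicate step — that the union of a chain of partial shrinkings is still a cover — correctly.
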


\begin{proposition}[Partitions of unity] For any open cover $\mathcal{U}$ of $X$ there is a smooth partition of unity subordinated to $\mathcal{U}$. 
\end{proposition}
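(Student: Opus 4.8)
The plan is to reproduce the classical construction of a smooth partition of unity on a normal paracompact space, the only genuinely new ingredient being the verification that every function produced actually lies in $\CC(X)$. The two facts that make this work are already available: $X$ is paracompact Hausdorff, so the Shrinking Lemma applies to it, and $\CC(X)$ is normal by Proposition \ref{CC(X)isnormal}, so the required separation can be realised by \emph{smooth} functions on $X$ rather than merely continuous ones. Combining these lets us run the usual bump-function argument entirely inside $\CC(X)$.

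First I would produce the bump functions. Applying the Shrinking Lemma to $\mathcal{U}=\{U_i\}_{i\in I}$ yields a locally finite open cover $\{V_i\}_{i\in I}$ with $\overline{V_i}\subset U_i$, and applying it again to $\{V_i\}$ yields a locally finite open cover $\{W_i\}_{i\in I}$ with $\overline{W_i}\subset V_i$. For each $i$ the sets $\overline{W_i}$ and $X\setminus V_i$ are closed and disjoint, so by normality of $\CC(X)$ (Proposition \ref{CC(X)isnormal}) there is $f_i\in\CC(X)$ with values in $[0,1]$, equal to $1$ on $\overline{W_i}$ and to $0$ on $X\setminus V_i$. In particular $\mathrm{supp}(f_i)\subset\overline{V_i}\subset U_i$.

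Next I would assemble these into a partition of unity. Set $f:=\sum_{i} f_i$. The key point is that this is a well-defined element of $\CC(X)$: local finiteness of $\{V_i\}$ on $X$ pulls back, along the continuous map $\pi$, to local finiteness of $\{\pi^{-1}(V_i)\}$ on $M$, so $f\circ\pi=\sum_i (f_i\circ\pi)$ is locally a finite sum of smooth functions on $M$, hence smooth; this says exactly that $f\in\CC(X)$. Moreover $f>0$ everywhere, since the $W_i$ cover $X$ and $f_i\equiv 1$ on $\overline{W_i}$. Because $f$ is nowhere vanishing and $f\circ\pi$ is smooth, $f_i/f$ again pulls back to a smooth function on $M$, so $\rho_i:=f_i/f\in\CC(X)$. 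The family $\{\rho_i\}_{i\in I}$ is then a smooth partition of unity with $\mathrm{supp}(\rho_i)\subset U_i$, subordinate to $\mathcal{U}$.

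The only real obstacle is the bookkeeping: one must check that $\CC(X)$ is stable under the two operations used, namely locally finite summation and division by a nowhere-vanishing element. This is precisely where the definition of $\CC(X)$ via pullback along $\pi$ enters, together with the fact that continuity of $\pi$ transports local finiteness and positivity from $X$ back to $M$. Everything else is the standard normal-space argument, with Proposition \ref{CC(X)isnormal} upgrading the separating functions from continuous to smooth.
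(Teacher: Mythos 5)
Your proof is correct and follows the same route as the paper's first argument: apply the Shrinking Lemma twice, use normality of $\CC(X)$ (Proposition \ref{CC(X)isnormal}) to produce bump functions, and normalise by the locally finite sum. The extra bookkeeping you supply — that locally finite sums and quotients by nowhere-vanishing elements stay in $\CC(X)$ because local finiteness and positivity pull back along $\pi$ — is exactly the verification the paper leaves implicit.
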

\begin{proof}
One possible proof goes by the standard argument used for the classical version of this result, for continuous functions on a paracompact Hausdorff space \cite{dugundji}. It first involves using the Shrinking lemma twice to obtain locally finite open covers $\{V_n\}$ and $\{W_n\}$ of $X$ such that $\overline{W}_n\subset V_n$ and $\overline{V}_n\subset U_n$ for each $n$. Secondly, since $\CC(X)$ is normal, we can choose functions $f_n:X\to [0,1]$ such that ${f_n}_{|X-V_n}=0$ and ${f_n}_{|\overline{W}_n}=1$. Since the covers used are locally finite, we can define functions $g_n=f_n/\sum f_n$, which form a partition of unity subordinated to $\mathcal{U}$.

There is also a proof using the classical version of the result. Take the open cover $\pi^{-1}(\mathcal{U})$ of $M$ defined by the preimages by the projection map of the opens of $\mathcal{U}$, and consider a smooth partition of unity $(f_n)$ subordinated to it, which exists because $M$ is a manifold. Averaging each function with respect to a Haar system leads to a smooth partition of unity subordinated to $\mathcal{U}$.
\end{proof}

\begin{proposition}[Existence of proper functions]\label{properfunctions}  Let $X$ be the orbit space of a proper groupoid. There exists a smooth proper function $f:X\to \mathbb{R}$.
\end{proposition}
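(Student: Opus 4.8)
The plan is to run the standard construction of a proper exhaustion function, but carried out inside the algebra $\CC(X)$ rather than in $C(X)$; this is possible precisely because the previous proposition supplies smooth partitions of unity on $X$. Recall that $X$ is Hausdorff, locally compact, second countable and paracompact — these are exactly the hypotheses under which the classical argument produces a proper function, so the only thing to check is that each step can be performed smoothly.

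First I would build a good cover. Using local compactness together with second countability, every point of $X$ has a relatively compact open neighbourhood, i.e. one whose closure is compact; these neighbourhoods form an open cover of $X$. By paracompactness I pass to a locally finite open refinement, and since $X$ is second countable (hence Lindel\"of) this refinement may be taken countable, giving a countable, locally finite open cover $\{U_n\}_{n\in\mathbb{N}}$ of $X$ in which each $\overline{U_n}$ is compact (being a closed subset of a compact set). Applying the Partitions of unity proposition to $\{U_n\}$, I obtain a smooth partition of unity $\{\rho_n\}_{n\in\mathbb{N}}\subset\CC(X)$ with $\mathrm{supp}(\rho_n)\subset U_n$ and $\sum_n\rho_n=1$.

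Now define $f:=\sum_{n\in\mathbb{N}} n\,\rho_n$. Because the cover is locally finite, this sum is locally finite, so near each point it is a finite sum of elements of $\CC(X)$; as smoothness is a local condition on $X$, this shows $f\in\CC(X)$. For properness, observe that if $x\notin \bigcup_{n\le N}U_n$ then $\rho_n(x)=0$ for all $n\le N$, whence $f(x)=\sum_{n>N} n\,\rho_n(x)\ge (N+1)\sum_{n>N}\rho_n(x)=N+1$. Consequently, for any $C>0$ the sublevel set $\{f\le C\}$ is contained in the finite union $\bigcup_{n\le C}\overline{U_n}$, which is compact; being a closed subset of a compact set, $\{f\le C\}$ is itself compact. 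Since $f\ge 0$, the preimage of any compact subset of $\mathbb{R}$ lies in some such sublevel set, so $f$ is proper.

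The construction is essentially routine, and no genuinely groupoid-theoretic difficulty arises once partitions of unity are in hand. The two points deserving attention are the extraction of a countable, locally finite cover by relatively compact opens (which is where local compactness and second countability are used) and the verification that the infinite sum $\sum_n n\,\rho_n$ is smooth rather than merely continuous — guaranteed by local finiteness and the locality of the smoothness condition. The properness estimate, though the crux of the argument, is then immediate.
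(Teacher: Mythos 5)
Your proof is correct and follows essentially the same strategy as the paper: build a countable, locally finite cover $\{U_n\}$ by relatively compact opens and take the weighted sum $f=\sum_n n\,f_n$ of smooth bump functions supported in the $U_n$, so that sublevel sets of $f$ are trapped in finite (hence compact) unions. The only cosmetic difference is that you feed the construction with a smooth partition of unity, whereas the paper uses the shrinking lemma together with normality of $\CC(X)$ to produce separating functions that equal $1$ on a shrunken cover; both yield the same lower-bound estimate and the same conclusion.
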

\begin{proof}Let $\{U_n\}$ be a locally finite countable open cover of $X$ such that each $U_n$ has compact closure. Using the Shrinking lemma twice, find open covers $\{V_n\}$ and $\{W_n\}$ of $X$ such that $\overline{W}_n\subset V_n$ and $\overline{V}_n\subset U_n$ for each $n$. Let $f_n\in\CC(X)$ be a function separating  $X-V_n$ and $\overline{W}_n$. This means that $\mathrm{supp}(f_n)\subset U_n$ and $f_n=1$ on a neighbourhood of $\overline{W}_n$. By local finiteness of the covers we can define the function $f=\sum_n nf_n$, which is proper. Indeed, if $K\in \mathbb{R}$ is compact, then it is bounded above by some integer $m$, and so $f^{-1}(K)$ is a closed subspace of the compact $\overline{W}_1,\cup \ldots\cup \overline{W}_m$, and so it is compact itself. \end{proof}

%\paragraph{Compactly supported functions and approximations}

%\ \

%Other useful spaces of functions are those of compactly supported continuous (respectively smooth) functions on $X$, denoted by $C_c(X)$ (respectively $\CC_c(X)$) and of continuous (respectively smooth) functions on $X$ vanishing at infinity, denoted by $C_0(X)$ (respectively $\CC_0(X)$). Some relations between these spaces of functions are encoded int the following approximation results.

%\begin{proposition} Let $X$ be the orbit space of a proper Lie groupoid. Then $\CC_c(X)$ is dense in $C_0(X)$ with respect to the topology of uniform convergence.
%\end{proposition}
%\begin{proof}Using the version of the Stone-Weierstrass theorem for locally compact Haudorff spaces (Theorem 10.A.2 in \cite{deitmar}), all that needs to be checked is that $\CC_c(X)$ is a subalgebra of $C_0(X)$ that separates points and such that for any point $x\in X$ there is a function $f\in \mathcal{A}$ such that $f(x)\neq 0$. For both cases it is easy to construct functions satisfying the requirements.
%\end{proof}

%\begin{remark} for each compact set $K\subset X$, the space of continuous functions with support in $K$, denoted $C_K(X)$ is a Banach space when equipped with the supremum norm. Being the union of these spaces for all possible $K\subset X$ compact, the set $C_c(X)$ is equipped with the inductive limit topology.
%\end{remark}

%\begin{lemma}\label{compactsupportdense}Let $X$ be the orbit space of a proper Lie groupoid. Then $\CC_c(X)$ is dense in $C_c(X)$, where $C_c(X)$ is given the inductive limit topology.
%\end{lemma}

\subsection{Locally ringed spaces}\label{sec-mfd-ringed} Let us recall some background on the more algebro-geometric approach to studying smooth manifolds using the language of locally ringed spaces, as well as some classical results about smooth functions on manifolds that will be later generalized to other spaces. In this section we follow the exposition of \cite{juan}.
Unless otherwise mentioned, all manifolds are considered to be finite dimensional, Hausdorff and second-countable.

\begin{definition}\label{dfn-locally_ringed_space}A \textbf{ringed $\R$-space} (also called ringed space) is a pair $(X,\O_X)$ where $X$ is a topological space and $\O_X$ is a sheaf of $\R$-algebras on  $X$, called the \textbf{structure sheaf} of the ringed space. A \textbf{morphism of ringed spaces} is a pair \[(\phi,\phi^\sharp):(X,\O_X)\to(Y,\O_Y)\] consisting of a continuous map $\phi:X\to Y$ and a morphism  $\phi^\sharp:\O_Y\to\phi_*\O_X$ of sheaves on $Y$.
% (or equivalently, a morphism $\phi^\sharp:\phi^*\O_Y\to\O_X$ of sheaves on $X$).
A \textbf{locally ringed space} is a ringed space $(X,\O_X)$ such that the stalk $\O_{X,x}$ at any point $x\in X$ is a local ring, i.e., it has a unique maximal ideal, denoted by $\m_x$.
A \textbf{morphism of locally ringed spaces} is a morphism of ringed spaces $(\phi,\phi^\sharp):(X,\O_X)\to(Y,\O_Y)$ such that $\phi^\sharp(\m_y)\subset \m_{\phi(y)}$. This condition means that \[(\phi^\sharp f)(x)=0 \Leftrightarrow f(\phi(x))=0.\]
A morphism $(\phi,\phi^\sharp)$ is an isomorphism if $\phi$ is a homeomorphism and $\phi^\sharp$ is an isomorphism of sheaves.

A ringed space $(X,\O_X)$ is said to be \textbf{reduced} if $\O_X$ is a subsheaf of $\R$-algebras of the sheaf $\mathcal{C}_X$ of continuous functions on $X$ and contains all constant functions.
%A \textbf{morphism of reduced ringed spaces} $\phi:(X,\O_X)\to (Y,\O_Y)$ is a continuous map $\phi:X\to Y$ such that \[\phi^*f:=f\circ \phi\in \O_X(\phi^{-1}(U)),\] for all $f\in \O_Y(U)$. It follows that $\phi$ induces a morphism of sheaves \[\phi^*:\O_Y\to \phi_*\O_X.\]

\end{definition}

\begin{remark} In general, the notion of ringed space allows for the structure sheaf to be a sheaf of unital rings, but since all the structure sheaves we use in this text are actually sheaves of $\R$-algebras, we restrict to this class.

Any reduced ringed space is automatically a locally ringed space, the unique maximal ideal of the stalk at a point being the ideal of germs that vanish at that point.
\end{remark}

\begin{example}The following are two essential examples:\begin{enumerate}
\item[1.] Any topological space $X$ is a ringed space if we take $\O_X$ to be equal to $\C_X$, the sheaf of continuous functions.

\item[2.] $(\R^n,\C_{\R^n}^\infty)$ is a locally ringed space, where $\C_{\R^n}^\infty$ is the sheaf of smooth functions on opens of $\R^n$; it is easy to check that any morphism of locally ringed spaces $(\R^n,\C_{\R^n}^\infty)\to (\R^m,\C_{\R^m}^\infty)$ is just given by a smooth map $\R^n\to \R^m$.
\end{enumerate}\end{example}

We can use the language of locally ringed spaces to give an alternative (equivalent) definition of smooth manifolds to the usual one in terms of atlases.

\begin{definition}[Manifolds as locally ringed spaces]\label{dfn-mfd-lrs} A \textbf{smooth manifold of dimension $n$} is a locally ringed space $(M,\O_M)$ such that $M$ is Hausdorff, second-countable, and has a cover by open subsets $U_i$ with the property that each restriction $(U_i,\O_{M|U_i})$ is isomorphic (as a locally ringed space) to an open subset of $(\R^n,\C_{\R^n}^\infty)$.
\end{definition}

We recall the construction of the real spectrum of an $\R$-algebra $A$. To start with, let us see how to construct the underlying set. An ideal $\m$ of $A$ is called a \textbf{real ideal} if it is a maximal ideal and $A/\m\cong \R$. A \textbf{character} on $A$ is a morphism of $\R$-algebras $\chi:A\to \R$; the kernel of any character is a real ideal, so there is a natural bijection  between the set of characters on $A$ and the set of real ideals of $A$.

\begin{definition}
Let $A$ be an $\R$-algebra.  The \textbf{real spectrum} of $A$ is the set $$\Spec_r A:=\Hom(A,\R)=\{\text{real ideals of $A$}\}.$$
\end{definition}

Given a point $x$ in $\Spec_r A$, the corresponding character is denoted by  $\chi_x$ and the corresponding ideal by $\m_x$.

Any element $f\in A$ defines a real valued function $\hat{f}$  (but also denoted by $f$ if there is no risk of confusion) on $\Spec_r A$, given by $$\hat{f}(x):=\chi_x(f)=[f]\in A/\m_x\cong \R.$$

In this way we have that \[\m_x=\{f\in A\ |\ f(x)=0\}\] and  the corresponding character $\chi_x$ is the evaluation map at $x$.

The topology that we consider on the real spectrum $\Spec_r A$ is the \textbf{Gelfand topology}, which is the smallest one such that the functions $\hat{f}: \Spec_r A\to \R$ are continuous, for all $f\in A$.

For any subset $I$ of an $\R$-algebra $A$, we define the \textbf{zero-set} of $I$ to be $$(I)_0:=\{x\in\Spec_r A \ |\ f(x)=0,\ \forall f\in I\}.$$ By definition, these subsets of $\Spec_r A$ are the closed subsets of the \textbf{Zariski topology} on $\Spec_r A$. The Gelfand topology is always finer than the Zariski topology, although they agree in some cases, as we will see below.

Finally, the structure sheaf on $\Spec_r A$ is the sheaf associated to the presheaf that assigns to an open subset $U\subset \Spec_r A$ the ring $A_U$ defined  as the localization (i.e., ring of fractions - see \cite{atiyah}) of $A$ with respect to the multiplicative system of all elements $f\in A$ such that $\hat{f}$ does not vanish at any point of $U$. The stalk at any point coincides with the localization of $A$ with respect to the multiplicative system $\{f\in A\ |\ \hat{f}(x)\neq 0\}$. The resulting locally ringed space $(\Spec_r A,\tilde{A})$ is called \textbf{the real spectrum of $A$} (it has the same name as the underlying set, but the meaning is usually clear from the context).

As mentioned before, a manifold can be recovered from its ring of functions. To start with, the following theorem shows how to recover the underlying topological space.

\begin{theorem}\label{thm-reconstructionM}
Let $M$ be a smooth manifold. Then the map $$\chi: M\to \Spec_r C^\infty(M),$$ given by $\chi(p)(f)=f(p)$ (i.e., $\chi(p)$ is the evaluation at $p$) is a homeomorphism.
\end{theorem}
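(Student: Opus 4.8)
The plan is to treat $\chi$ as a continuous open bijection, so that it is automatically a homeomorphism, and to check the four required properties in the order: well-defined, injective, continuous, surjective, open. The first three are the routine part. For each $p\in M$ the evaluation $\chi(p):\CC(M)\to\R$ is visibly a morphism of $\R$-algebras, hence a character, so $\chi(p)\in\Spec_r\CC(M)$ and $\chi$ is well-defined. Injectivity is exactly the statement that smooth functions separate points of a manifold: for $p\neq q$ a bump function (or a coordinate function read off a chart around $p$) gives $f\in\CC(M)$ with $f(p)\neq f(q)$, whence $\chi(p)\neq\chi(q)$. Continuity is immediate from the definition of the Gelfand topology as the coarsest one making all the $\hat f$ continuous: since $\hat f\circ\chi=f\in\CC(M)$ is smooth, every $\hat f\circ\chi$ is continuous, and therefore so is $\chi$.

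The main obstacle is surjectivity, i.e. showing that an arbitrary character $\chi'$, with $\m:=\ker\chi'$ a real (hence maximal) ideal, is an evaluation. This is where the standing hypotheses on $M$ are used through the existence of a proper smooth function $h:M\to\R$ (which exists by second-countability). First I would set $a:=\chi'(h)$ and $\phi:=h-a\in\m$, so that the zero set $Z:=\phi^{-1}(0)=h^{-1}(a)$ is compact. The key claim is that every $f\in\m$ has a zero on $Z$: if not, then $f^2>0$ on $Z$ and the function $\psi:=\phi^2+f^2$ lies in $\m$ yet vanishes nowhere on $M$ (its zero set is $Z\cap f^{-1}(0)=\emptyset$), so $\psi$ would be a unit of $\CC(M)$, contradicting that the proper ideal $\m$ contains no units. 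Applying the same trick to $f_1^2+\dots+f_k^2$ shows that the closed sets $\{\,Z\cap f^{-1}(0)\,\}_{f\in\m}$ enjoy the finite intersection property; being closed subsets of the compact $Z$, their total intersection contains a point $p$. Then $\m\subseteq\m_p=\{f:f(p)=0\}$, and maximality of $\m$ forces $\m=\m_p$, that is $\chi'=\chi(p)$.

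Granting surjectivity, I would finish by proving that $\chi$ is open, which gives continuity of $\chi^{-1}$. Given an open $V\subseteq M$ and $q\in V$, choose $f\in\CC(M)$ with $f(q)\neq0$ and $\mathrm{supp}(f)\subseteq V$; then $W:=\{x\in\Spec_r\CC(M):\hat f(x)\neq0\}$ is Gelfand-open and contains $\chi(q)$, since $\hat f(\chi(q))=f(q)\neq0$. By surjectivity any $x\in W$ equals $\chi(p)$ for some $p$ with $f(p)=\hat f(x)\neq0$, forcing $p\in\mathrm{supp}(f)\subseteq V$ and hence $x\in\chi(V)$. Thus $W\subseteq\chi(V)$, so $\chi(V)$ is open and $\chi$ is an open map; combined with continuity and bijectivity this yields that $\chi$ is a homeomorphism.

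I expect the surjectivity step to be the crux. The algebraic device $\psi=\phi^2+f^2$ together with the compactness of $Z$ extracted from a proper function is precisely what rules out ``characters at infinity'' and what makes second-countability (equivalently, the existence of a proper smooth function) indispensable; without it the real spectrum could acquire extra points not corresponding to elements of $M$.
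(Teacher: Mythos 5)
Your proof is correct and follows essentially the same route as the paper's argument (given there for the generalization, Theorem \ref{thm-reconstructionX}): a proper function cuts out a compact level set, and the sum-of-squares trick shows a character must be evaluation at a point of it. Your reorganization of surjectivity via the finite intersection property, and your use of an open map (bump functions supported in $V$) where the paper shows the map is closed (via normality of the function algebra), are only cosmetic variations.
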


For a proof see for example \cite{juan} or \cite{nestruev}, but also the proof of Theorem \ref{thm-reconstructionX} below which generalizes this result. From the proof it also follows that for a smooth manifold, the Gelfand and the Zariski topologies coincide.

That the structure sheaf associated to the ring $C^\infty(M)$ by localization coincides with the sheaf $\C_M^\infty$ of smooth functions on opens of $M$ is a consequence of the following result (see e.g. \cite{juan} for a modern exposition of the proof).

\begin{theorem}[Localization theorem]\label{thm-localization-mfd}
Let $M$ be a smooth manifold and $U\subset M$ an open. For any differentiable function $f$ on $U$ there exist global differentiable functions $g,h$ on $M$, such that $h$ does not vanish on $U$ and $f=g/h$ on $U$, i.e., $$\CC(U)=\CC(M)_U.$$
\end{theorem}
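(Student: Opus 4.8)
The plan is to show that the natural $\R$-algebra homomorphism $\CC(M)_U \to \CC(U)$ induced by restriction (which carries each $h$ not vanishing on $U$ to a unit of $\CC(U)$, so factors through the localization) is an isomorphism. Write $Z := M\setminus U$, a closed subset. Injectivity is the easy half: if $g/h$ maps to $0$ then $g$ vanishes on $U$, so the closed set $\overline{\{g\neq 0\}}$ is contained in $Z$ and hence disjoint from $U$. Choosing, by Whitney's theorem, a function $k\in\CC(M)$ with $k\geq 0$ and $k^{-1}(0)=\overline{\{g\neq 0\}}$, we get $k$ nowhere zero on $U$ and $kg\equiv 0$, so $g/h=0$ already in $\CC(M)_U$. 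All the content is therefore in surjectivity.

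For surjectivity, fix $f\in\CC(U)$. It suffices to produce $h\in\CC(M)$ with $h>0$ on $U$ and $h|_Z=0$ such that the function $g$ equal to $hf$ on $U$ and to $0$ on $Z$ belongs to $\CC(M)$; then $f=g/h$ on $U$ exhibits $f$ in the image. Since $U$ is an open subset of a second-countable manifold it is itself a ($\sigma$-compact) manifold, so I would choose a cover $\{W_i\}_{i\geq 1}$ of $U$ that is locally finite \emph{in $U$}, with each $\overline{W_i}$ compact and contained in $U$, together with a subordinate partition of unity $\{\rho_i\}$, $\operatorname{supp}\rho_i\subset W_i$. Each $\rho_i$ and each $\rho_i f$ then has compact support inside $U$, so extends by $0$ to an element of $\CC(M)$ that is identically zero near $Z$.

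The idea is now to set $h=\sum_i\lambda_i\rho_i$ and $g=\sum_i\lambda_i(\rho_i f)$ for suitably small weights $\lambda_i>0$. Fixing a locally finite atlas and letting $P_i$ bound the $C^i$-norms (in these charts) of $\rho_i$ and of $\rho_i f$ on their compact supports, I would take $\lambda_i\leq 2^{-i}/(1+P_i)$. On any compact subset of $U$ only finitely many terms survive, so $h$ and $g$ are smooth on $U$, with $h>0$ there because $\{\rho_i\}$ is a partition of unity, and clearly $g=hf$ on $U$. The purpose of the decay of $\lambda_i$ is that, for every fixed multi-index $\alpha$, the series $\sum_i\lambda_i\,\partial^\alpha\rho_i$ and $\sum_i\lambda_i\,\partial^\alpha(\rho_i f)$ are dominated for $i\geq|\alpha|$ by a tail of $\sum 2^{-i}$, hence converge uniformly on all of $M$.

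The main obstacle, and the only delicate point, is smoothness of $h$ and $g$ across $Z$, where the partition of unity fails to be locally finite and infinitely many terms overlap. This is exactly what the uniform convergence of every derivative series controls: by the standard theorem on term-by-term differentiation of uniformly convergent series, the sums are $C^\infty$ on all of $M$ with $\partial^\alpha h=\sum_i\lambda_i\partial^\alpha\rho_i$ and similarly for $g$. Continuity with value $0$ at a point $z\in Z$ holds because, as $x\to z$, the only nonvanishing terms have arbitrarily large index (the low-index supports being compact and away from $z$), so the tail estimate forces $h(x),g(x)\to 0$; applying the same reasoning to each derivative series gives flatness of $h$ and $g$ along $Z$. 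This produces the required $h$ and $g$ and completes the proof.
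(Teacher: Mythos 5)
Your proof is correct, and it is essentially the standard argument: the paper does not prove this theorem itself but defers to \cite{juan}, where the same strategy is used — a partition of unity on $U$ with compactly supported pieces, reweighted by rapidly decaying constants controlling all derivative norms so that $h=\sum_i\lambda_i\rho_i$ and $g=\sum_i\lambda_i\rho_i f$ converge in every $C^k$-norm and extend smoothly (flat) across $M\setminus U$. Your injectivity argument via a nonnegative smooth function whose zero set is $\overline{\{g\neq 0\}}$ is also the standard one and is sound.
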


Finally, smooth maps between manifolds can also be recovered from algebra maps between the rings of functions.

\begin{theorem}[Theorem 2.3 in \cite{juan}]\label{thm-morphisms-mfds}For any two manifolds $M$ and $N$ there is a natural bijection \[C^\infty(M,N)\to \Hom_{\R-\mathrm{alg}}(C^\infty(N),C^\infty(M))\] given by $\phi\mapsto \phi^*$.
\end{theorem}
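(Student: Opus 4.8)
The plan is to show that the pullback assignment $\phi\mapsto\phi^*$, where $\phi^*(f):=f\circ\phi$, is a well-defined bijection, treating injectivity and surjectivity separately; essentially all the content lies in surjectivity. First I would record that the map is well-defined and contravariantly functorial: if $\phi\colon M\to N$ is smooth then $f\circ\phi\in\CC(M)$ for every $f\in\CC(N)$, and $\phi^*$ is visibly a morphism of $\R$-algebras; moreover $(\psi\circ\phi)^*=\phi^*\circ\psi^*$ and $\mathrm{id}^*=\mathrm{id}$, which is exactly the naturality asserted in the statement.

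For injectivity I would use that smooth functions separate points of $N$: given $p\in M$ with $\phi(p)\neq\psi(p)$, a bump function $f\in\CC(N)$ (produced from a partition of unity) with $f(\phi(p))\neq f(\psi(p))$ yields $(\phi^*f)(p)\neq(\psi^*f)(p)$, so $\phi^*=\psi^*$ forces $\phi=\psi$.

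Surjectivity is the heart of the matter, and this is where I would lean on the reconstruction theorem (Theorem \ref{thm-reconstructionM}). Given $\theta\in\Hom_{\R-\mathrm{alg}}(\CC(N),\CC(M))$, for each $p\in M$ the composite $\chi_p\circ\theta\colon\CC(N)\to\R$, where $\chi_p$ is evaluation at $p$, is a unital morphism of $\R$-algebras, hence a character of $\CC(N)$, i.e., a point of $\Spec_r\CC(N)$. By Theorem \ref{thm-reconstructionM} the evaluation map $N\to\Spec_r\CC(N)$ is a bijection, so there is a unique $\phi(p)\in N$ with $\chi_{\phi(p)}=\chi_p\circ\theta$; unwinding this identity gives precisely $f(\phi(p))=\theta(f)(p)$ for all $f\in\CC(N)$, that is, $\theta(f)=\phi^*f$. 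Thus $\theta=\phi^*$, provided the set-theoretic map $\phi$ so defined is smooth.

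The hard part is exactly verifying that $\phi$ is smooth. The cleanest route is to embed $N$ as a (closed) submanifold $\iota\colon N\hookrightarrow\R^k$ via the Whitney embedding theorem; the ambient coordinates restrict to functions $y_1,\dots,y_k\in\CC(N)$. Applying the identity $\theta(f)=\phi^*f$ to $f=y_i$ shows that each component $y_i\circ\phi=\theta(y_i)$ lies in $\CC(M)$, so $\iota\circ\phi\colon M\to\R^k$ is smooth. Since $\phi$ takes values in the embedded submanifold $N$ and $\iota\circ\phi$ is smooth, $\phi$ itself is smooth, which finishes surjectivity. I expect the only genuine subtleties to be (i) that every character of $\CC(N)$ is an evaluation, which is supplied by Theorem \ref{thm-reconstructionM} and ultimately rests on the existence of proper smooth functions on $N$, and (ii) the passage from smoothness of $\iota\circ\phi$ to smoothness of $\phi$, which uses the standard fact that a map landing in an embedded submanifold is smooth as soon as it is smooth into the ambient Euclidean space.
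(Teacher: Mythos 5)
Your proof is correct and follows exactly the standard argument for this result (the paper itself gives no proof, deferring to \cite{juan}, whose Theorem 2.3 is proved the same way): injectivity from point separation, surjectivity by realizing each $\chi_p\circ\theta$ as an evaluation via the reconstruction theorem (Theorem \ref{thm-reconstructionM}), and smoothness of the resulting map by composing with a Whitney embedding $N\hookrightarrow\R^k$ and using that a map into an embedded submanifold is smooth once it is smooth into the ambient space. The two subtleties you flag are indeed the only ones, and both are handled correctly.
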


With the presented framework in mind, an approach to equipping singular spaces with smooth structures is to choose a class $\mathcal{A}$ of $\R$-algebras, such that the singular spaces we want to consider can be modelled (at least locally) by the real spectrum of algebras in $\mathcal{A}$. Moreover, $\mathcal{A}$ should include the algebra $C^\infty(M)$, for any manifold $M$. There is an ample choice of which kind of algebras should be taken to belong to $\mathcal{A}$ (see for example \cite{Joyce} or Appendix $2$ of \cite{reyes} for a discussion on possible models), depending on which singular spaces we wish to model.

Since our goal is to equip the orbit space $X$ of a proper groupoid $\G$ with a smooth structure, we should ask that the spectrum of $C^\infty(X)$ (see Definition \ref{dfn-functions-on-orbispace}) is locally isomorphic to the spectrum of an algebra in $\mathcal{A}$. We achieve this goal by letting $\mathcal{A}$ consist of differentiable algebras, discussed in the next section. We could also go further and require that the algebra $C^\infty(X)$ itself is in $\mathcal{A}$. We will see how to achieve this in section \ref{subsection-cinfty.rings}. But first let us show that, in any case, we can recover the underlying topological space $X$ from the algebra $C^\infty(X)$. The result generalizes Theorem \ref{thm-reconstructionM} and the proof is exactly the same as the one for manifolds, relying simply on the existence of proper functions on $X$ (Proposition \ref{properfunctions}) and on the fact that $C^\infty(X)$ is normal (Proposition \ref{CC(X)isnormal}).

\begin{theorem}\label{thm-reconstructionX}Let $X$ be the orbit space of a proper Lie groupoid. Then  the natural map $\phi: X \to Spec_r \CC(X)$ given by $x\mapsto ev_x$ is a homeomorphism.
\end{theorem}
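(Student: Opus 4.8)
The plan is to establish, separately, that $\phi$ is well defined, continuous, injective, a homeomorphism onto its image, and surjective, reproducing the classical argument behind Theorem \ref{thm-reconstructionM}. Well-definedness is immediate, since for each $x\in X$ the evaluation $ev_x:\CC(X)\to\R$ is a homomorphism of $\R$-algebras, hence a character, so $\phi(x)\in\Spec_r\CC(X)$. Continuity is equally formal: every $f\in\CC(X)$ is continuous on $X$ (because $f\circ\pi\in\CC(M)$ and $\pi$ is a quotient map), and since the Gelfand topology is by definition the coarsest one making all the $\hat f$ continuous, the identity $\hat f\circ\phi=f$ together with its universal property yields continuity of $\phi$.

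For the remaining topological points I would lean on normality of $\CC(X)$ (Proposition \ref{CC(X)isnormal}), which is available because $X$ is Hausdorff, locally compact and paracompact. Injectivity follows by applying normality to the disjoint closed sets $\{x\}$ and $\{y\}$ for $x\neq y$, producing $f\in\CC(X)$ with $f(x)\neq f(y)$, so $ev_x\neq ev_y$. The same separation property shows $\phi$ is open onto its image: for an open $U\subset X$ and $x\in U$, normality applied to $\{x\}$ and $X\setminus U$ gives $f\in\CC(X)$ with $f(x)=1$ and $f|_{X\setminus U}=0$, so $\{f>1/2\}$ is a neighbourhood of $x$, open in the topology generated by the functions of $\CC(X)$ and contained in $U$. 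Hence the original topology of $X$ coincides with the topology pulled back from the Gelfand topology via $\phi$, and $\phi$ is a homeomorphism onto its image.

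The main obstacle is surjectivity, and this is precisely where properness is used. Given a character $\chi$ with kernel the real ideal $\m$, I would fix a proper function $f\in\CC(X)$ (Proposition \ref{properfunctions}), put $c:=\chi(f)$, and note that $(f-c)^2\in\m$ while the level set $\{f=c\}=f^{-1}(c)$ is compact. The claim is that the common zero set $(\m)_0$ is nonempty. If it were empty, then each $p\in\{f=c\}$ would admit some $h_p\in\m$ with $h_p(p)\neq0$; by compactness finitely many of the open sets $\{h_{p_i}^2>0\}$ would cover $\{f=c\}$, and then $h:=(f-c)^2+\sum_i h_{p_i}^2$ would lie in $\m$ yet be strictly positive everywhere on $X$. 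The key algebraic fact is that $\CC(X)$ is closed under reciprocals of nowhere-vanishing elements, since $(1/h)\circ\pi=1/(h\circ\pi)$ is smooth on $M$ whenever $h$ never vanishes; thus $h$ would be invertible in $\CC(X)$, contradicting $h\in\m$. Therefore some $p\in(\m)_0$ exists, giving $\m\subseteq\m_p$, and maximality of both ideals forces $\m=\m_p$, whence $\chi=ev_p=\phi(p)$. The delicate point throughout is exactly why compactness of the level set of the proper function suffices to manufacture a single everywhere-positive element of $\m$; this is the heart of the argument and the only place where the existence of a proper function is essential.
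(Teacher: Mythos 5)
Your proposal is correct and follows essentially the same route as the paper: continuity from the universal property of the Gelfand topology, injectivity and the homeomorphism-onto-image step from normality of $\CC(X)$ (Proposition \ref{CC(X)isnormal}), and surjectivity by using a proper function to produce a compact level set, covering it by finitely many non-vanishing elements of the kernel, and summing squares to obtain a nowhere-vanishing (hence invertible) element of the maximal ideal. The only cosmetic differences are that you phrase surjectivity via non-emptiness of the zero set $(\m)_0$ rather than a direct contradiction with $\chi(g)\neq 0$, and you verify openness of $\phi$ onto its image where the paper checks that closed sets stay closed; both are equivalent uses of normality.
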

\begin{proof} The map $\phi$ is clearly injective since $\CC(X)$ is point-separating.
To check surjectivity, let $\chi \in \Spec_r \CC(X)$. According to Proposition \ref{properfunctions}, we can choose a proper function $f\in \CC(X)$, and then the level set $K = f^{-1}(\chi(f))$ is compact. Suppose that  $\chi$ is not in the image of $\phi$, i.e., it is not given by evaluation at a point. Then for each point $y\in X$ there is a function $f_y\in \CC(X)$ such that $f_y(y)\neq \chi(f_y)$. The sets \[U_y=\{x\in X\ |\ f_y(x)\neq \chi(f_y)\}\] cover $K$, which is compact, so we can take a finite subcover of it, $U_{y_1},\ldots, U_{y_k}$. Consider now the function $$g=(f-\chi(f))^2 + \sum_{i=1}^k (f_{y_i}-\chi(f_{y_i}))^2.$$ It is easy to see that $\chi(g)=0$. But $g$ is a nowhere vanishing smooth function on $X$, so it is invertible and we have that \[1=\chi(1)=\chi\left(g \frac{1}{g}\right)=\chi(g)\chi\left(\frac{1}{g}\right),\] so that $\chi(g)\neq 0$. Thus we have a contradiction and $\phi$ must be surjective. 

The map $\phi$ is continuous, since if $U=\hat{f}^{-1}(V)$ is some basic open for the Gelfand topology, with $f\in \CC(X)$ and $V$ open in $\mathbb{R}$ , then $\phi^{-1}(U)=f^{-1}(V)$ is open in $X$. 

To finish checking that $\phi$ is a homeomorphism, let $\phi$ induce a topology on $X$ which we also call the Gelfand topology. Given a set $Y\subset X$ which is closed in the original topology of $X$, we can consider the ideal $I_Y$ consisting of all functions of $\CC(X)$ vanishing on $Y$. Since the algebra $\CC(X)$ is normal, we have that $Y=\{x\in X\ |\ f(x)=0\ \ \forall\ f\in I_Y\}$, so $Y$ is also closed in the Gelfand topology.
\end{proof}

\subsection{Differentiable spaces}

In this section we discuss the category of differentiable spaces, which will turn out to be a good setting for the study of orbispaces. These are spaces that are locally modelled on the spectrum of differentiable algebras (also called closed $C^\infty$-rings, cf. \cite{reyes}), i.e., algebras of the form $C^\infty(\R^n)/I$, where $I$ is an ideal of $C^\infty(\R^n)$ which is closed with respect to the weak Whitney topology (cf. e.g. \cite{hirsch}). Differentiable spaces appeared in the work of Spallek \cite{spallek} and also as a particular case of the theory of $C^\infty$-schemes \cite{dubuc,Joyce,reyes}, that is discussed in the next section. A study of differentiable spaces, analogous to the basics of scheme theory in algebraic geometry, is discussed in detail in the book \cite{juan}; this is the main reference for this section. See \textit{loc. cit.} for the proofs of the results quoted below.

%Let $M$ be a smooth manifold and let $\{K_i\}_{i\in\mathbb{N}}$ be a countable family of compact subsets of $M$ such that each $K_i$ is contained in the domain of some chart $(U_i, x_1,\ldots,x_n)$ and such that $M$ is covered by the interior of the $K_i$'s. The family of seminorms \[||f||_{K_i,r}=\max_{\stackrel{p\in K_i}{|\alpha|< r}}2^r\left|\frac{\partial^{|\alpha|}f}{\partial x^\alpha}(p)\right|\] induces a locally convex topology on $C^\infty(M)$, independent of the choices made, making it into a Fr\'echet algebra, called the  Fr\'echet topology.

We deal with ideals of $C^\infty(M)$ that are closed in the Fr\'echet topology on $C^\infty(M)$ (also called the weak Whitney topology \cite{hirsch}). An example of such a closed ideal is the ideal $\mathfrak{m}_p$ of all smooth functions vanishing at a point $p\in M$.
The following classical result of Whitney characterizes the closure of an ideal of $C^\infty(M)$ in terms of conditions on the jets of smooth functions (cf. e.g. \cite{malgrange} for a proof).

\begin{theorem}[Whitney's spectral theorem] Let $M$ be a smooth manifold and $\a$ an  ideal of $\CC(M)$. Then \[f\in \bar{\a}\ \Leftrightarrow \ j_x f\in j_x(\a), \forall x\in M,\] where $j_xf$ denotes the jet of $f$ at the point $x$ and $j_x(\a)=\{j_xg\ |\ g\in \a\}$ .
\end{theorem}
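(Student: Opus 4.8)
The plan is to prove the two inclusions separately. Write $\a'=\{f\in\CC(M)\ |\ j_xf\in j_x(\a)\ \text{for all}\ x\in M\}$; this is visibly an ideal containing $\a$, and the asserted equality is precisely $\bar{\a}=\a'$. One inclusion, $\bar{\a}\subseteq\a'$, is soft and reduces to showing that $\a'$ is closed; the other, $\a'\subseteq\bar{\a}$, is the genuine analytic content of the theorem.

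For $\bar{\a}\subseteq\a'$ I would argue that $\a'$ is closed in the Fr\'echet topology, for then $\bar{\a}\subseteq\overline{\a'}=\a'$ (recall $\CC(M)$ with the weak Whitney topology is metrizable, so sequences suffice). Fix $x$ and a chart of dimension $n$. By Borel's theorem the jet map $j_x\colon\CC(M)\to\R[[t_1,\ldots,t_n]]$ is a surjective homomorphism of $\R$-algebras, so $j_x(\a)$ is an ideal of the ring of formal power series. That ring is Noetherian and local, so by the Krull intersection theorem (equivalently, Artin--Rees) every ideal is closed in the adic topology defined by its maximal ideal $\m$, i.e.\ $j_x(\a)=\bigcap_k\big(j_x(\a)+\m^{k+1}\big)$. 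Finally, Fr\'echet convergence $f_n\to f$ forces uniform convergence of all partial derivatives on a neighbourhood of $x$, hence coefficientwise convergence $j_xf_n\to j_xf$ in the adic topology; if each $j_xf_n\in j_x(\a)$, closedness yields $j_xf\in j_x(\a)$. As $x$ was arbitrary, $\a'$ is closed.

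For $\a'\subseteq\bar{\a}$ I would fix $f\in\a'$, a compact $K\subset M$, an order $k$, and $\epsilon>0$, and seek $g\in\a$ with $f-g$ and its derivatives up to order $k$ bounded by $\epsilon$ on $K$. First I would localize: cover $K$ by finitely many charts with a subordinate partition of unity $\{\rho_i\}$. Since $\a$ is an ideal, $\rho_i g\in\a$ whenever $g\in\a$, so producing on each chart some $g_i\in\a$ that approximates $f$ to order $k$ yields $\sum_i\rho_i g_i\in\a$ approximating $f=\sum_i\rho_i f$ on $K$; it thus suffices to work on a single chart $U$, identified with an open subset of $\R^n$. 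There the hypothesis $j_xf\in j_x(\a)$ supplies, pointwise, formal identities $j_xf=\sum_j\lambda_j(x)\,j_xg_j$ with $g_j\in\a$ and formal-power-series coefficients $\lambda_j(x)$. The task is to convert this pointwise formal solvability into an honest element of $\a$ that is $C^k$-close to $f$: one realizes compatible families of jets by genuine smooth functions via Whitney's extension theorem, and controls the flat remainder $f-g$ through Taylor estimates so that it is small in the $C^k$-norm on $K$.

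The main obstacle is exactly this last conversion, where the weight of the theorem lies. The coefficients $\lambda_j(x)$ are only defined pointwise and formally, and assembling them into smooth functions with \emph{uniform} $C^k$-estimates over the compact $K$ requires controlling the dependence $x\mapsto j_x(\a)$ uniformly. This uniformity comes from the Noetherian/coherence properties of the rings of formal (and of Whitney) jets --- again via Artin--Rees-type bounds --- together with the quantitative form of the Whitney extension theorem; choosing the $g_j$ and the order of flatness large enough to beat $\epsilon$ on $K$ is the technical heart of the argument. Once these estimates are in place, letting $K$, $k$, and $\epsilon$ vary shows $f\in\bar{\a}$. For the full details we refer to \cite{malgrange}.
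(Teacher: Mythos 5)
The paper does not actually prove this theorem: it is quoted as a classical result of Whitney with a pointer to \cite{malgrange}, so there is no in-paper argument to compare yours against. Judged on its own terms, your reduction of the statement to $\bar{\a}=\a'$ (with $\a'$ the ideal of functions whose jets lie pointwise in $j_x(\a)$) is correct, and the easy inclusion $\bar{\a}\subseteq\a'$ is essentially right, with one genuine slip: coefficientwise convergence of jets is convergence in the \emph{product} topology on $\R[[t_1,\dots,t_n]]$, not in the $\m$-adic topology; the $\m$-adic topology is strictly finer, so ``$j_xf_n\to j_xf$ in the adic topology'' is false as written, and $\m$-adic closedness of $j_x(\a)$ does not by itself let you pass to the limit. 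The argument is easily repaired with the ingredients you already name: each truncation $\pi_k(j_x(\a))\subset\R[[t]]/\m^{k+1}$ is a linear subspace of a finite-dimensional vector space, hence closed, so coefficientwise convergence gives $j_xf\in j_x(\a)+\m^{k+1}$ for every $k$, and then the Krull intersection theorem applied in $\R[[t]]/j_x(\a)$ gives $j_xf\in j_x(\a)$. The use of Borel's theorem to see that $j_x$ is surjective, hence that $j_x(\a)$ is an ideal, is correct.

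The genuine gap is the inclusion $\a'\subseteq\bar{\a}$, which is the entire analytic content of the theorem, and your proposal does not prove it. The reduction to a single compact $K$, order $k$, and tolerance $\epsilon$, and the localization by a partition of unity, are both fine; but everything after that is a description of the difficulty rather than a resolution of it. You never produce, even in outline, the element $g\in\a$ with $\|f-g\|_{C^k(K)}<\epsilon$: the pointwise formal coefficients $\lambda_j(x)$ have no a priori regularity or uniformity in $x$, and the step that replaces them by smooth data with uniform $C^k$-control (via Whitney extension and Artin--Rees/coherence bounds) is exactly where the theorem lives. You acknowledge this and refer ``the full details'' to \cite{malgrange}, which is the same move the paper makes, so the proposal is consistent with how the result is used in the text --- but as a proof it is incomplete precisely at the step that gives the theorem its name.
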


\begin{definition}\label{dfn-diff-alg}
An $\R$-algebra $A$ is called a \textbf{differentiable algebra} if it is isomorphic to the quotient $C^\infty(\R^n)/\a$, where $\a$ is a closed ideal (with respect to the Fr\'echet topology). Morphisms of differentiable algebras are simply morphisms of the underlying $\R$-algebras.
\end{definition}

\begin{example} \ \
\begin{itemize} 
\item[1.] (Open or closed subsets of Euclidean space) If $U\subset\R^n$ is an open subset, then $C^\infty(U)$ is a differentiable algebra. 

If $Y\subset \R^n$ is a closed subset, denote by $\mathfrak{p}_Y$ the ideal of all smooth functions on $\R^n$ which vanish on $Y$. Define
\[A_Y:=\{f_{|Y}\ |\ f\in \C^\infty(\R^n)\}.\]
Then $A_Y$ is a differentiable algebra because $A_Y\cong C^\infty(\R^n)/\mathfrak{p}_Y$ via the map $[f]\mapsto f_{|Y}$, and the ideal $\mathfrak{p}_Y$ is closed, since $\mathfrak{p}_Y=\bigcap_{p\in Y}\mathfrak{m}_p$.
\item[2.] (Smooth manifolds) As a particular case of the previous example, using Whitney's embedding theorem (see e.g. \cite{hirsch}) we conclude that the algebra of smooth functions on any manifold is a differentiable algebra.
\end{itemize}
\end{example}

We now study the real spectrum of a differentiable algebra (see the discussion in Section \ref{sec-mfd-ringed}). The point is that for a differentiable algebra $A$, the real spectrum behaves similarly to how a manifold does. For example, closed (resp. open) subsets of $\Spec_r A$ share many features with closed (resp. open) subsets of smooth manifolds; this is made more precise by the next two propositions.

\begin{proposition}[Closed subsets - Lemma 3.1 in \cite{juan}] If $A$ is a differentiable algebra and $Y\subset \Spec_r A$ is a closed subset, then $Y$ is a zero-set, i.e., there is an element $a\in A$ such that $Y=(a)_0$.
\end{proposition}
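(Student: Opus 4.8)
The plan is to reduce the statement to the classical fact that every closed subset of $\R^n$ is the zero-set of a single smooth function. First I would use the description of $A$ as a quotient: by Definition \ref{dfn-diff-alg} we may write $A=\CC(\R^n)/\a$ with $\a$ a closed ideal. Characters $\chi:A\to\R$ are in bijection with characters of $\CC(\R^n)$ that vanish on $\a$; by Theorem \ref{thm-reconstructionM} the latter are exactly the evaluations $ev_p$ at points $p\in\R^n$, and the condition $ev_p(\a)=0$ means precisely that $p$ lies in the closed set $Z:=(\a)_0\subset\R^n$. This identifies $\Spec_r A$ with $Z$ as a set, and for $a=[f]\in A$ the associated function is $\hat{a}=f_{|Z}$.

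Next I would check that this identification is a homeomorphism for the Gelfand topology on $\Spec_r A$ and the Euclidean subspace topology on $Z$. Each $\hat{a}=f_{|Z}$ is the restriction of a continuous function, so the Gelfand topology (the coarsest one making all $\hat a$ continuous) is contained in the subspace topology; conversely the subspace topology is already generated by the restrictions of the coordinate functions, which are themselves of the form $\hat a$, so it is contained in the Gelfand topology. Hence the two coincide. Consequently a subset $Y\subset \Spec_r A$ is closed precisely when it is closed in $Z$, i.e. $Y=Z\cap C$ for some closed $C\subset\R^n$; as $Z$ is itself closed in $\R^n$, this exhibits $Y$ as a closed subset of $\R^n$.

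The crux is then the classical result (see e.g. \cite{hirsch}): for any closed subset $Y\subset \R^n$ there is a smooth function $g\in\CC(\R^n)$ with $g\geq 0$ and $g^{-1}(0)=Y$, obtained by writing $\R^n\setminus Y$ as a countable union of balls and summing suitably scaled bump functions so that the series converges in the Fr\'echet topology. Setting $a:=[g]\in A$ we get $\hat{a}=g_{|Z}$, and hence
\[(a)_0=\{p\in Z\ |\ g(p)=0\}=Z\cap g^{-1}(0)=Z\cap Y=Y,\]
the last equality because $Y\subset Z$. This $a$ is the desired element, which would complete the proof.

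I expect the only genuine subtlety to be the identification of topologies in the second step: one must make sure that ``closed in $\Spec_r A$'' in the Gelfand sense really translates into ``closed in $\R^n$'', since zero-sets are a priori only Zariski-closed. Once this is in place the statement follows immediately from the smooth-function construction, and as a byproduct it yields the stronger conclusion that the Gelfand and Zariski topologies agree on $\Spec_r A$.
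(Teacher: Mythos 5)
Your proof is correct. The paper does not give its own proof of this proposition --- it is quoted from Lemma 3.1 of \cite{juan}, with the reader referred there for the argument --- but what you write is the standard proof and is sound: identify $\Spec_r A$ with its Gelfand topology with the closed subset $(\a)_0\subset\R^n$ carrying the Euclidean subspace topology (the topological identification being, as you correctly flag, the only point requiring care, and your two-sided comparison of the generating families handles it), and then apply Whitney's classical result that every closed subset of $\R^n$ is the zero set of a single nonnegative smooth function.
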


Note that the Gelfand and the Zariski topologies on $\Spec_r A$ coincide for any differentiable algebra $A$, since any closed subset of $\Spec_r A$ is a zero-set.
For open subsets, the following results extend Theorems \ref{thm-reconstructionM} and \ref{thm-localization-mfd} to the setting of differentiable algebras.

\begin{proposition}[Open subsets - Proposition 3.2 in \cite{juan}] If $A$ is a differentiable algebra and $U\subset \Spec_r A$ is an open subset, then we have a homeomorphism \[U\cong \Spec_r A_U,\] where $A_U$ denotes the localization of $A$ with respect to the multiplicative system of elements of $A$ that vanish nowhere in $U$.
\end{proposition}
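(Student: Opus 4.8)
The plan is to follow the same strategy that works for manifolds (Theorem \ref{thm-localization-mfd}), reducing the statement about an open subset $U$ of $\Spec_r A$ to a statement about the differentiable algebra $A$ itself, and ultimately pulling back to the case of $\CR{n}$ via the presentation $A = C^\infty(\R^n)/\a$. First I would recall that since $A$ is a differentiable algebra, the Gelfand and Zariski topologies on $\Spec_r A$ coincide (by the preceding proposition on closed subsets), so the open subset $U$ is the complement of a zero-set. By the same proposition applied to the closed set $\Spec_r A \setminus U$, there exists an element $a \in A$ with $U = \{x \in \Spec_r A \mid \hat{a}(x) \neq 0\}$; that is, $U$ is a basic open set defined by the non-vanishing of a single function. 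This is the key reduction: on manifolds an arbitrary open set need not be a single non-vanishing locus, but for differentiable algebras it is, which is what makes the clean homeomorphism $U \cong \Spec_r A_U$ possible.

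Next I would construct the comparison map and its inverse. The localization $A_U$ inverts exactly the elements of $A$ that vanish nowhere on $U$, in particular it inverts $a$. The inclusion $A \to A_U$ induces, by functoriality of $\Spec_r$, a continuous map $\Spec_r A_U \to \Spec_r A$ whose image lands in $U$ (since any character on $A_U$ must send $a$ to a nonzero value, because $a$ becomes invertible in $A_U$). I would argue this map is a bijection onto $U$: a character $\chi \colon A \to \R$ with $\chi(a) \neq 0$, i.e. a point of $U$, extends uniquely to a character $A_U \to \R$ because every element inverted in passing to $A_U$ is sent by $\chi$ to a nonzero real number. Uniqueness of the extension gives injectivity and the extension property gives surjectivity onto $U$. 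Both the map and its inverse are continuous for the Gelfand topologies because the topology is generated by the functions $\hat{f}$, and these are compatible under the localization homomorphism.

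The main obstacle will be the surjectivity/extension step, namely showing that a character $\chi$ on $A$ with $\chi(a)\neq 0$ genuinely extends to a character on the localized algebra $A_U$, and that this matches points of $U$ set-theoretically rather than just formally. The subtlety is that $A_U$ is the localization at the \emph{multiplicative system of all} elements nowhere-vanishing on $U$, not just powers of $a$, so I must verify that $\chi(s)\neq 0$ for every such $s$ once $\chi \in U$. This follows because if $s$ vanishes nowhere on $U$ and $\chi$ corresponds to a point $x\in U$, then $\hat{s}(x)=\chi(s)\neq 0$ by the very definition of $U$ as the set where the relevant functions do not vanish; one uses here that points of $\Spec_r A$ are genuinely represented by evaluation-type characters. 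I would close by noting that the structure sheaf statement is automatic from the definition of $\Spec_r A_U$ via localization, since localizing $A$ at $U$ and then restricting agrees with the structure sheaf of $A$ restricted to $U$; the content of the proposition is entirely the homeomorphism of underlying spaces. This parallels exactly the proof of the localization theorem for manifolds, with the role of Whitney's spectral theorem being to guarantee that the relevant ideals are closed so that the quotient description of $A$ behaves well.
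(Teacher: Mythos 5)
The paper does not prove this proposition; it is quoted verbatim from \cite{juan} (Proposition 3.2 there), so there is no in-paper argument to compare against. Your proof is correct and follows the standard route of that reference: the essential step, which you identify, is to use the preceding ``Closed subsets'' proposition to write $U$ as the non-vanishing locus of a single $a\in A$, after which the bijection between $U$ and $\Spec_r A_U$ falls out of the universal property of localization (every $s$ in the multiplicative system satisfies $\hat{s}(x)=\chi_x(s)\neq 0$ for $x\in U$, giving the extension, and $\hat{a}$ itself detects membership in $U$ for the converse containment), with continuity in both directions immediate from the definition of the Gelfand topologies. The closing remarks about the structure sheaf and Whitney's spectral theorem are extraneous to the statement as given, but harmless.
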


It is also important to note that for a differentiable algebra $A$ and an open subset $U\subset \Spec_r A$, the localization $A_U$ is again a differentiable algebra \cite[Thm.\ 3.7]{juan}.

Finally, the following result is essential when considering the spectrum of a differentiable algebra as a locally ringed space.

\begin{theorem}[Localization theorem for differentiable algebras] Let $A$ be a differentiable algebra and let $(\Spec_r A, \tilde{A})$ be its real spectrum. Then for any open subset $U\subset\Spec_r A$ we have that \[\tilde{A}(U)=A_U.\]
\end{theorem}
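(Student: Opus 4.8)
The plan is to prove the theorem by showing that the structure \emph{presheaf} $P\colon U\mapsto A_U$ on $\Spec_r A$ is in fact already a sheaf. Since $\tilde A$ is by construction the sheafification of $P$, and there is always a canonical map $A_U\to \tilde A(U)$, the equality $\tilde A(U)=A_U$ is equivalent to $P$ satisfying the sheaf axioms. So I would fix an open set $U\subset\Spec_r A$ together with an open cover $\{U_i\}_i$, and verify separation (a section $a\in A_U$ whose image in each $A_{U_i}$ is $0$ must be $0$) and gluing (sections $a_i\in A_{U_i}$ agreeing in every $A_{U_i\cap U_j}$ arise by restriction from a single $a\in A_U$).

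The key preliminary step is a reduction to the manifold $\R^n$. Writing $A=\CC(\R^n)/\a$ with $\a$ a closed ideal, the characters of $A$ are exactly the characters of $\CC(\R^n)$ annihilating $\a$; by Theorem \ref{thm-reconstructionM} these correspond to points of $\R^n$ at which every element of $\a$ vanishes, so $\Spec_r A$ is homeomorphic to the closed zero-set $Y=(\a)_0\subset\R^n$ and every open $U$ is a trace $U=V\cap Y$ with $V\subset\R^n$ open. The projection $\CC(\R^n)\to A$ then induces a surjection $\CC(V)\twoheadrightarrow A_U$ for a suitable $V$, where I have used the Localization theorem for manifolds (Theorem \ref{thm-localization-mfd}) to identify $\CC(\R^n)_V=\CC(V)$. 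The two tools I would rely on are precisely this manifold localization theorem, which tells me that the analogous presheaf $V\mapsto\CC(V)$ is a sheaf on $\R^n$, and the existence of smooth partitions of unity on $\R^n$; note that such a partition $\{\phi_i\}$ subordinate to $\{V_i\}$ descends to elements $\bar\phi_i\in A$ with $\sum_i\bar\phi_i=1$.

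With this in hand I would check the two axioms in the classical way. For gluing, I lift each $a_i$ to $b_i\in\CC(V_i)$ with $U_i=V_i\cap Y$, extend each $\phi_i b_i$ by zero to $V=\bigcup_i V_i$, and set $b:=\sum_i\phi_i b_i$, which lies in $\CC(V)$ by Theorem \ref{thm-localization-mfd}; its image $a\in A_U$ restricts on $U_i$ to $\sum_j\bar\phi_j a_j=\sum_j\bar\phi_j a_i=a_i$, using the overlap agreement $a_j=a_i$ on $U_i\cap U_j$ together with $\sum_j\bar\phi_j=1$. For separation, if $a=b/s\in A_U$ dies in every $A_{U_i}$, then for each $i$ there is $u_i\in A$ nonvanishing on $U_i$ with $u_i b=0$ in $A$; the element $u:=\sum_i\bar\phi_i u_i^2$ is then nonvanishing on $U$ (at each point some nonnegative summand is strictly positive) and satisfies $ub=0$, whence $a=0$ in $A_U$.

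The main obstacle is the bookkeeping with the closed ideal $\a$ under localization: I must ensure that lifting $a_i$ to $b_i$, gluing, and projecting back to $A_U$ is independent of the chosen lifts, i.e.\ that passing to the quotient by $\a$ commutes both with localizing and with extension-by-zero of $\phi_i b_i$. This is exactly what is controlled by the already-cited fact that each $A_U$ is again a differentiable algebra \cite[Thm.\ 3.7]{juan}, so that $A_U=\CC(V)/\bar\a_V$ for an appropriate closed ideal $\bar\a_V$, together with Whitney's spectral theorem, which identifies such closures jet-by-jet and guarantees that a function lying in the local ideal near every point of $U$ already lies in $\bar\a_V$. Once this compatibility is secured, the two partition-of-unity arguments above transfer verbatim from the manifold case, and the presheaf $P$ is a sheaf, giving $\tilde A(U)=A_U$.
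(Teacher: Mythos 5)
The paper itself offers no proof of this statement: it is quoted from \cite{juan}, with the proofs of all results in that subsection deferred to that reference. So there is no in-paper argument to compare against; what you propose is the standard route (and, as far as I can tell, the one taken in \cite{juan}): reduce to $Y=(\a)_0\subset\R^n$ via Proposition \ref{prop-affine-d-space}, and show that the presheaf $U\mapsto A_U$ is already a sheaf using partitions of unity together with Whitney's spectral theorem. Your separation argument is essentially complete and correct: the element $u=\sum_i\bar\phi_i u_i^{2}$ makes sense in $A$ because the sum is locally finite, it kills $b$ because each term $\phi_i\tilde u_i(\tilde u_ib)$ lies in $\a$ and a closed ideal is stable under locally finite sums, and it is nonvanishing on $U$ by your positivity argument.

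The gluing axiom is where the real content sits, and there your argument is incomplete at exactly the point you flag. The identity $a|_{U_i}=\sum_j\bar\phi_ja_j$ requires that the image in $A_{U_i}$ of the extension-by-zero of $\phi_jb_j$ equal $\bar\phi_ja_i$; since $a_j=a_i$ is only known in $A_{U_i\cap U_j}$, you must upgrade an identity on the overlap to one in $A_{U_i}$, and this local-to-global step is precisely what the theorem asserts. What makes it work is the explicit description
\[
\ker\bigl(\CC(V)\rmap A_U\bigr)=\{\,g\ |\ j_xg\in j_x(\a)\ \text{for all}\ x\in U\,\},
\]
i.e., membership in the kernel is a pointwise jet condition along $U$ (the condition at points of $V\setminus Y$ being vacuous, since $j_x(\a)$ is the whole jet ring there). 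This is strictly more than the cited fact that $A_U$ is \emph{some} differentiable algebra, and it is not a formal consequence of Whitney's theorem alone: the forward inclusion follows by applying Whitney to a relation $ug\in\a$ with $u$ nonvanishing on $U$, but the converse requires constructing $u\geq 0$ nonvanishing exactly on $V$ and flat on $\R^n\setminus V$, so that $j_x(ug)\in j_x(\a)$ at every point of $\R^n$ and hence $ug\in\a$ by closedness of $\a$. With this lemma in hand, your computation $a|_{U_i}=a_i$ goes through (each difference $\phi_j b_j-\phi_j b_i$ has jets in $j_x(\a)$ at every $x\in U_i$, either by flatness off $\operatorname{supp}\phi_j$ or by the overlap relation), and the proof closes. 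As written, however, this key compatibility is asserted rather than proved, and it is the heart of the theorem.
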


So we see that the essential properties that allow to reconstruct a manifold from its ring of smooth functions as the real spectrum still hold in the more general setting of differentiable algebras.

\begin{definition}\label{dfn-differentiable_space}%[Affine differentiable spaces]
An \textbf{affine differentiable space} is a locally ringed space $(X,\O_X)$ which is isomorphic to the real spectrum $(\Spec_r A, \tilde{A})$ of some differentiable algebra $A$. By the Localization theorem, $A$ must be isomorphic to $\O_X(X)$.

A \textbf{differentiable space} is a locally ringed space $(X,\O_X)$ for which every point $x$ of $X$ has a neighbourhood $U$ such that $(U,\O_{X|U})$ is an affine differentiable space.
Such opens are called \textbf{affine opens}.

\textbf {Morphisms of differentiable spaces} between $(X,\O_X)$ and $(Y,\O_Y)$ are defined to be the morphisms of locally ringed spaces between them (Definition \ref{dfn-locally_ringed_space}). Sections of the sheaf $\O_X$ over an open subset $U\subset X$ are called \textbf{differentiable functions} on $U$.
\end{definition}

\begin{example}\label{exm-cloded-reduced} \ \ 
\begin{itemize}
\item[1.] (Manifolds) Any smooth manifold $M$ is an example of an affine differentiable space. As discussed before, given a manifold $M$, its algebra of smooth functions $C^\infty(M)$ is a differentiable algebra; the manifold $(M,\C_M^\infty)$ is isomorphic, as a locally ringed space, to the real spectrum of $C^\infty(M)$.
\item[2.] (Open subsets) If $(X,\O_X)$ is an affine differentiable space and $U$ is an open subset of $X$ then $(U,\O_{X|U})$ is an affine differentiable space.
\item[3.] (Closed subsets) Let $(X,\O_X)$ be a differentiable space and $Y\subset X$ a closed subset. If $\mathcal{I}_Y$ is the sheaf of differentiable functions vanishing on $Y$, then $(Y,\O_X/\mathcal{I}_Y)$ is a differentiable space.
\end{itemize}
\end{example}

Affine differentiable spaces can be explicitly described, at least as a topological space, as follows.

\begin{lemma}
Let $I$ be an ideal of an $\R$-algebra $A$. Then there is a natural homeomorphism $$Spec_r(A/I)\cong(I)_0\subset \Spec_r A.$$
\end{lemma}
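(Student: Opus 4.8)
The plan is to construct a bijection between $\Spec_r(A/I)$ and the zero-set $(I)_0 \subset \Spec_r A$ directly from the algebraic structure, and then to verify that it is a homeomorphism for the Gelfand topologies. First I would set up the correspondence at the level of points. Recall that points of $\Spec_r B$ are precisely the $\R$-algebra characters $\chi \colon B \to \R$. Given the quotient projection $q \colon A \to A/I$, precomposition induces a map
\[
q^* \colon \Spec_r(A/I) \rmap \Spec_r A, \qquad \bar\chi \longmapsto \bar\chi \circ q,
\]
and I would show this map is injective (since $q$ is surjective, two characters on $A/I$ agreeing after pullback agree) and that its image is exactly $(I)_0$. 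The image lands in $(I)_0$ because for any $f \in I$ we have $(\bar\chi\circ q)(f) = \bar\chi(0) = 0$, so every character factoring through $q$ kills $I$; conversely, any character $\chi \colon A \to \R$ vanishing on $I$ (that is, $\chi \in (I)_0$) descends to a well-defined character $\bar\chi$ on $A/I$ by the universal property of the quotient, giving a preimage. Thus $q^*$ is a bijection onto $(I)_0$.

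Next I would check that this bijection is a homeomorphism. The key observation is that the function $\widehat{q(a)}$ on $\Spec_r(A/I)$ corresponds, under $q^*$, to the restriction to $(I)_0$ of the function $\hat a$ on $\Spec_r A$; indeed, for $\bar\chi \in \Spec_r(A/I)$ we have $\widehat{q(a)}(\bar\chi) = \bar\chi(q(a)) = (\bar\chi\circ q)(a) = \hat a(q^*\bar\chi)$. Since the Gelfand topology on each spectrum is by definition the coarsest topology making all the functions $\hat b$ continuous, and since every element of $A/I$ has the form $q(a)$ for some $a \in A$ (surjectivity of $q$), the pullbacks of the generating functions on $\Spec_r(A/I)$ are exactly the restrictions of the generating functions on $\Spec_r A$. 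A standard argument then shows that the Gelfand topology on $\Spec_r(A/I)$ coincides with the subspace topology that $(I)_0$ inherits from the Gelfand topology on $\Spec_r A$, so $q^*$ is a homeomorphism onto $(I)_0$.

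I expect the main obstacle to be purely bookkeeping around the descent of characters: one must confirm that a character $\chi$ vanishing on $I$ genuinely induces a well-defined \emph{$\R$-algebra} character on $A/I$ (not merely a linear map), which follows because $I = \ker q$ is contained in $\ker\chi$, and that this descent is inverse to $q^*$. The topological half is essentially formal once the identity $\widehat{q(a)} = \hat a \circ q^*$ is in hand, since comparing the coarsest topologies generated by matching families of functions is routine. No appeal to the differentiable structure of $A$ is needed here — the lemma holds for an arbitrary $\R$-algebra $A$ and arbitrary ideal $I$, which is why it is stated in that generality and serves as the set-theoretic backbone for the description of affine differentiable spaces as zero-sets.
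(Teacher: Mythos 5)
Your proof is correct. The paper states this lemma without proof (it is quoted from the theory of differentiable spaces in the reference \cite{juan}), and your argument — pulling back characters along the quotient $q\colon A\to A/I$, identifying the image with $(I)_0$ via the universal property, and matching the Gelfand topologies using $\widehat{q(a)}=\hat a\circ q^*$ together with the fact that the subspace topology of an initial topology is the initial topology of the restricted family — is exactly the standard argument the authors intend, valid for an arbitrary $\R$-algebra and ideal.
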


\begin{proposition}[Structure of affine spaces - Proposition 2.13 in \cite{juan}]\label{prop-affine-d-space} Let $A$ be an algebra of the form $A=\CR{n}/\a$, for any ideal $\a\subset \CR{n}$. Then there is a homeomorphism \[\Spec_r A=(\a)_0\subset \R^n.\]
\end{proposition}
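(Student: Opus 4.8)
The plan is to obtain the homeomorphism as a composition of the two results immediately preceding this proposition, so that essentially no new work is required beyond one compatibility check.

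First I would apply the previous lemma with the $\R$-algebra taken to be $\CR{n}$ and the ideal taken to be $\a$. Since $A=\CR{n}/\a$ by hypothesis, this produces a natural homeomorphism
\[\Spec_r A=\Spec_r(\CR{n}/\a)\ \cong\ (\a)_0\subset \Spec_r \CR{n},\]
where $(\a)_0$ here denotes the zero-set of $\a$ computed inside the spectrum $\Spec_r \CR{n}$. Concretely, this homeomorphism is induced by pulling back characters along the quotient map $\CR{n}\to \CR{n}/\a$: a character of $\CR{n}/\a$ corresponds precisely to a character of $\CR{n}$ that kills $\a$, i.e. to a point of $(\a)_0$.

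Next I would invoke Theorem \ref{thm-reconstructionM} in the special case $M=\R^n$, which gives a homeomorphism $\chi:\R^n\to \Spec_r \CR{n}$, $x\mapsto ev_x$, between $\R^n$ and the real spectrum of its algebra of smooth functions (recall that for a manifold the Gelfand and Zariski topologies agree, so there is no ambiguity about which topology is meant). The one point that must be verified is that, under $\chi$, the spectral zero-set $(\a)_0\subset \Spec_r \CR{n}$ is carried exactly onto the classical zero-set $\{x\in\R^n\ |\ f(x)=0\ \ \forall f\in\a\}$. This is immediate from the definitions: if $p=\chi(x)=ev_x$ then $\hat f(p)=\chi_p(f)=f(x)$ for every $f\in\CR{n}$, so $p\in(\a)_0$ if and only if $f(x)=0$ for all $f\in\a$. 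Since $\chi$ is a homeomorphism it restricts to a homeomorphism between these two subspaces for their subspace topologies.

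Composing the two homeomorphisms yields $\Spec_r A\cong (\a)_0\subset\R^n$, as claimed. There is no serious obstacle here: all the substance is contained in the previous lemma and in Theorem \ref{thm-reconstructionM}, and the only genuine verification — that the spectral zero-set matches the naive zero-set under the evaluation identification — is the short computation above.
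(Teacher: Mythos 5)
Your argument is correct and is precisely the route the paper sets up: it states the lemma $\Spec_r(A/I)\cong (I)_0\subset \Spec_r A$ immediately before this proposition (which it otherwise just cites from \cite{juan}), and composing that with Theorem \ref{thm-reconstructionM} for $M=\R^n$ is the intended derivation. Your verification that the spectral zero-set matches the naive zero-set under the evaluation homeomorphism is the only point needing checking, and you do it correctly; note also that nothing here requires $\a$ to be closed, consistent with the statement.
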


As mentioned in the previous section, morphisms of differentiable spaces $M\to N$ between smooth manifolds are simply smooth maps, and these correspond to algebra maps  $C^\infty(N)\to C^\infty(M)$. A similar result is valid for the more general setting of affine differentiable spaces.

\begin{theorem}[Morphisms into affine spaces - Theorem 3.18 in \cite{juan}] If $(X,\O_X)$ is a differentiable space and $(Y,\O_Y)$ is an affine differentiable space, then \[\Hom(X,Y)\cong \Hom_{\R-\mathrm{alg}}(\O_Y(Y),\O_X(X)),\ \ (\phi,\phi^\sharp)\mapsto \phi^\sharp.\]
\end{theorem}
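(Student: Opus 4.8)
The plan is to prove the bijection $\Hom(X,Y)\cong \Hom_{\R\text{-alg}}(\O_Y(Y),\O_X(X))$ by constructing an explicit inverse to the map $(\phi,\phi^\sharp)\mapsto \phi^\sharp$ that sends a morphism to its action on global sections. First I would observe that the forward map is well-defined: a morphism of differentiable spaces is in particular a morphism of locally ringed spaces, so it induces a morphism of sheaves $\O_Y\to\phi_*\O_X$, and taking global sections gives an $\R$-algebra homomorphism $\phi^\sharp\colon\O_Y(Y)\to\O_X(X)$. The whole content is constructing, from an arbitrary algebra map $\psi\colon\O_Y(Y)\to\O_X(X)$, a unique morphism of locally ringed spaces realising it.

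The key reduction is that $(Y,\O_Y)$ is \emph{affine}, so by the Localization theorem $\O_Y(Y)\cong A$ is a differentiable algebra and $Y\cong\Spec_r A$ as a locally ringed space. This lets me recover points of $Y$ as characters of $A$. To build the continuous map $\phi\colon X\to Y$ I would proceed locally: cover $X$ by affine opens $(U_i,\O_{X|U_i})$ with $\O_X(U_i)\cong B_i$ differentiable, so $U_i\cong\Spec_r B_i$. For a point $x\in U_i$, evaluation at $x$ is a character $\mathrm{ev}_x\colon\O_X(X)\to\R$ (using that the stalk at $x$ has residue field $\R$), and composing with $\psi$ gives a character $\mathrm{ev}_x\circ\psi\colon A\to\R$, hence a point of $\Spec_r A\cong Y$. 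Define $\phi(x)$ to be this point. Continuity follows because the Gelfand topology on $Y=\Spec_r A$ is generated by the functions $\hat a$, and $\hat a(\phi(x))=\psi(a)(x)$, so $\phi^{-1}$ of a basic Gelfand-open is the locus where the global section $\psi(a)\in\O_X(X)$ lands in an open of $\R$, which is open. The sheaf map $\phi^\sharp\colon\O_Y\to\phi_*\O_X$ is then obtained on the affine $Y$ by localizing $\psi$: since $\O_Y(V)=A_V$ for open $V\subset Y$ and $\psi$ carries the multiplicative system of functions nonvanishing on $V$ to functions nonvanishing on $\phi^{-1}(V)$, the universal property of localization yields a compatible map $A_V\to\O_X(\phi^{-1}(V))$, and these glue into a morphism of sheaves inducing $\psi$ on global sections.

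I would then verify the two roundtrips. Starting from an algebra map $\psi$ and running the construction gives a morphism whose action on global sections is $\psi$ by design, so one composite is the identity. For the other composite, starting from a morphism $(\phi,\phi^\sharp)$ I must check that the reconstructed map agrees with the original; this is where the locally-ringed condition $\phi^\sharp(\m_y)\subset\m_{\phi(x)}$ is essential, because it forces $\mathrm{ev}_x\circ\phi^\sharp=\mathrm{ev}_{\phi(x)}$ as characters of $A$, pinning down $\phi$ from $\phi^\sharp$ on points, and then the identity of sheaf maps follows from the Localization theorem once they agree on global sections over affine opens.

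The main obstacle I expect is verifying that the candidate $\phi^\sharp$ genuinely lands in $\phi_*\O_X$ and respects the maximal ideals, i.e.\ that the pointwise recipe $\phi(x)=\mathrm{ev}_x\circ\psi$ is compatible with the sheaf-level localization so that $(\phi,\phi^\sharp)$ is a bona fide morphism of \emph{locally} ringed spaces rather than merely a continuous map plus an algebra map on global sections. Concretely, one must confirm that a global section $\psi(a)$ vanishes at $x$ exactly when $a$ vanishes at $\phi(x)$, which is precisely the condition $(\phi^\sharp f)(x)=0\Leftrightarrow f(\phi(x))=0$ from Definition \ref{dfn-locally_ringed_space}; this uses that the residue field of every stalk is $\R$, a property guaranteed for differentiable algebras by the reconstruction results for their real spectra. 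Once this compatibility is secured, gluing over the affine cover of $X$ and invoking the sheaf axioms finishes the argument, with the affineness of $Y$ doing the heavy lifting throughout.
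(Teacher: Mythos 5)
The paper does not actually prove this statement---it is quoted verbatim as Theorem 3.18 of \cite{juan}---so there is no in-paper argument to compare against. Your proof is correct and is essentially the standard one given in that reference: reduce to $Y\cong\Spec_r\O_Y(Y)$ via affineness, recover $\phi$ pointwise from the characters $\mathrm{ev}_x\circ\psi$, and localize $\psi$ to obtain the sheaf map; the only step worth making explicit is the lemma that a section of $\O_X$ over an open $U$ that vanishes nowhere on $U$ is a unit in $\O_X(U)$ (true for any locally ringed space with residue fields $\R$, by inverting in each stalk and gluing the inverses), since that is what licenses your appeal to the universal property of localization.
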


As a particular case of this result, we obtain a characterization of morphisms from a differentiable space to an Euclidean space.

\begin{corollary}[Morphisms into Euclidean space] If $(X,\O_X)$ is a differentiable space, then we have an isomorphism \[\Hom(X,\R^n)\cong \bigoplus_{i=1}^n \O_X(X),\ \ (\phi,\phi^\sharp)\mapsto (\phi^\sharp(x_1),\ldots,\phi^\sharp(x_n)). \]
\end{corollary}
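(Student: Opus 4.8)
The plan is to deduce the corollary directly from the preceding theorem on morphisms into affine spaces, specialised to the target $Y=\R^n$. First I would note that $\R^n$, being a smooth manifold, is an affine differentiable space whose algebra of global sections is $\O_{\R^n}(\R^n)=\CR{n}$. Feeding $Y=\R^n$ into that theorem yields a natural bijection
\[
\Hom(X,\R^n)\;\cong\;\Hom_{\R-\mathrm{alg}}(\CR{n},\O_X(X)),
\]
under which a morphism $(\phi,\phi^\sharp)$ corresponds to the $\R$-algebra homomorphism $\phi^\sharp\colon\CR{n}\to\O_X(X)$. The whole problem therefore reduces to identifying this set of algebra homomorphisms with $\bigoplus_{i=1}^n\O_X(X)=\O_X(X)^n$.

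I would produce that identification through the evaluation-on-coordinates map $\psi\mapsto(\psi(x_1),\dots,\psi(x_n))$, where $x_1,\dots,x_n\in\CR{n}$ are the coordinate functions. Composing it with the bijection above gives precisely the assignment $(\phi,\phi^\sharp)\mapsto(\phi^\sharp(x_1),\dots,\phi^\sharp(x_n))$ in the statement, so it suffices to check that evaluation on coordinates is a bijection $\Hom_{\R-\mathrm{alg}}(\CR{n},\O_X(X))\to\O_X(X)^n$. Injectivity amounts to saying that a homomorphism out of $\CR{n}$ is determined by the images of the coordinates, and surjectivity to saying that every $n$-tuple of global sections is realised by some homomorphism.

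The hard part will be justifying this last bijection, which is exactly the universal (free) property of $\CR{n}$, and the delicate point is that $x_1,\dots,x_n$ do \emph{not} generate $\CR{n}$ as an ordinary $\R$-algebra (e.g.\ $e^{x_1}$ is not a polynomial in the coordinates), so neither direction is formal. The resolution I would invoke is the characteristic feature of differentiable algebras underlying Definition \ref{dfn-diff-alg}: an $\R$-algebra homomorphism between them automatically respects composition with smooth functions (it is a morphism of $C^\infty$-rings). Granting this, any such $\psi$ must obey $\psi(f)=f(\psi(x_1),\dots,\psi(x_n))$ for all $f\in\CR{n}$, which forces uniqueness, while for a prescribed tuple $(a_1,\dots,a_n)\in\O_X(X)^n$ the same formula---evaluated using the $C^\infty$-ring operations carried by $\O_X(X)$, which are defined sectionwise on affine opens where $\O_X$ takes values in differentiable algebras---defines a homomorphism hitting that tuple. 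I would accordingly structure the write-up so that this free property of $\CR{n}$ is isolated as the single nontrivial ingredient, with everything else following formally from the affine theorem.
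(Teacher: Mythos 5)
Your proposal is correct and follows essentially the same route as the paper, which states this corollary without further argument as the special case $Y=\R^n$ of the theorem on morphisms into affine spaces. You correctly identify the one nontrivial point left implicit there — that an $\R$-algebra homomorphism out of $\CR{n}$ automatically respects composition with smooth functions, so that $\CR{n}$ is free on the coordinates and $\Hom_{\R-\mathrm{alg}}(\CR{n},\O_X(X))\cong\O_X(X)^n$ — which is exactly the ingredient supplied by the theory of differentiable algebras in \cite{juan}.
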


We have seen in the general discussion about the real spectrum of an algebra $A$ that any element $f\in A$ can be seen as a continuous function $\hat{f}$ on $\Spec_r A$. We now focus on the case in which the assignment $f\mapsto \hat{f}$ is injective.

\begin{definition} A differentiable space $(X,\O_X)$ is said to be \textbf{reduced} if for any open subset $U$ of $X$ and any $f\in\O_X(U)$ we have \[f=0\Leftrightarrow \hat{f}(x)=0,\ \forall x\in X.\]
\end{definition}

Being reduced is a local condition: If every point of $x$ has a reduced open neighbourhood then $X$ is reduced.
By definition, if $(X,\O_X)$ is a reduced differentiable space, then the map $\O_X(U)\to C(U)$, $f\mapsto\hat{f}$ is injective for any open $U$; hence $(X,\O_X)$ is a reduced ringed space. 

Let $(\phi,\phi^\sharp)$ be a morphism of differentiable spaces (Definition \ref{dfn-differentiable_space}) between reduced differentiable spaces. Then it can be checked (cf. \cite{juan}) that $(\phi,\phi^\sharp)$ is a morphism of reduced ringed spaces (Definition \ref{dfn-locally_ringed_space}).

\begin{example} Smooth manifolds are reduced differentiable spaces. Differentiable spaces of the form $(Y,\O_X/\mathcal{I}_Y)$, where $Y$ is a closed subset of a differentiable space $X$, as in Example \ref{exm-cloded-reduced} are also reduced.
\end{example}

We now look at an explicit description of affine reduced differentiable spaces. 

\begin{definition} Let $Y$ be a topological subspace of $\R^n$. A \textbf{smooth function on $Y$} is a continuous function $f:Y\to \R$ for which every point $y\in Y$ has an open neighbourhood $U_y$ in $\R^n$ such that $f$ coincides on $Y\cap U_y$ with the restriction of a smooth function on $U_y$. The smooth functions on $Y$ form the ring $C^\infty(Y)$. Denote by $\C^\infty_Y$ the sheaf of continuous functions on $Y$ defined by $\C^\infty_Y(V):=\CC(V)$, for each open subset $V\subset Y$.
\end{definition}

\begin{proposition}[Structure of reduced affine spaces - \cite{juan}, Prop.\ 3.22]\label{prop-reduced-affine}
Let $A=\CR{n}/\a$ be a differentiable algebra. If the affine differentiable space $Y=\Spec_r A$ is reduced, then $\O_Y=\C^\infty_Y$.
\end{proposition}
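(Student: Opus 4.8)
The plan is to identify $Y$ with the closed subset $(\a)_0\subseteq\R^n$ via Proposition \ref{prop-affine-d-space}, and then to compare both sheaves as subsheaves of the sheaf $\C_Y$ of continuous functions on $Y$. Under this identification an element $[g]\in A=\CR{n}/\a$ has associated function $\widehat{[g]}=g_{|Y}$, the restriction of $g$ to $Y$. Since $Y$ is reduced, the map $\O_Y(U)\to\C_Y(U)$, $f\mapsto\hat f$, is injective for every open $U$, so $\O_Y$ sits inside $\C_Y$; by definition $\C^\infty_Y$ does too. Because being reduced is a local condition and, by the Localization theorem for differentiable algebras, $\O_Y(U)=A_U$ is again a differentiable algebra with $U\cong\Spec_r A_U$, every open $U\subseteq Y$ is itself an affine reduced differentiable space. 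Hence it suffices to prove the equality on a basis of opens; I will treat global sections first, since the key extension lemma lives there, and then handle general opens by a localization argument.

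For global sections, reducedness says precisely that $[g]\mapsto g_{|Y}$ is injective, i.e. $g_{|Y}=0$ forces $g\in\a$; this means $\mathfrak{p}_Y\subseteq\a$, where $\mathfrak{p}_Y=\bigcap_{x\in Y}\m_x$ is the ideal of smooth functions vanishing on $Y$. Conversely every element of $\a$ vanishes on $(\a)_0=Y$, so $\a\subseteq\mathfrak{p}_Y$, and therefore $\a=\mathfrak{p}_Y$. This gives a canonical identification $\O_Y(Y)=A=\CR{n}/\mathfrak{p}_Y\cong\{g_{|Y}\ |\ g\in\CR{n}\}$: the global sections of $\O_Y$ are exactly the restrictions to $Y$ of globally smooth functions on $\R^n$.

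It remains to identify these restrictions with $\CC(Y)=\C^\infty_Y(Y)$, and this is where I expect the main difficulty. The inclusion $\{g_{|Y}\}\subseteq\CC(Y)$ is immediate. For the reverse inclusion, given $\phi\in\CC(Y)$, each point of the closed set $Y$ has an open neighbourhood $W_i\subseteq\R^n$ and a smooth $g_i\in\CC(W_i)$ with $(g_i)_{|Y\cap W_i}=\phi_{|Y\cap W_i}$; choosing a smooth partition of unity on $\R^n$ subordinate to the open cover $\{W_i\}\cup\{\R^n\setminus Y\}$ and gluing the $g_i$ (extending by $0$ off $Y$) produces a single $g\in\CR{n}$ with $g_{|Y}=\phi$. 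This partition-of-unity argument, which crucially uses that $Y$ is closed in $\R^n$, yields $\CC(Y)=\{g_{|Y}\ |\ g\in\CR{n}\}$ and hence $\O_Y(Y)=\C^\infty_Y(Y)$.

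Finally I would upgrade the statement from global sections to an equality of sheaves. For an open $U\subseteq Y$ one inclusion is clear: a fraction $f/h$ with $f,h\in A$ and $\hat h$ nowhere zero on $U$ restricts to a smooth function on $U$, so $\O_Y(U)=A_U\subseteq\CC(U)$. For the reverse inclusion I would mimic the proof of the Localization theorem for manifolds (Theorem \ref{thm-localization-mfd}): writing $U=Y\cap W$ with $W$ open in $\R^n$, so that $U$ is closed in $W$, a smooth $\phi$ on $U$ extends by the previous paragraph (applied in the manifold $W$) to a smooth function on $W$; multiplying by a suitable global function $h$ that is positive on $U$ and flat enough outside $W$ makes the product extend to some $g\in\CR{n}$, and then $\phi=g/h$ on $U$ exhibits $\phi\in A_U$. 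Equality on this basis of opens then propagates to all opens by the sheaf axioms, giving $\O_Y=\C^\infty_Y$. The genuinely nontrivial input throughout is the extension and gluing lemma for smooth functions on closed subsets of $\R^n$; everything else is bookkeeping resting on reducedness and the Localization theorem.
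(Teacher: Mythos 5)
Your argument is correct. Note that the paper itself does not prove this proposition --- it is quoted from \cite{juan} (Prop.\ 3.22) with the proofs deferred to that reference --- so there is no in-paper proof to compare against; but your proof is complete and follows the standard route: reducedness at the level of global sections forces $\a=\mathfrak{p}_Y$, a partition-of-unity gluing of local smooth extensions (valid because $Y=(\a)_0$ is closed in $\R^n$) identifies $\CC(Y)$ with restrictions of global smooth functions, and the classical Localization Theorem (applied to $W\subset\R^n$ with $U=Y\cap W$ closed in $W$) upgrades this to the identification $A_U=\CC(U)$ on every open, with injectivity of $A_U\to\C_Y(U)$ supplied by reducedness. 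Two cosmetic remarks: the auxiliary function $h$ in the last step should be taken nowhere vanishing on all of $W$ (not just on $U$) and flat on $\partial W$, which is exactly what Theorem \ref{thm-localization-mfd} provides, so you could simply invoke that theorem for $\tilde\phi\in\CC(W)$ rather than re-sketching its proof; and the preliminary reduction ``to a basis of opens'' via affineness of $A_U$ is unnecessary, since your direct argument already treats an arbitrary open $U=Y\cap W$.
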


\begin{remark} We knew already from Proposition \ref{prop-affine-d-space} that in the conditions of the previous proposition, $Y\cong(\mathfrak{a})_0$ as a topological space, because $A=\CR{n}/\a$ is affine. The new information we get from knowing that $Y$ is reduced is the characterization of the structure sheaf. In conclusion, reduced differentiable spaces are reduced ringed spaces which are locally isomorphic to $(Z,\C^\infty_Z)$, for some closed subset $Z$ of $\R^n$. 
\end{remark}

We now discuss the Embedding theorem for differentiable spaces, which characterizes which differentiable spaces can be embedded into an affine space $\R^n$. Before stating the theorem we discuss subspaces and embeddings of differentiable spaces.

\begin{definition}
Let $(X,\O_X)$ be a differentiable space and let $Y\subset X$ be a locally closed subspace. Let $\I$ be a sheaf of ideals of $\O_{X|Y}$ and set $\O_X/\I:=(\O_{X|Y})/\I$.

We say that $(Y,\O_X/\I)$ is a \textbf{differentiable subspace} of $(\O_X,X)$ if it is a differentiable space.
It is said to be an \textbf{open} differentiable subspace if $Y$ is open in $X$ and $\I=0$. It is said to be a \textbf{closed} differentiable subspace if $Y$ is closed in $X$.
\end{definition}

\begin{definition} 
An \textbf{embedding} of differentiable spaces is a morphism of differentiable spaces $(\phi,\phi^\sharp):(Y,\O_Y)\rmap (X,\O_X)$ such that \begin{enumerate}\item[1.] $\phi:Y\rmap X$ induces a homeomorphism of $Y$ onto a locally closed subspace of $X$ \item[2.]$\phi^\sharp:\phi^*\O_X\rmap \O_Y$ is surjective.\end{enumerate} It is called a \textbf{closed embedding} if additionally $\phi(Y)$ is closed in $X$.
\end{definition}

%We already know from Proposition \ref{prop-affine-d-space} that any affine differentiable space is homeomorphic to a subspace of some $\R^n$; the next result ensures that the embedding respects the differentiable structure.

%\begin{proposition}[Closed subspaces - Proposition 5.6 in \cite{juan}]
%Let $I$ be a closed ideal of a differentiable algebra $A$. Then $\Spec_r(A/I)$ is a closed differentiable subspace of $\Spec_r A$. Conversely, any closed differentiable subspace of $\Spec_r A$ is defined by a unique closed ideal.
%\end{proposition}

%\begin{corollary}\label{cor-local-str-diff-space}Any affine differentiable space is isomorphic to some closed differentiable subspace of an affine space $\R^n$.
%\end{corollary}

\begin{definition} Let $p$ be a point of a differentiable space $(X,\O_X)$ and let $\m_p$ be the unique maximal ideal of $\O_{X,p}$. The \textbf{tangent space of $X$ at $p$} is defined as: \[T_pX=Der(\O_{X,p}\, ;\, \O_{X,p}/\m_p).\] The dimension of $T_pX$ is called the \textbf{embedding dimension} of $X$ at $p$.
\end{definition}

\begin{theorem}[Embedding theorem for differentiable spaces - cf. \cite{juan}]\label{embedding} A differentiable space is affine if and only if it is Hausdorff, second-countable, and has bounded embedding dimension.
\end{theorem}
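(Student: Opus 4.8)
The statement is an equivalence, so I would treat the two implications separately, spending almost all the effort on the harder one.

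For the easy implication, suppose $(X,\O_X)$ is affine, say $(X,\O_X)\cong(\Spec_r A,\tilde A)$ with $A=\CR{n}/\a$ a differentiable algebra. By Proposition \ref{prop-affine-d-space}, $X$ is homeomorphic to the closed subset $(\a)_0\subset\R^n$, and closed subsets of $\R^n$ are Hausdorff and second-countable, giving the first two properties for free. For the third, the inclusion $(\a)_0\hookrightarrow\R^n$ underlies an embedding of differentiable spaces, so at every point $p$ the induced map on tangent spaces $T_pX\rmap T_p\R^n$ is injective; hence the embedding dimension is at most $n$ everywhere, in particular bounded.

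For the substantial implication, assume $X$ is Hausdorff, second-countable, and has embedding dimension bounded by some $d\in\mathbb{N}$. The plan is to realise $X$ as a closed differentiable subspace of some $\R^N$: once $\Phi\colon X\rmap\R^N$ is a closed embedding, its image $Y=\Phi(X)$ is closed in $\R^N$, the global sections $\O_X(X)$ are a differentiable-algebra quotient of $\CR{N}$, and by the Localization theorem $(X,\O_X)\cong(\Spec_r\O_X(X),\widetilde{\O_X(X)})$ is affine (cf.\ the example following Definition \ref{dfn-diff-alg}). First I would note that affine differentiable spaces are locally compact, being closed subsets of Euclidean space, so $X$ is locally compact; together with Hausdorff and second-countable this yields paracompactness and $\sigma$-compactness, hence smooth partitions of unity and a proper function $\tau\colon X\rmap\R$. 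Next I would produce \emph{uniform} local embeddings: at a point $p$ with $\dim T_pX=k\le d$, choosing $f_1,\dots,f_k\in\O_X(U)$ whose classes span the cotangent space $\m_p/\m_p^2$ gives, on a small affine open $U_p$, an embedding $\phi_p=(f_1,\dots,f_k)\colon U_p\rmap\R^d$ (a standard local statement from \cite{juan}). Paracompactness then furnishes a countable locally finite cover $\{U_i\}$ by such affine opens, with embeddings $\phi_i\colon U_i\rmap\R^d$ and a subordinate partition of unity $\{\theta_i\}$.

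The core step is to glue the $\phi_i$ into a single proper embedding into a \emph{finite}-dimensional $\R^N$. Combining the functions $\theta_i\phi_i$ (extended by zero) together with the $\theta_i$ separates points and is locally an embedding, but a priori only into an infinite product. To land in a fixed $\R^N$ I would use the Whitney folding trick adapted to this setting: use $\tau$ as an extra coordinate to separate level sets, and use the bound $d$ to control multiplicities, so that the index set splits into finitely many families of opens with pairwise disjoint supports whose charts can be superimposed. This produces $\Phi\colon X\rmap\R^N$ for some finite $N$ (one may aim for $N\sim 2d+1$, but any finite value suffices) that is injective, a homeomorphism onto its image, proper since $\tau$ is a component, and with $\Phi^\sharp$ surjective; properness then forces $\Phi(X)$ to be closed, completing the argument.

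The hard part will be exactly this gluing. Two issues go beyond the purely topological Whitney argument. First, one must keep the target dimension \emph{finite}, and this is precisely where the bound on the embedding dimension (not merely the finite-dimensionality of each individual chart) is indispensable, through a bounded-multiplicity/folding argument organised by the proper function $\tau$. Second, "embedding" here is a condition on structure sheaves, so I must verify that $\Phi^\sharp$ is surjective, i.e.\ that the pulled-back coordinate functions generate $\O_X$ locally; this should follow because on each affine chart $\O_X|_{U_i}$ is a quotient of $\CR{d}$ in which the chosen coordinates generate, and the partition of unity transports these generators to global functions. A final bit of bookkeeping, which I expect to be routine via Whitney's spectral theorem, is checking that the kernel of $\CR{N}\twoheadrightarrow\O_X(X)$ is a closed ideal, so that $\O_X(X)$ is genuinely a differentiable algebra.
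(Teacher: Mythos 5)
The paper itself offers no proof of this theorem: it is imported from \cite{juan}, so the only meaningful comparison is with the argument given there, and your outline does follow the same architecture. Your easy direction is complete: by Proposition \ref{prop-affine-d-space} an affine space is homeomorphic to a closed subset $(\a)_0\subset\R^n$, hence Hausdorff and second-countable, and since $\O_{X,p}$ is a quotient of $C^\infty_p(\R^n)$ every derivation of $\O_{X,p}$ lifts to one of $C^\infty_p(\R^n)$, so $T_pX$ injects into $T_p\R^n$ and the embedding dimension is at most $n$.

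For the converse, your plan (local embeddings $(f_1,\dots,f_k)\colon U_p\rmap\R^d$ built from generators of $\m_p/\m_p^2$, a proper function $\tau$, a partition of unity, then a closed embedding into a finite-dimensional $\R^N$) is the right one, but the step you yourself label ``the core step'' is asserted rather than carried out, and it is precisely where the hypothesis of bounded embedding dimension does its work; as written, nothing excludes needing infinitely many coordinates. The missing mechanism is a bounded-multiplicity refinement: because each point has a neighbourhood embedding into $\R^d$, the covering dimension of $X$ is at most $d$, so the locally finite cover $\{U_i\}$ can be refined to one of multiplicity at most $d+1$ whose nerve is coloured by finitely many colours; within each colour class the supports are pairwise disjoint, so the maps $(\theta_i\phi_i,\theta_i)$ of one class superimpose into a single map to $\R^{d+1}$, and adjoining $\tau$ yields an injective proper map into a fixed $\R^N$. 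Until that refinement-and-colouring argument is spelled out, the finiteness of $N$ is unproved. Two secondary points also need a word each: surjectivity of $\Phi^\sharp$ on \emph{global} sections (so that $\O_X(X)$ is a quotient of $\CR{N}$ and the Localization theorem applies to exhibit $X$ as $\Spec_r\O_X(X)$) does not follow from stalkwise surjectivity alone and requires the partitions of unity you constructed; and the closedness of the kernel ideal, which you defer to Whitney's spectral theorem, is genuinely needed for $\O_X(X)$ to be a differentiable algebra and deserves more than a remark. None of these is a wrong turn --- the skeleton matches the proof in \cite{juan} --- but the finite-dimensional gluing is a real gap in the write-up rather than routine bookkeeping.
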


\subsection{Orbispaces as differentiable spaces}

We look at how orbit spaces of proper groupoids can be seen as differentiable spaces. We start by discussing the smooth structure on orbit spaces of representations of compact Lie groups.
%\subsubsection{Orbit spaces of representations of compact Lie groups}
Let $G$ be a compact Lie group and let $V$ be a representation of $G$.

\begin{definition}\label{dfn-functions-on-orbit-space-of-V} The \textbf{algebra of smooth functions on $V/G$} is defined as 
\[ C^{\infty}(V/G):= \{ f: V/G \rightarrow \mathbb{R} \ |\ f\circ \pi \in C^{\infty}(V)\},\] where $\pi:V\rmap V/G$ denotes the canonical projection map. 

The \textbf{sheaf of smooth functions on $V/G$}, denoted by $\C_{V/G}^\infty$, is defined by letting \[\C^\infty_{V/G}(U):=C^{\infty}(\pi^{-1}(U)/G).\]
\end{definition}

It is natural to identify the algebra of smooth functions on the orbit space, $C^\infty(V/G)$, with the algebra of $G$-invariant smooth functions on $V$, via the pullback map $\pi^*$. 

We now explain how $V/G$ can be seen as an affine differentiable space.
The first step in this direction is given by the following classical result of Schwarz \cite{Schwarz}.

\begin{theorem}[Schwarz]\label{thm-schwarz} Let $G$ be a compact Lie group and $V$ a representation of $G$. Let $p_1,\ldots,p_k$ be generators of the algebra of invariant polynomials $\mathbb{R}[V]^G$. Then $p:V\to \R^k$ defined by $p=(p_1,\ldots,p_k)$ induces an isomorphism \[p^*C^\infty(\R^k)\cong C^\infty(V)^G.\]
\end{theorem}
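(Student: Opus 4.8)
The plan is to prove the two inclusions separately, the substance being surjectivity of $p^*$ onto the invariants. First I would dispose of the easy direction and set up the geometry. Since each $p_i$ lies in $\mathbb{R}[V]^G$, any composite $f\circ p$ with $f\in C^\infty(\R^k)$ is $G$-invariant, so $p^*$ does land in $C^\infty(V)^G$. Next I would record two geometric facts about $p$ that hold because $G$ is compact and the $p_i$ generate $\mathbb{R}[V]^G$: (i) $p$ separates orbits, i.e.\ $p(v)=p(w)$ forces $v,w$ into the same $G$-orbit (given two disjoint orbits, approximate a separating function by a polynomial and average it over $G$ to obtain an invariant polynomial taking different values on the two orbits; by the generation hypothesis it is a polynomial in the $p_i$); and (ii) $p$ is proper, since each fibre is an orbit, hence compact, and $p^{-1}(K)$ is bounded because $\lvert v\rvert^2\in\mathbb{R}[V]^G$ is itself a polynomial in the $p_i$ and so is bounded on $p^{-1}(K)$. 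Thus $p$ descends to a homeomorphism of $V/G$ onto the closed semialgebraic image $Z:=p(V)\subset\R^k$, and the theorem reduces to showing that every $h\in C^\infty(V)^G$ can be written $h=f\circ p$ for some $f\in C^\infty(\R^k)$.

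To produce such an $f$, the main tool on the invariant side is the Reynolds (averaging) operator
\[
R(h)(v)=\int_G h(g\cdot v)\,dg,
\]
built from the normalized Haar measure on $G$; after averaging an inner product so that $G$ acts orthogonally, $R$ is a projection of $C^\infty(V)$ onto $C^\infty(V)^G$ which commutes with taking jets. Using $R$ one reduces every construction to an invariant one, and in particular the factorization already holds at the level of formal Taylor expansions: the $p_i$ may be taken homogeneous, and then, working degree by degree, each invariant homogeneous polynomial is a polynomial in the $p_i$, so the formal pullback $\mathbb{R}[[\R^k]]\to\mathbb{R}[[V]]^G$ is surjective and the $\infty$-jet of $h$ at every point factors through the jet of $p$.

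The hard part, and the genuine analytic heart of the theorem, is to upgrade this formal factorization to an honest smooth one. Concretely, I would assemble from the jets of $h$ a coherent family of jets (a Whitney field) on the closed set $Z=p(V)$ and then invoke Whitney's extension theorem to obtain $f\in C^\infty(\R^k)$ restricting to that field; one then checks that $f\circ p$ and $h$ have the same infinite jet everywhere and concludes $f\circ p=h$. I expect the crux to be the verification that the candidate jets satisfy Whitney's compatibility conditions (the correct decay of Taylor remainders) across the singular strata of $Z$, rather than any of the algebraic preliminaries. This is where one needs quantitative control of $p$, supplied by a Lojasiewicz-type inequality comparing the distance to an orbit with the values of the invariants, together with the Malgrange (or Tougeron) preparation and division theorems to guarantee that the relevant ideals of smooth functions are closed. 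I would organize this estimate as an induction over the orbit-type strata, using the slice theorem (Theorem \ref{thm-tube}) to reduce, near each stratum, to a lower-dimensional invariant-theory problem for the isotropy representation, where the inductive hypothesis applies.
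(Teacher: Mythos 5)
The paper does not prove this theorem: it is quoted verbatim as a classical result of Schwarz, with the proof deferred to the cited reference, so there is no internal argument to compare yours against. Judged on its own terms, your outline correctly reproduces the architecture of Schwarz's actual proof: the easy inclusion, orbit separation and properness of $p$ (hence the identification of $V/G$ with the closed set $Z=p(V)$), the reduction to surjectivity of $p^*$, the use of averaging and of Hilbert's finiteness theorem to get formal (Taylor-level) factorization, and the identification of the passage from formal to smooth factorization as the real difficulty.

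The gap is that this passage \emph{is} the theorem, and your proposal names the tools (Whitney extension, \L ojasiewicz inequalities, Malgrange--Tougeron division, induction over orbit-type strata) without executing any of them. Two points in particular are load-bearing and unaddressed. First, the degree-by-degree argument you give only establishes surjectivity of the formal pullback at the origin; at a point $v\neq 0$ with isotropy $G_v$ the jet of $p$ is not determined by homogeneous invariant theory for $G$ on $V$, and one must first pass through the slice theorem to the isotropy representation of $G_v$ on $\N_v$ and appeal to an inductive hypothesis there --- so the claim that ``the $\infty$-jet of $h$ at every point factors through the jet of $p$'' is itself part of what the induction must deliver, not an input to it. Second, since $p$ is nowhere near a submersion along the singular strata, the formal factorizations at individual points are highly non-unique, and the entire content of the proof is choosing them coherently so that they form a Whitney field on $Z$ with the required remainder estimates; this is exactly where Schwarz needs the quantitative comparison between $|v-w|$ and $|p(v)-p(w)|$ and where the closedness of $p^*C^\infty(\R^k)$ in $C^\infty(V)^G$ is established. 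As written, your text is an accurate map of the proof rather than a proof, and the map stops precisely at the terrain that is hard to cross.
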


This allows us to make sense of $V/G$ as a differentiable space. To start with, the map $p$ from the theorem is constant along orbits; so it induces a map  on $V/G$, denoted by $\tilde{p}:V/G\to \mathbb{R}^k$.

\begin{lemma}With the notation from Schwarz's theorem, the map $p:V\to \R^k$ is proper and it induces a closed embedding (of topological spaces) \[\tilde{p}:V/G\to \R^k.\]
\end{lemma}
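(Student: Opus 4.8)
The plan is to establish three properties of $p$ and $\tilde p$ in turn --- that $p$ is proper, that $\tilde p$ is injective, and that $\tilde p$ is a closed map --- and then to note that a continuous injective closed map is automatically a homeomorphism onto a closed subset, which is exactly the assertion. For properness I would first fix a $G$-invariant inner product on $V$ (obtained by averaging any inner product over $G$, using compactness). Then $v\mapsto |v|^2$ is a $G$-invariant polynomial, so it belongs to $\mathbb{R}[V]^G$; since $p_1,\ldots,p_k$ generate this algebra, there is a polynomial $q$ on $\R^k$ with $|v|^2=q(p(v))$ for all $v$. Consequently, for any compact $K\subset\R^k$ the set $p^{-1}(K)$ is closed by continuity and bounded, because $|v|^2=q(p(v))\le \max_{y\in K}|q(y)|<\infty$ there; being closed and bounded in the finite-dimensional space $V$, it is compact, so $p$ is proper. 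As $\R^k$ is locally compact Hausdorff, a proper continuous map into it is closed, whence $p(V)$ is closed in $\R^k$.

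Because each $p_i$ is $G$-invariant, $p$ is constant on orbits and factors as $p=\tilde p\circ\pi$. Closedness of $\tilde p$ follows from that of $p$: if $C\subset V/G$ is closed then $\pi^{-1}(C)$ is closed and $G$-invariant in $V$, and $\tilde p(C)=p(\pi^{-1}(C))$ by surjectivity of $\pi$, which is closed since $p$ is; in particular $\tilde p(V/G)=p(V)$ is closed. The step I expect to be the crux is injectivity of $\tilde p$, which is the classical fact that invariant polynomials separate the orbits of a compact group. Since the $p_i$ generate $\mathbb{R}[V]^G$, the equality $p(v)=p(w)$ forces $f(v)=f(w)$ for every invariant polynomial $f$, so it suffices to produce, for $v$ and $w$ in distinct orbits, an invariant polynomial distinguishing them. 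The orbits $G\cdot v$ and $G\cdot w$ are disjoint compact subsets of $V$; by Urysohn's lemma together with the Stone--Weierstrass theorem one finds an honest polynomial $f$ that is close to $1$ on $G\cdot v$ and close to $0$ on $G\cdot w$. Averaging, $\bar f(x):=\int_G f(g\cdot x)\,dg$, produces a $G$-invariant polynomial --- the action being linear, $f(g\cdot x)$ is polynomial in $x$ of bounded degree and integrating its coefficients against Haar measure preserves polynomiality --- and $\bar f$ remains close to $1$ on $G\cdot v$ and to $0$ on $G\cdot w$, hence separates $v$ from $w$.

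Combining the three points, $\tilde p$ is a continuous injective closed map, therefore a homeomorphism onto its image, and that image $p(V)$ is closed in $\R^k$. Thus $\tilde p\colon V/G\to\R^k$ is a closed topological embedding. The only genuinely substantive ingredient is the orbit-separation property used for injectivity; everything else is a formal consequence of properness and the factorization through the quotient map.
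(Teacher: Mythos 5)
Your proof is correct. The paper actually states this lemma without proof, treating it as a classical consequence of Schwarz's theorem, so there is no argument in the text to compare against; your write-up supplies exactly the standard one. All three steps are sound: properness via expressing the invariant polynomial $|v|^2$ (for an averaged inner product) as $q\circ p$, closedness of $\tilde p$ from closedness of $p$ together with $\tilde p(C)=p(\pi^{-1}(C))$, and injectivity via separation of disjoint compact orbits by an invariant polynomial obtained from Urysohn, Stone--Weierstrass on a $G$-invariant compact set containing both orbits, and averaging over the Haar measure. The only point worth making explicit is that the Stone--Weierstrass approximation should be taken uniformly on a $G$-invariant compact set (e.g.\ a ball for the invariant inner product) so that the averaged polynomial $\bar f$ inherits the estimates on the whole of each orbit; you implicitly do this, and with that noted the argument is complete.
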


\begin{remark}In fact, since $p$ is a polynomial map, the image of $V/G$ is naturally a semialgebraic set. Its semialgebraic structure can be described explicitly \cite{Schwarz2}. 
\end{remark}

We can rephrase Schwarz's theorem in the language of differentiable spaces as follows.

\begin{theorem}\label{thm-schwarz-diff} Let $G$ be a compact Lie group and $V$ a representation of $G$. Then \begin{enumerate}\item[1.]{ $(V/G, \C^\infty_{V/G})$ is a reduced affine differentiable space;}
\item[2.]{the map $\tilde{p}:V/G\to \R^k$ is a closed embedding of differentiable spaces.}\end{enumerate}
\end{theorem}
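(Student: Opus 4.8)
The plan is to deduce Theorem \ref{thm-schwarz-diff} from Schwarz's theorem (Theorem \ref{thm-schwarz}) together with the structural results about reduced affine differentiable spaces collected earlier, especially Proposition \ref{prop-reduced-affine} and the Lemma asserting that $\tilde{p}\colon V/G\to\R^k$ is a closed topological embedding with image a closed subset $Z:=\tilde p(V/G)\subset\R^k$. The two assertions are intertwined: once I know $V/G$ is a reduced affine differentiable space realized as a closed subset of $\R^k$, the fact that $\tilde p$ is a closed embedding of differentiable spaces follows almost immediately. So the core work is to identify $(V/G,\C^\infty_{V/G})$ with the standard reduced affine differentiable space structure on the closed set $Z=(\a)_0$, where $\a=\ker(p^*\colon\CR{k}\to C^\infty(V))$.

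First I would analyze the algebra. By Schwarz's theorem the pullback $p^*\colon\CR{k}\to C^\infty(V)^G=C^\infty(V/G)$ is surjective, so it factors as an isomorphism $\CR{k}/\a\xrightarrow{\ \sim\ }C^\infty(V/G)$ where $\a=\ker p^*$. To see that $\CR{k}/\a$ is a differentiable algebra I must check that $\a$ is a closed ideal in the Fr\'echet topology; here I would invoke Whitney's spectral theorem, or more directly argue that $\a$ is the full ideal $\mathfrak p_Z$ of smooth functions vanishing on the closed image $Z=\tilde p(V/G)$. Indeed $f\in\a$ means $f\circ p\equiv 0$ on $V$, i.e.\ $f$ vanishes on $p(V)=Z$; and $\mathfrak p_Z=\bigcap_{z\in Z}\mathfrak m_z$ is closed as in the earlier example on closed subsets of Euclidean space. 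This gives $C^\infty(V/G)\cong A_Z:=\CR{k}/\mathfrak p_Z$, exhibiting it as a differentiable algebra whose real spectrum is, by Proposition \ref{prop-affine-d-space}, homeomorphic to $(\mathfrak p_Z)_0=Z$.

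Next I would match this spectrum with the topological space $V/G$ itself. The homeomorphism is exactly the map $\tilde p\colon V/G\to Z$ from the Lemma, so $\Spec_r C^\infty(V/G)\cong V/G$ as topological spaces, and the points of the spectrum are genuine evaluations (this is the content of Theorem \ref{thm-reconstructionX} applied to the proper groupoid $G\ltimes V$, or can be seen directly since $C^\infty(V/G)=A_Z$ consists of honest functions on $Z$). It then remains to check that the structure sheaf coincides with $\C^\infty_{V/G}$. For this I would verify reducedness: an invariant smooth function on $V$ that induces the zero function on $V/G$ is the zero element, so $f\mapsto\hat f$ is injective and $V/G$ is reduced. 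Proposition \ref{prop-reduced-affine} then identifies the structure sheaf of the affine differentiable space $\Spec_r A_Z$ with the sheaf $\C^\infty_Z$ of (locally extendable) smooth functions on the closed set $Z\subset\R^k$. Finally I would confront this with the defining sheaf $\C^\infty_{V/G}$ from Definition \ref{dfn-functions-on-orbit-space-of-V}: under $\tilde p$, a section over $U$ is a $G$-invariant smooth function on $\pi^{-1}(U)$, and Schwarz's theorem applied locally (via the localization that it is compatible with, or by the Localization theorem for differentiable algebras) shows these are precisely the restrictions of ambient smooth functions, i.e.\ sections of $\C^\infty_Z$. This establishes both (1) and (2), since a closed topological embedding inducing a surjection $\tilde p^\sharp\colon\C^\infty_Z\to\C^\infty_{V/G}$ (surjectivity being exactly Schwarz plus Proposition \ref{prop-reduced-affine}) is by definition a closed embedding of differentiable spaces.

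The main obstacle I expect is the sheaf-level compatibility in the last step: Schwarz's theorem is a statement about global invariants on all of $V$, whereas the structure sheaf requires matching invariant smooth functions on arbitrary saturated opens $\pi^{-1}(U)$ with locally extendable smooth functions on $Z\cap(\text{open})$. Bridging the global and local pictures is precisely where the Localization theorem for differentiable algebras, together with the identification of $\C^\infty_Z$ as the reduced structure sheaf via Proposition \ref{prop-reduced-affine}, does the essential work; once reducedness is in hand, Proposition \ref{prop-reduced-affine} delivers the local characterization for free, so the genuinely new input beyond Schwarz is just the closedness of $\a$ and the verification of reducedness, both of which are short.
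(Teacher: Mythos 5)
Your proposal is correct and follows the route the paper intends: the paper offers no written proof of Theorem \ref{thm-schwarz-diff}, presenting it as a direct rephrasing of Schwarz's theorem (Theorem \ref{thm-schwarz}) combined with the preceding lemma that $\tilde{p}$ is a proper, closed topological embedding, and your identification $C^\infty(V/G)\cong C^\infty(\R^k)/\mathfrak{p}_Z$ with $Z=\tilde p(V/G)$ closed (hence $\mathfrak p_Z=\bigcap_{z\in Z}\mathfrak m_z$ closed) is exactly the intended filling-in. The one step where you lean on citations that do not quite cover it is the final sheaf-level identification: the Localization theorem and Proposition \ref{prop-reduced-affine} describe the structure sheaf of $\Spec_r C^\infty(V/G)$, namely $\C^\infty_Z$, but they do not by themselves show that the \emph{a priori different} sheaf $\C^\infty_{V/G}$ of Definition \ref{dfn-functions-on-orbit-space-of-V} agrees with it; for that you still need to show that a $G$-invariant smooth function on a saturated open $\pi^{-1}(U)$ is locally the restriction of an ambient smooth function on $\R^k$. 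This closes quickly: given $z\in\tilde p(U)$, the fibre $p^{-1}(z)$ is a single (compact) orbit, so one multiplies by a $G$-invariant bump function supported in $\pi^{-1}(U)$ and equal to $1$ near that orbit, extends by zero, and applies global Schwarz to the resulting invariant function (alternatively, average the numerator and a squared denominator produced by Theorem \ref{thm-localization-mfd} over $G$). With that sentence added, your argument is complete.
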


%%%%%%%%%%%%%%%%%%%%%%%%%%%%%%%%%%%%%%%%%%%%%%%%%%%

%%%%%%%%%%%%%%%%%%%%%%%%%%%%%%%%%%%%%%%%%%%%%%%%%%%%%

%\subsubsection{Orbit spaces of proper Lie groupoids}

%\ \

For a proper Lie groupoid $\G\arrows M$, we prove that the orbit space $X$ is a reduced differentiable space. We also see that the differentiable space structure on $X$ only depends on the Morita equivalence class of $\G$, so that it is really associated to the orbispace presented by $\G$.

We know from the Linearization theorem (Theorem \ref{lin}) that any point in $X$ has a neighbourhood homeomorphic to a space of the form $V/G$, where $V$ is a representation of a compact Lie group $G$. The idea is to upgrade this homeomorphism to an isomorphism of locally ringed spaces and to use the differentiable space structure on $V/G$ described in the previous section.

\begin{theorem}[Main Theorem 1]\label{Xissmooth} The orbit space $X$ of a proper Lie groupoid $\G\arrows M$, together with the sheaf $\C^\infty_X$ on $X$ (Definition \ref{dfn-functions-on-orbispace}), is a reduced differentiable space. Moreover, it is affine if and only if it has bounded embedding dimension.

If two Lie groupoids $\G$ and $\H$ are Morita equivalent, their orbit spaces are isomorphic as differentiable spaces.
\end{theorem}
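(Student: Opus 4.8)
The plan is to prove the three assertions in the order: Morita invariance first, then the ``reduced differentiable space'' statement, and finally the affineness criterion. Morita invariance is placed first because the local analysis needed for the other two will itself be carried out \emph{through} a Morita equivalence, so it is convenient to have the invariance available as a tool.

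For the Morita invariance I would first reduce to a single elementary building block via Remark \ref{simplemorita}: any Morita equivalence $P$ between $\G\arrows M$ and $\H\arrows N$ factors as $\G\sim\alpha^*\G\cong\beta^*\H\sim\H$, where $\alpha:P\to M$ and $\beta:P\to N$ are the (surjective submersion) moment maps. Since isomorphic groupoids have literally the same structure sheaf on their common orbit space, the whole statement reduces to the case of a pullback groupoid $\alpha^*\G\arrows P$ along a surjective submersion $\alpha:P\to M$. The orbits of $\alpha^*\G$ are exactly the sets $\alpha^{-1}(\O)$ for $\O$ an orbit of $\G$, so at the topological level $M/\G\cong P/\alpha^*\G$ recovers the homeomorphism of Lemma \ref{lemma-moritarmk}. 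The key point is then purely function-theoretic: pullback $\alpha^*$ is a bijection between $\G$-invariant smooth functions on $M$ and $\alpha^*\G$-invariant smooth functions on $P$. One direction is immediate, since $f\mapsto f\circ\alpha$ is smooth and invariant; for the converse, an $\alpha^*\G$-invariant function is in particular constant along the fibres of $\alpha$ (because $(p,1_{\alpha(p)},q)$ is an arrow whenever $\alpha(p)=\alpha(q)$), and a smooth function constant on the fibres of a surjective submersion descends to a smooth function on the base, which is automatically $\G$-invariant. Running this argument over preimages of opens — for an open $\Omega\subseteq M/\G$ with image $\Omega'\subseteq P/\alpha^*\G$ one has $\pi_P^{-1}(\Omega')=\alpha^{-1}(\pi_M^{-1}(\Omega))$, so the same descent applies verbatim — upgrades the bijection to an isomorphism of the structure sheaves covering $\Phi$, i.e. the desired isomorphism of differentiable spaces.

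For the first assertion I would argue locally on $X$. Fix $x\in M$; by the Linearization theorem (Theorem \ref{lin}) together with the remark following it, the orbit $\O_x$ has a \emph{saturated} open neighbourhood $U$ such that $\G_U$ is Morita equivalent to the action groupoid $\G_x\lx\N_x$, where $\G_x$ is compact (Proposition \ref{hausdorff}) and $\N_x$ is its normal (isotropy) representation. Because $U$ is invariant we have $\pi^{-1}(\pi(U))=U$, and for every open $\Omega\subseteq\pi(U)$ also $\pi^{-1}(\Omega)\subseteq U$; hence $\C^\infty_X|_{\pi(U)}$ coincides with the orbit-space sheaf of $\G_U$. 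By the Morita invariance just established this is isomorphic, as a locally ringed space, to $(\N_x/\G_x,\C^\infty_{\N_x/\G_x})$, which by Schwarz's theorem in its sheaf form (Theorem \ref{thm-schwarz-diff}) is a reduced affine differentiable space. Thus every point of $X$ has an open neighbourhood isomorphic to a reduced affine differentiable space, and since being reduced is a local condition, $(X,\C^\infty_X)$ is a reduced differentiable space.

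The affineness criterion is then immediate from the Embedding theorem (Theorem \ref{embedding}): $X$ is Hausdorff (Proposition \ref{hausdorff}) and second-countable, so it is affine precisely when its embedding dimension is bounded. I expect the main obstacle to be the sheaf-theoretic bookkeeping rather than any single deep input. The genuinely delicate steps are (i) checking that the descent of invariant smooth functions along $\alpha$ is compatible with restriction to arbitrary opens, so that one really obtains an isomorphism of \emph{sheaves} and not merely of global sections, and (ii) verifying that $\C^\infty_X$, which is defined through the \emph{saturations} $\pi^{-1}(\Omega)$, restricts on a saturated neighbourhood $\pi(U)$ to the orbit-space sheaf of $\G_U$; it is exactly the invariance of $U$ that makes this identification work, which is why the saturated local model from the Linearization theorem (and not merely the pointwise one) is the right tool. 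Schwarz's theorem supplies the one genuinely hard analytic fact, but it is available as a black box.
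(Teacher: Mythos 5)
Your proof is correct, and the overall skeleton (linearize around an orbit, reduce to $V/G$ for a compact group $G$, invoke Schwarz's theorem, and settle affineness with the Embedding theorem) matches the paper's. Where you genuinely diverge is in the key technical step and in the order of the argument. The paper constructs the local identification $C^\infty(\pi(U))\cong C^\infty(W/\G_x)$ by hand: it writes the saturated linearized model as $P_x\times_{\G_x}W$ and exhibits an explicit algebra isomorphism $\phi(f)(w)=f([1_x,w])$ with inverse $\phi^{-1}(f)([p,w])=f(w)$, i.e.\ restriction of invariant functions to a slice; Morita invariance is then deduced only at the end, somewhat tersely, from the Morita invariance of the local models. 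You instead prove Morita invariance of the structure sheaf \emph{first}, in full generality, by reducing via Remark \ref{simplemorita} to a pullback groupoid $\alpha^*\G$ along a surjective submersion and descending invariant functions fibrewise (the arrow $(p,1_{\alpha(p)},q)$ forcing constancy on $\alpha$-fibres, then smooth descent via local sections), and you then use this lemma to transport the linearized local model. Your route buys two things: the slice-restriction isomorphism of the paper becomes a special case of a general descent statement rather than an ad hoc computation, and the final Morita-invariance assertion -- that the homeomorphism $\Phi$ of Lemma \ref{lemma-moritarmk} is covered by an isomorphism of sheaves, not merely that the local models are abstractly isomorphic -- is established more completely than in the paper's closing sentence. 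The one point worth making explicit if you write this up is that the composite homeomorphism $M/\G\cong P/\alpha^*\G=P/\beta^*\H\cong N/\H$ agrees with the $\Phi$ induced by the bibundle $P$, so that the sheaf isomorphism really lives over the map in the statement.
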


\begin{proof} Let $\O\in X$ and let $x$ be a point in the orbit $\O$. Consider an open subset $U\subset M$ containing $\O$, such that $\G_U\cong \N_{\O}(\G)_V$, as in the Linearization theorem (Theorem \ref{lin}).
Under this isomorphism, let $\Sigma\subset U$ be the slice (see def. \ref{def-slice}) at $x$ corresponding to the open $V\cap \N_x$ of the normal space to $\O$ at $x$. 
It also holds that \[\CC(U)^\G\cong \CC(V)^{\N_\O(\G)}.\] Moreover, since we are only considering invariant functions, we could work with invariant opens instead - if $\tilde{V}$ is the saturation of $V$, we have that \[\CC(\tilde{V})^{\N_\O(\G)}\cong \CC(V)^{\N_\O(\G)}.\] So from now on assume that $\tilde{V}=P_x\x_{\G_x}W$, where $P_x$ is the $s$-fibre of $\G$ at $x$ and $W\subset \N_x$ is an invariant open subset.
Note that $W$ intersects all the orbits in $V$ (it is actually a slice at $x$). We can define  an algebra isomorphism $$\phi: \CC(\tilde{V})^{\N_\O(\G)}\to \CC(W)^{\G_x}$$ by restriction to the slice $W$. More precisely, given a function $f\in \CC(\tilde{V})^{\N_\O(\G)}$ define $\phi(f)(w)=f([1_x,w])$. The invariance of $\phi(f)$ follows from that of $f$ and it is also easy to check that $\phi$ is an injective algebra homomorphism. An explicit inverse can be given by $\phi^{-1}(f)([p,w])=f(w)$. Once again it is easy to check that it is well defined, and an inverse to $\phi$. To see that it is invariant we use that a point of $V$ belonging to the orbit of $[p,w]$ must be of the form $[q,w]$. To summarize, we have isomorphisms \[C^\infty(\pi(U))\cong C^\infty(U)^\G\cong C^\infty(V)^{\N_\O(\G)}\cong C^\infty(W/\G_x).\]

Since $\G_U$ is Morita equivalent to $\G_x\lx W$, it holds, as explained in Lemma \ref{lemma-moritarmk}, that $\pi(U)\cong W/\G_x$; what we have proved above is that in fact, we also obtain an isomorphism of reduced ringed spaces 
\[\big(\pi(U),\C^\infty_{\pi(U)}\big)\cong \big(W/\G_x, \C^\infty(W)^{\G_x}\big).\]
From the discussion of the previous section, we know that Schwarz's theorem implies that $\big(W/\G_x, \C^\infty(W)^{\G_x}\big)$ is a reduced affine differentiable space (Theorem \ref{thm-schwarz-diff}). We have thus proved that the reduced ringed space $(X,\C^\infty_X)$ is locally isomorphic to a reduced affine differentiable space, so it is a reduced differentiable space.

The fact that $X$ is affine if and only if it has bounded embedding dimension follows from  the Embedding theorem for differentiable spaces (Theorem \ref{embedding}).

We already knew that if $\G$ and $\H$ are Morita equivalent, then the orbit spaces of $\G$ and $\H$ are homeomorphic (Lemma \ref{lemma-moritarmk}). We have also seen that as a differentiable space, the orbit space of a proper groupoid is locally isomorphic, around a point $\O$, to the orbit space of the isotropy representation at a point of the orbit $\O$, which is invariant under Morita equivalences.
\end{proof}

\begin{remark}A direct consequence of the last statement is that the algebra of smooth functions on the orbit space $X$ of a proper groupoid is a differentiable algebra if and only if $X$ has bounded embedding dimension.

A case in which $X$ has bounded embedding dimension is for example when $X$ has only finitely many Morita types, a notion that is discussed in Section \ref{sec-Morita_types}.
\end{remark}

\subsection{Alternative framework I: $C^\infty$-Schemes}\label{subsection-cinfty.rings}

In this section we briefly discuss $C^\infty$-schemes. These are spaces locally modelled on the spectrum of a $C^\infty$-ring. They have appeared as models for synthetic differential geometry in \cite{dubuc,reyes}. We have already seen examples of such spaces, as any differentiable algebra is also a $C^\infty$-ring (and so any differentiable space is a $C^\infty$-scheme). Some references for this material are \cite{Joyce,reyes}.

%\subsubsection{Some general theory of $C^\infty$-rings and $C^\infty$-Schemes}

\begin{definition}\label{dfn-Cinfty-ring}%[$\CC$-Ring]
A \textbf{$\CC$-ring} is a set $\CCC$ together with operations $\phi_f:\CCC^n\to \CCC$ for each $n\geq 0$ (also denoted by  $\phi_f^\CCC$) and for each smooth map $f:\R^n\to \R$, satisfying the following conditions.
\begin{enumerate}
\item[1.] Consider natural numbers $m,n \geq 0$, and smooth functions $f_i:\R^n\to\R$ for $i=1,\ldots, m$, $g:\R^m\to \R$. Define a smooth function $h:\R^n\rmap \R$ by \[h(x_1,\ldots,x_n)=g(f_1(x_1,\ldots,x_n),\ldots,f_m(x_1,\ldots,x_n)).\] Then we have that the following diagram commutes:
\[
\begin{tikzcd}[column sep=huge]\CCC^n  \arrow{rd}[swap]{\phi_h}\arrow{r}{(\phi_{f_1},\ldots,\phi_{f_m})} & \CCC^m \arrow{d}{\phi_g} \\
 & \CCC
\end{tikzcd}
\]
\item[2.] For the coordinate functions $x_i:  \R^n \to \R$ we have $\phi_{x_i}(c_1,\ldots,c_n)=c_i$, for all $c_1,\ldots,c_n\in \CCC$.
\end{enumerate}

A morphism of $C^\infty$-rings is a map $F:\CCC\to \mathfrak{D}$ such that for any $f\in C^\infty(\R^n)$ and any $c_1,\ldots, c_n\in \CCC$ it holds that \[F(\phi^\CCC_f(c_1,\ldots,c_n))=\phi_f^{\mathfrak{D}}(F(c_1),\ldots,F(c_n)).\]
\end{definition}

\begin{proposition}[Proposition 1.2 in \cite{reyes}]\label{prop-quotientCinfty}Let $\CCC$ be a $\CC$-ring and $I$ an ideal of $\CCC$ (with $\CCC$ considered as an $\R$-algebra). Then there is a unique $C^\infty$-ring structure on the quotient $\CCC/I$, such that the quotient map $\pi:\CCC\to \CCC/I$ is a $\CC$-ring morphism.
\end{proposition}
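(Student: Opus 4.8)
The plan is to note first that uniqueness is automatic: since $\pi$ is surjective, any $C^\infty$-ring structure on $\CCC/I$ for which $\pi$ is a morphism must satisfy
\[\bar\phi_f(\pi(c_1),\dots,\pi(c_n))=\pi\big(\phi_f^\CCC(c_1,\dots,c_n)\big),\]
and this formula determines every operation $\bar\phi_f$ completely. Hence the whole content of the statement reduces to showing that this prescription is well defined, i.e. that the right-hand side does not depend on the chosen representatives. Concretely, writing $c=(c_1,\dots,c_n)$ and $c'=(c_1',\dots,c_n')$ with $c_i-c_i'\in I$ for all $i$, I must show that $\phi_f(c)-\phi_f(c')\in I$.

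The key step, and the only substantive one, is to promote Hadamard's lemma to an identity inside $\CCC$. Hadamard's lemma produces smooth functions $g_1,\dots,g_n\colon\R^{2n}\to\R$ with $f(y)-f(x)=\sum_{i=1}^n (y_i-x_i)\,g_i(x,y)$ for all $x,y\in\R^n$. Applying the composition axiom (condition 1 of Definition \ref{dfn-Cinfty-ring}) to the smooth function $F(x,y):=f(y)-f(x)$ on $\R^{2n}$, evaluated at the $2n$-tuple $(c_1',\dots,c_n',c_1,\dots,c_n)$, and using that the $\R$-algebra addition and multiplication of $\CCC$ are exactly the operations $\phi_{+}$ and $\phi_{\times}$ attached to the maps $(a,b)\mapsto a+b$ and $(a,b)\mapsto ab$, I obtain the identity
\[\phi_f(c)-\phi_f(c')=\sum_{i=1}^n (c_i-c_i')\,\phi_{g_i}(c',c)\]
in $\CCC$. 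The point is that $\phi_F$ applied to this fixed tuple can be read off in two ways: once via the decomposition of $F$ as $(a,b)\mapsto b-a$ composed with $f$ on each block of coordinates, which yields the left-hand side, and once via the decomposition as a sum of products $(y_i-x_i)g_i(x,y)$, which yields the right-hand side; both evaluations must agree.

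Once this identity is in hand the conclusion is immediate: each summand $(c_i-c_i')\,\phi_{g_i}(c',c)$ lies in $I$ because $c_i-c_i'\in I$ and $I$ is an ideal, so $\phi_f(c)-\phi_f(c')\in I$ and $\bar\phi_f$ is well defined. It then remains to verify that $(\CCC/I,\{\bar\phi_f\})$ satisfies the two axioms of a $C^\infty$-ring; both follow formally by applying $\pi$ to the corresponding identities in $\CCC$ and invoking surjectivity of $\pi$, while $\pi$ is a morphism by the very defining formula for $\bar\phi_f$. The main obstacle is thus entirely concentrated in the middle step: the careful bookkeeping needed to transfer the analytic Hadamard identity into an algebraic identity among the abstract operations $\phi$, which in turn relies on the (standard) fact that the underlying $\R$-algebra structure of a $C^\infty$-ring --- the very structure that makes the hypothesis \textquotedblleft $I$ is an ideal\textquotedblright\ meaningful --- is recovered from the operations $\phi_{+}$ and $\phi_{\times}$. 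Everything else is formal.
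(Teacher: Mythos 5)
Your proof is correct. The paper itself states this result as a citation (Proposition 1.2 in \cite{reyes}) and gives no proof, and your argument --- forcing the operations by surjectivity of $\pi$, then establishing well-definedness via Hadamard's lemma transported into $\CCC$ through the composition axiom, using that the $\R$-algebra structure is recovered from $\phi_{+}$ and $\phi_{\times}$ --- is precisely the standard proof given in that reference.
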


As an immediate consequence of this proposition we obtain the following.

\begin{corollary}\label{cor-diff-is-Cinfty} Any differentiable algebra is a $C^\infty$-ring.
\end{corollary}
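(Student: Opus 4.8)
The plan is to apply Proposition \ref{prop-quotientCinfty} directly, since Corollary \ref{cor-diff-is-Cinfty} asserts exactly that every differentiable algebra carries a $C^\infty$-ring structure. First I would recall the definition of a differentiable algebra $A$ (Definition \ref{dfn-diff-alg}): by definition $A$ is isomorphic to a quotient $C^\infty(\R^n)/\a$ where $\a$ is a closed ideal with respect to the Fr\'echet topology. The key observation is that $C^\infty(\R^n)$ is itself a $C^\infty$-ring in the most natural way, with the operation $\phi_f$ attached to a smooth $f:\R^n\to\R$ given by composition; checking the two axioms of Definition \ref{dfn-Cinfty-ring} for $C^\infty(\R^n)$ is a routine unwinding of the chain rule for composition and the fact that the coordinate functions act as projections.

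Next I would invoke Proposition \ref{prop-quotientCinfty} with $\CCC = C^\infty(\R^n)$ and $I = \a$. Since $\a$ is an ideal of $C^\infty(\R^n)$ (regarded as an $\R$-algebra), the proposition guarantees a $C^\infty$-ring structure on the quotient $C^\infty(\R^n)/\a$ compatible with the quotient map. As $A \cong C^\infty(\R^n)/\a$ as $\R$-algebras, transporting the $C^\infty$-ring structure along this isomorphism endows $A$ with a $C^\infty$-ring structure. This completes the argument.

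The only point requiring a word of care, rather than a genuine obstacle, is that the closedness of $\a$ in the Fr\'echet topology plays no role whatsoever in this corollary: Proposition \ref{prop-quotientCinfty} applies to an \emph{arbitrary} ideal of the underlying $\R$-algebra. Thus the fact that $A$ is a differentiable algebra is used only to produce the presentation $A \cong C^\infty(\R^n)/\a$ with $C^\infty(\R^n)$ a $C^\infty$-ring; the further property that $\a$ is closed is extra structure that we simply do not need here. I would make this explicit so the reader is not tempted to search for a subtlety that is absent, and the proof is then essentially immediate from the two cited results.
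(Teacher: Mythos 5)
Your proof is correct and follows exactly the route the paper intends: the paper states the corollary as an immediate consequence of Proposition \ref{prop-quotientCinfty}, applied to the presentation $A\cong C^\infty(\R^n)/\a$ with $C^\infty(\R^n)$ carrying its natural $C^\infty$-ring structure by composition. Your remark that closedness of $\a$ is not needed here is also accurate and harmless.
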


It is worth mentioning that $C^\infty$-rings are much more general than differentiable algebras - for example, from the theory of the previous section, we know that the algebra of smooth functions on the orbit space of a proper groupoid is a differentiable algebra if and only if the orbit space has bounded embedding dimension. But we will see that it is always a $C^\infty$-ring. As a more extreme example, the algebra of continuous functions on any topological space is a $C^\infty$-ring. 

On the other hand, although $C^\infty$-rings are very general, those that arise as algebras of smooth functions on $X$ still belong to a somewhat restrictive class - that of locally fair $C^\infty$-rings.

\begin{definition}A $\CC$-ring $\CCC$ is said to be  \textbf{finitely generated} if there are $c_1,\ldots,c_n\in \CCC$ which generate $\CCC$ under all $C^\infty$ operations.
  
%A $\CC$-ring $\CCC$ is called \textbf{finitely presented} if $\CCC\cong \CR{n}/I$ as a $\CC$-ring, for some finitely generated ideal $I$.

A $\CC$-ring $\CCC$ is called a \textbf{$\CC$-local ring} if it has a unique maximal ideal $\m$ and $\CCC/\m\cong \R$.
\end{definition}

\begin{example} The ring $\CC_p(\R^n)$ of germs at $p$ of functions on $\R^n$ is a $\CC$-local ring.
\end{example}

%\begin{definition} A $\CC$-ring $\CCC$ is called \textbf{fair} if it is isomorphic to $\CR{n}/I$, where $I$ is a germ-determined ideal.

%A $\CC$-ring $\CCC$ is called \textbf{good} if it is isomorphic to $\CR{n}/I$, where $I=(f_1,\ldots,f_k,\m_X^\infty)$ and $\m_X^\infty$ is the ideal of functions which are flat at a closed subset $X\subset \R^n$.
%\end{definition}

%\begin{remark}Finitely presented $\Rightarrow$ good $\Rightarrow$ fair $\Rightarrow$ finitely generated.
%\end{remark}

\begin{definition}
A \textbf{$\CC$-Ringed space} $(X,\O_X)$ is a topological $X$ space together with a sheaf $\O_X$ of $\CC$-rings on it. 
A \textbf{local $\CC$-Ringed space} $(X,\O_X)$ is a $\CC$-ringed space for which the stalks $\O_{X,x}$ are local rings for all $x\in X$.

A \textbf{morphism of $\CC$-Ringed spaces} is a pair \[(\phi,\phi^\sharp):(X,\O_X)\to(Y,\O_Y)\] consisting of a continuous map $\phi:X\to Y$ and a morphism  $\phi^\sharp:\O_Y\to\phi_*\O_X$ of sheaves  on $Y$ (or equivalently, a morphism $\phi^\sharp:\phi^*\O_Y\to\O_X$ of sheaves of $C^\infty$-rings on $X$).
\end{definition}

\begin{definition}\label{dfn-Cinfty-scheme} An \textbf{affine $\CC$-scheme} is a local $\CC$-ringed space $(X,\O_X)$ which is isomorphic to $\Spec_r\CCC$ as a local $\CC$-ringed space, for  some $\CC$-ring $\CCC$. 

 A \textbf{$\CC$-scheme} is a local $\CC$-ringed space $(X,\O_X)$ for which $X$ can be covered by open sets $U_i$ such that each $(U_i, \O_{X|U_i})$ is an affine $\CC$-scheme. \textbf{Morphisms of $\CC$-schemes} are just morphism of $\CC$-Ringed spaces.
\end{definition}

\begin{definition} A \textbf{locally fair} $C^\infty$-scheme  is a  $C^\infty$-scheme  $(X,\O_X)$ for which $X$ can be covered by open sets $U_i$ such that each $(U_i, \O_{X|U_i})$ is isomorphic to $\Spec_r \CCC_i$, where $\CCC_i$ is a finitely generated $\CC$-ring.
\end{definition}

%\subsubsection{Orbispaces as $C^\infty$-Schemes}

We return to the smooth structure on the orbit space $X$ of a proper groupoid. Although we already knew that $X$ is a $C^\infty$-scheme (since it is a differentiable space) we show that it is always affine, as a $C^\infty$-scheme.

\begin{proposition}[First variation of Main Theorem 1]\label{Prop-Var1} Let $X$ be the orbit space of a proper Lie groupoid $\G\arrows M$. Then the algebra $C^\infty(X)$ is a $C^\infty$-ring and $(X,\C^\infty_X)$ is a locally fair affine  $C^\infty$-scheme.
\end{proposition}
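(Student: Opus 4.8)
The plan is to prove the three assertions separately: that $\CC(X)$ carries a $\CC$-ring structure, that $(X,\C^\infty_X)$ is covered by spectra of finitely generated $\CC$-rings (local fairness), and that it is globally affine. I would begin with the $\CC$-ring structure. Recall that $\pi^\ast$ identifies $\CC(X)$ with the subalgebra $\CC(M)^{\G}\subset\CC(M)$ of $\G$-invariant functions. Since $\CC(M)$ is a $\CC$-ring, with operations $\phi_g(f_1,\dots,f_n)=g\circ(f_1,\dots,f_n)$ for $g\in\CC(\R^n)$, it suffices to observe that this subalgebra is closed under all such operations: if each $f_i$ is constant along the orbits of $\G$, then so is $g\circ(f_1,\dots,f_n)$. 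Hence $\CC(M)^{\G}$ is a $\CC$-subring of $\CC(M)$, and therefore $\CC(X)$ is a $\CC$-ring.

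For local fairness I would reuse the local description obtained in the proof of Theorem \ref{Xissmooth}. Around any orbit $\O\in X$, picking $x\in\O$, that argument produces a neighbourhood $\pi(U)$ isomorphic, as a reduced ringed space, to $\big(\N_x/\G_x,\ \CC(\N_x)^{\G_x}\big)$; here one may take the transverse model to be all of $\N_x$, using that $\N_x$ admits $\G_x$-invariant neighbourhoods of the origin equivariantly diffeomorphic to $\N_x$ itself. By Schwarz's theorem (Theorem \ref{thm-schwarz}), a choice of generators $p_1,\dots,p_k$ of $\R[\N_x]^{\G_x}$ yields $p^\ast\CC(\R^k)\cong\CC(\N_x)^{\G_x}$, so $\CC(\N_x)^{\G_x}$ is a quotient of $\CC(\R^k)$ and hence a finitely generated $\CC$-ring (a differentiable algebra, and so a $\CC$-ring by Corollary \ref{cor-diff-is-Cinfty}). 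Since any reduced affine differentiable space, regarded as a local $\CC$-ringed space, is exactly the affine $\CC$-scheme $\Spec_r$ of its $\CC$-ring of global sections, these neighbourhoods have the form $\Spec_r\CC(\N_x)^{\G_x}$ with $\CC(\N_x)^{\G_x}$ finitely generated. As they cover $X$, the $\CC$-scheme $(X,\C^\infty_X)$ is locally fair.

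It remains to prove that $(X,\C^\infty_X)$ is affine, i.e. that the canonical morphism $(X,\C^\infty_X)\to\Spec_r\CC(X)$ is an isomorphism of local $\CC$-ringed spaces. On underlying spaces this map, $x\mapsto\mathrm{ev}_x$, is already a homeomorphism by Theorem \ref{thm-reconstructionX}, so the issue is to identify the two structure sheaves; concretely, one must show that for each open $U$ the $\CC$-localization of $\CC(X)$ along $U$ agrees with $\C^\infty_X(U)$. Here I would invoke the affineness criterion for $\CC$-schemes, namely that a Hausdorff, second-countable, locally fair $\CC$-scheme is affine (the $\CC$-geometric analogue of the Embedding theorem \ref{embedding}; cf. \cite{Joyce,reyes}). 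Its hypotheses are all in place: $X$ is Hausdorff (Proposition \ref{hausdorff}) and second-countable, and local fairness was just established. Hence $X$ is affine, and taking global sections identifies the representing $\CC$-ring with $\CC(X)$.

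The main obstacle is precisely this last step. Unlike for differentiable spaces — where, as noted in Theorem \ref{Xissmooth}, affineness fails without a bound on the embedding dimension — passing to the larger category of $\CC$-rings removes the obstruction, because $\CC$-rings carry smooth partitions of unity. This is what makes well-behaved $\CC$-schemes affine: local sections of $\C^\infty_X$ can always be patched and recovered from global ones, so no global embedding into some $\R^n$ is needed, and $\CC(X)$ need not be finitely generated. The technical core therefore lies in the localization identity $\C^\infty_X(U)=\CC(X)_U$ understood in the $\CC$-sense, and this is exactly where the normality of $\CC(X)$ (Proposition \ref{CC(X)isnormal}) together with the partition-of-unity and proper-function constructions (Proposition \ref{properfunctions}) carry the argument.
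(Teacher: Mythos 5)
Your proposal is correct, and two of its three steps coincide with the paper's: the $C^\infty$-ring structure on $C^\infty(X)$ (the paper defines the operations directly by composition with smooth maps $\R^n\to\R$, which is your observation that invariant functions are closed under such compositions), and local fairness (the linearization plus Schwarz's theorem \ref{thm-schwarz}, exactly as you do). Where you genuinely diverge is the proof of affineness. The paper establishes it directly, and before local fairness: Theorem \ref{thm-reconstructionX} gives the homeomorphism $\mathrm{ev}\colon X\to \Spec_r C^\infty(X)$, and the structure sheaves are then matched by hand, using that a section of the spectrum's structure sheaf over $U$ is a fraction $g/h$ with $g,h\in C^\infty(X)$ and $h$ nowhere vanishing on $U$, hence pulls back to an element of $\C^\infty_X(\mathrm{ev}^{-1}(U))$. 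You instead deduce global affineness from local fairness via the criterion of \cite{Joyce} that a Hausdorff, second-countable, locally fair $C^\infty$-scheme is affine. Both routes work. The paper's is self-contained (modulo Theorem \ref{thm-reconstructionX}, which rests on exactly the normality and proper-function inputs you single out) and yields the explicit presentation $X\cong \Spec_r C^\infty(X)$ via evaluation; yours is shorter but outsources the sheaf-level identification to a black box, and requires one small compatibility check you do not mention: the paper's notion of ``locally fair'' asks only for finitely generated $C^\infty$-rings, whereas the affineness criterion in the literature is stated for covers by spectra of fair (germ-determined) $C^\infty$-rings. This is harmless here, since the local models $C^\infty(\N_x)^{\G_x}\cong C^\infty(\R^k)/\mathfrak{a}$ are differentiable algebras with $\mathfrak{a}$ closed, hence germ-determined by Whitney's spectral theorem, but it is worth saying. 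Your closing remark that the real content is the localization identity $\C^\infty_X(U)=C^\infty(X)_U$ is apt: that is precisely the point the paper's direct argument addresses (briefly) and the point your cited theorem hides.
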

\begin{proof} It is clear that $C^\infty(X)$ is a $C^\infty$-ring : the operation $\phi_f$ associated with a smooth function $f:\R^n\to \R$ is simply given by composition with $f$.

We also know from Theorem \ref{thm-reconstructionX} that the evaluation map \[ev:X\cong \Spec_r C^\infty(X)\] is a homeomorphism. In fact, almost by definition, $ev$ is also an isomorphism of reduced ringed spaces: Let $f\in \O_{\Spec_r C^\infty(X)}(U)$ and recall that $\O_{\Spec_r C^\infty(X)}(U)$ is the localization of $C^\infty(X)$ with respect to the multiplicative system of those functions of $C^\infty(X)$ that vanish nowhere on $U$. We then know that $f=\frac{g}{h}$ with $g,h\in C^\infty(X)$ and $h$ non-vanishing on $U$, so it is easy to see that $ev^*f\in C^\infty_X(ev^{-1}(U))$.
This implies that $(X,\C^\infty_X)$ is an affine $C^\infty$-scheme.

In the proof of Theorem \ref{Xissmooth}, we have seen that $X$ is locally isomorphic to differentiable spaces (hence $C^\infty$-schemes) of the form $(V/G,\C^\infty_{V/G})$, where $V$ is a representation of a compact Lie group $G$. Schwarz's theorem (Theorem \ref{thm-schwarz}) says that the algebra of $G$-invariant functions on $V$ is a finitely generated $C^\infty$-ring, and that $(V/G,\C^\infty_{V/G})$ is affine; hence $(V/G,\C^\infty_{V/G})$ is isomorphic to the real spectrum of a finitely generated $C^\infty$-ring. Therefore $(X,\C^\infty_X)$ is a locally fair $C^\infty$-scheme.
\end{proof}

%\begin{remark} In fact, any locally fair paracompact Hausdorff $C^\infty$-scheme is affine \cite{Joyce}.
%\end{remark}

%\begin{remark}[Morita invariance] Note that, since differentiable spaces form a subcategory of the category of $C^\infty$-schemes, Corollary \ref{cor-morita-inv-diff-str} implies that the structure of $C^\infty$-scheme on the orbit space of a proper Lie groupoid is invariant under Morita equivalences.
%\end{remark}

\subsection{Alternative framework II: Sikorski spaces}\label{sec-sikorski}

In this section we discuss another notion of smooth structure - that of a Sikorski space \cite{sikorski} (also called differential space in the literature). This type of smooth structure will be revisited later on, when we discuss differentiable stratifications of the orbit space (Section \ref{chp5-smooth_stratifications}). The main references used for this section are \cite{Sniat-book,Watts}.

\begin{definition}
A \textbf{Sikorski space}\label{def-sikorski} is a pair $(X,\F)$, where $X$ is a topological space and $\F$ is a non-empty set of continuous real-valued functions on $X$, satisfying:
\begin{enumerate}
\item[1.] $X$ has the weakest topology such that all the elements of $\F$ are continuous.
\item[2.] (Locality) Let $f:X\to\R$ be a function such that for all $x\in X$ there is a neighbourhood $U$ of $x$ and a function $g\in\F$ such that $f_{|U}=g_{|U}$. Then $f\in\F$.
\item[3.] (Smooth compatibility) If $F\in\CR{n}$ and $f_1,\ldots,f_n\in \F$, then the composition $F(f_1,\ldots,f_n)$ belongs to $\F$.
\end{enumerate}
 The elements of $\F$ are called \textbf{smooth functions} on $X$.
\begin{remark}\label{rmk-sikorski} There are some immediate observations we can draw from the previous definition.

\begin{enumerate}

\item[1.] Since the composition of elements of $\F$ with translations and rescalings is again in $\F$, the topology of $X$ is generated by the open subsets of the form $f^{-1}(0,1)$, for $f\in\F$.

\item[2.] The smooth compatibility condition ensures that $\F$ is a commutative $\R$-algebra and that it contains all constant functions.

\item[3.] The locality condition guarantees that $\F$ induces a sheaf of continuous functions $\tilde{\F}$ on $X$: for any open $U\subset X$, let $\tilde{\F}(U)$ be the set of all functions $f:U\to \R$ such that for all $x\in X$ there is a neighbourhood $V$ of $x$ in $U$ and a function $g\in\F$ such that $f_{|V}=g_{|V}$. 
In this way $(X,\tilde{\F})$ is a reduced ringed space.
\end{enumerate}
\end{remark}

\end{definition}

\begin{definition}%[Morphisms between Sikorski spaces]
A \textbf{smooth map} between Sikorski spaces $(X,\F_X)$ and $(Y,\F_Y)$ is any continuous map $\phi:X\to Y$ with the property that $f\circ \phi\in \F_X$ for all $f\in\F_Y$.A \textbf{diffeomorphism} is a smooth homeomorphism with a smooth inverse.
\end{definition}

\begin{definition}%[Subspaces]
Let $(X,\F_X)$ be a Sikorski space. A \textbf{Sikorski subspace} of $(X,\F_X)$ is a Sikorski space $(Y,\F_Y)$, where $Y$ is a topological subspace of $X$, and $\F_Y$ is generated by restrictions of functions of $\F_X$ to $Y$, i.e., $f\in \F_Y$ if and only if for all $y\in Y$ there is a neighbourhood $U$ of $y$ in $X$ and a function $g\in\F$ such that $f_{|U\cap Y}=g_{|U\cap Y}$.
\end{definition}

\begin{definition}%[Quotients]
Let $(X,\F_X)$ be a Sikorski space and let $R$ be an equivalence relation on $X$. The \textbf{quotient Sikorski space} of $X$ with respect to $R$ is the Sikorski space $(X_R,\F_R)$, where\begin{enumerate} \item[1.] $X_R$ is the set of equivalence classes for $R$; \item[2.] $\F_R=\{f\in \F \ |\ \pi^*f\in \F\}$, where $\pi:X\to X_R$ is the canonical projection; \item[3.] the topology on $X_R$ is the smallest one making all functions in $\F_R$ continuous.\end{enumerate}
\end{definition}

The topology induced on $X_R$ by $\F_R$ may be distinct from the quotient topology. The following result provides a sufficient condition for the two topologies to coincide.

\begin{proposition}[Proposition 2.1.11 in \cite{Sniat-book}]\label{prop-toplogiescoincide}
Let $(X,\F_X)$ be a Sikorski space, let $R$ be an equivalence relation on $X$ and let $(X_R,\F_R)$ be the quotient Sikorski space. Then the topology induced by $\F_R$ on $X_R$ coincides with the quotient topology if for every subset $U\subset X_R$, open for the quotient topology, and every point $y\in U$, there exists a function $f\in \F_R$ such that $f(y)=1$ and $f$ vanishes outside of $U$.
\end{proposition}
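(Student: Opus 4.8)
The plan is to show the two topologies on $X_R$ coincide by establishing the two inclusions separately; one of them holds unconditionally, and the separation hypothesis is needed only for the other. Throughout, write $\tau_{\F_R}$ for the Sikorski topology on $X_R$ (the weakest topology making all elements of $\F_R$ continuous, as in the definition of the quotient Sikorski space) and let $\tau_q$ denote the quotient topology coming from $\pi:X\to X_R$.

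First I would check that $\tau_{\F_R}\subseteq\tau_q$, with no use of the hypothesis. By the universal property of the quotient topology, a function $f:X_R\to\R$ is $\tau_q$-continuous if and only if $\pi^*f=f\circ\pi$ is continuous on $X$. By the definition of $\F_R$, every $f\in\F_R$ satisfies $\pi^*f\in\F_X$, and elements of $\F_X$ are continuous on $X$ by the Sikorski axioms; hence each $f\in\F_R$ is $\tau_q$-continuous. Since $\tau_{\F_R}$ is the weakest topology making all of $\F_R$ continuous, it must be contained in $\tau_q$.

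The substance of the statement is the reverse inclusion $\tau_q\subseteq\tau_{\F_R}$, and this is precisely where the hypothesis enters. Let $U\subseteq X_R$ be $\tau_q$-open and fix $y\in U$. The hypothesis supplies $f_y\in\F_R$ with $f_y(y)=1$ and $f_y$ vanishing on $X_R\setminus U$. I would then set $V_y:=f_y^{-1}(\R\setminus\{0\})$: since $\R\setminus\{0\}$ is open in $\R$ and $f_y\in\F_R$, this $V_y$ is a subbasic, hence $\tau_{\F_R}$-open, set; it contains $y$ because $f_y(y)=1\neq 0$, and it is contained in $U$ because $f_y$ vanishes off $U$. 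Consequently $U=\bigcup_{y\in U}V_y$ is a union of $\tau_{\F_R}$-open sets, hence $\tau_{\F_R}$-open. Combining the two inclusions yields $\tau_q=\tau_{\F_R}$.

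I do not expect any genuine obstacle here: the only points requiring care are the direction of the inclusion furnished by the universal property of the quotient topology, and the interpretation of $\F_R$ as the real-valued functions on $X_R$ whose pullback by $\pi$ lies in $\F_X$. The separation hypothesis is tailored precisely to manufacture, around each point of a quotient-open set, a Sikorski-open neighbourhood sitting inside it, which is exactly what the second inclusion needs.
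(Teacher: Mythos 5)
Your argument is correct: the inclusion $\tau_{\F_R}\subseteq\tau_q$ follows from the universal property of the quotient topology together with the definition of $\F_R$ as pullback-smooth functions, and the separation hypothesis yields, for each point of a quotient-open set $U$, a subbasic $\tau_{\F_R}$-open neighbourhood $f_y^{-1}(\R\setminus\{0\})$ inside $U$, giving the reverse inclusion. The paper itself states this result without proof, citing Proposition 2.1.11 of \'Sniatycki's book, so there is no in-paper argument to compare against; your proof is the standard one and I see no gap.
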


As in the other frameworks presented previously, we will see that orbispaces will fall into a particularly nice subcategory of Sikorski spaces.

\begin{definition}\label{dfn-subcartesian}%[Subcartesian spaces]
A \textbf{subcartesian space} is a Hausdorff Sikorski space $X$ such that every point $x\in X$ has a neighbourhood diffeomorphic to a subset of some $\R^n$.
\end{definition}

\begin{remark} Subcartesian spaces, introduced by Aronszaijn \cite{aronszajn} are sometimes called locally affine Sikorski spaces in the literature, or are defined in other (equivalent) ways. The precise connection between the various definitions is made clear in \cite{Watts}. A brief historical overview on subcartesian spaces can also be found in \cite {Watts-MSc}.
\end{remark}

%\subsubsection{Orbispaces as Sikorski spaces}

\begin{proposition}[Second variation of Main Theorem 1]\label{Prop-Var2} The orbit space $X$ of a proper Lie groupoid $\G\arrows M$ is a subcartesian space.

If two Lie groupoids $\G$ and $\H$ are Morita equivalent, their orbit spaces $X$ and $Y$ are isomorphic as subcartesian spaces.
\end{proposition}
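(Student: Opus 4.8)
The plan is to verify that the pair $(X,\F)$, with $\F:=\CC(X)$ the algebra of Definition \ref{dfn-functions-on-orbispace}, is a Sikorski space satisfying the extra local condition of Definition \ref{dfn-subcartesian}, and then to read off Morita invariance from Theorem \ref{Xissmooth}. First I would check the three Sikorski axioms. Hausdorffness of $X$ is Proposition \ref{hausdorff}. The topology axiom is exactly Theorem \ref{thm-reconstructionX}: it identifies $X$ with $\Spec_r\CC(X)$ carrying the Gelfand topology, which by construction is the weakest one making every $\hat{f}$ with $f\in\F$ continuous, so the orbit topology coincides with the Sikorski topology. Smooth compatibility is immediate from the fact that $\CC(X)$ is a $\CC$-ring (Proposition \ref{Prop-Var1}): for $F\in\CR{n}$ and $f_1,\dots,f_n\in\F$ one has $\big(F(f_1,\dots,f_n)\big)\circ\pi=F(f_1\circ\pi,\dots,f_n\circ\pi)$, which lies in $\CC(M)^{\G-\mathrm{inv}}$. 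Finally, locality follows because if $f\colon X\to\R$ agrees near each point of $X$ with an element of $\F$, then $f\circ\pi$ agrees near each point of $M$ with a smooth function, hence $f\circ\pi\in\CC(M)$ and, being invariant, $f\in\F$.

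The subcartesian condition is the heart of the matter, and I would obtain it directly from the local model constructed in the proof of Theorem \ref{Xissmooth}. There each orbit $\O\in X$ has a neighbourhood $\pi(U)$ that is isomorphic, as a reduced ringed space, to $\big(W/\G_x,\CC(W)^{\G_x}\big)$, the orbit space of the isotropy representation at a point $x\in\O$. Choosing generators $p_1,\dots,p_k$ of the invariant polynomials, Schwarz's theorem (Theorem \ref{thm-schwarz}) yields a closed embedding of differentiable spaces $\tilde{p}\colon W/\G_x\to\R^k$ (Theorem \ref{thm-schwarz-diff}); and since $W/\G_x$ is reduced, Proposition \ref{prop-reduced-affine} identifies its structure sheaf with the sheaf of smooth functions on the image subset $\tilde{p}(W/\G_x)\subset\R^k$. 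Translated back into the Sikorski language, this exhibits $\pi(U)$ as diffeomorphic to the subset $\tilde{p}(W/\G_x)$ of $\R^k$ equipped with its subspace Sikorski structure, which is precisely what Definition \ref{dfn-subcartesian} requires.

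For the final statement, a Morita equivalence induces a homeomorphism $\Phi\colon Y=N/\H\to X=M/\G$ of orbit spaces (Lemma \ref{lemma-moritarmk}), and Theorem \ref{Xissmooth} asserts that $\Phi$ is an isomorphism of reduced differentiable spaces. In particular $\Phi^\ast$ restricts to a bijection $\CC(X)\to\CC(Y)$ intertwining the two algebras of smooth functions, so $\Phi$ is a diffeomorphism of Sikorski spaces. The main obstacle I anticipate is the bookkeeping between the sheaf-theoretic language of differentiable spaces used in Theorem \ref{Xissmooth} and the global-function language of Sikorski spaces in Definition \ref{def-sikorski}: one must check that an isomorphism of reduced ringed spaces genuinely produces a Sikorski diffeomorphism, and that on the local model the Sikorski algebra (restrictions of functions on $\R^k$) agrees with the global sections of $\C^\infty_{W/\G_x}$. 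Both points rest on the reduced-structure statement Proposition \ref{prop-reduced-affine} together with the localization property of $\CC(X)$, and this is where the argument should be written out with care.
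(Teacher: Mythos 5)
Your proposal is correct and follows essentially the same route as the paper: the subcartesian property is obtained from the local model in the proof of Theorem \ref{Xissmooth} together with Schwarz's theorem, and Morita invariance from Lemma \ref{lemma-moritarmk} and Theorem \ref{Xissmooth}. The only cosmetic difference is that the paper gets the agreement of the quotient topology with the Sikorski topology by endowing $X$ with the quotient Sikorski structure and invoking Proposition \ref{prop-toplogiescoincide} via normality of $C^\infty(X)$, whereas you verify the axioms directly using Theorem \ref{thm-reconstructionX} --- which rests on the same normality and proper-function facts.
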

\begin{proof} First of all, as a quotient, $X$ can be endowed with the quotient Sikorski space structure $(X,C^\infty(X))$. The topology induced by $C^\infty(X)$ coincides with the quotient topology because $X$ is a normal space, so it is in the conditions of Proposition \ref{prop-toplogiescoincide}.

We have already seen (e.g. in the proof of Theorem \ref{Xissmooth}) that $(X,C^\infty(X))$ is locally isomorphic to the quotient of a representation of a compact group which, by Schwarz's theorem (Theorem \ref{thm-schwarz}), is isomorphic to a subspace of some $\R^n$. Hence $(X,C^\infty(X))$ is subcartesian.

If two Lie groupoids $\G$ and $\H$ are Morita equivalent, their orbit spaces $X$ and $Y$ are homeomorphic (Lemma \ref{lemma-moritarmk}); the algebra of smooth functions can be seen as the algebra of global sections of the structure sheaf, which is invariant under Morita equivalences (Theorem \ref{Xissmooth}).
\end{proof}

%%%%%%%%%%%%%%%%%%%%%%%
%%%%%%%%%%%%%%%%%%%%%%%
%%%%%%%%%%%%%%%%%%%%%%%
%%%%%%%%%%%%%%%%%%%%%%%
%%%%%%%%%%%%%%%%%%%%%%%
%%%%%%%%%%%%%%%%%%%%%%%
%%%%%%%%%%%%%%%%%%%%%%%
%%%%%%%%%%%%%%%%%%%%%%%
\section{Orbispaces as stratified spaces}
\label{chp4-stratifications}
%%%%%%%%%%%%%%%%%%%%%%%
%%%%%%%%%%%%%%%%%%%%%%%
%%%%%%%%%%%%%%%%%%%%%%%
%%%%%%%%%%%%%%%%%%%%%%%
%%%%%%%%%%%%%%%%%%%%%%%
%%%%%%%%%%%%%%%%%%%%%%%
%%%%%%%%%%%%%%%%%%%%%%%
%%%%%%%%%%%%%%%%%%%%%%%
%%%%%%%%%%%%%%%%%%%%%%%

In this section we study the canonical decomposition of the base and orbit space of a proper Lie groupoid. The decomposition is by smooth pieces that fit together in a prescribed way, as a stratification.

We start by recalling some of the general theory of stratifications associated with proper Lie group actions. Most of this material is rather classical, but we try to clarify some points where the literature can sometimes be confusing. Some references for this exposition are for example \cite{DK,Mather,Pflaum}. We then extend some of the theory of proper Lie group actions to proper Lie groupoids, obtaining in this way some of the results from \cite{hessel} and a principal type theorem for proper Lie groupoids (Theorem \ref{thm-principal_type}).
Throughout this section, we assume that $X$ is a connected topological space and $M$ is a connected manifold of dimension $n$.

\subsection{Stratifications}

\begin{definition}\label{dfn-stratification} Let $X$ be a Hausdorff second-countable paracompact space. A \textbf{stratification} of $X$ is a locally finite partition $\mathcal{S}= \{X_i \ |\ i\in I\}$ of $X$ such that its members satisfy:
\begin{enumerate} \item[1.] Each $X_i$, endowed with the subspace topology, is a locally closed, {\it connected} subspace of $X$, carrying a given structure of a smooth manifold; \item[2.] (frontier condition) the closure of each $X_i$ is the union of $X_i$ with members of $\cS$ of strictly lower dimension.
\end{enumerate}The members $X_i\in \mathcal{S}$ are called the \textbf{strata} of the stratification.
\end{definition}

We will study in detail some stratifications associated with proper Lie group actions and proper Lie groupoids, but let us start with some very simple examples.

\begin{enumerate}\item[1.] Any connected manifold comes with the stratification by only one stratum.
\item[2.] A manifold with boundary can be stratified by its interior and the connected components of the boundary.
\item[3.] If $M$ is compact, then the cone on $M$, \[CM=[0,1)\x M\ /\ \{0\}\x M\] comes with a stratification with two strata: the vertex point and $(0,1)\x M$.
\end{enumerate}

\begin{remark}(Comments on the definition and comparison with the literature)
When $X$ is actually a smooth manifold, it is usual to require that the strata are submanifolds of $X$. Similarly, when $X$ can be equipped with some sort of smooth structure, for example the ones described in Section \ref{chp3-smooth structures}, then it is natural to require some sort of compatibility between the smooth structure and the stratification. Section \ref{chp5-smooth_stratifications} is centred around this interplay.

Across the literature, it is possible to find quite a lot of variations on the definition of a stratification, typically so that the definition is most adapted to the problem under study. For example, some authors do not require $X$ to be Hausdorff, paracompact, or second-countable. The two main conditions used here that are often not mentioned are connectedness of the strata (which is discussed in detail in Remark \ref{rmk-connected_strata}) and the requirement that strata included in the closure of another stratum have strictly lower dimension. Although the latter condition is often not required, without it we would be forced to consider pathological examples (e.g. the closed topologist's sine curve, and even more pathological ones - see \cite[Ex.\ 1.1.12]{Pflaum}) that do not occur anyway in our study of proper actions and proper Lie groupoids. On the other hand, some authors require further conditions on how the strata fit together, for example the conditions of topological local triviality, or the cone condition; these will be discussed in Section \ref{chp5-smooth_stratifications}.
\end{remark}

\begin{remark}[On the condition of connectedness of the strata]\label{rmk-connected_strata}
The condition of connectedness of the strata is important not only as a technical condition but also conceptually. It is often present in the literature only implicitly, built into the definition of the partition that is to be studied. More precisely, one starts with a locally finite partition of $X$ by locally closed submanifolds $\cP$ and then one passes to the partition $\cP^c$ by connected components of $\cP$. The partition $\cP^c$ is then checked to satisfy the frontier condition. One of the usual motivations for passing to connected components is that the elements of $\cP$ might have components of different dimension. However, there is a much more fundamental reason to pass to connected components: in many important examples (such as the partition by orbit types - see Definition \ref{dfn-orbit-types}) the condition of frontier might not be satisfied unless we pass to connected components - see Example \ref{counterexample_not_connected_strata}.

At a more conceptual level, the condition of connectedness also allows for a global implementation of Mather's approach to stratifications using germs of submanifolds (see e.g. \cite{Mather,Pflaum}), but without making reference to germs. We explain below the precise connection with Mather's approach. The main point to do so, present in the next lemma, is to understand when two partitions may give rise, after passing to connected components, to the same stratification.

\end{remark}

% connected strata. This may be seen just as a simple "assumption" which is often present in the literature either explicitely, or just implicitely. % In the second case, one usually encounters a locally finite partition $\cP$ of $M$ by submanifolds and then one automatically passes to the % new partition $\cP^{c}$ consisting of the connected components of the members of $\cP$. 

% Working with  connected strata is equivalent to working with Mather's germ-like notion of stratification (see e.g. \cite{Pflaum}). This 
% assumption is often ommited in the literature, sometimes with a good reason, but often not- it is just implicit in the fact that one passes 
% from "stratifications" which are natural but possibly with disconnected strata to stratifications in our sense. This is done by considering the % connected components of the "strata". For such situations, the following is very useful:

\begin{lemma}\label{stratif-lemma-1} Let $\cP_{i}$, $i\in \{1, 2\}$, be two partitions of $X$ by smooth manifolds (whose connected components may have different dimensions) with the subspace topology; denote by $\cP_{i}^{c}$ the new partition obtained by taking the connected components of the members of $\cP_i$. Then $\cP_{1}^{c}= \cP_{2}^{c}$ if and only if, for each $x\in X$, there exists an open neighbourhood $U$ of $x$ in $X$ such that
\[ P_1\cap U= P_2\cap U,\]
where $P_i\in \cP_i$ are the members containing $x$.
\end{lemma}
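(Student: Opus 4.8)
The statement is an equivalence, so the plan is to prove the two implications separately, with the substance concentrated in the direction that starts from local agreement. Throughout, for a point $z\in X$ write $P_i(z)$ for the (unique) member of $\cP_i$ containing $z$; in particular $P_i=P_i(x)$ in the notation of the statement. Fix $x\in X$ and let $C_i$ denote the connected component of $P_i$ through $x$, i.e.\ the member of $\cP_i^c$ containing $x$; the goal is to decide when $C_1=C_2$, and it suffices to do this for every $x$.

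For the direction assuming the local agreement, I would show that $S:=C_1\cap C_2$ is nonempty, open, and closed in the connected manifold $C_1$, so that $S=C_1$ and hence $C_1\subseteq C_2$, with equality following by symmetry. Nonemptiness is immediate since $x\in S$. For openness, take $y\in S$: then $y\in C_1\subseteq P_1$ and $y\in C_2\subseteq P_2$, so $P_1(y)=P_1$ and $P_2(y)=P_2$ by disjointness of the partition members, and the hypothesis gives a neighbourhood $U_y$ with $P_1\cap U_y=P_2\cap U_y$. Since $C_1$ is a manifold carrying the subspace topology, I can pick a connected neighbourhood $W$ of $y$ in $C_1$ with $W\subseteq U_y$; then $W\subseteq P_1\cap U_y=P_2\cap U_y\subseteq P_2$, and as $W$ is connected and contains $y\in C_2$ it lies in $C_2$, whence $W\subseteq S$. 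For closedness, take $y$ in the closure of $S$ within $C_1$; then $P_1(y)=P_1$, and choosing a connected $W\ni y$ in $C_1$ inside the neighbourhood $U_y$ from the hypothesis at $y$ gives $W\subseteq P_1\cap U_y=P_2(y)\cap U_y\subseteq P_2(y)$. Picking $z\in W\cap S$ (nonempty since $y$ is in the closure of $S$), we have $z\in C_2\subseteq P_2$ and $z\in W\subseteq P_2(y)$, so disjointness forces $P_2(y)=P_2$; thus $W\subseteq P_2$, and connectedness of $W$ together with $z\in C_2$ yields $W\subseteq C_2$, so $y\in S$.

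For the converse, assume $\cP_1^c=\cP_2^c$ and let $C$ be the common component of $P_1$ and $P_2$ through $x$. The key elementary fact is that a smooth manifold (even with connected components of differing dimensions) is locally connected, so its connected components are open; hence $C$ is open in $P_i$ for the subspace topology, i.e.\ $C=V_i\cap P_i$ for suitable open sets $V_i\subseteq X$. Setting $U=V_1\cap V_2$, which contains $x$, and using $C\subseteq V_1\cap V_2$, a direct computation gives $P_1\cap U=C=P_2\cap U$, which is exactly the asserted local agreement.

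I expect the open/closed argument in the first implication to be the main obstacle: it requires moving carefully between the ambient neighbourhoods $U_y$ and connected neighbourhoods taken inside the strata, and repeatedly invoking disjointness of the partition to pin down which member of $\cP_2$ a given small connected set belongs to. The converse, by contrast, collapses to the single observation that components of a locally connected space are open, and notably neither direction appears to require local finiteness of the partitions.
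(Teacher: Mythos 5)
Your proof is correct and follows essentially the same route as the paper: both directions rest on local connectedness of manifolds, with the converse being the one-line observation that components are open. The only cosmetic difference is in the harder implication, where you show $C_1\cap C_2$ is open \emph{and} closed in $C_1$, while the paper shows only openness of all intersections $A_1\cap B$ for $B\in\cP_2^c$ and then invokes that a connected space cannot be partitioned into several nonempty open pieces; the two arguments are interchangeable.
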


\begin{proof} For the direct implication, let $x\in X$ and let $A_i\in \cP_{i}^{c}$ such that $x\in A_i$. Let $P_i\in \cP_i$ be the members containing $x$. Then there are open subsets $U_i$ of $X$ such that $U_i$ contains $A_i$ but not the other connected components of $P_i$. The open neighbourhood $U=U_1\cap U_2$ satisfies the second condition of the statement. 

To prove the converse implication, it suffices to show that, for $A_i\in \cP_{i}^{c}$ ($i\in \{1, 2\}$) with $A_1\cap A_2\neq \emptyset$, one must have $A_1= A_2$. Let $P_i\in \cP_i$ so that $A_i$ is a connected component of $P_i$. We first show that $A_1\cap A_2$ is open in $A_1$. Let $a\in A_1\cap A_2$. By hypothesis, we find a neighbourhood $U$ of $a$ so that $U\cap P_1= U\cap P_2$. Since $A_1$ is locally connected, we may assume that $U\cap A_1$ is connected. Then, since $U\cap A_1\subset U\cap P_1= U\cap P_2\subset P_2$, we know that $U\cap A_1$ sits inside a connected component of $P_2$. Since $a\in U\cap A_1$ and $A_2$ is the connected component of $P_2$ containing $a$, we must have 
$U\cap A_1\subset A_2$, hence $U\cap A_1\subset A_1\cap A_2$. This proves that $A_1\cap A_2$ is open in $A_1$. Note that this implies that $\{A_1\cap B\ |\ B\in \cP_{2}^{c}\}$ is a partition of $A_1$ by open subspaces hence, by the connectedness of $A_1$, it must coincide with one of the members of this family - and that is necessarily the non-empty $A_1\cap A_2$. Hence $A_1\subset A_2$ and the reverse inclusion is proved similarly.
\end{proof}

\begin{definition} A \textbf{decomposition} of a Hausdorff second-countable topological space $X$ is a partition $\cP$ satisfying all conditions from Definition \ref{dfn-stratification} except possibly connectedness of the strata.
\end{definition}

\begin{example} Some decompositions cannot be made into a stratification in our sense by passing to connected components. For example, consider the decomposition of the plane $\R^2$ into three pieces $A$, $B$, and $C$: $A$ equals the origin $\{0\}$,  $B$ equals the union of all circles centred at the origin, of radius equal to $1/n$, with $n\in \mathbb{N},$ and $C=\R^2\backslash A\cup B$. Passing to connected components, we would lose local finiteness of the partition.
\end{example}

Mather's approach using germs leads to the following alternative definition of stratification (cf. \cite{Mather,Pflaum}), that we designate by germ-stratification.

\begin{definition} A \textbf{germ-stratification} of a topological space $X$ is a rule which assigns to each $x\in X$ a germ $\S_x$ of a closed subset of $X$, such that, for each $x\in X$, there is a neighbourhood $U$ of $x$ and a decomposition $\cP$ of $U$, with the property that for all $y\in U$, $\S_y$ is the germ of the piece of $\cP$ containing $y$. 
\end{definition}

Given any decomposition $\S$ on $X$, we can produce a germ-stratification by assigning to each $x\in X$ the germ of the piece of the decomposition containing $x$. A result of Mather \cite[Lemma 2.2]{Mather} states that any germ-stratification arises in this way. Lemma \ref{stratif-lemma-1} guarantees that as long as we restrict to those decompositions that are stratifications and their corresponding germ-stratifications, this correspondence is indeed bijective. Accordingly, germ-stratifications are usually simply called stratifications in the literature.

\begin{definition}Given a stratification $\S$ there is a natural partial order on the strata given by \[S\leq T\ \Leftrightarrow\ S\subset \overline{T}.\] The union of all maximal strata (with respect to this order) forms a subspace $M^{\cS-\mathrm{reg}}\subset M$ called the \textbf{$\cS$-regular part of $M$}.
\end{definition}

 The following lemma shows that maximality of a stratum is a local condition (cf. \cite{PMCT3}).

\begin{lemma}\label{stratif-lemma-0} A stratum $S\in \cS$ is maximal if and only if it is open. The regular part  $M^{\cS-\mathrm{reg}}$ is open and dense in $M$.
\end{lemma}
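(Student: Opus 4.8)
The plan is to first prove the pointwise equivalence ``$S$ maximal $\iff$ $S$ open'' and then deduce the two assertions about $M^{\cS-\mathrm{reg}}$ from it. The only inputs I expect to need are the three defining features of a stratification: that $\cS$ is a \emph{locally finite} partition into \emph{disjoint} locally closed strata, and the \emph{frontier condition}. The crucial reformulation of the frontier condition I would extract at the outset is that the closure of any stratum is a \emph{union of whole strata}, namely $\overline{S} = S \cup \bigcup\{S' \in \cS : S' \subset \overline{S},\ \dim S' < \dim S\}$; in particular, if $S \subset \overline{T}$ with $S \neq T$ then $\dim S < \dim T$, so maximality of $S$ amounts to saying there is no stratum $T \neq S$ with $S \subset \overline{T}$.

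The implication ``open $\Rightarrow$ maximal'' is the easy half: if $S$ is open and $S \subset \overline{T}$, then picking $x \in S$, the open set $S$ is a neighbourhood of $x \in \overline{T}$, hence meets $T$; disjointness of the partition forces $S = T$, so $S$ is maximal. The reverse implication ``maximal $\Rightarrow$ open'' is where the work lies, and I would argue locally. Fix $x \in S$ and use local finiteness to choose a neighbourhood $U$ of $x$ meeting only finitely many strata $S = S_0, S_1, \dots, S_k$. For each $i \neq 0$ with $x \in \overline{S_i}$, the frontier reformulation shows that the stratum containing $x$ (namely $S$) must lie in $\overline{S_i}$, giving $S \subset \overline{S_i}$ with $S \neq S_i$, contradicting maximality. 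Hence $x \notin \overline{S_i}$ for every $i \neq 0$, and intersecting $U$ with the (finitely many) open complements $X \setminus \overline{S_i}$ produces an open neighbourhood of $x$ contained in $S$. As $x$ was arbitrary, $S$ is open.

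Granting this equivalence, openness of $M^{\cS-\mathrm{reg}}$ is immediate, since it is a union of open (maximal) strata. For density I would argue that every nonempty open set meets a maximal stratum. Reducing to a basic open $U$ that (by local finiteness) meets only finitely many strata, let $T$ be one of \emph{maximal dimension} among those meeting $U$. Running the frontier argument above verbatim shows that no $y \in T \cap U$ can lie in $\overline{S_i}$ for another stratum $S_i$ meeting $U$ (such an $S_i$ would have $\dim S_i > \dim T$, contradicting the choice of $T$), so $T \cap U$ is open in $M$. Finally, if $T$ were not maximal there would be a stratum $R \neq T$ with $T \subset \overline{R}$; then any $p \in T \cap U$ lies in $\overline{R}$, so the open set $T \cap U$ would meet $R$, contradicting $(T\cap U)\cap R \subset T \cap R = \emptyset$. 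Thus $T$ is maximal and $U \cap M^{\cS-\mathrm{reg}} \supset T \cap U \neq \emptyset$.

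I expect the main obstacle to be the ``maximal $\Rightarrow$ open'' step, specifically making rigorous that the stratum through a boundary point $x$ of $\overline{S_i}$ is \emph{contained} in $\overline{S_i}$; this is exactly where the frontier condition must be used in its ``closure is a union of strata'' form rather than merely as a statement about dimensions, and where local finiteness is essential to keep the intersection of complements open. The density argument is then a reuse of the same local analysis combined with the disjointness trick, so no genuinely new difficulty should arise there.
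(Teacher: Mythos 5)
Your proof is correct, and while it follows the same overall skeleton as the paper's (first the equivalence, then openness and density of the regular part), the individual arguments are genuinely different. For ``maximal $\Rightarrow$ open'' the paper argues by contradiction with a sequence $x_n\to x$, $x_n\notin S$, passing to a subsequence lying in one of the finitely many strata meeting a neighbourhood; you instead intersect a locally finite neighbourhood with the complements of the closures $\overline{S_i}$, which is a purely open-set version of the same idea and avoids any appeal to sequences (so it would work verbatim on a non--first-countable space). For ``open $\Rightarrow$ maximal'' the paper invokes the dimension clause of the frontier condition (an open stratum has full dimension, so cannot sit in the closure of a higher-dimensional one), whereas you only use disjointness of the partition together with the fact that a neighbourhood of a point of $\overline{T}$ must meet $T$ --- slightly more economical, since you never need the dimension comparison. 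The largest divergence is in the density proof: the paper takes an arbitrary point, builds a strict chain $S_1<S_2<\cdots<S_k$ that cannot be continued (terminating because dimensions strictly increase), and observes $x\in\overline{S_k}$ with $S_k$ maximal; you instead show that every small open set contains a nonempty open piece of a stratum of locally maximal dimension and then verify that this stratum is maximal in the partial order. Both are valid; the chain argument is shorter, while yours has the side benefit of exhibiting an explicit open subset of $M^{\cS-\mathrm{reg}}$ inside every open set rather than just a closure point.
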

\begin{proof} Assume that $S$ is a maximal stratum which is not open. Let $x\in S$ lie outside the interior of $S$. Choose a neighbourhood $V$ of $x$ in $M$ which intersects with only finitely many members of $\cS$. Since $x$ is not in the interior of $S$, by choosing
a sequence of neighbourhoods $V\supset V_0\supset V_1\supset \ldots $ that shrink to $x$,  we can find $x_n\in V_n\setminus S$ for each $n$. We obtain in this way a sequence $(x_n)_{n\geq 0}$ converging to $x$, with $x_n\in V\setminus S$. Each $x_n$ belongs to one of the finitely many members of $\cS$ which meets $V$, so after passing to a subsequence we may assume that $x_n\in T$ for all $n$, for some $T\in \cS$. It follows that $x\in \overline{T}$, hence $S\cap \overline{T}\neq \emptyset$, and so $S\subset \overline{T}$. From the maximality of $S$ we have that $S= T$, which contradicts the fact that $x_n$ is not in $S$. 

For the converse, assume that $S$ is open and $S\subset \overline{T}$ for some $T\in \cS$. If $S\neq T$, it follows that $S$ is a stratum of dimension strictly less than that of $T$, which cannot be the case since $S$ is open.

The regular part $M^{\cS-\mathrm{reg}}$ is clearly open, being a union of open strata. Given an arbitrary $x\in M$, it belongs to at least one stratum; consider a strict chain $x\in S_1 < S_2< \ldots < S_k$ which cannot be continued. Then $S_k$ is maximal and $x\in \overline{S}_k$, hence $x$ is in the closure of $M^{\cS-\mathrm{reg}}$, proving that this space is dense. 
\end{proof}

Some natural questions about the regular part of $M$ come to mind; how different is it from $M$? Is it connected? The following lemma tries to partially address these questions.

%Associated to any stratification we have a filtration on $M$:
%\[ M^{\cS-\textrm{reg}}=C_0 \subset C_1 \subset \ldots \subset C_n= M\]
%where $C_{k}$ is the union of all strata of codimension at most $k$.

%We can recover the stratification $\cS$ from the filtration, by considering the connected components of the codimension $k$-parts:
%\begin{equation}\label{sigma-k} \Sigma_{k}:= C_k\setminus C_{k-1} .
%\end{equation}

%Also, the axioms on $\cS$ translate into properties of the filtration- e.g. that $C_0$ is dense or, as a consequence of the frontier axiom, all the members of the filtration are open in $M$. Note that filtering by codimension instead of dimension is essential in important infinite dimensional examples (when the strata have finite codimension). For the examples treated here, it is simply more natural. For instance:

\begin{lemma}\label{stratif-lemma-4}  Let $\cS$ be a stratification on a smooth manifold $M$, with no strata of codimension $1$. Then the $\cS$-regular part of $M$, denoted by $M^{\mathrm{reg}}$, is connected.
\end{lemma}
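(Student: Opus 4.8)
The plan is to exploit the standing assumption that $M$ is connected together with a transversality argument: the complement of $M^{\mathrm{reg}}$ is assembled from strata of codimension at least two, and such a set is too thin to disconnect a manifold, because a generic path can be pushed off it.

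First I would pin down the complement. By Lemma \ref{stratif-lemma-0}, $M^{\mathrm{reg}}$ is open and dense, and a stratum is maximal precisely when it is open; hence $\Sigma:=M\setminus M^{\mathrm{reg}}$ is the closed union of all non-open strata. A stratum of dimension $n$ would be an $n$-dimensional embedded submanifold of the $n$-manifold $M$, hence open; so every non-open stratum has dimension at most $n-1$, and the hypothesis that there are no codimension $1$ strata forces dimension at most $n-2$. Thus $\Sigma$ is a locally finite union of embedded submanifolds of $M$, each of codimension at least $2$.

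Next I would join two given points $p,q\in M^{\mathrm{reg}}$ by a path avoiding $\Sigma$. Since $M$ is a connected manifold it is path-connected, so I pick a smooth path $\gamma\colon[0,1]\to M$ from $p$ to $q$. Its image is compact; choosing a compact neighbourhood $K$ of $\gamma([0,1])$, local finiteness guarantees that $K$ meets only finitely many strata $S_1,\dots,S_m\subset\Sigma$, each of codimension at least $2$. Because the endpoints already lie off $\Sigma$, the Thom transversality theorem lets me perturb $\gamma$ relative to its endpoints, by an arbitrarily $C^1$-small amount (so that the image stays inside $K$), to a path $\gamma'$ transverse to $S_1,\dots,S_m$ simultaneously. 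Transversality of the $1$-dimensional domain to a submanifold $S_j$ of dimension at most $n-2$ forces $\gamma'([0,1])\cap S_j=\emptyset$, since $1+\dim S_j\le n-1<n$; hence $\gamma'$ avoids $\Sigma$ and lies entirely in $M^{\mathrm{reg}}$. As $p$ and $q$ were arbitrary, $M^{\mathrm{reg}}$ is path-connected, hence connected.

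The main obstacle is the bookkeeping in the transversality step: one must keep the perturbation small enough that the path stays inside the compact set $K$ on which only finitely many strata intervene, and one must carry it out relative to the endpoints, which is legitimate precisely because $p,q\notin\Sigma$, so $\gamma$ is already transverse to each $S_j$ near $0$ and $1$. The only structural input required is that the strata be genuine embedded submanifolds of $M$, which is condition (1) of Definition \ref{dfn-stratification} and is what gives the codimension bound its meaning; everything else is the standard fact that a closed, locally finite union of submanifolds of codimension at least $2$ cannot separate a connected manifold.
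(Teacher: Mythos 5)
Your proof is correct and follows essentially the same route as the paper's: connect two regular points by a smooth path, use compactness and local finiteness to reduce to finitely many strata, and apply the transversality homotopy theorem so that the perturbed path, being $1$-dimensional, misses all strata of codimension at least $2$. Your version is in fact slightly more careful than the paper's, since you explicitly identify the complement of $M^{\mathrm{reg}}$ as the union of the non-open strata and note that the perturbation must be carried out relative to the endpoints and kept within the compact set where only finitely many strata intervene.
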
 

\begin{proof}
Let $x$ and $y$ be two points of $M^{\mathrm{reg}}$ and consider a smooth curve $\gamma:[0,1]\to M$ connecting $x$ and $y$ (recall that by $M$ is connected by assumption). The image of $\gamma$ is compact, so it can be covered by a finite number of open subsets of $M$, each of which intersects finitely many strata. Let $U$ be the union of those open subsets. 
Then by the Transversality homotopy theorem (cf. \cite[p.\ 70]{GP}), it is possible to find a map $\gamma':[0,1]\to U$ which is homotopic to $\gamma$ and transverse to all the finitely many strata of codimension greater than $1$ in $U$, which means that it misses them. Since there are no strata of codimensions $1$, the image of the map $\gamma'$ must be completely contained in the union of the strata of codimension $0$, which is precisely $M^{\mathrm{reg}}$.
\end{proof}

%\begin{remark} The filtration by dimension gives rise to a formulation of stratifications that is often more common in algebraic geometry: as
%filtrations
%\[ M= D_n \supset D_{n-1} \supset \ldots \supset D_{0}\]
%by closed subspaces so that "the strata $D_i\setminus D_{i-1}$" are i-dimensional manifolds. The relationship with the previous formulation is 
%\[ D_i= M\setminus C_{n-i-1},\]
%(so that $D_{i}\setminus D_{i-1}= C_{n-i}\setminus C_{n-i-1}= \Sigma_{n-i}$ is $i$-dimensional). In this formulation, the regular part becomes $M\setminus D_{n-1}$.
%\end{remark}

\subsection{Proper group actions: the canonical stratification.}\label{sec-proper_actions_strat}

We recall an important example of a stratification, associated to a proper action of a Lie group $G$ on a manifold $M$. This example serves as both motivation and background for the study of stratifications on proper groupoids.  We take the standard approach of first defining a natural partition $\cP$ on $M$ associated to the action and then passing to the partition by connected components $\cP$. For the whole of this section, let $G$ be a Lie group acting properly on a smooth manifold $M$.

%\paragraph{Orbit types and the canonical stratification}

\begin{definition}\label{dfn-orbit-types} The \textbf{orbit type equivalence} is the equivalence relation on $M$ given by
\[ x\sim y\  \Longleftrightarrow \  G_x\sim G_y \ \ (\textrm{i.e.} \ G_x\ \textrm{and}\ G_y\ \textrm{are\ conjugate\ in} \ G) .\]
The \textbf{partition by orbit types}, denoted by $\cP_{\sim}(M)$, is the  resulting partition  (each member of $\cP_{\sim}(M)$ is called an \textbf{orbit type}).
\end{definition}

The reason for the terminology is that $x\sim y$ is equivalent to the fact that the orbits through $x$ and $y$ are diffeomorphic as $G$-manifolds. The members of this partition can be indexed by conjugacy classes $(H)$ of subgroups $H$ of $G$. To each such conjugacy class corresponds the orbit type \[M_{(H)}=\{x\in M\ |\ G_x\sim H \} \in \cP_{\sim}(M).\]

%\begin{remark} 
Points in the same orbit belong to the same orbit type, so the partition $\cP_{\sim}(M)$ descends to a partition by orbit types $\cP_{\sim}(M/G)$ of the orbit space. 
%\end{remark}
Passing to the connected components of the members of  $\cP_{\sim}(M)$ and of $\cP_{\sim}(M/G)$, we obtain stratifications of $M$ and of $M/G$. Moreover, the projection of the strata on $M$  by the quotient map are the strata on $M/G$. In other words, passing to the quotient commutes with taking connected components of the orbit types. A proof of these facts can be found for example in \cite{DK,Pflaum}.

\begin{definition} \textbf{The canonical stratification} on $M$ (respectively $M/G$) associated to the action of $G$ is the partition of $M$ (respectively of $M/G$) by connected components of the members of  $\cP_{\sim}(M)$ (respectively $\cP_{\sim}(M/G)$) and is denoted by $\cS_{G}(M)$ (respectively $\cS(M/G)$).
\end{definition}

There are other partitions of $M$ that also induce the same stratification $\cS(M/G)$, by passing to connected components. Let us start with the simplest one to describe.

\begin{definition}\label{dfn-isotropy-iso-type} The \textbf{partition by isotropy  isomorphism classes} on $M$, denoted by $\cP_{\cong}(M)$, is defined by the  equivalence relation on $M$ given by
\[ x\cong y\ \Longleftrightarrow \ G_x\cong G_y\  (\textrm{Lie\ group\ isomorphism}) .\]
\end{definition}

The members of this partition can be indexed by isomorphism classes $[H]$ of subgroups $H$ of $G$. To each such conjugacy class corresponds the orbit type \[M_{[H]}=\{x\in M\ |\ G_x\cong H \} \in \cP_{\cong}(M).\]

\begin{proposition}\label{prop-also-induces-strat}
After passing to connected components, $\cP_{\cong}(M)$ induces the same stratification on $M$ as $\cP_{\sim}(M)$.
\end{proposition}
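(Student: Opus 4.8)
The plan is to apply Lemma \ref{stratif-lemma-1} to the two partitions $\cP_1 = \cP_{\sim}(M)$ and $\cP_2 = \cP_{\cong}(M)$. Since conjugate subgroups are in particular isomorphic, every orbit type $M_{(H)}$ is contained in the isotropy isomorphism class $M_{[H]}$; thus $\cP_{\cong}(M)$ is coarser than $\cP_{\sim}(M)$ and the inclusion $M_{(G_x)} \subseteq M_{[G_x]}$ holds for every $x$. By Lemma \ref{stratif-lemma-1}, it therefore suffices to produce, for each $x \in M$, an open neighbourhood $U$ of $x$ on which the two members containing $x$ agree, i.e. $M_{[G_x]} \cap U \subseteq M_{(G_x)} \cap U$.

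First I would fix $x$ and invoke the Slice theorem (Theorem \ref{thm-tube}), which---properness guaranteeing that $G_x$ is compact---provides a $G$-invariant tube $U \cong G \times_{G_x} B$, with $B$ a $G_x$-invariant neighbourhood of the origin in the isotropy representation $\N_x$. A direct computation shows that the isotropy group of a point $[g,b] \in G \times_{G_x} B$ equals $g (G_x)_b g^{-1}$, where $(G_x)_b$ is the stabiliser of $b$ for the $G_x$-action on $B$; in particular every isotropy group occurring in $U$ is $G$-conjugate to a closed subgroup $(G_x)_b$ of the compact group $G_x$. The problem then reduces to the following claim: if $b \in B$ satisfies $(G_x)_b \cong G_x$ as Lie groups, then in fact $(G_x)_b = G_x$. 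Granting this, a point $[g,b] \in M_{[G_x]} \cap U$ has $(G_x)_b \cong G_x$, hence $(G_x)_b = G_x$, so its isotropy is $g G_x g^{-1}$, conjugate to $G_x$, and therefore $[g,b] \in M_{(G_x)}$; combined with the trivial reverse inclusion this gives $M_{[G_x]} \cap U = M_{(G_x)} \cap U$.

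The heart of the argument---and the step I expect to be the main obstacle---is the purely group-theoretic fact that a compact Lie group admits no proper closed subgroup isomorphic to itself. I would prove this by a dimension-and-components count: a closed subgroup $(G_x)_b \le G_x$ with $(G_x)_b \cong G_x$ has the same dimension as $G_x$, hence, its Lie algebra being all of that of $G_x$, it is open, therefore contains the identity component $G_x^{0}$ and is a union of connected components of $G_x$; as $G_x$ is compact the component group $G_x/G_x^{0}$ is finite, and the isomorphism forces $(G_x)_b$ and $G_x$ to have equally many components, whence $(G_x)_b = G_x$. This establishes the claim, and Lemma \ref{stratif-lemma-1} then yields that $\cP_{\cong}(M)$ and $\cP_{\sim}(M)$ induce the same stratification on $M$.
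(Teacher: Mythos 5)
Your proposal is correct and follows essentially the same route as the paper: reduce via Lemma \ref{stratif-lemma-1} to a local comparison, use the Tube theorem to pass to the isotropy representation, and conclude with the group-theoretic fact that a compact Lie group has no proper closed subgroup isomorphic to itself (the paper's Lemma \ref{stratif-lemma-2}), proved by the same Lie-algebra-plus-component-count argument.
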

\begin{proof}
Using Lemma \ref{stratif-lemma-1}, it is enough to check that given any $x\in M$, there is an open neighbourhood $U$ of $x$ such that $M_{[G_x]}\cap U = M_{(G_x)}\cap U$. Using the normal form $[G\x_{G_x}\N_x]$ around $x$ given by the Tube theorem (Theorem \ref{thm-tube}), and since points belonging to the same orbit are equivalent for both $\cong$ and $\sim$, it is enough to compare $\cong$ and $\sim$ for points on a neighbourhood of $x$ in $\N_x$. Therefore, we have reduced the problem to checking that given a representation $V$ of a compact subgroup $K\subset G$, it holds that $V_{[K]}=V_{(K)}$. The following lemma guarantees that this is always the case.
\end{proof}

\begin{lemma}\label{stratif-lemma-2} If $H$ is a closed subgroup of a compact Lie group $K$ with the property that $H$ is isomorphic (or just diffeomorphic) to $K$, then $H= K$.
\end{lemma}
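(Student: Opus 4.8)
The plan is to reduce the whole statement to a count of dimensions together with a count of connected components, exploiting only the fact that a diffeomorphism preserves both of these. Since a group isomorphism is in particular a diffeomorphism, it suffices to treat the weaker hypothesis, so assume merely that $H$ is diffeomorphic to $K$. First I would invoke Cartan's closed subgroup theorem to regard the closed subgroup $H$ as an \emph{embedded} Lie subgroup of $K$; in particular $H$ is an embedded submanifold. As $H$ is diffeomorphic to $K$, the two manifolds have the same dimension, so $H$ is an embedded submanifold of $K$ of full dimension and is therefore open in $K$ (in a submanifold chart the defining codimension is zero).

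Next I would exploit that $H$ is an open subgroup. Its complement is the union of the cosets $gH$ with $g\notin H$, each of which is open because left translation is a homeomorphism; hence $H$ is also closed. Being open, $H$ meets every neighbourhood of the identity, so $K_0\cap H$ is open, closed and nonempty in the connected identity component $K_0$, forcing $K_0\subseteq H$. Consequently $H$ is a union of cosets of $K_0$, i.e.\ $H/K_0$ is a subgroup of the component group $K/K_0$.

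The final step is a counting argument, and this is precisely where compactness of $K$ enters. Because $K$ is compact it has only finitely many connected components, so $K/K_0$ is a \emph{finite} group. A diffeomorphism $H\cong K$ matches connected components bijectively, so $H$ and $K$ have the same finite number of components, i.e.\ $|H/K_0|=|K/K_0|$. A subgroup of a finite group having the same cardinality as the whole group must equal it, so $H/K_0=K/K_0$ and therefore $H=K$.

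I do not anticipate a serious obstacle here; the only point requiring genuine care is that the hypothesis provides merely a diffeomorphism rather than a group isomorphism, so I must never use any multiplicative property of that map and may extract from it only the equality of dimensions and of component counts. The equality of dimensions is what forces openness, while the equality of component counts is what closes the remaining gap when $K$ is disconnected; compactness is used exactly to make the latter count finite, and this finiteness is essential, since an infinite group can contain a proper subgroup of equal cardinality.
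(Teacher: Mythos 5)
Your proof is correct and follows essentially the same route as the paper's: first use the equality of dimensions to conclude that the identity components agree (the paper phrases this as $H$ and $K$ having the same Lie algebra, hence $H^0=K^0$, while you phrase it as $H$ being a full-dimensional embedded, hence open, subgroup containing $K_0$), and then use compactness to reduce the remaining gap to a count of the finitely many connected components. The extra care you take about using only the diffeomorphism (never the group structure of the identification) matches the parenthetical ``or just diffeomorphic'' in the statement and is handled the same way in the paper.
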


\begin{proof}
For dimensional reasons we see that $H$ and $K$ must have the same Lie algebra; from this it follows that their connected components containing the identity, $H^0$ and $K^0$, coincide. The fact that $H$ is diffeomorphic to $K$ implies that they also have the same (finite) number of connected components, from which the statement follows.
\end{proof}

Yet another partition that induces the canonical stratification is the partition by \textbf{local types} (cf. \cite[Def.\ 2.6.5]{DK}).
Its members are indexed by equivalence classes of pairs $(H, V)$, where $H$ is a subgroup of $G$ and $V$ is a representation of $H$; two such pairs $(H, V)$ and $(H', V')$ are equivalent if $H$ is conjugate to $H'$ by some $g\in G$ and $V\cong V'$ by an isomorphism compatible with $\mathrm{Ad}_{g}$.

\begin{definition} The \textbf{partition by local types} on $M$, denoted by $\cP_{\approx}(M)$, is defined by the  equivalence relation on $M$ given by
\[ x\approx y\ \Longleftrightarrow \  (G_x, \mathcal{N}_x)\approx (G_y, \mathcal{N}_y) ,\]
where $\mathcal{N}_x$ is the normal representation at $x$.
\end{definition}

%\begin{remark}
The reason for the terminology is that, as a consequence of the Tube theorem (Theorem \ref{thm-tube}), $x$ and $y$  belong to the same local type if and only if the orbits through $x$ and $y$ admit equivariantly-diffeomorphic neighbourhoods.
%\end{remark}
By the same arguments as in the proof of Proposition \ref{prop-also-induces-strat} above, we conclude the following.

\begin{proposition}
After passing to connected components, $\cP_{\approx}(M)$ induces the canonical stratification on $M$.
\end{proposition}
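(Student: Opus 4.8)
The plan is to follow the template of the proof of Proposition \ref{prop-also-induces-strat}, using Lemma \ref{stratif-lemma-1} to reduce the comparison of the two partitions to a purely local statement inside a slice, and then identifying the relevant local points by means of Lemma \ref{stratif-lemma-2}. First I would record the refinement direction: if $x\approx y$ then in particular $G_x$ and $G_y$ are conjugate, so $x\sim y$. Hence every local type is contained in an orbit type, i.e.\ $\cP_{\approx}(M)$ is finer than $\cP_{\sim}(M)$, and the real content of the statement is that the two partitions coincide locally. By Lemma \ref{stratif-lemma-1} it suffices to produce, for each $x\in M$, an open neighbourhood $U$ of $x$ with $M_{\approx x}\cap U=M_{\sim x}\cap U$, where $M_{\approx x}$ and $M_{\sim x}$ denote the local type and the orbit type through $x$. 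The inclusion $M_{\approx x}\cap U\subset M_{\sim x}\cap U$ holds for any $U$ by the refinement just noted, so only the reverse inclusion requires work.

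Next I would pass to the normal form. Using the Tube theorem (Theorem \ref{thm-tube}), I take $U$ to be a tube $G\x_{K}B$ around the orbit of $x$, where $K:=G_x$ is compact and $B$ is a $K$-invariant ball around $0$ in $V:=\N_x$. Since both $\sim$ and $\approx$ are $G$-invariant and constant along orbits, the comparison on $U$ reduces to a comparison on the slice $B\subset V$, on which $K$ acts linearly. Concretely, the goal becomes showing that a point $v\in B$ lies in $M_{\sim x}$ if and only if it lies in $M_{\approx x}$.

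The key computation runs as follows. If $v\in B$ has the same orbit type as $x$, then its isotropy $G_v=K_v$ is conjugate in $G$ to $K$; since $K_v$ is a closed subgroup of the compact group $K$ and is isomorphic to $K$, Lemma \ref{stratif-lemma-2} forces $K_v=K$. Thus the orbit-type points of $x$ inside the slice are exactly the $K$-fixed points near $0$. At such a fixed point the $K$-orbit is trivial, the tangent to the $G$-orbit has the same dimension as at $x$, and the normal (slice) representation at $v$ is the full space $V=\N_x$; that is, $(G_v,\N_v)=(K,\N_x)$ as pairs. Hence $v\approx x$, which yields $M_{\sim x}\cap U\subset M_{\approx x}\cap U$ and completes the desired equality.

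The step I expect to require the most care is the identification of the normal representation at the $K$-fixed points of the slice: one must verify that the normal representation of the ambient $G$-action at $[e,v]$ coincides, \emph{as a $K$-representation}, with $\N_x$, and not merely that it has the same dimension. This is where the linear structure of the slice and the vanishing $\mathfrak{k}\cdot v=0$ (equivalently $K\cdot v=\{v\}$) are used, and it is the only point at which the extra datum of the normal representation, beyond the isotropy group, genuinely enters the argument; everywhere else the reasoning is formally identical to that of Proposition \ref{prop-also-induces-strat}.
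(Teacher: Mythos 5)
Your proof is correct and follows essentially the same route as the paper, which simply invokes "the same arguments as in the proof of Proposition \ref{prop-also-induces-strat}": reduce to a local comparison via Lemma \ref{stratif-lemma-1}, pass to the slice via the Tube theorem, and use Lemma \ref{stratif-lemma-2} to identify the relevant points with the $G_x$-fixed points of the slice. The one place you go beyond the paper's terse reference --- explicitly checking that at a $K$-fixed point of the slice the normal representation is the whole of $\N_x$ as a $K$-representation, which is exactly where the local-type comparison needs more than the isotropy-isomorphism comparison --- is correctly identified and correctly handled.
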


The difference between the three partitions discussed is that they group the strata of $\cS_G(M)$ in different ways: it is easy to see that
\[ \cP_{\cong}(M) \prec \cP_{\sim}(M) \prec \cP_{\approx}(M)  \]
in the sense that each member of $\cP_{\cong}(M)$ is a union of members of $\cP_{\sim}(M)$, etc.
The obvious inclusions of a member of $\cP_{\sim}(M)$ into the corresponding member of $\cP_{\cong}(M)$, and of a member of $\cP_{\approx}(M)$ into the corresponding member of $\cP_{\sim}(M)$ are strict in general, as the following example shows.

%\begin{example}Let $\mathbb{Z}_2\times  \mathbb{Z}_2= \{\pm 1\}\times \{\pm 1\}$ act on $\mathbb{R}^2$ by $(\epsilon, \eta)\cdot (x, y)= (\epsilon x, \eta y)$. 
%\end{example}

\begin{example}\label{counterexample_not_connected_strata}Consider the finite group $\mathbb{Z}_2\times  \mathbb{Z}_2= \{\pm 1\}\times \{\pm 1\}$, acting on $\mathbb{R}P^2$  by $(\epsilon, \eta)\cdot [x: y: z]= [x: \epsilon y: \eta z]$. 
The subgroups $\mathbb{Z}_2\times \{1\}$ and $\{1\}\times  \mathbb{Z}_2$ are isomorphic but not conjugate, and arise as isotropy groups of the points $[1:0:1]$ and $[1:1:0]$ respectively. Hence \[\mathbb{R}P^2_{(\mathbb{Z}_2\times \{1\})}\neq \mathbb{R}P^2_{[\mathbb{Z}_2\times \{1\}]}.\]

On the other hand, all fixed points of a given action have the same orbit type, but the local types of fixed points can differ. For example, consider the action of the circle $S^1$ on $\mathbb{C}P^2$ given by $\theta \cdot [x:y:z]=[z:\theta^2 y: \theta^5 z]$.
The isotropy representation at the fixed points $[1:0:0]$, $[0:1:0]$ and $[0:0:1]$ are isomorphic with the representation of $S^1$  on $\mathbb{C}^2$ of weights $(2,5)$, $(-2,3)$ and $(-5,-3)$ respectively (we say that a representation of $S^1$ on $\mathbb{C}^2$ has weight $(m,n)\in \mathbb{Z}^2$ if it is given by $\theta\cdot(z,w) = (\theta^m z,\theta^n w)$).

\end{example}

\begin{remark}It is important to note that, as mentioned in Remark \ref{rmk-connected_strata}, the passage to connected components of $\cP_{\sim}(M)$ is really necessary to guarantee that we end up with a stratification. Besides the obvious problem that orbit types may be disconnected, there is the more serious issue that the frontier condition may not be satisfied (between orbit types). For example, there may be orbit types whose closure contains some, but not all of the fixed points of the action. This is the case in Example \ref{counterexample_not_connected_strata} above:
For the subgroup $H=  \mathbb{Z}_2\times \{1\}$, we see that \[\mathbb{R}P^2_{(\mathbb{Z}_2\times \{1\})}=\{[x:0:z]\in \mathbb{R}P^2\ |\ x\neq 0\neq z\},\] so its closure contains the fixed points $[1:0:0]$ and $[0:0:1]$, but not the fixed point $[0:1:0]$.

Unlike the case for the two partitions discussed before, all the connected components of a local type have the same dimension (which can be seen using the normal form given by the Tube theorem). However, $\cP_{\approx}(M)$ may still fail to satisfy the frontier condition: the same counterexample as for $\cP_{\sim}(M)$ works here as well since $[0:1:0]$ and $[0:0:1]$ belong to the same local type.
\end{remark}

 We recall another interesting stratification on $M$, which appears as the infinitesimal version of the canonical stratification from the previous section. The idea is that replacing the isotropy Lie groups $G_x$ by their Lie algebras $\mathfrak{g}_x$, one obtains similar (but in general different) partitions of $M$.

\begin{definition}The \textbf{infinitesimal orbit type equivalence} is the equivalence relation on $M$ given by
\[ x{\sim}_{\mathrm{inf}}\ y\ \Longleftrightarrow \  \mathfrak{g}_x\sim \mathfrak{g}_y \ \ (\textrm{i.e.} \ \mathfrak{g}_x\ \textrm{and}\ \mathfrak{g}_y\ \textrm{are\ conjugate\ in} \ \mathfrak{g}) .\]
 The \textbf{partition by infinitesimal orbit types} is the resulting partition, denoted by $\cP_{{\sim}_{\mathrm{inf}}}(M)$ (each member of  $\cP_{{\sim}_{\mathrm{inf}}}(M)$ is called an \textbf{infinitesimal orbit type}).

The \textbf{infinitesimal canonical stratification} on $M$, denoted by $\cS_{G}^{\mathrm{inf}}(M)$, is the partition of $M$ by connected components of the members of $\cP_{{\sim}_{\mathrm{inf}}}(M)$.
\end{definition}

Similarly, we define infinitesimal versions of the partitions by isotropy isomorphism classes and by local types - define the equivalence relations ${\approx}_{\mathrm{inf}}$ and ${\cong}_{\mathrm{inf}}$ by replacing the isotropy Lie groups $G_x$ by their Lie algebras $\mathfrak{g}_x$ in the definitions. The infinitesimal analogue of Lemma \ref{stratif-lemma-2} is obvious, and therefore we obtain that the partitions $\cP_{{\sim}_{\mathrm{inf}}}(M)$ and $\cP_{{\cong}_{\mathrm{inf}}}(M)$ induce the infinitesimal canonical  stratification.

\begin{remark} It is easy to see that points in the same orbit belong to the same infinitesimal orbit type, so we obtain a partition by infinitesimal orbit type on the orbit space $M/G$. However, this partition does not in general induce a stratification. Indeed, the members of the partition on the orbit space may fail to be manifolds. For example, in the case of the action of $\mathbb{Z}_2$ on $\R$ by reflection at the origin, all points have the same infinitesimal orbit type, but the orbit space is not a manifold.
\end{remark}

% $\cS_{G}^{\mathrm{inf}}(M)$.

\subsection{Proper actions  - principal and regular types}

We have seen several partitions inducing the same stratifications $\cS_{G}(M)$ and $\cS_{G}^{\mathrm{inf}}(M)$. This allows us to use different partitions when proving results about the stratification, using whichever is more convenient for the proof. 

It is interesting to distinguish which notions are intrinsic to the stratification, and which are particular to one of the partitions giving rise to it; similarly, we can wonder about whether a given result on the stratification can be strengthened to a result on one of the partitions giving rise to it.

We now recall some properties of the maximal strata of the canonical stratification and point out the relation with the partitions that give rise to it.

\begin{definition}
The \textbf{principal part} of $M$ is defined to be the $\cS_{G}(M)$-regular part of $M$ and is denoted by $M^{\mathrm{princ}}:= M^{\cS_{G}(M)-\mathrm{reg}}$. The orbits inside $M^{\mathrm{princ}}$ are called \textbf{principal orbits}.
\end{definition}

In order to check whether a point $x\in M$ belongs to a principal orbit, we use Lemma \ref{stratif-lemma-0} and a tube $G\times_{G_x} V$ around $x$. We arrive at the condition that $V^{G_x}$ is open in $V$, which leads to the following characterization.

\begin{lemma}\label{G-str-prin-pts} For any point $x\in M$, the following are equivalent:
\begin{enumerate}
\item[1.] $x\in M^{\mathrm{princ}}$ 
\item[2.] the action of $G_x$ on the normal space to the orbit is trivial.
\end{enumerate}
In this case all the orbits $G\cdot y$ through points $y$ close to $x$ are diffeomorphic $G\cdot x$. 
\end{lemma}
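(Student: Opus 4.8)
The plan is to test membership in $M^{\mathrm{princ}}$ through Lemma \ref{stratif-lemma-0}, which identifies maximal strata with open ones, and then to convert the resulting openness condition into a statement about the isotropy representation by working inside a tube. Writing $S_x$ for the stratum of $\cS_G(M)$ through $x$, i.e. the connected component of the orbit type $M_{(G_x)}$ containing $x$, the point $x$ lies in $M^{\mathrm{princ}}$ exactly when $S_x$ is maximal, hence (by Lemma \ref{stratif-lemma-0}) exactly when $S_x$ is open; since $S_x$ is a connected submanifold, this happens if and only if $M_{(G_x)}$ is a neighbourhood of $x$ in $M$.

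Next I would invoke the Tube theorem (Theorem \ref{thm-tube}) to fix an equivariant diffeomorphism of an invariant neighbourhood of $x$ with $G\times_{G_x}V$, where $V$ is a $G_x$-invariant open neighbourhood of $0$ in $\N_x$, and under it $x=[e,0]$. The decisive step is to identify the orbit type inside this model. The isotropy group of $[g,v]$ is $g\,(G_x)_v\,g^{-1}$, where $(G_x)_v=\{h\in G_x\ |\ h\cdot v=v\}$ denotes the stabiliser of $v$ for the isotropy representation. I would then show that $(G_x)_v$ is conjugate in $G$ to $G_x$ precisely when $(G_x)_v=G_x$: conjugate groups are isomorphic, and $(G_x)_v$ is a closed subgroup of the compact group $G_x$, so Lemma \ref{stratif-lemma-2} upgrades an isomorphism to an equality. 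This yields $M_{(G_x)}\cap(G\times_{G_x}V)=G\times_{G_x}V^{G_x}$, with $V^{G_x}=V\cap\N_x^{G_x}$ the fixed-point set.

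The equivalence of 1 and 2 then drops out. Since the quotient map $G\times V\to G\times_{G_x}V$ is open and $G\times_{G_x}V^{G_x}$ is saturated with preimage $G\times V^{G_x}$, this set is a neighbourhood of $x$ if and only if $V^{G_x}$ is open in $V$; as $\N_x^{G_x}$ is a linear subspace and $V$ is a neighbourhood of the origin, that forces $\N_x^{G_x}=\N_x$, i.e. $G_x$ acts trivially on $\N_x$. Combining with the first paragraph gives $x\in M^{\mathrm{princ}}\Leftrightarrow G_x$ acts trivially on $\N_x$. For the closing assertion, under condition 2 any nearby $y$ sits in the tube as $[g,v]$ with $v\in V=V^{G_x}$, so $G_y=g\,G_x\,g^{-1}$ and $G\cdot y\cong G/G_y\cong G/G_x\cong G\cdot x$.

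I expect the main obstacle to be the clean identification of $M_{(G_x)}$ within the tube, and specifically the passage from conjugacy of the stabilisers in the ambient group $G$ to the equality $(G_x)_v=G_x$; this is exactly the place where compactness of $G_x$ and Lemma \ref{stratif-lemma-2} are indispensable. A secondary point to handle carefully is that openness of the stratum must be tested in the topology of $M$, not merely in that of the orbit type, which is why the openness of the quotient map $G\times V\to G\times_{G_x}V$ enters.
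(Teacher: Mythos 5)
Your argument is correct and is exactly the route the paper takes: the paper's own justification is the one-line sketch preceding the lemma (``use Lemma \ref{stratif-lemma-0} and a tube $G\times_{G_x}V$ around $x$; we arrive at the condition that $V^{G_x}$ is open in $V$''), and you have filled in precisely those steps, including the key use of Lemma \ref{stratif-lemma-2} to upgrade conjugacy of $(G_x)_v$ with $G_x$ to equality. No discrepancies worth flagging.
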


By definition, $M^{\mathrm{princ}}$ is intrinsically associated to the canonical stratification. But to understand it better, we recall a related notion, defined in terms of the partition $\cP_{\sim}(M)$ by orbit types. First of all, note that there is a partial order on the orbit types that is analogous to the ordering on the strata:
\[ M_{(H)} \geq M_{(K)}  \Longleftrightarrow K\ \textrm{is}\ G-\textrm{conjugate\ to \ a\ subgroup\ of}\ H .\]

The maximal orbit types (with respect to this order) are called principal orbit types. They are related to $M^{\mathrm{princ}}$ by the Principal orbit type theorem (cf. \cite[Thm.\ 2.8.5]{DK}, or Subsection \ref{subsection_principal} for a  generalization of this theorem for proper Lie groupoids). The theorem states that $\cP_{\sim}(M)$ admits one and only one maximal orbit type: there exists a unique conjugacy class, denoted  $(H_{\mathrm{princ}})$ such that any isotropy group $G_x$ of the action contains a 
conjugate of $H_{\mathrm{princ}}$. In terms of the stratification, this means that \[ M^{\mathrm{princ}}= M_{(H_{\mathrm{princ}})}.\]
Hence the maximal strata of $\cS_{G}(M)$ are precisely the connected components of the principal orbit type. We also see that, although it was originally defined in terms of $\cP_{\sim}(M)$, the notion of principal orbit type only depends on the stratification.

Moreover, the Principal orbit type theorem also states that, even when $M^{\mathrm{princ}}$ is not connected, the quotient $M_{(H_{\mathrm{princ}})}/G$ is connected. Hence the stratification $\cS(M/G)$ of the quotient $M/G$ has one and only one maximal (principal) stratum.

% \begin{lemma}\label{G-str-prin-pts} A point $x$ is in $M^{\textrm{princ}}$ if and only if the action of the isotropy group $G_x$ on the 
% normal space to the orbit is trivial. Around such a point, all the orbits through points close to $x$ are diffeomorphic to the orbit through 
% $x$. 
% \end{lemma}

Alternatively, we could proceed similarly but using $\cP_{\cong}(M)$ instead of $\cP_{\sim}(M)$; in that case, the partial order to consider is 
\[ M_{[H]} \geq M_{[K]}  \Longleftrightarrow K\ \textrm{is\ isomorphic\ to \ a\ subgroup\ of}\ H,\]
which is, indeed, a partial order by Lemma \ref{stratif-lemma-2}. The corresponding version (for $\cP_{\cong}(M)$) of the Principal orbit type theorem results in a unique maximal element, which is precisely $M_{[H_{\mathrm{princ}}]}$. 

\begin{proposition} The maximal elements of $\cP_{\cong}(M)$ and of $\cP_{\sim}(M)$ coincide, i.e., \[M_{[H_{\mathrm{princ}}]} = M_{(H_{\mathrm{princ}})}.\]
\end{proposition}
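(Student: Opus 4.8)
The plan is to prove the two inclusions separately, the inclusion $M_{(H_{\mathrm{princ}})}\subseteq M_{[H_{\mathrm{princ}}]}$ being immediate and the reverse one carrying the actual content. For the easy direction, I would simply observe that if $x\in M_{(H_{\mathrm{princ}})}$, then $G_x$ is conjugate to $H_{\mathrm{princ}}$ and hence in particular abstractly isomorphic to it, so that $x\in M_{[H_{\mathrm{princ}}]}$. This is just the inclusion $\cP_{\cong}(M)\succ\cP_{\sim}(M)$ specialised to the relevant members.

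For the reverse inclusion $M_{[H_{\mathrm{princ}}]}\subseteq M_{(H_{\mathrm{princ}})}$, I would take a point $x$ with $G_x\cong H_{\mathrm{princ}}$ and argue that $G_x$ must in fact be \emph{conjugate} to $H_{\mathrm{princ}}$. The key external input is the Principal orbit type theorem, which guarantees that every isotropy group of a proper action contains a conjugate of $H_{\mathrm{princ}}$. Applied to $x$, it produces a subgroup $H'\subseteq G_x$ with $H'$ conjugate, and therefore isomorphic, to $H_{\mathrm{princ}}$. Combining with the standing hypothesis gives a chain of isomorphisms $H'\cong H_{\mathrm{princ}}\cong G_x$, exhibiting $H'$ as a subgroup of $G_x$ that is abstractly isomorphic to all of $G_x$.

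At this point the strategy is to invoke Lemma \ref{stratif-lemma-2}. Since the action is proper, all isotropy groups are compact, so $G_x$ is a compact Lie group and $H'$ is a closed subgroup of it that is isomorphic to the whole group; the lemma then forces $H'=G_x$. Because $H'$ is conjugate to $H_{\mathrm{princ}}$, we conclude that $G_x$ itself is conjugate to $H_{\mathrm{princ}}$, i.e.\ $x\in M_{(H_{\mathrm{princ}})}$, completing the argument. This is essentially the same mechanism already used in the proof of Proposition \ref{prop-also-induces-strat} to compare the partitions $\cP_{\cong}(M)$ and $\cP_{\sim}(M)$, now applied globally rather than on a slice.

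The only delicate point, and the step I would verify most carefully, is the applicability of Lemma \ref{stratif-lemma-2}: one must ensure that $H'$ is genuinely a closed subgroup of a compact group. Both facts follow from properness — isotropy groups of a proper action are compact, and $H'$, being conjugate to the (compact, hence closed) subgroup $H_{\mathrm{princ}}$, is itself closed — so that the rigidity statement that a closed subgroup of a compact group isomorphic to the ambient group must coincide with it can legitimately be applied. Everything else in the argument is formal, relying only on the Principal orbit type theorem and on the fact that $H_{\mathrm{princ}}$ is realised as an isotropy group (so that it is compact in the first place).
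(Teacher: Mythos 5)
Your proof is correct and follows essentially the same route as the paper: the easy inclusion is formal, and the reverse inclusion combines the Principal orbit type theorem with the rigidity Lemma \ref{stratif-lemma-2} applied to compact isotropy groups. (The paper phrases the subconjugacy in the opposite direction---$G_x$ conjugate into a subgroup of $H_{\mathrm{princ}}$ rather than a conjugate of $H_{\mathrm{princ}}$ sitting inside $G_x$---but the lemma closes the argument either way, and your version is the one consistent with the theorem as stated.)
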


\begin{proof}
It always holds that $M_{(H)}\subset M_{[H]}$, so we are left with checking the converse. If $x \in M_{[H_{\mathrm{princ}}]}$ we know that $G_x\cong H_{\mathrm{princ}}$, but we also know that $G_x$ is conjugate to a subgroup of  $H_{\mathrm{princ}}$. Using  Lemma \ref{stratif-lemma-2}, we see that the subgroup must be the entire $H_{\mathrm{princ}}$, and so $G_x$ is conjugate to it; hence $x$ is also in $M_{(H_{\mathrm{princ}})}$. 
\end{proof}

Finally, let us mention that, using the partition by local types, one would equally find that there is a single maximal local type, which again coincides with $M^\mathrm{princ}$ (cf. \cite[Cor.\ 2.8.6]{DK}).

Let us now focus on the infinitesimal canonical stratification. The associated regular part is denoted: 
\[ M^{\mathrm{reg}}:= M^{\cS_{G}^{\mathrm{inf}}(M)-\mathrm{reg}}.\]
In complete similarity with Lemma \ref{G-str-prin-pts}, using a the normal form given by the Tube theorem (Theorem \ref{thm-tube}), we find:

\begin{lemma}\label{G-str-reg-pts} For any point $x\in M$, the following are equivalent:
\begin{enumerate}
\item[1.]  $x\in M^{\mathrm{reg}}$;
\item[2.]  the infinitesimal action of $\gg_{x}$ on the normal space to the orbit is trivial;
\item[3.]  the action of $G_{x}^{0}$ on the normal space to the orbit is trivial;
\item[4.]  all the orbits through points close to $x$ have the same dimension, 
\end{enumerate}
In this case all the orbits $G\cdot y$ through points $y$ close to $x$ are coverings of $G\cdot x$;
\end{lemma}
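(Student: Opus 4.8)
The plan is to transport everything into the local normal form furnished by the Tube theorem (Theorem \ref{thm-tube}) and then run an argument that mirrors the proof of Lemma \ref{G-str-prin-pts}, but with the isotropy group $G_x$ systematically replaced by its identity component $G_x^0$ (equivalently, the isotropy algebra $\mathfrak{g}_x$). By Theorem \ref{thm-tube}, an invariant neighbourhood of $\O_x$ is equivariantly diffeomorphic to a tube $G\times_{G_x}B$, with $B$ a $G_x$-invariant neighbourhood of $0$ in $\N_x$; under this identification $x=[e,0]$, and the isotropy group of a slice point $[e,v]$ is the stabiliser $(G_x)_v\subset G_x$, whose Lie algebra is the kernel $(\mathfrak{g}_x)_v=\{\xi\in\mathfrak{g}_x:\xi\cdot v=0\}$ of the infinitesimal $\mathfrak{g}_x$-action at $v$. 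Since every point near $\O_x$ lies on the $G$-orbit of a slice point, and the infinitesimal orbit type and the orbit dimension are $G$-invariant, it suffices to test conditions (1)--(4) on slice points $[e,v]$ with $v$ small.

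First I would prove (1)$\Leftrightarrow$(2). By Lemma \ref{stratif-lemma-0}, $x\in M^{\mathrm{reg}}$ iff the stratum through $x$ is open, which (using local connectedness of $M$) amounts to $x$ lying in the interior of its infinitesimal orbit type; in the tube this says that $(\mathfrak{g}_x)_v$ is conjugate in $\mathfrak{g}$ to $\mathfrak{g}_x$ for all $v$ in a neighbourhood of $0$. The key point is that $(\mathfrak{g}_x)_v\subseteq\mathfrak{g}_x$, so conjugacy forces equality of dimensions and hence $(\mathfrak{g}_x)_v=\mathfrak{g}_x$; by linearity of the representation this can hold on a neighbourhood of $0$ only if $\mathfrak{g}_x$ acts trivially on all of $\N_x$, which is exactly (2). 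Conversely, if the infinitesimal action is trivial then $(\mathfrak{g}_x)_v=\mathfrak{g}_x$ for all $v$, so the whole tube lies in a single infinitesimal orbit type and $x$ is interior to it.

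The remaining equivalences are then short. For (2)$\Leftrightarrow$(3), a connected Lie group acts trivially precisely when its Lie algebra does, and $G_x^0$ is the connected subgroup integrating $\mathfrak{g}_x$. For (2)$\Leftrightarrow$(4), the orbit through $[e,v]$ has dimension $\dim G-\dim(\mathfrak{g}_x)_v$; if all nearby orbits have a common dimension this must equal $\dim\O_x$ (attained at $x$ itself), forcing $\dim(\mathfrak{g}_x)_v=\dim\mathfrak{g}_x$, hence $(\mathfrak{g}_x)_v=\mathfrak{g}_x$ and trivial infinitesimal action, while the reverse implication is immediate. Finally, for the covering statement, note that when (2) holds the slice points satisfy $\mathfrak{g}_{[e,v]}=\mathfrak{g}_x$ exactly, so $G_{[e,v]}=(G_x)_v\subseteq G_x$ has identity component $G_x^0$ and therefore contains $G_x^0$. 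The natural surjection $\O_{[e,v]}=G/G_{[e,v]}\to G/G_x=\O_x$ is then a fibre bundle with fibre $G_x/G_{[e,v]}$, which is discrete (being a quotient of the component group $G_x/G_x^0$) and finite because $G_x$ is compact, the action being proper; hence it is a finite covering. As any orbit near $x$ coincides with the orbit of a slice point, this gives the final claim.

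The main obstacle, exactly as in Lemma \ref{G-str-prin-pts}, is making the reduction to slice points interact cleanly with the openness criterion: one must check that ``$y$ close to $x$'' may be replaced by ``$v$ close to $0$'' without losing control, and, crucially, that it is the inclusion $(\mathfrak{g}_x)_v\subseteq\mathfrak{g}_x$ (not mere conjugacy) which upgrades ``conjugate'' to ``equal.'' Everything else is a routine transcription of the principal-type argument, with $G_x$ replaced by $G_x^0$ and orbit types replaced by infinitesimal orbit types.
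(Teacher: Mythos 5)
Your argument is correct and follows exactly the route the paper indicates: it reduces to the tube $G\times_{G_x}B$ from Theorem \ref{thm-tube}, applies the openness criterion of Lemma \ref{stratif-lemma-0}, and upgrades ``conjugate'' to ``equal'' via the containment $(\mathfrak{g}_x)_v\subseteq\mathfrak{g}_x$ (the infinitesimal analogue of Lemma \ref{stratif-lemma-2}), which is precisely the argument of Lemma \ref{G-str-prin-pts} with $G_x$ replaced by $\mathfrak{g}_x$. The paper leaves all of these details implicit, so your write-up is simply a fuller version of the same proof.
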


% \begin{lemma}\label{G-str-reg-pts} A point $x$ is in $M^{\textrm{reg}}$ if and only if the action of the isotropy Lie algebra $\gg_{x}$ (or, % equivalently, the connected component $G_{x}^{0}$ of the isotropy group) on the normal space to the orbit is trivial. This is also 
% equivalent to the fact that all the orbits through points close to $x$ have the same dimension, and it follows that all those orbits are 
% coverings of the orbit through $x$.
%\end{lemma}

The next proposition shows that, although the infinitesimal canonical stratification behaves worse than the canonical stratification (e.g. it does not induce a stratification on the quotient), it does have some  advantages over $\cS_{G}(M)$.

\begin{proposition}\label{codim1_strata_action} The infinitesimal canonical stratification satisfies:

\begin{itemize}
\item[1.] $\cS_{G}^{\mathrm{inf}}(M)$ does not contain codimension 1 strata.
\item[2.] $M^{\mathrm{reg}}$ is connected. 
\item[3.] $\cS_{G}^{\mathrm{inf}}(M)$ has one and only one maximal strata. 
\end{itemize}
\end{proposition}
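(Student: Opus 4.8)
The plan is to treat the three assertions together, since only the first is substantial: once codimension one strata have been excluded, the other two follow formally from lemmas already at our disposal. Granting (1), assertion (2) is immediate from Lemma \ref{stratif-lemma-4} applied to $\cS = \cS_G^{\mathrm{inf}}(M)$ on the connected manifold $M$. For (3), recall from Lemma \ref{stratif-lemma-0} that a stratum is maximal precisely when it is open, and that $M^{\mathrm{reg}}$ is the union of the maximal strata; being a disjoint union of nonempty open strata which is connected by (2), it must consist of a single stratum. So the whole proposition reduces to proving (1).

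For (1), the plan is to reduce to the linear local model via the Tube theorem (Theorem \ref{thm-tube}) and then compute the codimension of an arbitrary stratum. Fix $x \in M$, set $K := G_x$ (a compact group, since the action is proper) and $V := \N_x$, and use an equivariant tube $G \x_K B$ with $B$ a $K$-invariant ball around $0$ in $V$. A point $[g,v]$ has infinitesimal isotropy $\mathrm{Ad}_g(\mathfrak{k}_v)$, where $\mathfrak{k}_v = \{\xi \in \mathfrak{k} \mid \xi \cdot v = 0\}$; hence $[g,v]$ lies in the infinitesimal orbit type of $x$ exactly when $\mathfrak{k}_v$ is conjugate in $\mathfrak{g}$ to $\mathfrak{k}_0 = \mathfrak{k}$. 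Since $\mathfrak{k}_v \subseteq \mathfrak{k}$, conjugacy forces equality $\mathfrak{k}_v = \mathfrak{k}$, i.e. $v \in V^{\mathfrak{k}} = V^{K^0}$. Thus, inside the tube, the stratum through $x$ is the connected component of $G \x_K (V^{K^0} \cap B)$, whose codimension in $M$ equals $\dim V - \dim V^{K^0} = \dim W$, where $W := (V^{K^0})^{\perp}$ is a $K$-invariant complement.

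The key point, and the step I expect to be the main obstacle, is to rule out $\dim W = 1$. Choosing a $K$-invariant inner product makes $W$ an orthogonal representation of the connected compact group $K^0$ with no nonzero fixed vector. But a one-dimensional real representation of a connected group is a homomorphism into $GL(1,\R) \cong \R^{\times}$ whose image, being connected, lies in $\R_{>0}$; compactness of $K^0$ then forces this image to be trivial, so every vector is fixed. Hence $\dim W = 1$ is incompatible with $W^{K^0} = 0$, and therefore $\dim W \in \{0\} \cup \{2,3,\ldots\}$. As $x$ was arbitrary and $\dim W$ computes the codimension of the stratum through $x$, no stratum can have codimension one, which proves (1).

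It is worth recording where the infinitesimal nature of the stratification is essential. The same computation for the ordinary canonical stratification $\cS_G(M)$ would involve $V^K$ in place of $V^{K^0}$, and a finite group $K$ may certainly have nontrivial one-dimensional real representations (for instance $\mathbb{Z}_2$ acting by reflection on $\R$), so codimension one strata do occur there. It is precisely the restriction to identity components that makes the representation-theoretic obstruction above available, and this is the conceptual heart of the argument.
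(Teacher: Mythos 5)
Your proof is correct and follows essentially the same route as the paper: reduce to the tube $G\times_K V$, identify the stratum through $x$ with $G\times_K V^{K^0}$ (the paper writes $V^{\mathfrak{h}}$, which is the same set), and observe that the orthogonal complement would be a nontrivial one-dimensional representation of the compact connected group $K^0$, which cannot exist. The only differences are cosmetic: you compute the codimension of an arbitrary stratum directly rather than arguing by contradiction, and you spell out the representation-theoretic step that the paper leaves as ``which is impossible.''
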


\begin{proof} Assume that $S$ is a stratum of codimension one; $S$ is a connected component of a subspace of type
\[ M_{[\mathfrak{h}]}= \{ x\in M\ |\ \gg_x\cong \mathfrak{h} \}\]
for some Lie subalgebra $\mathfrak{h}$ of $\gg$. Let $x\in S$; we may assume that $G_x= H$.  Let $U\subset G\times_{H} V$ be a tube around $x$, with $x$ represented by $(e, 0)$. Then 
\[ U\cap M_{[\mathfrak{h}]} \cong \{ y= [a, v]\in G\times_{H} V\ |\ G\times_{G_x} \mathfrak{h}_v= h\}= G\times_{H} V^{\mathfrak{h}} .\]
To achieve codimension $1$, $V^{\mathfrak{h}}$ must be of codimension $1$ in $V$. Let $W$ be the complement of $V^{\mathfrak{h}}$ with respect to an $H$-invariant metric on $V$; then $W^{\mathfrak{h}}= 0$, hence $W$ is a non-trivial  one dimensional representation of the compact connected Lie group $H^0$, which is impossible. The fact that part 1 implies part 2 follows from Lemma \ref{stratif-lemma-4}; part 3  is just a reformulation of part 2.  
\end{proof}

\subsection{Morita types}\label{sec-Morita_types}

In this section we introduce the canonical stratification associated to a proper Lie groupoid. This generalizes the canonical stratification induced by a proper Lie group action.
As in the case of proper actions, a proper Lie groupoid induces a stratification not only on its base, but also on the orbit space. Let us mention already that one of the essential properties of the stratification on the orbit space is that it is Morita invariant (see Remark \ref{rmk-morita-inv-strat}), meaning that it is intrinsically associated to the orbispace presented by the groupoid.

Let $\G$ be a proper Lie groupoid over $M$ and denote its orbit space by $X$.

\begin{definition}\label{dfn-morita-types} The \textbf{Morita type equivalence} is the equivalence relation on $M$ given by
\[ x\sim_\M y\  \Longleftrightarrow   \ (\G_x, \mathcal{N}_x)\cong (\G_y, \mathcal{N}_y),\]
where $(\G_x, \mathcal{N}_x)\cong (\G_y, \mathcal{N}_y)$ means that there is an isomorphism $\phi:\G_x \to \G_y$ and a compatible isomorphism of representations between the normal representations $\N_x$ and $\N_y$.

The \textbf{partition by Morita types}, denoted by $\cP_{\M}(M)$, is defined to be the resulting partition. Each member of $\cP_{\M}(M)$ is called a \textbf{Morita type}.
\end{definition}

In other words, the partition by Morita types is indexed by equivalence classes of pairs $(H, V)$ where $H$ is a Lie group, $V$ is a representation of $H$, and two pairs are equivalent in the way described above: $H\cong H'$ and $V\cong V'$ in a compatible way. In this case, we write $(H, V)\cong (H', V')$. 
Set $[H,V]$ for the equivalence class of the pair $(H, V)$. If  $[H,V]=\alpha$, then the element of $\mathcal{P}_\M(M)$ corresponding to $\alpha$ is 
\[M_{(\alpha)}=\{x\in M \ |\ [\G_x,\N_x]=\alpha\}\in \mathcal{P}_\M(M).\]
We also denote by $M_{(x)}$ the Morita type of a point $x\in M$.

\begin{remark}\label{rmk-morita-inv-strat}
The Morita type of a point $x\in M$ depends only on the Morita equivalence class of the local model  $\N(\G_{\O_x})$ of the groupoid in a neighbourhood of its orbit ${\O_x}$. Indeed, the local models $\N(\G_{\O_x})$  and $\N(\G_{\O_y})$ around $x$ and $y$ are Morita equivalent to $\G_x\lx\N_x$ and $\G_y\lx\N_y$, respectively. These in turn are Morita equivalent to each other if and only if there is an isomorphism $\phi:\G_x \to \G_y$ and an isomorphism of representations between $\N_x$ and $\N_y$, compatible with $\phi$.

Therefore, points in the same orbit belong to the same Morita type and hence we also obtain a \textbf{partition by Morita types on the orbit space}, $\cP_\M(X)$. The projection map $\pi:M\to X$ takes Morita types in $M$ to Morita types in $X$; we use the notation $X_{(\alpha)}=\pi(M_{(\alpha)})$ for Morita types in the orbit space.
This partition on $X$ is really associated to the orbispace $X$ presented by $\G$ and not to $\G$ itself. Indeed, by its very definition, the Morita type of $\O\in X$ only depends on Morita invariant information.
\end{remark}
% - the Morita equivalence class of the local model of $\G$ around $\O$.

\begin{definition}
The \textbf{canonical stratification} on $M$, denoted by $\cS_\G(M)$, is the partition on $M$ obtained by passing to connected components of $\cP_{\M}(M)$. 
The \textbf{canonical stratification on the orbit space} $X$, denoted by $\cS(X)$, is the partition on $X$ obtained by passing to connected components of $\cP_\M(X)$. 

In Section \ref{sec:gpd_canonical_strat} we see  that these partitions are, indeed, stratifications.
\end{definition}

\begin{remark}By passing to the corresponding germ-stratification, $\cS(X)$ corresponds to the canonical germ-stratification of \cite{hessel}.
\end{remark}

\subsection{Comparison with other equivalence relations}

The notion of partition by isotropy isomorphism classes (see Definition \ref{dfn-isotropy-iso-type}) still makes sense for a general Lie groupoid, so we can compare the partitions $\cP_{\cong}(M)$ and $\cP_\M(M)$ (see Definition \ref{dfn-morita-types}), and we see that $\cP_{\cong}(M) \prec \cP_{\M}(M)$. In the case of an action groupoid of a proper Lie group action, it also makes sense to compare these partitions with the ones by orbit types and by local types.
It is easy to see that  \[ \cP_{\cong}(M) \prec \cP_{\M}(M) \prec \cP_{\approx}(M), \] and Example \ref{counterexample_not_connected_strata} can be used to check that these comparisons are strict.
There is no such relation comparing $\cP_\M(M)$ and $\cP_\sim(M)$ in general: for the $\mathbb{Z}_2\x\mathbb{Z}_2$ - action of Example \ref{counterexample_not_connected_strata}, the Morita types of $[1:0:1]$ and $[1:1:0]$ are the same, while their orbit types are different. On the other hand, all fixed points for the $S^1$ - action of Example \ref{counterexample_not_connected_strata} have the same orbit type, but their Morita types are different.

Nonetheless, as soon as we pass to connected components, Morita types induce the same stratification as the other partitions.

\begin{proposition}\label{prop-morita_types_action_groupoid}
Let $\G$ be the action groupoid associated to a proper Lie group action. The partition by Morita types on $M$ induces, after passing to connected components, the canonical stratification associated with the action. 
\end{proposition}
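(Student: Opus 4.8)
The plan is to deduce the statement from material already in place for proper group actions, rather than redoing a slice computation from scratch. Throughout I identify, for the action groupoid $\G = G\lx M$, the isotropy group $\G_x$ with the stabilizer $G_x$ and the normal representation $\N_x$ with the isotropy representation; with this identification the Morita type equivalence $\sim_\M$ of Definition \ref{dfn-morita-types} is literally the relation appearing in the comparison of partitions above. The two inputs I would invoke are: Proposition \ref{prop-also-induces-strat}, which gives that $\cP_{\cong}(M)$ induces $\cS_G(M)$ after passing to connected components; the corresponding fact, established above, that $\cP_{\approx}(M)$ also induces $\cS_G(M)$ after passing to connected components; and the comparison $\cP_{\cong}(M) \prec \cP_{\M}(M) \prec \cP_{\approx}(M)$ recorded in this subsection.

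The main step is then a short squeezing argument on connected components. Fix $x\in M$ and let $C$, $B$, $A$ be the members of $\cP_{\cong}(M)$, $\cP_{\M}(M)$, $\cP_{\approx}(M)$, respectively, that contain $x$. The comparison gives inclusions $A \subseteq B \subseteq C$. Since the connected component of $x$ inside a subset only enlarges when the subset enlarges, writing $A_x$, $B_x$, $C_x$ for the connected components of $x$ in $A$, $B$, $C$ we get $A_x \subseteq B_x \subseteq C_x$. By the two inputs above, both $A_x$ and $C_x$ equal the stratum of $\cS_G(M)$ through $x$, so the outer inclusions are equalities and hence $B_x$ equals that same stratum. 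As $x$ was arbitrary, the partition of $M$ by connected components of $\cP_{\M}(M)$ coincides with $\cS_G(M)$, which is exactly the claim.

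Alternatively, one can argue directly via Lemma \ref{stratif-lemma-1}, comparing $\cP_{\M}(M)$ with $\cP_{\approx}(M)$: since the local type of $x$ is always contained in its Morita type $M_{(x)}$, it suffices to produce, for each $x$, a tube $U \cong G\times_{G_x} V$ (Theorem \ref{thm-tube}, with $V = \N_x$ and $K = G_x$ compact) on which the reverse inclusion holds. Reducing to a slice point $[e,v]$, membership of $[e,v]$ in $M_{(x)}$ forces an abstract isomorphism $K_v \cong K$, whence $K_v = K$ by Lemma \ref{stratif-lemma-2}; thus $v \in V^K$, the normal representation at $[e,v]$ is again $V$, and so $[e,v]$ lies in the local type of $x$. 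I expect the only delicate point of this second route to be the bookkeeping that identifies the normal representation of a slice point with the correct $K$-representation inside the tube; the squeezing argument is preferable precisely because it sidesteps that bookkeeping by outsourcing it to the already-proven statements about $\cP_{\cong}(M)$ and $\cP_{\approx}(M)$.
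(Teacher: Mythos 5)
Your proposal is correct, and your main (squeezing) argument is genuinely different from the paper's proof, while your ``alternative'' second route is essentially what the paper does. The paper proves the proposition by redoing the local computation: it applies Lemma \ref{stratif-lemma-1} to compare $\cP_{\M}(M)$ with $\cP_{\approx}(M)$ directly, passes to a tube $G\times_K V$ via Theorem \ref{thm-tube}, and uses Lemma \ref{stratif-lemma-2} to reduce ``$K_v$ isomorphic to $K$'' to ``$K_v=K$'', observing that the compatibility conditions on the isomorphisms of normal representations then collapse to compatibility with $\mathrm{id}_K$ in both cases --- exactly the bookkeeping you flag as the delicate point of your second route. Your squeeze instead exploits the monotonicity of connected components under the inclusions coming from $\cP_{\cong}(M)\prec\cP_{\M}(M)\prec\cP_{\approx}(M)$, together with the already-proven facts that both extreme partitions induce $\cS_G(M)$; this is logically sound (if $A_x\subseteq B_x\subseteq C_x$ with $A_x=C_x$ equal to the stratum through $x$, then so is $B_x$) and completely sidesteps the slice analysis. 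What the squeeze buys is brevity and robustness: it would prove the same statement for \emph{any} partition sandwiched between $\cP_{\cong}$ and $\cP_{\approx}$. What the paper's route buys is a \emph{local} identification of Morita types with local types inside a tube, which is reused later (Remark \ref{rmk-morita_types_around_point}, the frontier-condition argument in Theorem \ref{thm-MT2}) and, unlike the squeeze, transfers directly to the general proper groupoid setting via the linearization theorem, where the group-action propositions you invoke are not available as stated. Both arguments are valid here; just note that your first route leans on the unlabelled proposition about $\cP_{\approx}(M)$ whose proof in the paper is itself only sketched by analogy.
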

\begin{proof} 
The situation here is in complete analogy to the comparison between $\cP_{\sim}(M)$ and $\cP_{\cong}(M)$: apply Lemma \ref{stratif-lemma-1} and use the normal form given by the Tube theorem (Theorem \ref{thm-tube}) to compare $\cP_{\M}(M)$ with $\cP_{\approx}(M)$. Looking at the normal form means we consider the associated bundle $G\x_K V$, where $V$ is a representation $V$ of a compact subgroup $K\subset G$. We have to look at the points $v\in V$ with the property that $K_{v}$ is conjugate to $K$, (for $\approx$), or with the property that $K_v$ is isomorphic to $K$ (for $\sim_\M$), and additionally that the normal representation of $K_v$ on $\N_v$ is isomorphic to the representation of $K$ on $V$ in a compatible way with the isomorphisms of $K_v$ and $K$; once more, the first condition reduces to the condition $K_v= K$ because of Lemma \ref{stratif-lemma-2}. The conditions on the compatibility of the isomorphism of the representation become the same in both cases - compatibility with the identity map of $K$.
\end{proof}

The following result relates the Morita types for a proper Lie groupoid and the Morita types for the action groupoids given by the local model.

\begin{lemma}[Reduction to Morita types on a slice]\label{lemma-reducemorita}Let $\G\arrows M$ be a proper Lie groupoid and let $x\in M$.
Then there are invariant open sets $U$ around $x$ in $M$ and $W$ around $0$  in $\N_x$ (which we identify with a slice at $x$) such that the intersection of the Morita types for $\G$ with $U$ are given by the saturation of the Morita types for the linear action of $\G_x$ on $W$. 
\end{lemma}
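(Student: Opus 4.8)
The plan is to reduce the statement to the Morita invariance of the pair (isotropy group, normal representation) recorded in Lemma \ref{lemma-moritarmk}, once the restriction of $\G$ to a slice has been identified with the action groupoid $\G_x\lx W$. First I would fix the data coming from the linearization. Choosing a slice at $x$ as in Definition \ref{def-slice}, the Linearization theorem (Theorem \ref{lin}) together with the local model around the point (Proposition \ref{prop-local_model_at_point}) lets me take the slice to be a $\G_x$-invariant open ball $W$ around $0$ in $\N_x$, on which $\G_x$ acts by the (linear) normal representation, and identify the restricted groupoid $\G_W$ with the action groupoid $\G_x\lx W$. I then set $U:=\langle W\rangle$, the $\G$-saturation of $W$; since $s$ is a submersion, hence open, $U=t(s^{-1}(W))$ is an invariant open neighbourhood of $x$, in fact of the whole orbit $\O_x$. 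Because $W$ is a slice it is transverse to every orbit it meets, so Example \ref{moritaex1}(3) supplies a Morita equivalence $\G_x\lx W\cong \G_W\sim_{\mathrm{Mor}}\G_{\langle W\rangle}=\G_U$, realised by the inclusion $W\hookrightarrow U$.

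Next I would transport Morita types across this Morita equivalence. By Lemma \ref{lemma-moritarmk}, the bibundle realising $\G_W\sim_{\mathrm{Mor}}\G_U$ induces a homeomorphism of orbit spaces $W/\G_x\cong U/\G$, under which each $\G_x$-orbit in $W$ corresponds to the unique $\G$-orbit in $U$ that meets $W$ in it; moreover, for related points $w\in W$ and $y\in U$, it provides an isomorphism $\G_y\cong (\G_x)_w$ together with a compatible isomorphism of normal representations $\N_y\cong \N_w^W$, where $\N_w^W=T_wW/T_w(\G_x\cdot w)$ is exactly the normal representation of $\G_x\lx W$ at $w$. Thus $(\G_y,\N_y)\cong ((\G_x)_w,\N_w^W)$ as pairs, which by Definition \ref{dfn-morita-types} means that $y$ and $w$ lie in the same Morita type. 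Consequently the Morita type $M_{(\alpha)}$ of $\G$ meets $U$ precisely in the set of those $y\in U$ whose corresponding $w\in W$ has Morita type $\alpha$ for the $\G_x$-action.

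Finally I would read off the saturation statement. Writing $W_{(\alpha)}=\{w\in W\ |\ [(\G_x)_w,\N_w^W]=\alpha\}$ for the Morita type of the linear $\G_x$-action, and using that every $\G$-orbit in $U$ meets $W$ in a single $\G_x$-orbit, the previous paragraph yields $M_{(\alpha)}\cap U=\langle W_{(\alpha)}\rangle$, which is the claim. The one point that demands care, and which I expect to be the main obstacle, is the compatibility of the two normal representations: one must verify that the normal representation of $\G$ at $y$, computed in $T_yM$, is identified through the bibundle with the normal representation of the action groupoid $\G_x\lx W$ at $w$, computed in $T_wW$, rather than with some unrelated representation. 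This is exactly the content of part (3) of Lemma \ref{lemma-moritarmk} applied to $\G_W\sim_{\mathrm{Mor}}\G_U$, but it is worth spelling out that the slice identification makes the two notions of normal space agree as $(\G_x)_w$-representations, so that the Morita-type invariant $[\G_\bullet,\N_\bullet]$ is genuinely preserved.
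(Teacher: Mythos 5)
Your argument is correct and follows essentially the same route as the paper: both obtain, via the Linearization theorem, a Morita equivalence between the restriction of $\G$ to an invariant neighbourhood of $\O_x$ and the linear action groupoid $\G_x\lx W$, and then transport Morita types across it using the Morita invariance of the pair $(\G_\bullet,\N_\bullet)$ from Lemma \ref{lemma-moritarmk}. Your version merely spells out more of the details (taking $U=\langle W\rangle$, invoking Example \ref{moritaex1}(3) for the equivalence $\G_W\sim\G_{\langle W\rangle}$, and flagging the compatibility of the two normal representations), all of which is consistent with the paper's terser proof.
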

\begin{proof} By the Linearization theorem for proper groupoids (Theorem \ref{lin}), there are invariant open subsets $U$ around $\O_x$ in $M$ and $W$ around $0$ in $\N_x$ such that $\G_U$ is Morita equivalent to the action groupoid $\G_x \ltimes W$. We identify $W$ with a slice $S$ at $x$ in $M$.
By this Morita equivalence, the Morita types in $U$ coincide with the saturation of the Morita types for the isotropy action of $\G_x$ on the slice $S$, and consequently correspond to the Morita types for the linear action of $\G_x$ on $W$.
\end{proof}

Using the notation from the previous lemma, let $F$ be the orthogonal complement to  the fixed point set $W^{\G_x}$ with respect to an invariant inner product, as in \cite{DK}.
 The isomorphism \[W^{\G_x}\x F\to W,\ \ (w,f)\mapsto w+f\] is equivariant.
  There is also an equivariant diffeomorphism \[\phi:\R_+\x \Sigma \to F\backslash \{0\},\] where $\Sigma$ denotes the unit sphere in $F$, with respect to the inner product used above; $\G_x$ acts on $\R_+\x \Sigma$ by $g(r,p)=(r,gp)$.

\begin{lemma}\label{reduceproblem}  Using the notation introduced above and identifying $W$ with a slice at $x$, the intersection of the Morita type of $x$ with $U$ is the $\G$-saturation of $W^{\G_x}$.
Each other Morita type is given by the $\G$-saturation of \[W^{\G_x} + \phi(\R_+\x T)\subset W,\] where $T$ is a Morita type for the action of $\G_x$ on $\Sigma$.
\end{lemma}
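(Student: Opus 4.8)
The plan is to reduce everything, via Lemma \ref{lemma-reducemorita}, to a purely representation-theoretic statement about the linear action of the compact group $\G_x$ on $W$, and then to compute the Morita types (Definition \ref{dfn-morita-types}) explicitly using the two equivariant splittings at hand: $W\cong W^{\G_x}\oplus F$ and the radial parametrization $\phi$ of $F\setminus\{0\}$ by $\R_+\times\Sigma$.

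First I would fix $w\in W$ and compute its Morita type $(\G_{x,w},\N_w)$ for the $\G_x$-action, where $\N_w=W/T_w(\G_x\cdot w)$. Writing $w=w_0+f$ with $w_0\in W^{\G_x}$ and $f\in F$, triviality of the $\G_x$-action on $W^{\G_x}$ gives at once $\G_{x,w}=\G_{x,f}$ and $\N_w\cong W^{\G_x}\oplus\big(F/T_f(\G_x\cdot f)\big)$ as $\G_{x,f}$-representations. When $f=0$ this is the Morita type $(\G_x,W)$. When $f\neq 0$, set $p=f/|f|\in\Sigma$; by equivariance of the radial structure and homogeneity of the linear action one has $\G_{x,f}=\G_{x,p}$ and $T_f(\G_x\cdot f)=T_p(\G_x\cdot p)\subset p^\perp$, so splitting $F=\R p\oplus p^\perp$ yields $\N_w\cong W^{\G_x}\oplus\R\oplus\N^{\Sigma}_p$, where $\N^{\Sigma}_p=T_p\Sigma/T_p(\G_x\cdot p)$ is the normal representation of the $\G_x$-action on $\Sigma$ and the summand $W^{\G_x}\oplus\R$ is a \emph{trivial} $\G_{x,p}$-representation of dimension $\dim W^{\G_x}+1$.

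For the first assertion I would note that $w$ has the type $(\G_x,W)$ of $x$ exactly when its isotropy is all of $\G_x$: by Lemma \ref{stratif-lemma-2} a closed subgroup of $\G_x$ abstractly isomorphic to $\G_x$ must equal $\G_x$, and since $F^{\G_x}=0$ (as $F$ is the orthogonal complement of $W^{\G_x}$, taking fixed points of $W=W^{\G_x}\oplus F$ forces $F^{\G_x}=0$) this requires $f=0$, i.e. $w\in W^{\G_x}$; conversely every point of $W^{\G_x}$ visibly has type $(\G_x,W)$. Thus the Morita type of $x$ meets $W$ in $W^{\G_x}$, and the claim follows by taking the $\G$-saturation through Lemma \ref{lemma-reducemorita}. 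For the second assertion, the computation above shows that the type of $w=w_0+\phi(|f|,p)$ depends only on the pair $(\G_{x,p},\N^{\Sigma}_p)$, i.e. only on the Morita type $T$ of $p$ on $\Sigma$; extending a $\psi$-compatible isomorphism $\N^{\Sigma}_p\cong\N^{\Sigma}_{p'}$ by the identity on the trivial part immediately shows that each set $W^{\G_x}+\phi(\R_+\times T)$ lies in a single Morita type of $W$.

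The main obstacle is the converse inclusion — that sphere coordinates in different Morita types of $\Sigma$ genuinely give different types on $W$ — which requires cancelling the common trivial summand $W^{\G_x}\oplus\R$ from the normal representations. Concretely, given an isomorphism $\psi\colon\G_{x,p}\to\G_{x,p'}$ and a $\psi$-equivariant isomorphism $W^{\G_x}\oplus\R\oplus\N^{\Sigma}_p\cong W^{\G_x}\oplus\R\oplus\N^{\Sigma}_{p'}$, I would pull the target back along $\psi$ and invoke complete reducibility of representations of the compact group $\G_{x,p}$: in a semisimple representation category the isotypic multiplicities determine the representation, so a trivial summand of fixed dimension cancels, producing a $\psi$-compatible isomorphism $\N^{\Sigma}_p\cong\N^{\Sigma}_{p'}$. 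Hence $p$ and $p'$ have the same Morita type on $\Sigma$, so each Morita type of $W$ other than that of $x$ is precisely one set $W^{\G_x}+\phi(\R_+\times T)$; passing to $\G$-saturations via Lemma \ref{lemma-reducemorita} then finishes the proof.
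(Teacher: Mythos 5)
Your proof is correct, and its skeleton matches the paper's: reduce to the linear $\G_x$-action on $W$ via Lemma \ref{lemma-reducemorita}, use Lemma \ref{stratif-lemma-2} to identify the Morita type of $x$ in $W$ with $W^{\G_x}$, and use the equivariance of $W\cong W^{\G_x}\x F$ and of $\phi:\R_+\x\Sigma\to F\setminus\{0\}$ for the remaining types. The difference is one of completeness rather than of route: the paper disposes of the second assertion in one sentence (``it follows from equivariance of the two identifications''), whereas you actually compute the normal representation at $w=w_0+f$ as $\N_w\cong W^{\G_x}\oplus\R\oplus\N^{\Sigma}_p$ with $\G_{x,w}=\G_{x,p}$, and — crucially — prove the converse direction, that distinct Morita types $T$ on $\Sigma$ yield distinct Morita types on $W$, by pulling back along $\psi$ and cancelling the common trivial summand $W^{\G_x}\oplus\R$ using complete reducibility of representations of the compact group $\G_{x,p}$. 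That cancellation step is the genuine mathematical content behind the phrase ``is given by'' in the statement, and the paper leaves it implicit; your version also gives a cleaner proof of the first assertion than the paper's slight detour through orbit types (``all points of $W^{\G_x}$ have the same orbit type, hence the same Morita type''), since you simply observe that every point of $W^{\G_x}$ has isotropy $\G_x$ and normal representation equal to $W$ itself. In short: same strategy, but your writeup supplies the details that make it a complete proof.
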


\begin{proof} The Morita type of the point $x$ (which is identified with $0\in W$) for the action of $\G_x$ on $W$ is given by $W_{(x)}=W^{\G_x}$. To see this, note that every point of $W_{(x)}$ has an isotropy group isomorphic to $\G_x$, hence equal to $\G_x$ by Lemma \ref{stratif-lemma-2}, so that $W_{(x)}\subset W^{\G_x}$. The converse inclusion can be deduced from the fact that all points of $W^{\G_x}$ have the same orbit type, hence the same Morita type. We can assume that this is the case since for a small enough open containing $0$, the members of $\cP_{\sim}(W)$ and  $\cP_\M(W)$ containing $0$ coincide. The first statement of the lemma follows by applying Lemma \ref{lemma-reducemorita}.

The second part of the statement is obtained as a consequence of Lemma \ref{lemma-reducemorita} and of the fact that the isomorphism $W^{\G_x}\x F\to W$ and the diffeomorphism $\phi:\R_+\x \Sigma \to F\backslash \{0\}$ are equivariant.
\end{proof}

\begin{remark}[Morita types on a neighbourhood of a point]\label{rmk-morita_types_around_point}  When we are only interested in how Morita types look like in a small neighbourhood of a point $x$ in the base, we can use the local model for $\G$ around $x$ given by Proposition \ref{prop-local_model_at_point}. This local model is the product of a pair groupoid $O\x O\arrows O$ with an action groupoid $\G_x\lx W\arrows W$, where $W$ is an invariant ball centred at $0$ in $\N_x$, on which the compact group $\G_x$ acts linearly. So we see that the intersection of each Morita type in $M$ with the neighbourhood $O\x W$ of $x$ is of the form $O\x T$ where $T$ is a  Morita type in $W$ for the action groupoid $\G_x\lx W\arrows W$.
\end{remark}

\subsection{The canonical (Morita type) stratifications}\label{sec:gpd_canonical_strat}

\begin{proposition}\label{onemoritatype} Let $\G\arrows M$ be a proper Lie groupoid with only one Morita type. Then the orbit space $X$ is a smooth manifold and the canonical projection $\pi:M\to X$ is a submersion, whose fibres are the orbits.  

%Moreover, $\G$ is locally isomorphic to the product of the gauge groupoid of the $s$-fibres with a transversal to the orbits. 
\end{proposition}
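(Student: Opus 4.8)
The plan is to reduce the statement to the local models of Proposition \ref{prop-local_model_at_point} and to show that the single Morita type hypothesis forces the normal representation $\N_x$ to be trivial at every $x\in M$. Once that is known, each local model degenerates into a product of a pair groupoid (whose orbit space is a point) and a \emph{trivial} action groupoid (whose orbit space is an open ball in a Euclidean space), and the manifold structure on $X$ falls out of the ringed-space description already established in Theorem \ref{Xissmooth}.

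First I would fix $x\in M$ and apply Proposition \ref{prop-local_model_at_point} to obtain a neighbourhood $U\cong O\x W$ on which $\G_U\cong (O\x O)\x(\G_x\lx W)$, where $\G_x$ is compact (Proposition \ref{hausdorff}) and acts linearly on the invariant ball $W\subset\N_x$. By Remark \ref{rmk-morita_types_around_point} the Morita types of $\G$ meeting $U$ are precisely the sets $O\x T$, with $T$ ranging over the Morita types of $\G_x\lx W$ on $W$. As $\G$ has a single Morita type there is only one such $T$, so $W$ is a single Morita type for the $\G_x$-action; but by Lemma \ref{reduceproblem} the Morita type of the origin is the fixed-point set $W^{\G_x}$, whence $W=W^{\G_x}$. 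Thus $\G_x$ acts trivially on the ball $W$, and therefore trivially on $\N_x$. In particular the normal representation is trivial at every point and, since all pairs $(\G_x,\N_x)$ are isomorphic, $k:=\dim\N_x$ is constant.

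With the action trivial the local model collapses: the orbit space of $(O\x O)\x(\G_x\lx W)$ is a point times $W$, so $\pi(U)\cong W$ as topological spaces, and $\pi(U)$ is open in $X$ because $\pi$ is an open map. At the level of functions, a $\G_U$-invariant smooth function on $O\x W$ is constant along $O$ and, the $W$-action being trivial, is an arbitrary smooth function of $W$; hence the isomorphism of reduced ringed spaces from the proof of Theorem \ref{Xissmooth} reads
\[
\big(\pi(U),\,\C^\infty_{\pi(U)}\big)\;\cong\;\big(W/\G_x,\,\C^\infty(W)^{\G_x}\big)\;=\;\big(W,\,\C^\infty_W\big),
\]
an open subset of $(\R^k,\C^\infty_{\R^k})$. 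Since $X$ is Hausdorff (Proposition \ref{hausdorff}) and second-countable and is covered by such opens, the characterization of manifolds as locally ringed spaces (Definition \ref{dfn-mfd-lrs}) shows that $(X,\C^\infty_X)$ is a smooth manifold of dimension $k$; compatibility of charts is automatic in this formulation. Finally $\pi$ is smooth, since $f\circ\pi\in\CC(M)$ for every $f\in\CC(X)$ by Definition \ref{dfn-functions-on-orbispace}, and under the identifications above it is locally the projection $O\x W\to W$, a surjective submersion; hence $\pi$ is a submersion, and its fibres are the orbits by the very definition of $X=M/\G$.

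The main obstacle is the first step: deducing triviality of the normal representation from the single Morita type assumption. This is where the genuine content lies, and it relies on the slice reductions (Lemmas \ref{lemma-reducemorita} and \ref{reduceproblem}), which identify the Morita type through $x$ with $W^{\G_x}$. Everything afterwards is a routine transcription of the degenerate local model into the ringed-space language prepared for Theorem \ref{Xissmooth}.
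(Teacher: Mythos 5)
Your proposal is correct and follows essentially the same route as the paper: reduce to the linearized local model, show the single-Morita-type hypothesis forces the isotropy action on the slice $W$ to be trivial, and then read off the manifold structure and the submersion property from the collapsed local model via the ringed-space isomorphism of Theorem \ref{Xissmooth}. The only cosmetic difference is that you obtain $W=W^{\G_x}$ by citing Lemma \ref{reduceproblem}, whereas the paper re-runs the underlying argument directly (every $H_v$ is isomorphic to $H=\G_x$, hence equal to it by Lemma \ref{stratif-lemma-2}); since Lemma \ref{reduceproblem} is itself proved by exactly that argument, the content is identical.
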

\begin{proof} We already know from Proposition \ref{Xissmooth} that $(X,\CC(X))$ is a differentiable space, so in order to show that it is a smooth manifold it is enough to show that every point $\O\in X$ has a neighbourhood $U$ such that $(U,\CC(U))$ is a smooth manifold. Consider $\O\in X$ and $x\in M$ such that $\pi(x)=\O$. We have seen in the proof of  Proposition \ref{Xissmooth} that $\O$ has a neighbourhood $U$ in $X$ such that
\[(U,\CC(U))\cong (W/\G_x, \CC(W)^{\G_x}),\]
where $W$ is an open in $\N_x$ containing the origin, which is invariant for the isotropy representation of $\G_x$ on $\N_x$. To avoid confusion let us use the notation $H:=\G_x$. By Morita equivalence, since $\G$ only has one Morita type, the same holds for $\G_x\lx W$. This implies that for every $v\in W$, the isotropy group $H_v$ is isomorphic to $H$, hence $H_v=H$, by Lemma \ref{stratif-lemma-2}. This means that $H$ acts trivially on $W$ and so $$(U,\CC(U))\cong (W/\G_x, \CC(W)^{\G_x})=(W,\CC(W))$$ is a smooth manifold, and so is $X$.

To check that $\pi$ is a submersion, as this is a local property, we can use a neighbourhood $V\subset P_x\x_{\G_x}W$ of $x$ in the local model. Since $\G_x$ acts trivially on $W$ then $P_x\x_{\G_x}\N_x=\O\x W$. The restriction of the projection $\pi$ to $V$ is then given by $\pi(y,v)=v$, hence $\pi$ is a submersion.
\end{proof}

We denote the restriction of $\G$ to a Morita type $M_{(\alpha)}$ by $\G_{(\alpha)}:=s^{-1}(M_{(\alpha)})$. It is clear that $\G_{(\alpha)}$ is a groupoid over $M_{(\alpha)}$. Recall that we denote its orbit space by $X_{(\alpha)}$. The next result ensures smoothness of all these objects.

\begin{proposition}\label{prop-Morita.types.are.smooth} The Morita type $M_{(\alpha)}$ is a smooth submanifold of $M$, the groupoid $\G_{(\alpha)}\arrows M_{(\alpha)}$ is a Lie groupoid, and the orbit space $X_{(\alpha)}$ is a smooth manifold.
\end{proposition}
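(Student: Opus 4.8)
The plan is to handle the three assertions in sequence, reducing everything to the local models of Proposition~\ref{prop-local_model_at_point} and to the single Morita type case already settled in Proposition~\ref{onemoritatype}. First I would show that $M_{(\alpha)}$ is an embedded submanifold, which is a local question. Fix $x\in M_{(\alpha)}$; then $\alpha$ is precisely the Morita type of $x$, so by Remark~\ref{rmk-morita_types_around_point} the intersection of $M_{(\alpha)}$ with a local-model neighbourhood $O\x W$ of $x$ has the form $O\x T$, where $T$ is the Morita type of $0$ for the linear $\G_x$-action on $W\subset\N_x$. The computation in the proof of Lemma~\ref{reduceproblem} identifies this type as the fixed-point set, $T=W^{\G_x}$. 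Since $O$ is an open ball and $W^{\G_x}$ is an open subset of the linear subspace $\N_x^{\G_x}$, the set $O\x W^{\G_x}$ is an embedded submanifold of $O\x W$; as this holds near every point, $M_{(\alpha)}$ is an embedded submanifold of $M$. Its dimension $\dim\O_x+\dim\N_x^{\G_x}$ is constant along $M_{(\alpha)}$, since both summands are determined by the pair $(\G_x,\N_x)$, hence by $\alpha$.

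Next I would treat the groupoid $\G_{(\alpha)}$. By Remark~\ref{rmk-morita-inv-strat} the Morita type $M_{(\alpha)}$ is invariant, so $\G_{(\alpha)}=s^{-1}(M_{(\alpha)})=t^{-1}(M_{(\alpha)})$ is the restriction of $\G$ to an invariant submanifold. Since $s$ is a submersion it is transverse to $M_{(\alpha)}$, so $s^{-1}(M_{(\alpha)})$ is an embedded submanifold of $\G$ on which $s$ and $t$ restrict to submersions onto $M_{(\alpha)}$; the unit, inversion and multiplication restrict accordingly, so $\G_{(\alpha)}\arrows M_{(\alpha)}$ is a Lie groupoid, and it is proper by item~5 of Example~\ref{exm-proper-gpd}.

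Finally, for $X_{(\alpha)}$ I would check that the proper groupoid $\G_{(\alpha)}$ has a single Morita type and then apply Proposition~\ref{onemoritatype}. In the local picture above, restricting the local model to the invariant submanifold $O\x W^{\G_x}$ gives the product of the pair groupoid $O\x O\arrows O$ with the action groupoid $\G_x\lx W^{\G_x}\arrows W^{\G_x}$; but $\G_x$ acts trivially on $W^{\G_x}$. Consequently, along $M_{(\alpha)}$ every orbit of $\G_{(\alpha)}$ is a copy of $O$, every isotropy group is isomorphic to $\G_x$, and the normal representation to the orbit inside $M_{(\alpha)}$ is the trivial representation on $W^{\G_x}$. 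These data are the same for all points of $M_{(\alpha)}$, so $\G_{(\alpha)}$ has exactly one Morita type, and Proposition~\ref{onemoritatype} yields that $X_{(\alpha)}=M_{(\alpha)}/\G_{(\alpha)}$ is a smooth manifold.

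I expect the one genuinely conceptual point to be the triviality of the normal representation inside $M_{(\alpha)}$: this is what converts the rigid product description $O\x W^{\G_x}$ of the stratum into the statement that the restricted groupoid has a single Morita type, thereby reconciling the fine information recorded by Morita types with the coarser behaviour of $\G_{(\alpha)}$. Once this is isolated, the submanifold and Lie groupoid claims are routine transversality arguments, and the manifold structure on $X_{(\alpha)}$ follows immediately from Proposition~\ref{onemoritatype}.
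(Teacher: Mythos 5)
Your proof is correct, and its overall skeleton (reduce to the local model, identify the Morita type there as $W^{\G_x}$ via Lemma \ref{lemma-reducemorita}/\ref{reduceproblem}, conclude the manifold structure on $X_{(\alpha)}$ from Proposition \ref{onemoritatype}) matches the paper's. The one place where you genuinely diverge is the Lie groupoid structure on $\G_{(\alpha)}$: the paper works in the linearization around the whole orbit, identifies the restricted groupoid explicitly as $Gauge(P_x)\x W^{\G_x}\arrows \O_x\x W^{\G_x}$ inside $(P_x\x P_x)\x_{\G_x}\N_x$, and verifies smoothness of each structure map (including a separate check for the multiplication near a composable pair) in that model; you instead observe that $M_{(\alpha)}$ is an invariant embedded submanifold and that $\G_{(\alpha)}=s^{-1}(M_{(\alpha)})=t^{-1}(M_{(\alpha)})$ is therefore an embedded submanifold by transversality, with all structure maps restricting automatically. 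Your route is shorter and more general (it applies to any invariant embedded submanifold), at the cost of the explicit gauge-groupoid description of $\G_{(\alpha)}$ that the paper's computation yields as a by-product. You also make explicit a step the paper leaves implicit, namely why the restricted groupoid $\G_{(\alpha)}$ has a single Morita type (triviality of the $\G_y$-action on $\N_y^{\G_y}$ and constancy of the pair $(\G_y,\dim\N_y^{\G_y})$ along $M_{(\alpha)}$), which is indeed needed before Proposition \ref{onemoritatype} can be invoked; spelling this out is a small improvement in rigour.
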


\begin{proof}
Let $x\in M_{(\alpha)}$ and let $g\in \G_{(\alpha)}$. By the Linearization theorem (Theorem \ref{lin}), there are opens $U$ around $\O_x$ in $M$ and $V$ around $\O_x$ in $\N\O_x$ such that $\G_U\cong \N(\G_{\O_x})_V$ (note that $\G_U$ is an open in $\G$ containing $g$). 

The idea of the proof is the following: all the conditions we need to verify are local - that $M_{(\alpha)}$ and $\G_{(\alpha)}$ are submanifolds of $M$ and $\G$, that structure maps restrict to smooth maps and that $s,t:\G_{(\alpha)}\to M_{(\alpha)}$ are submersions. This means that it is enough to check them on the opens $V$ and $\N(\G_{\O_x})_V$, slightly abusing notation by thinking of them as opens around $x$ and $g$. Checking that the multiplication restricts to a smooth map is done in an analogous way as for the other maps, but in a neighbourhood of a composable pair, in the local model.

The situation becomes simple in the linear local model: indeed, $U\cap M_{(\alpha)}$ corresponds to $V\cap (\N\O_x)_{(\alpha)}$; to check that this is a submanifold of $\N\O_x$ we might as well substitute $V$ by its saturation $\tilde{V}$. Using the bundle description $\tilde{V}\cap N(\G_{\O_x})\cong P_x\x_{\G_x} W$ and Lemma \ref{reduceproblem}, we have that $\tilde{V}\cap (\N\O_x)_{(\alpha)}$ is equal to the saturation of $W^{\G_x}$ inside $P_x\x_{\G_x} W$ and so it is a submanifold, diffeomorphic to $\O_x\x W^ {\G_x}$. Its dimension is constant along $M_{(\alpha)}$, so $M_{(\alpha)}$ is a submanifold of $M$.

Moreover, $\G_{(\alpha)}\cap \G_U$ corresponds to $(N(\G_{\O_x})_V)_{(\alpha)}$;
for our purposes (local verifications) we can substitute $V$ by $\tilde{V}$, since $(N(\G_{\O_x})_V)_{(\alpha)}$ is a neighbourhood of $g$ inside of $(N(\G_{\O_x})_{\tilde{V}})_{(\alpha)}$.
Using the description \[N(\G_{\O_x})\cong (P_x\x P_x)\x_{\G_x} \N_x,\] we are interested in the restriction of $(P_x\x P_x)\x_{\G_x} W$ to \[\tilde{V}\cap (N\O_x)_{(\alpha)}\cong \O_x\x W^ {\G_x},\] which is exactly $Gauge(P_x)\x W^{\G_x}\arrows \O_x\x W^ {\G_x}$ - a Lie subgroupoid of $N(\G_{\O_x})$. Therefore, we conclude that on a neighbourhood of $g$, $\G_{(\alpha)}$ is a smooth submanifold of $\G$, that the restriction of all the structure maps of $\G$ (except possibly the multiplication) to it are smooth, and that the restrictions of the source and target are submersions. 

One proceeds in a completely analogous way to check that the restriction of the multiplication is smooth: one only needs to check it a neighbourhood of a composable pair $(g,h)\in \G_{(\alpha)}^{(2)}$; it is enough to work in the local model, where such a neighbourhood can be constructed starting from $N(\G_{\O_x})_V\x N(\G_{\O_x})_V$, since $N(\G_{\O_x})_V$ is a neighbourhood of both $g$ and $h$, and then the restriction of the multiplication coincides with the multiplication of $Gauge(P_x)\x W^{\G_x}$.

This proves that $\G_{(\alpha)}\arrows M_{(\alpha)}$ is a Lie groupoid and therefore Proposition \ref{onemoritatype} tells us that $X_{(\alpha)}$ is a smooth manifold.
\end{proof}

%the local model such that   and let $S$ be a slice through $x$. On a neighbourhood of $\O_x$ of the form $U\subset P_x\x_{G_x}W$ as given by the linearization theorem, we know from the considerations in remark \ref{reduceproblem} that $M_{(\alpha)}\cap U$ is equal to the saturation of $U\cap W^{G_x}$ in $U$, so $M_{(\alpha)}\cap U$ is a submanifold of $U$, of dimension equal to $\dim (W^{G_x})+\dim(P_x)-\dim(G_x)$. This dimension is constant among $x\in M_{(\alpha)}$, so that $M_{(\alpha)}$ is a submanifold of $M$. 
%Similarly, the restriction of $\G_{(\alpha)}$ to $U$ is of the form $Gauge(P_x)\x W^{G_x}$, so it is a submanifold of $\G_U$ and the source and target maps of $\G$ restrict to submersions $Gauge(P_x)\x W^{G_x}\to \O_x\x W^{G_x}$.

\begin{theorem}[Main Theorem 2]\label{thm-MT2} Let $\G\arrows M$ be a proper Lie groupoid. Then the partitions of $M$ and of $X=M/\G$ by connected components of Morita types are stratifications. 

Given a Morita equivalence between two Lie groupoids $\G$ and $\H$, the induced homeomorphism at the level of orbit spaces preserves the canonical stratification.

\end{theorem}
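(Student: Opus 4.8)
The plan is to reduce both assertions to the already-understood case of a compact Lie group acting linearly on a representation, exploiting the local models for $\G$ and the Morita-invariance of the data that defines a Morita type. First I would verify the conditions of Definition \ref{dfn-stratification} for the partition of $M$ by connected components of Morita types. Connectedness of the members is built in, and the smooth manifold structure on each Morita type $M_{(\alpha)}$ is exactly Proposition \ref{prop-Morita.types.are.smooth}. The remaining conditions (local finiteness, local closedness, and the frontier condition) are local, so I would check them on a convenient neighbourhood of an arbitrary $x\in M$. By Proposition \ref{prop-local_model_at_point} such a neighbourhood is diffeomorphic to $O\times W$, with $\G$ restricting to the product of the pair groupoid $O\times O$ with the action groupoid $\G_x\ltimes W$ of the compact group $\G_x$ acting linearly on an invariant ball $W\subset\N_x$; and by Remark \ref{rmk-morita_types_around_point} the Morita types of $\G$ meet this neighbourhood in the sets $O\times T$, where $T$ ranges over the Morita types of $\G_x\ltimes W$.

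By Proposition \ref{prop-morita_types_action_groupoid}, passing to connected components turns the partition by Morita types of $\G_x\ltimes W$ into the canonical stratification $\cS_{\G_x}(W)$ of the proper (here compact) action $\G_x\curvearrowright W$, which is a genuine stratification by the classical theory recalled in Section \ref{sec-proper_actions_strat}. In particular $\cS_{\G_x}(W)$ is locally finite near $0$ (a compact group acting linearly has finitely many orbit, hence Morita, types), so the product stratification of $O\times W$ is locally finite and its members are locally closed; this gives local finiteness and local closedness for the partition of $M$. For the frontier condition I would use that for open $U\subset M$ and a stratum $S$ one has $\overline{S}\cap U=\overline{S\cap U}\cap U$, so both the inclusion $R\subset\overline{S}$ and the strict dimension drop can be read off in the local model, where they hold for $O\times\cS_{\G_x}(W)$. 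Concretely, if $R\cap\overline{S}\neq\emptyset$ then $R\cap\overline{S}$ is closed in $R$, and the local frontier condition shows it is also open in $R$; since $R$ is connected this forces $R\subset\overline{S}$, and the comparison of dimensions reduces to $W$. This propagation from the charts to all of $M$, and in particular the fact that the closure-type frontier condition is genuinely detected in the local model, is the step I expect to be the main obstacle.

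For the orbit space $X=M/\G$ I would argue in the same fashion, now using that a neighbourhood of $\O_x\in X$ is homeomorphic to $W/\G_x$ by the Linearization theorem (Theorem \ref{lin}) together with Lemma \ref{lemma-moritarmk}, exactly as in the proofs of Theorem \ref{Xissmooth} and Proposition \ref{onemoritatype}; under this homeomorphism the Morita types of $X$ correspond to those of $W/\G_x$. By Proposition \ref{prop-morita_types_action_groupoid} and the classical theory, their connected components yield the canonical stratification $\cS(W/\G_x)$ of the orbit space of the action, which is again a stratification whose strata are the images of the strata of $\cS_{\G_x}(W)$ under the quotient map (consistent with the smoothness of $X_{(\alpha)}$ from Proposition \ref{prop-Morita.types.are.smooth}). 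The same local arguments then give local finiteness, local closedness and the frontier condition for $\cS(X)$.

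Finally, for Morita invariance, let $P$ realise a Morita equivalence between $\G\arrows M$ and $\H\arrows N$, with induced homeomorphism $\Phi:M/\G\to N/\H$ from Lemma \ref{lemma-moritarmk}. If $x\in M$ and $y\in N$ have orbits related by $\Phi$, then Lemma \ref{lemma-moritarmk} supplies an isomorphism $\G_x\cong\H_y$ together with a compatible isomorphism $\N_x\cong\N_y$ of normal representations, i.e. $[\G_x,\N_x]=[\H_y,\N_y]$. Thus $\Phi$ carries the Morita type $X_{(\alpha)}$ bijectively onto $Y_{(\alpha)}$ for every $\alpha$, which is precisely the Morita-invariance recorded in Remark \ref{rmk-morita-inv-strat}. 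Being a homeomorphism preserving the partition by Morita types, $\Phi$ maps connected components to connected components, hence sends each stratum of $\cS(X)$ homeomorphically onto a stratum of $\cS(Y)$; this proves that the canonical stratification is preserved, and I would not expect any further difficulty here beyond citing Lemma \ref{lemma-moritarmk} and Remark \ref{rmk-morita-inv-strat}.
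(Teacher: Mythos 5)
Your proposal is correct and follows essentially the same route as the paper: reduction to the local model $O\times W$ with the action groupoid $\G_x\ltimes W$, invoking Proposition \ref{prop-morita_types_action_groupoid} and Proposition \ref{prop-Morita.types.are.smooth}, and deducing Morita invariance directly from Lemma \ref{lemma-moritarmk} and Remark \ref{rmk-morita-inv-strat}. The only cosmetic difference is that you globalize the frontier condition by showing $R\cap\overline{S}$ is open and closed in the connected stratum $R$, whereas the paper chains the local statement along a path covered by finitely many linearized neighbourhoods; the two arguments are equivalent.
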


\begin{proof} Using the local model for a groupoid around a point $x$ in the base, we have seen in Remark \ref{rmk-morita_types_around_point} how to index Morita types in an open around $x$ by the Morita types for the action groupoid associated to the action of $\G_x$ on the linear slice $W$. In such a case, we have seen in Proposition \ref{prop-morita_types_action_groupoid}  that the partition by connected components of Morita types coincides with the canonical stratification $\S_{\G_x}(W)$, which is locally finite. We have also seen in Proposition \ref{prop-Morita.types.are.smooth} that the elements of the partitions by Morita types on $M$ and $X$ are smooth manifolds, with the subspace topology.

Let us check the condition of frontier. Let $T_1$ and $T_2$ be connected components of Morita types $M_{(\alpha)}$ and $M_{(\beta)}$ respectively and suppose that $\overline{T_1}\cap T_2\neq\emptyset$. 

First of all, let us start by checking that the condition of frontier for $T_1$ and $T_2$ holds on a neighbourhood $U$ of any point $x\in\overline{T_1}\cap T_2$, i.e., that $T_2\cap U\subset \overline{T_1}\cap U$. For this we use the local model of $\G$ around the point $x$, as in Proposition \ref{prop-local_model_at_point}.
As explained in Remark \ref{rmk-morita_types_around_point}, the Morita types in such a neighbourhood of $x$ are identified with the product of an open ball $O$ in $\O_x$ with the Morita types for the action groupoid $\G_x\lx W$, where $W$ is an invariant open ball  in $\N_x$; similarly, passing to connected components, $T_i\cap U_0$ is given by $O\x (T_i\cap W)$, for $i=1,2$. So it is enough to check that $y\in \overline{T_1}$ for all points $y\in W\cap T_2$.
We conclude that this holds from the fact that the partition by connected components of Morita types for an action groupoid on the slice $S$ induces the canonical stratification on $S$ associated to the action of $\G_x$.

Let $y$ be any other point in $T_2$ and consider any continuous path $\gamma$ from $x$ to $y$ in $T_2$. 
We can cover the image of $\gamma$ by finitely many open subsets $U_i$ of $M$, centred at points $p_i$ along $\gamma$, such that on each of them, the frontier condition for $T_1$ and $T_2$ holds, because of the same considerations made above for $x$. Hence $y\in \overline{T_1}$ and so $T_2\subset \overline{T_1}$. The continuity of $\pi$ ensures that the condition of frontier also holds on the orbit space.

The statement on Morita invariance follows from the fact that, by definition, the Morita type of $\O\in X$ only depends on Morita invariant information.
\end{proof}

As mentioned before, these stratifications are called the canonical stratifications of $M$ and $X$ and are denoted by $\cS_\G(M)$ and $\cS(X)$.

%\begin{corollary} The connected components of the groupoids $\G_{(\alpha)}$, where defined as above, over the Morita types $M_{(\alpha)}$, form a stratification of $\G$.-NOT TRUE AS STATED
%\end{corollary}
%\begin{proof} It is clear that the connected components of $\G_{\alpha}$ are a locally finite partition by submanifolds of $\G$. In order to check the frontier condition, let $P_1$ and $P_2$ be connected components of $\G_{(\alpha)}$ and $\G_{(\beta)}$ respectively, over the connected components $T_1$ and $T_2$ of $M_{(\alpha)}$ and $M_{(\beta)}$, such that $\overline{P_1}\cap P_2\neq\emptyset$. For $h\in \overline{P_1}\cap P_2$, we know that $s(h)\in \overline{T_1}\cap T_2$, and hence $T_2\subset \overline{T_1}$. Let $g\in P_2$, with $g:x\to y$. Choose any converging sequence $\O_k\to \O_x$ in $X$, with such that all $\O_k$ lie in $X_{(\beta)}$, and choose converging sequences $x_k\to x$, $y_k\to y$, where $x_k,y_k$ belong to $T_1\cap \O_k$. Next, choose any sequence $g_k:x_k\to y_k$. By multiplication on the left with elements of $G_x$, it is possible to ensure that each $g_k$ is in the connected component $P_1$ - this can be seen in the explicit local model $Gauge(P_x)\x N_x^{G_x}$ for the restriction of $\G$ to $M_{(x)}$.MAYBE MORE DETAIL HERE
%Finally, since the sequence $g_k$ satisfies that $(s,t)(g_k)=(x_k,y_k)\to(x,y)$, and the map $(s,t)$ is proper, there is a converging ...
%WEIRD - MAYBE JUST ADD ALL THE CONNECTEDNESS ASSUMPTIONS.
%\end{proof}

\subsection{Principal and regular types}\label{subsection_principal}

In this section we discuss principal and regular orbits and give a proof of the Principal type theorem for proper Lie groupoids (Theorem \ref{thm-principal_type}), generalizing the corresponding result for proper Lie group actions.

In analogy with the case of proper actions, we denote by $M^\mathrm{princ}$ the $\cS_\G(M)$-regular part of $M$ and orbits inside $M^{\mathrm{princ}}$ are called \textbf{principal orbits}. Similarly we denote by $X^\mathrm{princ}\subset X$ the $\cS (X)$-regular part of $X$. As before, we get a criteria for when a point $x\in M$ lies in $M^{\mathrm{princ}}$ - combining Lemma \ref{stratif-lemma-0} and the local model of $\G$ around $x$, given as in Proposition \ref{prop-local_model_at_point} by $(O\x O)\x (\G_x\lx W)\arrows O\x W$, we arrive at the condition that $W^{\G_x}$ is open in $W$. This leads to:

\begin{lemma}\label{Gpd-str-prin-pts} For a point $x\in M$, the following are equivalent:
\begin{enumerate}
\item[1.] $x\in M^{\mathrm{princ}}$ 
\item[2.] the action of $\G_x$ on the normal space to the orbit is trivial.
\end{enumerate}
\end{lemma}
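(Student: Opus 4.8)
The plan is to reduce the statement to a fact about the linear isotropy action via the local model, and then to invoke the characterization of maximal strata as open strata.

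First I would recall that $M^{\mathrm{princ}}$ is by definition the union of the maximal strata of $\cS_\G(M)$, and that by Lemma \ref{stratif-lemma-0} a stratum is maximal precisely when it is open in $M$. Hence $x\in M^{\mathrm{princ}}$ if and only if the connected component of the Morita type $M_{(x)}$ through $x$ is open; since openness is a local condition, this is equivalent to $M_{(x)}$ being a neighbourhood of $x$ inside any sufficiently small open $U\ni x$.

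Next I would pass to the local model. By Proposition \ref{prop-local_model_at_point} there is a neighbourhood $U\cong O\times W$ of $x$, with $O$ an open ball in $\O_x$ and $W$ a $\G_x$-invariant open ball in $\N_x$, on which $\G_U$ is isomorphic to $(O\times O)\times(\G_x\ltimes W)$. By Remark \ref{rmk-morita_types_around_point} the Morita types in $U$ are exactly the products $O\times T$ with $T$ a Morita type for the action groupoid $\G_x\ltimes W$; and by Lemma \ref{reduceproblem} the Morita type through the basepoint (i.e. through $x\leftrightarrow 0$) is $W^{\G_x}$. Thus $M_{(x)}\cap U=O\times W^{\G_x}$, and the openness condition from the previous step becomes precisely the requirement that $W^{\G_x}$ be open in $W$.

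The last — and only genuinely new — step is the elementary linear-algebra observation that finishes the equivalence. Since $W$ is an open ball in the vector space $\N_x$ and $W^{\G_x}=W\cap\N_x^{\G_x}$ is the trace of the linear subspace $\N_x^{\G_x}$ of $\G_x$-fixed vectors, $W^{\G_x}$ is open in $W$ if and only if $\N_x^{\G_x}=\N_x$, because a proper linear subspace meets an open ball in a set with empty interior. But $\N_x^{\G_x}=\N_x$ says exactly that every vector of $\N_x$ is fixed, i.e. that the normal representation of $\G_x$ on $\N_x$ is trivial, which is condition 2. I do not expect any real obstacle here: all the substantive work (the local model and the identification of the Morita type of the basepoint with the fixed-point set) has already been carried out in the preceding results, and what remains is only the single observation about linear subspaces meeting a ball.
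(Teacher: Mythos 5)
Your proposal is correct and follows essentially the same route as the paper, which states the lemma after the one-sentence argument "combining Lemma \ref{stratif-lemma-0} and the local model of $\G$ around $x$ given by Proposition \ref{prop-local_model_at_point}, we arrive at the condition that $W^{\G_x}$ is open in $W$." You simply make explicit the final observation that a proper linear subspace meets an open ball in a set with empty interior, which the paper leaves implicit.
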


However, one difference appears: while for a proper action, all orbits close enough to a principal orbit are diffeomorphic to it, for a proper groupoid this might not be the case. In general, this property holds for the principal orbits of a proper groupoid if and only if the restriction of the groupoid to $M^{\mathrm{princ}}$ is source-locally trivial, which is the case for action groupoids. For example, consider the Lie groupoid associated to a submersion between connected manifolds. It is proper and all orbits are principal, so the similar statement as for proper actions holds if and only if all fibres of the submersion are diffeomorphic.

At this stage we know that $M^{\mathrm{princ}}$ is open, dense, and a union of Morita types (hence invariant). The Principal type theorem tells us a bit more about its geometry, but before we get into it and as preparation for its proof, let us look at an infinitesimal version of the concepts of canonical stratification and principal orbits.

The \textbf{infinitesimal canonical stratification} on $M$ given by a proper groupoid is constructed exactly like the stratification $\cS_\G(M)$, substituting everywhere the isotropy group $\G_x$ and isotropy action on $\N_x$, at a point $x$, by the corresponding isotropy Lie algebra $\g_x$ and induced Lie algebra action of $\g_x$ on $\N_x$. The corresponding stratification is denoted by $\cS^\mathrm{inf}_\G(M)$.

\begin{remark}
At the level of the orbit space we do not have in general an infinitesimal canonical stratification, as the Morita types may fail to be manifolds. They are, however, naturally endowed with an orbifold structure.

Denote by $\mathcal{E}(\mathcal{G})$ be the foliation groupoid associated to a regular Lie groupoid $\mathcal{G}$ over $M$ by dividing out the action of the connected components $\mathcal{G}_x^0$ of the isotropy groups $\mathcal{G}_x$ (for the smoothness of $\mathcal{E}(\mathcal{G})$ see Proposition 2.5 in \cite{moerdijk}). 

The groupoids $\mathcal{G}$ and $\mathcal{E}(\mathcal{G})$ define the same foliation on $M$. If $\mathcal{G}$ is proper, then $\mathcal{E}(\mathcal{G})$ is proper as well \cite{moerdijk}. Therefore, $\mathcal{E}(\mathcal{G})$ defines an orbifold structure on the quotient $M/ \mathcal{G}$. Keep in mind that when seen as differentiable stacks, $M / / \mathcal{E}(\mathcal{G})$ is different from $M/ / \mathcal{G}$ because we lost isotropy information when passing from $\mathcal{G}$ to $\mathcal{E}(\mathcal{G})$ (unless $\mathcal{G}$ was a foliation groupoid to begin with).

Let $M^\mathrm{inf}_{(\alpha)}$ be an infinitesimal Morita type and let $\mathcal{G}^\mathrm{inf}_{(\alpha)}$ denote the restriction of $\mathcal{G}$ to it. Since $\mathcal{G}^\mathrm{inf}_{(\alpha)}$ is proper and regular, $\mathcal{E}(\mathcal{G}^\mathrm{inf}_{(\alpha)})$ defines an orbifold structure on $X^\mathrm{inf}_{(\alpha)}=M^\mathrm{inf}_{(\alpha)} / \mathcal{G}^\mathrm{inf}_{(\alpha)}$.
\end{remark}

The constructions and results are very reminiscent of the ones for proper actions: we denote by $M^\mathrm{reg}$ and $X^\mathrm{reg}$ the $\cS^\mathrm{inf}_\G(M)$-regular part of $M$ and the image of its projection to $X$, respectively; orbits in $M^\mathrm{reg}$ are called \textbf{regular} and the following result characterizes them:

\begin{lemma}\label{Gpd-str-reg-pts} For any point $x\in M$, the following are equivalent:
\begin{enumerate}
\item[1.]  $x\in M^{\mathrm{reg}}$;
\item[2.]  the infinitesimal action of $\gg_{x}$ on the normal space to the orbit is trivial; 
\item[3.]  the action of $G_{x}^{0}$ on the normal space to the orbit is trivial;
\item[4.]  all the orbits through points close to $x$ have the same dimension.
\end{enumerate}
\end{lemma}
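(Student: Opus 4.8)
The plan is to transport the statement to the corresponding result for the linear, proper action of the compact isotropy group $\G_x$ on the normal space $\N_x$, where it is precisely Lemma~\ref{G-str-reg-pts}. The bridge is the local model of Proposition~\ref{prop-local_model_at_point}: there is a neighbourhood $U\cong O\x W$ of $x$ on which $\G_U$ is isomorphic to the product of the pair groupoid $O\x O\arrows O$ with the action groupoid $\G_x\ltimes W\arrows W$, where $O$ is an open ball in $\O_x$ and $W$ is a $\G_x$-invariant ball around $0$ in $\N_x$ (recall $\G_x$ is compact by Proposition~\ref{hausdorff}, so this action is proper and Lemma~\ref{G-str-reg-pts} applies to it).

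First I would establish the infinitesimal analogue of Remark~\ref{rmk-morita_types_around_point}. For a point $(o,w)\in O\x W$ in the local model, the isotropy group is $(\G_x)_w$, so its isotropy Lie algebra is $(\g_x)_w$; moreover the orbit through $(o,w)$ is $O\x(\G_x\cdot w)$, so the normal space to it is canonically the normal space to the $\G_x$-orbit of $w$ in $W$, and the induced infinitesimal normal representation of $(\g_x)_w$ coincides with the one coming from the $\G_x$-action on $W$. Hence the infinitesimal Morita type of $(o,w)$ is independent of $o$ and equals that of $w$ for the action of $\G_x$ on $W$; consequently infinitesimal Morita types near $x$, and their connected components, have the product form $O\x(\,\cdot\,)$. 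Since by Lemma~\ref{stratif-lemma-0} a stratum is regular exactly when it is open, this product description yields $(o,w)\in M^{\mathrm{reg}}$ if and only if $w\in W^{\mathrm{reg}}$; in particular $x$, corresponding to $(o,0)$, lies in $M^{\mathrm{reg}}$ if and only if $0\in W^{\mathrm{reg}}$.

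It then remains to apply Lemma~\ref{G-str-reg-pts} to the $\G_x$-action on $\N_x$ at the origin. There the isotropy is all of $\G_x$ with Lie algebra $\g_x$, the orbit is the single point $\{0\}$, and the normal space to this orbit is all of $\N_x$; the four equivalent conditions of that lemma at $0$ read: $0\in(\N_x)^{\mathrm{reg}}$; $\g_x$ acts trivially on $\N_x$; $\G_x^0$ acts trivially on $\N_x$; and all $\G_x$-orbits near $0$ have equal dimension. By the previous paragraph the first is equivalent to condition~(1), while the second and third are verbatim conditions~(2) and~(3). Finally, since the orbit through $(o,w)$ in $M$ has dimension $\dim\O_x+\dim(\G_x\cdot w)$, the equal-dimension condition near $x$ is equivalent to the equal-dimension condition for the $\G_x$-action, giving condition~(4). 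The only step that genuinely needs to be written out is this identification of the infinitesimal isotropy and normal data in the product local model; once it is in place the four equivalences are a direct transcription of the action case, and I anticipate no real obstacle beyond that bookkeeping.
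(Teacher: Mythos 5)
Your proposal is correct and follows essentially the same route as the paper, which disposes of this lemma in one line by invoking Lemma \ref{stratif-lemma-0} together with the local model of Proposition \ref{prop-local_model_at_point}; you simply write out the reduction to Lemma \ref{G-str-reg-pts} for the linear $\G_x$-action on $\N_x$ that the paper leaves implicit. The bookkeeping you identify (infinitesimal isotropy and normal data in the product local model, and the dimension count $\dim\O_x+\dim(\G_x\cdot w)$ for condition 4) is exactly the content the paper deems straightforward.
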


The proof of this lemma is straightforward by using Lemma \ref{stratif-lemma-0} and the local model of Proposition \ref{prop-local_model_at_point} for the Lie groupoid $\G$ around $x$.

The main difference with the case of proper actions is that it is no longer true that all the orbits $\O_y$ through points $y$ close to a regular point $x$ are coverings of $\O_x$. It is still true in the case of source-locally trivial groupoids.

From the definitions it is immediate that $M^\mathrm{princ}\subset M^\mathrm{reg}$ and so also that $X^\mathrm{princ}\subset X^\mathrm{reg}$. It is also clear that like for $M^\mathrm{princ}$, it holds that $M^\mathrm{reg}$ is open, dense and consists of a union of Morita types.
We look into connectedness properties for $M^\mathrm{princ}$ and $M^\mathrm{reg}$. With Lemma \ref{stratif-lemma-4} in mind, we start by looking at codimension $1$ Morita types.

\begin{lemma}[Codimension 1 Morita types]\label{codim1}

Let $\G\arrows M$ be a proper Lie groupoid and $x\in M$. Suppose that the codimension of $M_{(x)}$ in $M$ is 1.
Then \begin{enumerate} \item[1.] All orbits in $M_{(x)}$ are regular, i.e., $M_{(x)}\subset M^\mathrm{reg}$;

\item[2.] $F$ is 1-dimensional, where $F$ is an orthogonal complement to $(\N_x)^{\G_x}$ in $\N_x$, for any $\G_x$-invariant inner product on $\N_x$, as in Lemma \ref{reduceproblem}; $\G_x$ acts non-trivially on $F$ as $O(1,\R)=\Z_2$.

\item[3.] The orbits near $\O_x$ which are not in $M_{(x)}$ are principal.

\item[4.] There is a neighbourhood $U$ of $x$ in $M$ such that the intersection of each orbit which is not in $M_{(x)}$ with $U$ is a two-fold covering of $\O_x\cap U$.
\end{enumerate}
\end{lemma}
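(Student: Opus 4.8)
The plan is to reduce everything to the linear slice at $x$ and then to analyse the one-dimensional representation $F$ of the compact group $\G_x$. By Proposition \ref{prop-local_model_at_point} (equivalently the Linearization theorem \ref{lin}) I may work on a neighbourhood $U \cong O\x W$ on which $\G_U \cong (O\x O)\x(\G_x\lx W)$, with $O$ a ball in $\O_x$ and $W$ an invariant ball around $0$ in $\N_x$; by Lemma \ref{reduceproblem} the Morita type $M_{(x)}$ meets $U$ in $O\x W^{\G_x}$. Comparing dimensions then gives $\dim M - \dim M_{(x)} = \dim \N_x - \dim (\N_x)^{\G_x} = \dim F$, so the hypothesis that $M_{(x)}$ has codimension $1$ is exactly the statement $\dim F = 1$, which is the first half of part 2. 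Since $F$ has no nonzero $\G_x$-fixed vectors and $\G_x$ acts orthogonally (with respect to the invariant inner product used to define $F$), the induced homomorphism $\G_x \to O(F) = O(1,\R) = \Z_2$ is nontrivial (otherwise $F\subset (\N_x)^{\G_x}$) and hence surjective; this proves the rest of part 2. I set $K := \ker(\G_x \to \Z_2)$, an index-two closed subgroup containing $\G_x^0$.

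For part 1, the action of $\G_x$ on $\N_x = (\N_x)^{\G_x}\oplus F$ factors through the finite quotient $\Z_2$, so the connected component $\G_x^0$ acts trivially on $\N_x$. By the characterization of regular points (condition 3 of Lemma \ref{Gpd-str-reg-pts}) this gives $x\in M^{\mathrm{reg}}$, and since every point of $M_{(x)}$ has isotropy data isomorphic to that of $x$, the same argument yields $M_{(x)}\subset M^{\mathrm{reg}}$.

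For part 3, I would describe the nearby orbits explicitly via Lemma \ref{reduceproblem}: an orbit meeting $U$ but not contained in $M_{(x)}$ hits $W$ in a point $w = w_0 + r p$ with $w_0 \in W^{\G_x}$, $r>0$, and $p$ in the unit sphere $\Sigma$ of $F$. As $F$ is one-dimensional, $\Sigma = \{\pm p\}$ and $\G_x$ permutes these two points through $\Z_2$, so the stabiliser of $w$ in $\G_x$ is precisely $K$, which acts trivially on all of $\N_x$. Moreover the $\G_x$-orbit of $w$ is the discrete set $\{w_0+rp,\, w_0-rp\}$, whence the normal space to the orbit of $w$ is all of $\N_x$ carrying the trivial $K$-action; Lemma \ref{Gpd-str-prin-pts} then shows $w$ is principal, proving part 3.

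Finally, for part 4 I would read the covering off the local model: the orbit through $(o,w)$ in $(O\x O)\x(\G_x\lx W)$ is $O\x(\G_x\cdot w)$, and for $w$ as above $\G_x\cdot w = \G_x/K \cong \Z_2$ consists of two points, so the orbit meets $U$ in two disjoint copies of $O\cong \O_x\cap U$; projection to the first factor exhibits this as a two-fold covering of $\O_x\cap U$. (Globally, in the bundle model $\N(\G_{\O_x})\cong (P_x\x P_x)\x_{\G_x}\N_x$ the orbit through $[q,w]$ is $P_x/K$, and the natural map $P_x/K \to P_x/\G_x = \O_x$ is a fibre bundle with fibre $\G_x/K = \Z_2$, i.e. a double cover.) The hard part will be the bookkeeping in this last step: one must verify that the stabiliser of a nearby point is exactly the index-two subgroup $K$ and not something smaller — which is precisely where the one-dimensionality of $F$ and the resulting $\Z_2$-action enter — and then correctly identify the covering map through the principal $\G_x$-bundle $P_x\to\O_x$.
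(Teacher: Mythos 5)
Your proposal is correct and follows essentially the same route as the paper: reduce to the linear slice via the local model and Lemma \ref{reduceproblem}, observe that codimension $1$ forces $\dim F=1$ so that $\G_x$ acts on $F$ through $O(1,\R)=\Z_2$, and then read off parts 1, 3 and 4 from the resulting description of nearby orbits. You are somewhat more explicit than the paper (introducing the index-two kernel $K$, checking stabilisers, and exhibiting the double cover directly), and you invoke condition 3 of Lemma \ref{Gpd-str-reg-pts} and Lemma \ref{Gpd-str-prin-pts} where the paper argues via constancy of orbit dimension and openness of the Morita type, but these are minor variations of the same argument.
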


\begin{proof}
We can use Lemma \ref{lemma-reducemorita} to restrict our attention to a slice at $x$. We have that $\dim M_{(x)}=\dim \O_x + \dim \N_x^{\G_x}$. In the case that the codimension of $M_{(x)}$ is $1$, $F$ must be $1$-dimensional. Now $\G_x$ acts non-trivially on $F$ by orthogonal transformations, so it must act as multiplication by $\{+1,-1\}$. This proves part 2 of the lemma. 

Since the orbits near $\O_x$ are equal to the saturation of their intersection of a slice, which can be identified with $\N_x$, then the dimension of the orbits is constant near $\O_x$, proving part 1 of the lemma. 

For part 3, note that the orbits near $\O_x$ which are not in $M_{(x)}$ are the saturation of elements $v\in F\backslash \{0\}$. All such points have the same Morita type, so they all belong to the same open Morita type and therefore to $M^\mathrm{princ}$. 
Part 4 is then an easy consequence of the local model for $\G$ around $x$ (Proposition \ref{prop-local_model_at_point}). 
\end{proof}

\begin{corollary} Let $\G$ be a Lie groupoid over $M$. Then

\begin{itemize}
\item[1.] $\cS_{\G}^{\mathrm{inf}}(M)$ does not contain codimension 1 strata.
\item[2.] $M^{\mathrm{reg}}$ is not only dense and open in $M$, but also connected. 
\item[3.] hence $\cS_{G}^{\mathrm{inf}}(M)$ has one and only one maximal stratum. 
\end{itemize}
\end{corollary}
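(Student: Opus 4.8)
The plan is to treat part 1 as the real content of the statement and to derive parts 2 and 3 formally from it together with results already established. For part 1, I would argue exactly as in the action case (Proposition \ref{codim1_strata_action}), reducing the groupoid problem to a question about a linear action of a compact group via the local model.

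Concretely, fix $x\in M$ and use the local model of Proposition \ref{prop-local_model_at_point}, under which $\G$ near $x$ is isomorphic to the product of the pair groupoid $O\x O\arrows O$ with the action groupoid $\G_x\lx W\arrows W$, where $\G_x$ is compact and $W$ is an invariant ball in $\N_x$. The infinitesimal analogue of Remark \ref{rmk-morita_types_around_point} identifies the infinitesimal Morita types of $\G$ meeting this neighbourhood with products $O\times T$, where $T$ ranges over the infinitesimal Morita types of $\G_x\lx W$; passing to connected components, the strata of $\cS^\mathrm{inf}_\G(M)$ near $x$ are of the form $O\times S$ with $S$ a stratum of $\cS^\mathrm{inf}_{\G_x}(W)$. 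Since $O$ is open in $\O_x$, the codimension of $O\times S$ in $O\times W$ equals the codimension of $S$ in $W$. By part 1 of Proposition \ref{codim1_strata_action}, applied to the proper (because $\G_x$ is compact) action of $\G_x$ on $W$, the stratification $\cS^\mathrm{inf}_{\G_x}(W)$ has no codimension $1$ strata, so neither does $\cS^\mathrm{inf}_\G(M)$. The decisive point is the same as in the action case and as in Lemma \ref{codim1}: a compact connected group $\G_x^0$ admits no nontrivial one-dimensional real representation, so the orthogonal complement to $(\N_x)^{\G_x}$ cannot be one-dimensional.

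With part 1 in hand, part 2 follows immediately from Lemma \ref{stratif-lemma-4}: $\cS^\mathrm{inf}_\G(M)$ is a stratification of the connected manifold $M$ (established just as in Theorem \ref{thm-MT2}, by the same local reduction), and, having no codimension $1$ strata, its regular part is connected; combined with the already-noted openness and density of $M^\mathrm{reg}$ this gives part 2. For part 3, recall from Lemma \ref{stratif-lemma-0} that the maximal strata are precisely the open ones. Distinct strata are disjoint, so $M^\mathrm{reg}$ is the disjoint union of the open, connected maximal strata, i.e.\ these are exactly the connected components of $M^\mathrm{reg}$. Since $M^\mathrm{reg}$ is connected by part 2, there can be only one, which proves part 3.

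I expect the main obstacle to be bookkeeping rather than a genuine difficulty: one must verify carefully that the local model identification really sends infinitesimal Morita types to the products $O\times T$, matching both the infinitesimal isotropy $\g_x$-action and the induced normal representation at a point $(o,w)$ with the corresponding data of $w$ for the $\G_x$-action, and that $\cS^\mathrm{inf}_\G(M)$ is genuinely a stratification so that Lemmas \ref{stratif-lemma-0} and \ref{stratif-lemma-4} apply. Both points run in complete parallel to the arguments already given for $\cS_\G(M)$ in Theorem \ref{thm-MT2} and for the action case in Section \ref{sec-proper_actions_strat}.
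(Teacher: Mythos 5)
Your proof is correct and follows essentially the same route as the paper: parts 2 and 3 are derived formally from part 1 via Lemmas \ref{stratif-lemma-4} and \ref{stratif-lemma-0}, and part 1 comes down to the local (slice) reduction together with the non-existence of a nontrivial one-dimensional orthogonal representation of a compact connected group. The only cosmetic difference is that the paper obtains part 1 as a reformulation of part 1 of Lemma \ref{codim1}, whereas you invoke Proposition \ref{codim1_strata_action} for the linear $\G_x$-action on the slice; both rest on the identical underlying argument.
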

\begin{proof} Part 1 is a reformulation of part 1 of Lemma \ref{codim1}; part 2 follows from part 1 and Lemma \ref{stratif-lemma-4}; part 3 is just a reformulation of part 2.
\end{proof}

Unlike the stratification $\cS^\mathrm{inf}_\G(M)$, the canonical stratification $\cS_\G(M)$ can have codimension $1$ strata. However, Lemma \ref{codim1} also implies that on a small neighbourhood of a regular point $x$ belonging to a codimension $1$ stratum, all the orbits not contained in this stratum belong to $M^\mathrm{princ}$; the stratum through $x$ then disconnects $M^\mathrm{princ}$ into two half-spaces, which are permuted by the isotropy action. In other words, passing to the orbit space we conclude that a neighbourhood of $\O_x$ in $X$ looks like a neighbourhood of the boundary point $\O_x$ on a manifold with boundary.

\begin{example}
A simple illustration of this behaviour is the following.
Consider an action of the circle on the M\"obius band (and its associated action groupoid), so that all orbits are regular and only the central orbit is not principal - its Morita type consists of only itself and has codimension 1. We are then in the conditions of Lemma \ref{codim1} and it is clear that the orbit space is a manifold with boundary: a closed half-line.
\end{example}

\begin{theorem}[Principal Morita type]\label{thm-principal_type} Let $\G\arrows M$ be a proper Lie groupoid. Then $X^\mathrm{princ}$ is not only open and dense but also connected, hence $\cS(X)$ has a single maximal stratum. 
\end{theorem}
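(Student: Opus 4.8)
The plan is to dispatch the two soft properties (openness and density of $X^{\mathrm{princ}}$) immediately from the general stratification theory, and then concentrate all the real work on connectedness, which I would reduce to a purely local statement solved by the already-available Principal orbit type theorem for compact group actions. Openness and density are free: by definition $X^{\mathrm{princ}}$ is the $\cS(X)$-regular part of $X$, so Lemma \ref{stratif-lemma-0} shows it is open and dense, and it is a nonempty union of open (maximal) strata. Consequently, once connectedness is proved, $X^{\mathrm{princ}}$ must be a single open stratum, which is exactly the final clause ``$\cS(X)$ has a single maximal stratum''.

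For connectedness I would first note that $X$ itself is connected, being the continuous image of the connected manifold $M$ under $\pi$, and that $X^{\mathrm{princ}}$ is dense in $X$. The heart of the argument is then the following local claim: every orbit $\O\in X$ has a neighbourhood basis of open sets $U$ for which $U\cap X^{\mathrm{princ}}$ is connected and nonempty. Granting the claim, connectedness follows from a standard point-set fact: if $A$ is dense in a connected space $B$ and every point of $B$ has arbitrarily small neighbourhoods $U$ with $A\cap U$ connected and nonempty, then $A$ is connected. Indeed, given a splitting $A=A_1\sqcup A_2$ into relatively clopen pieces, the rule assigning to $b\in B$ the constant value of the indicator of $A_1$ on the connected trace $A\cap U$ (for $U$ small) is well defined, locally constant, hence constant by connectedness of $B$, forcing one $A_i$ to be empty.

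To establish the local claim I would invoke linearization. By Proposition \ref{prop-local_model_at_point} (equivalently Theorem \ref{lin} with Lemma \ref{lemma-reducemorita}), the orbit $\O=\pi(x)$ has arbitrarily small neighbourhoods in $X$ of the form $U\cong B/\G_x$, where $\G_x$ is the compact isotropy group and $B$ is a $\G_x$-invariant open ball in $\N_x$ on which $\G_x$ acts linearly; shrinking the radius gives the required neighbourhood basis. Since the principal part is a Morita-invariant, intrinsic feature of the canonical stratification (Main Theorem 2, Theorem \ref{thm-MT2}), under this identification $U\cap X^{\mathrm{princ}}$ corresponds to $B^{\mathrm{princ}}/\G_x$, the image of the principal part of the compact action of $\G_x$ on $B$. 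As $B$ is a connected manifold with a proper (compact) $\G_x$-action, the Principal orbit type theorem for proper actions (\cite[Thm.\ 2.8.5]{DK}, recalled in Subsection \ref{sec-proper_actions_strat}) applies and asserts that $B_{(H_{\mathrm{princ}})}/\G_x=B^{\mathrm{princ}}/\G_x$ is connected, even when $B^{\mathrm{princ}}$ is disconnected. Hence $U\cap X^{\mathrm{princ}}$ is connected, proving the claim.

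The main obstacle is exactly the genuine disconnectedness of the principal locus \emph{upstairs}: Lemma \ref{codim1} shows that $M^{\mathrm{princ}}$ can be disconnected, the two sides of a codimension-$1$ Morita type being interchanged by a $\Z_2$ in the normal isotropy action, so one cannot merely transport connectedness of $M$ to $X^{\mathrm{princ}}$. The resolution is that these two sides are identified in the quotient, and this phenomenon is precisely what the compact-group Principal orbit type theorem packages when applied in the slice (the reflection-type walls become boundary and the chambers are glued). The one point requiring care is verifying that the local homeomorphism $U\cong B/\G_x$ carries $X^{\mathrm{princ}}$ exactly onto $B^{\mathrm{princ}}/\G_x$, so that the cited theorem is applicable; this is where I would lean on the Morita-invariance of the canonical stratification, which guarantees that ``principal'' is detected intrinsically in the local model.
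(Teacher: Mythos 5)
Your proof is correct, but it takes a genuinely different route from the paper. The paper argues globally: it joins two principal orbits by a path in $M$, invokes the Transversality homotopy theorem to push that path off all strata of codimension $\geq 2$ so that it only crosses finitely many codimension-$1$ strata transversally, and then uses Lemma \ref{codim1} (a codimension-$1$ Morita type projects to a boundary wall in $X$, with all nearby non-wall orbits principal) to nudge the projected curve off each wall; this produces an explicit path in $X^{\mathrm{princ}}$ and in fact proves path-connectedness directly. You instead run a soft local-to-global argument: a dense subset whose trace in arbitrarily small neighbourhoods is connected and nonempty is itself connected, and the local connectedness of the trace is delegated, via linearization, to the classical Principal orbit type theorem for the compact group $\G_x$ acting on a ball in $\N_x$. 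Your approach avoids transversality and the codimension-$1$ analysis entirely, at the cost of importing the full compact-group theorem as a black box (the paper only recalls that theorem without proof, and it is itself usually established by a slice induction or a transversality argument of exactly the kind the paper uses). The one step you rightly flag --- that the local homeomorphism $U\cong B/\G_x$ carries $U\cap X^{\mathrm{princ}}$ onto $B^{\mathrm{princ}}/\G_x$ --- does need the small extra observation that the regular part localizes, i.e.\ that the $\cS$-regular part of the stratification restricted to a saturated open equals the intersection of that open with $X^{\mathrm{princ}}$; this follows from the pointwise characterization of principal points (Lemma \ref{Gpd-str-prin-pts}) together with Proposition \ref{prop-morita_types_action_groupoid}, and with that in place your argument is complete. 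Note also that your method yields only connectedness rather than path-connectedness, though the two coincide here since $X^{\mathrm{princ}}$ is an open union of manifold strata and hence locally path-connected.
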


\begin{proof}
Let $x$ and $y$ be points on two principal orbits and consider a path $\gamma$ in $M$ connecting them. Since the image of $\gamma$ is compact, by local finiteness of the stratification, it meets finitely many strata. As a consequence, we can use a version the Transversality homotopy theorem \cite[p.\ 72]{GP} to obtain a curve $\gamma'$, between $x$ and $y$, which is transverse to all strata. This means that it misses all strata of codimension $2$ or higher, and it meets finitely many strata of codimension $1$ transversally. 

If $p$ is a point in the intersection of a codimension $1$ stratum with the image of $\gamma'$, we know by Lemma \ref{codim1} that a neighbourhood $U$ of $\O_p$ in $X$ looks like a neighbourhood of a boundary point in a closed half-space, in which $\O_p$ is a boundary point (and all orbits not on the boundary are principal).
Since $\gamma'$ is transverse to the stratum of $p$, we can assume that the part of the image of $\pi\circ \gamma'$ that lies in $U$ only touches the boundary at $\O_p$, so we can homotope $\pi\circ \gamma$ so that its image in $U$ misses the boundary completely, hence it is contained in $X^\mathrm{princ}$. Doing this to the curve $\pi\circ \gamma'$ in $X$ finitely many times, we obtain a new curve connecting $\O_x$ to $\O_y$, completely contained in $X^\text{princ}$.
\end{proof}

In the case of the action groupoid associated to a proper Lie group action, Theorem \ref{thm-principal_type} becomes the classical Principal orbit type theorem mentioned in Section \ref{sec-proper_actions_strat}. With some mild assumptions we obtain a consequence about $M^\mathrm{princ}$ as well:

\begin{corollary}If $\G\arrows M$ is a proper Lie groupoid with connected orbits (for example, if it has connected $s$-fibres), then $M^\mathrm{princ}$ is connected, hence $\cS_\G(M)$ has a single maximal stratum.
\end{corollary}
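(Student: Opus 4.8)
The plan is to deduce the connectedness of $M^{\mathrm{princ}}$ from that of $X^{\mathrm{princ}}$, which is already supplied by the Principal Morita type theorem (Theorem \ref{thm-principal_type}), by exploiting that the projection $\pi:M\to X$ is open and has the orbits as its fibres. First I would dispatch the parenthetical remark: by part 4 of Proposition \ref{prop-fund-str-gpd} the target map restricts to a surjection $t:s^{-1}(x)\to \O_x$, so connectedness of the $s$-fibres forces connectedness of every orbit. Hence it is enough to treat the case of a proper groupoid with connected orbits.

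Next I would set up the topology. Since $M^{\mathrm{princ}}$ is a union of Morita types it is invariant, and being the $\cS_\G(M)$-regular part it is open in $M$ by Lemma \ref{stratif-lemma-0}; therefore $M^{\mathrm{princ}}=\pi^{-1}(X^{\mathrm{princ}})$, and the restriction $\pi:M^{\mathrm{princ}}\to X^{\mathrm{princ}}$ is a continuous open surjection (the map $\pi$ is open for any Lie groupoid, and $M^{\mathrm{princ}}$ is open) whose fibres are exactly the connected orbits contained in $M^{\mathrm{princ}}$.

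The key step is then a purely topological lemma: \emph{an open continuous surjection onto a connected space, all of whose fibres are connected, has connected total space.} I would prove it by contradiction. If $M^{\mathrm{princ}}=U\sqcup V$ with $U,V$ open, nonempty and disjoint, then connectedness of each fibre forces every orbit to lie entirely in $U$ or entirely in $V$, so $\pi(U)$ and $\pi(V)$ are disjoint; openness of $\pi$ makes them open in $X^{\mathrm{princ}}$, and surjectivity makes them cover $X^{\mathrm{princ}}$, contradicting its connectedness. This yields connectedness of $M^{\mathrm{princ}}$.

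Finally, the maximal strata of $\cS_\G(M)$ are precisely the strata contained in the regular part $M^{\mathrm{princ}}$, and each is open by Lemma \ref{stratif-lemma-0}; being pairwise disjoint open sets whose union $M^{\mathrm{princ}}$ is now known to be connected, there can be only one of them, so $\cS_\G(M)$ has a single maximal stratum. The only delicate point I anticipate is the topological lemma, but the open-map property of $\pi$, valid for every Lie groupoid, makes the argument go through cleanly.
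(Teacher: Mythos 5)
Your argument is correct and is exactly the intended deduction: the paper states this corollary without proof immediately after Theorem \ref{thm-principal_type}, and transferring connectedness from $X^{\mathrm{princ}}$ to $M^{\mathrm{princ}}$ via the open quotient map $\pi$ with connected orbit-fibres is the natural (and evidently intended) route. The one step worth spelling out slightly more is the identity $M^{\mathrm{princ}}=\pi^{-1}(X^{\mathrm{princ}})$: with connected orbits every stratum of $\cS_\G(M)$ is saturated, so a stratum of $M$ is open precisely when its image in $X$ is an open stratum, which gives both inclusions.
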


%%%%%%%%%%%%%%%%%%%%%%%
%%%%%%%%%%%%%%%%%%%%%%%
%%%%%%%%%%%%%%%%%%%%%%%
%%%%%%%%%%%%%%%%%%%%%%%
%%%%%%%%%%%%%%%%%%%%%%%
%%%%%%%%%%%%%%%%%%%%%%%
%%%%%%%%%%%%%%%%%%%%%%%
%%%%%%%%%%%%%%%%%%%%%%%
\section{Orbispaces as differentiable stratified spaces}
\label{chp5-smooth_stratifications}
%%%%%%%%%%%%%%%%%%%%%%%
%%%%%%%%%%%%%%%%%%%%%%%
%%%%%%%%%%%%%%%%%%%%%%%
%%%%%%%%%%%%%%%%%%%%%%%
%%%%%%%%%%%%%%%%%%%%%%%
%%%%%%%%%%%%%%%%%%%%%%%

When a space $X$ admits both a stratification and a smooth structure, it is natural to ask how the strata fit together with respect to the smooth structure. For example, if $X$ is a smooth manifold, it is natural to require that the strata are submanifolds.

In general, one way to proceed would be to think of $X$ as a stratified space and then give it some smooth structure in a way compatible with the stratification, for example via an appropriate atlas. This approach is studied in detail in \cite{Pflaum}. Alternatively, one could think of $X$ as if it was a ``smooth'' space (for example by giving it one of the structures described in Section \ref{chp3-smooth structures}) and then consider a stratification on $X$ that respects the smooth structure. We follow this second approach.

\begin{definition}A \textbf{differentiable stratified  space} consists of a differentiable space $(X,\O_X)$ together with a stratification $\cS$ on $X$ such that the inclusion of each stratum is an embedding of differentiable spaces.
\end{definition}

As remarked in \cite{hessel}, the notion of a reduced differentiable stratified  space is the same as that of a stratified space with smooth structure of Pflaum, defined in terms of singular charts (cf. \cite[Sec.\ 1.3]{Pflaum}).

For a general stratification, the only condition we have required on how the strata should fit together is the frontier condition. In the presence of smooth structure, there are several other conditions that are often imposed, the most common of them being Whitney's conditions (A) and (B).

\begin{definition} Let $M$ be a smooth manifold and let $\cS$ be a stratification on $M$. We say that a pair of strata $(R,S)$ satisfies the \textbf{Whitney condition (A)} if the following condition holds:
\begin{enumerate}
\item[(A)] Let $(x_n)$ be any sequence of points in $R$ such that $x_n$ converges to a point $x\in S$ and $T_{x_n}S$ converges to $\tau\subset T_x M$ in the Grassmannian of $(\dim R)$-dimensional subspaces of $TM$. Then $T_x S\subset \tau$.
\end{enumerate}
 
Let $\phi:U\rmap \R^n$ be a local chart around $x\in S$. We say that $(S,T)$ satisfies the \textbf{Whitney condition (B)} at $x$ with respect to the chart $(U,\phi)$ if satisfies the following condition:

\begin{enumerate}
\item[(B)] Let $(x_n)$ be a sequence as in $(A)$, with $x_n\in U\cap R$ and let  $(y_n)$ be a sequence of points of $U\cap S$, converging to $x$, such that the sequence of lines \[\overline{\phi(x_n)\phi(y_n)}\] converges in projective space to a line $\ell$. Then $(d_x\phi)^{-1}(\ell)\subset \tau$. 
\end{enumerate} 

The pair $(S,T)$ is said to satisfy the Whitney condition $(B)$ if the above condition holds for any point $x\in S$ and any chart around $x$.
\end{definition}

Although we have used charts in the definition of Whitney condition $(B)$, the condition is actually independent of the chart chosen - if $(S,T)$ satisfy Whitney $(B)$ with respect to a chart around $x$, they do so with respect to any chart as well \cite[Lemma 1.4.4]{Pflaum}. 

\begin{remark}\label{rmk-whitney} Note that Whitney's conditions are local, by definition, and they also make sense for a stratification on a subspace of $\R^n$. In this way we can make sense of when a reduced differentiable stratified  space $X$ satisfies Whitney's conditions: by Proposition \ref{prop-reduced-affine} such a space is always covered by open subspaces $(U_i,\O_{X|U_i})$ isomorphic to $(Z,\C^\infty_Z)$, for some closed subset $Z$ of $\R^n$. The subspace $Z_i$ has a decomposition $\cP_i$ induced by the decomposition on $U_i$ (By restricting to an open, we might not have a stratification, but only a decomposition; that is not a problem since all that we want is to check that Whitney's conditions hold for the pieces of the decomposition on $Z_i$ corresponding to the strata on $X$).
\end{remark}

\begin{definition}Let $(X,\O_X, \cS)$ be a reduced differentiable stratified  space. We say that $\cS$ is a \textbf{Whitney stratification} if $X$ is covered by opens $U_i$, such that, in the notation of Remark \ref{rmk-whitney} above, $(U_i,\O_{X|U_i})$ is isomorphic to $(Z,\C^\infty_{Z_i})$,  and all the induced stratifications $\cS_i$ satisfy Whitney's conditions $(A)$ and $(B)$.
\end{definition}

The idea is that Whitney's conditions are local conditions about how the strata fit together that permit drawing important global information about the stratification. Let us mention one particularly important example of this idea: stratifications satisfying Whitney's conditions are locally trivial (see Proposition \ref{prop-loc-triv-strat} below).

\begin{definition}%[Morphisms]
A \textbf{morphism of stratified spaces} is a continuous map $f:X\rmap Y$ between stratified spaces with the property that for every stratum $S$ of $X$ there is a stratum $R_S$ of $Y$ such that $f(S)\subset Y$ and the restriction $f_{|S}:S\rmap R_S$ is smooth. 
\end{definition}

Let $X_1$ and $X_2$ be two topological spaces, with stratifications $\cS_i$ on $X_i$, $i=1,2$. Then the products of the form $S\x R$ with $S\in \cS_1$ and $R\in\cS_2$ form a stratification on $X_1\x X_2$.

\begin{definition}%[Local triviality]
A stratification $\cS$ on a space $S$ is called \textbf{topologically locally trivial} if for every $x\in X$ there is an open neighbourhood $U$ of $x$ in $X$, a stratification $\S_F$ on a space $F$, a point $0\in F$ and an isomorphism of stratified spaces   \[\phi: (S\cap U)\x F\rmap U, \] where $S$ is the stratum of $\S$ containing $x$, such that the stratum of $\S_F$ containing  $0$ is simply $\{0\}$, and such that $\phi(y,0)=y$ for any $y\in S\cap U$. In this case $F$ is called the \textbf{typical fibre} over $x$. When $F$ is a cone, $F=CL$, we say that $L$ is the \textbf{link} of $x$.

If $L$ is locally trivial with cones as typical fibres, and that holds again for the links in the points of $L$, and so on, we say that $(X,\cS)$ is a \textbf{cone space}.
\end{definition}

\begin{proposition}\label{prop-loc-triv-strat}[Thom-Mather \cite{Mather2}, \cite{thom}] Any Whitney stratified reduced differentiable space is locally trivial, with cones as typical fibres.
\end{proposition}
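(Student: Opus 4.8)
The statement is the classical Thom--Mather theorem, so the plan is to reduce the assertion to the setting of Whitney stratified subsets of Euclidean space and then to establish (a self-contained version of) Thom's first isotopy lemma. The first step is to pass to a local model. By Proposition \ref{prop-reduced-affine} and Remark \ref{rmk-whitney}, every point of $X$ has a neighbourhood isomorphic, as a reduced differentiable space, to $(Z,\C^\infty_Z)$ for some closed subset $Z\subset\R^n$ carrying an induced decomposition satisfying Whitney's conditions (A) and (B). Since topological local triviality and the cone structure are local properties, invariant under isomorphism of differentiable spaces, it suffices to prove the statement for such a Whitney stratified $Z\subset\R^n$ at a point $x$ lying in a stratum $S$.

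Next I would equip the stratification with a system of control data in the sense of Mather. For each stratum $S$ this consists of an open tubular neighbourhood $T_S$ of $S$ in $\R^n$, a projection $\pi_S\colon T_S\to S$, and a nonnegative tube function $\rho_S\colon T_S\to[0,\infty)$ with $\rho_S^{-1}(0)=S$, chosen so that on overlaps the control conditions $\pi_R\circ\pi_S=\pi_R$ and $\rho_R\circ\pi_S=\rho_R$ hold whenever $R<S$. These are built inductively over the strata, starting from honest Riemannian tubular neighbourhoods of each $S$ in $\R^n$ and using the frontier condition together with Whitney (A) to arrange the compatibility on overlaps. Restricting everything to a small neighbourhood $U$ of $x$, I fix coordinates on $S$ near $x$, giving a submersion of $S\cap U$ onto an open ball in $\R^{\dim S}$.

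The heart of the argument is then to lift each coordinate vector field on $S\cap U$ to a \emph{controlled} vector field on a neighbourhood in $Z$: a stratified vector field tangent to every stratum, projecting via $\pi_S$ to the prescribed field on $S$, and annihilating $\rho_S$. Integrating the resulting commuting flows produces a stratum-preserving homeomorphism
\[ \phi\colon (S\cap U)\times F \rmap U, \qquad F:=\pi_S^{-1}(x)\cap Z, \]
with $\phi(y,0)=y$, where $F$ carries the induced stratification whose minimal stratum is $\{x\}$. This is precisely the content of Thom's first isotopy lemma and yields topological local triviality with typical fibre $F$. Finally I would extract the cone structure of $F$ from the tube function: the radial vector field dual to $\rho_S$ is itself controlled, and its flow contracts $F$ onto $x$, identifying $F$ with the cone $CL$ on the link $L:=\rho_S^{-1}(\epsilon)\cap F$ for small $\epsilon>0$. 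Iterating the construction on $L$, which is again Whitney stratified, shows that $(X,\cS)$ is a cone space.

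The main obstacle is the middle step: the existence of controlled lifts of vector fields and the proof that their flows are genuine stratum-preserving homeomorphisms. This is the technical core of Thom--Mather theory, and it is exactly here that Whitney condition (B) is indispensable — without it the lifted fields need not extend continuously across the lower strata, and their integral flows could fail to map strata to strata, so that $\phi$ would not even be well defined. Everything else (the reduction to $\R^n$, the inductive assembly of the control data, and the passage from the isotopy lemma to the cone structure) is comparatively formal once the controlled lifting is in hand.
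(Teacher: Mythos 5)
The paper does not prove this proposition: it is quoted as a classical result of Thom and Mather, with only the references \cite{Mather2} and \cite{thom}, so there is no in-paper argument to compare yours against. Your outline is precisely the standard proof from those sources — localization to a Whitney stratified closed subset of $\R^n$ via Proposition \ref{prop-reduced-affine}, inductive construction of control data, controlled lifting of vector fields (Thom's first isotopy lemma), and radial contraction along the tube function to exhibit the fibre as a cone on the link — and the reduction step correctly observes that local triviality and the cone condition are invariant under isomorphism of differentiable spaces. Two caveats: the inductive construction of control data already needs Whitney's condition (B), not just (A) and the frontier condition (condition (A) only makes $\pi_S$ restricted to a higher stratum a submersion near $S$; one needs $(\pi_S,\rho_S)$ to be a submersion, which is exactly what (B) provides); and the technical core you explicitly defer — existence, integrability, and continuity of the controlled lifts — is the bulk of Mather's actual proof, so what you have is a correct roadmap rather than a self-contained argument, which is consistent with the paper's own decision to cite the result rather than prove it.
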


\subsection{Orbispaces as differentiable stratified  spaces}

We now focus on the differentiable aspects of the canonical stratifications associated to a proper groupoid.

\begin{proposition}[Main Conclusion] Let $\G\arrows M$ be a proper Lie groupoid. Then $M$ and the orbit space $X=M/\G$, together with the canonical stratifications, are differentiable stratified  spaces. Moreover, the canonical stratifications of $M$ and $X$ are Whitney stratifications.

Any Morita equivalence between two proper Lie groupoids induces an isomorphism of differentiable stratified  spaces between their orbit spaces.
\end{proposition}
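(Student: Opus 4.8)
The plan is to deduce all three assertions from the local normal form together with the structural results already established, reducing each to the classical theory of orbit-type stratifications for linear actions of compact groups. Being a differentiable stratified space and satisfying Whitney's conditions are both local properties (Whitney's conditions are local by definition, and the embedding condition on strata can be tested on an open cover), so I would first invoke the Linearization theorem (Theorem \ref{lin}) and the local model around a point (Proposition \ref{prop-local_model_at_point}): every point of $M$ has a neighbourhood of the form $O\x W$ on which $\G$ is isomorphic to $(O\x O)\x(\G_x\lx W)$, and correspondingly every point of $X$ has a neighbourhood isomorphic, as a differentiable space, to $W/\G_x$, where $\G_x$ is compact and $W$ is an invariant ball in the slice representation $\N_x$. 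By Remark \ref{rmk-morita_types_around_point} and Proposition \ref{prop-morita_types_action_groupoid}, under these identifications the canonical stratification on $M$ becomes the product of the trivial stratification of $O$ with the orbit-type (equivalently Morita-type) stratification of the $\G_x$-action on $W$, and the canonical stratification of $X$ becomes that of $W/\G_x$.

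For the first assertion I would argue as follows. On $M$ the strata are smooth submanifolds by Proposition \ref{prop-Morita.types.are.smooth}, and the inclusion of a submanifold into a manifold is automatically an embedding of differentiable spaces, so $M$ with its canonical stratification is a differentiable stratified space. For $X$, Proposition \ref{prop-Morita.types.are.smooth} already provides that each stratum is a smooth manifold; what remains is to check that its inclusion is an embedding of differentiable spaces, i.e. that its manifold structure agrees with the one induced from $\C^\infty_X$ and that every smooth function on a stratum is locally the restriction of a function in $\C^\infty_X$. Using the local model and Schwarz's theorem (Theorems \ref{thm-schwarz} and \ref{thm-schwarz-diff}), which realizes $W/\G_x$ as a closed semialgebraic subset $Z\subset\R^k$ with $\C^\infty_{W/\G_x}=\C^\infty_Z$, this reduces precisely to the statement that the orbit-type decomposition of $V/G$ for a compact group $G$ acting on a representation $V$ makes $V/G$ a reduced differentiable stratified space, which is classical (cf. \cite{DK,Pflaum}).

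For the Whitney conditions the same reduction applies: by locality it suffices to verify conditions (A) and (B) in the local model. On $X$ this is exactly the statement that the orbit-type stratification of $V/G$ for compact $G$ is Whitney, which follows from the classical theory (the image under the Hilbert map is a semialgebraic set whose canonical stratification is Whitney, or alternatively from the explicit normal-form description of tubular neighbourhoods of strata; cf. \cite{Pflaum}). On $M$ the local stratification is a product of the manifold factor $O$ with the slice model, and Whitney's conditions are stable under taking products with a smooth manifold, so again the verification reduces to the linear slice. Finally, for Morita invariance I would combine the two Main Theorems: a Morita equivalence induces a homeomorphism $\Phi\colon X\rmap Y$ (Lemma \ref{lemma-moritarmk}) that is an isomorphism of differentiable spaces by Theorem \ref{Xissmooth} and that carries the canonical stratification of $X$ onto that of $Y$ by Theorem \ref{thm-MT2}. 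Since $\Phi$ is an isomorphism of differentiable spaces sending each stratum bijectively onto a stratum, and the smooth structure of each stratum is the one induced from the ambient differentiable space (by the embedding property just established), $\Phi$ restricts to isomorphisms of differentiable spaces between corresponding strata; these being manifolds, the restrictions are diffeomorphisms, so $\Phi$ is an isomorphism of differentiable stratified spaces.

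I expect the main obstacle to be the first assertion for $X$, namely verifying that the inclusion of each stratum is genuinely an \emph{embedding} of differentiable spaces, i.e. the local surjectivity of the structure-sheaf map asserting that smooth functions on a stratum extend locally to smooth functions on $X$. All the groupoid-theoretic content is absorbed by the linearization, so the real work lies in making the reduction to the compact linear model precise and then appealing to (or reproving) the classical fact that orbit spaces of compact group representations are Whitney-stratified reduced differentiable spaces.
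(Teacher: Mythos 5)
Your proposal is correct and follows essentially the same route as the paper: locality of both the embedding and Whitney conditions, reduction via the linearization/local model to the orbit-type stratification of a compact group representation $W/\G_x$ (the paper cites Bierstone for the Whitney property of that case, which is the classical fact you invoke), and Morita invariance obtained by combining Theorem \ref{Xissmooth} with Theorem \ref{thm-MT2}. Your discussion is somewhat more explicit than the paper's about the embedding condition for strata of $X$ and the product structure on $M$, but the underlying argument is the same.
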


\begin{proof}
We have seen before that each connected component of a Morita type is a submanifold of $M$ so, together with its canonical stratification, $M$ is a differentiable stratified  space. Moreover, using the local model of $\G$ around a point in $M$, as in Remark \ref{rmk-morita_types_around_point}, we see that since the stratification of the action of $\G_x$ on $\N_x$ is a Whitney stratification, the same holds for the canonical stratification on $M$.

Now let us focus on the orbit space. We already know that the strata are locally closed subspaces of $X$. It is a local problem to verify that they are embedded in $X$ as differentiable spaces and satisfy Whitney's conditions. Let $x\in X$ and let $U$ be a neighbourhood of $x$ in $X$ for which the intersection with the canonical stratification coincides with the canonical stratification on $U$ associated with a compact Lie group representation (again, by the local description of Morita types as in Remark \ref{rmk-morita_types_around_point})

The statement on the Whitney conditions for the orbit space now follows from a result of Bierstone \cite{bierstone}, which states that the orbit space of a representation of a compact group has a Whitney stratification (which coincides with the canonical stratification). 

Morita invariance of the differentiable stratified  space structure on the orbit space $X$ follows from the Morita invariance of the smooth structure \ref{Xissmooth} and of the canonical stratification  \ref{thm-MT2} on $X$.
\end{proof}

\begin{corollary} Let $\G\arrows M$ be a proper Lie groupoid. Then the canonical stratifications on $M$ and $X$ are locally trivial with cones as typical fibres. Moreover, $M$ is a cone space.
\end{corollary}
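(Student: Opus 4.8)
The first assertion is essentially immediate from what precedes it. The Main Conclusion above shows that $M$ and $X$, each with its canonical stratification, are reduced differentiable spaces whose stratifications satisfy Whitney's conditions $(A)$ and $(B)$. The Thom--Mather theorem (Proposition \ref{prop-loc-triv-strat}) then applies verbatim to each of them and yields that both canonical stratifications are topologically locally trivial with cones as typical fibres.

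It remains only to upgrade this, for $M$, to the assertion that $M$ is a \emph{cone space}, i.e. that the links occurring as typical fibres are themselves locally trivial with cones as typical fibres, and that this persists recursively. The plan is to prove the following slightly stronger statement by induction on dimension: \emph{if a compact Lie group $K$ acts on a manifold $N$, then $N$ with its canonical stratification is a cone space}. The case of a general proper groupoid then follows by the local model: by Proposition \ref{prop-local_model_at_point}, a neighbourhood of any $x\in M$ is isomorphic, as a stratified space, to the product $O\x W$ of a trivially stratified ball $O\subset\O_x$ with the linear $\G_x$-representation $W\subset \N_x$ stratified by Morita types (Remark \ref{rmk-morita_types_around_point}). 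A product of a (trivially stratified) manifold with a cone space is again a cone space, so it suffices to treat the representation $W$ of the compact group $\G_x$, which is the inductive statement with $N=W$ and $K=\G_x$.

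For the inductive step I would make the cone structure at a point explicit using the decomposition of Lemma \ref{reduceproblem}. At the origin of $W$, writing $W\cong W^{K}\x E$ with $E$ the orthogonal complement of $W^{K}$ for a $K$-invariant inner product (so $E^{K}=0$), the equivariant radial diffeomorphism $\phi:\R_+\x\Sigma\rmap E\setminus\{0\}$, with $\Sigma\subset E$ the unit sphere, exhibits $E$ as the cone $C\Sigma$ with cone point the origin, compatibly with the stratifications. Hence a neighbourhood of the origin is the product of (a piece of) the stratum $W^{K}$ with the cone $C\Sigma$, so the typical fibre is $C\Sigma$ and the link is the $K$-sphere $\Sigma$. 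At a point $w\neq 0$ one argues identically after first invoking the slice theorem (Theorem \ref{thm-tube}) for the $K$-action: a neighbourhood of $w$ splits as the orbit direction times a representation of the smaller isotropy group $K_w$ on $\N_w$, whose cone structure at the origin produces a link which is a $K_w$-sphere of dimension at most $\dim E_w-1$. In every case the link is a sphere of dimension strictly smaller than $\dim N$, carrying a smooth action of a compact group, so the induction hypothesis applies and the link is a cone space; this is exactly the recursion needed to conclude that $W$, and therefore $M$, is a cone space.

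The routine points I would suppress are that all the identifications involved are isomorphisms of stratified spaces: the local-model isomorphism, the splitting $W\cong W^{K}\x E$, and the radial map $\phi$ are all equivariant, hence stratification-preserving. The only genuinely substantive point, and the one I expect to require the most care, is the bookkeeping of the induction itself: one must verify that the link at \emph{every} point, not merely at the most singular one, is again the stratified sphere of a compact group action of strictly smaller dimension, so that the inductive hypothesis is available at each stage and the cone structure genuinely iterates all the way down.
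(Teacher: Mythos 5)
Your proposal is correct and follows essentially the same route as the paper: the first assertion via the Thom--Mather result (Proposition \ref{prop-loc-triv-strat}), and the cone-space property via the explicit local description from Lemma \ref{reduceproblem} and Remark \ref{rmk-morita_types_around_point}, with the link at each point identified as the unit sphere of a compact-group representation. The paper compresses your induction on dimension into the single remark that the stratification on the link $\Sigma$ is again the canonical stratification of a compact group action and therefore ``has the same properties''; your explicit bookkeeping of the recursion is a more careful rendering of exactly that step.
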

\begin{proof}The first statement is  a direct consequence of Proposition \ref{prop-loc-triv-strat}. But actually, we already had an explicit description of this fact, by combining Lemma \ref{reduceproblem} and Remark \ref{rmk-morita_types_around_point}. For the stratification on $M$ we find that the typical fibre over $x$ is the orthogonal complement $F$ to $\N_x^{\G_x}$ with respect to an invariant inner product for the $\G_x$-action on $\N_x$; the link of $x$ is the unit sphere $\Sigma$ of $F$. Since the stratification on $\Sigma$ is the canonical stratification for the $\G_x$-action, it has the same properties, so $M$, with the canonical stratification, is a cone space.
\end{proof}

With the notation of the proof of the previous corollary, on $X$ we have that the typical fibre over the orbit $\O_x$ is $F/\G_x$ and the link of $\O_x$ is $\Sigma/\G_x$.

\begin{remark} The proof that the canonical stratifications on $M$ and $X$ are Whitney stratifications, using the language of germ-stratifications, appeared in \cite{hessel}. The authors of \textit{loc. cit.} also use the canonical stratification to prove that the orbit space of a proper groupoid can be triangulated, and a deRham theorem for the basic cohomology of a proper Lie groupoid.

Another example of a differentiable stratified  space arising in the study of proper Lie groupoids is that of the inertia groupoid of a proper Lie groupoid \cite{farsi1,farsi2}.
\end{remark}

\subsection{Orbispaces as stratified subcartesian spaces}

We now briefly discuss stratifications on subcartesian spaces (see Section \ref{sec-sikorski}) coming from orbits of families of vector fields and their relevance for orbispaces.

\begin{definition}%[Maximal integral curve]
Let $(S,\F_S)$ be a subcartesian space and let $X$ be a derivation of $\F_S$. An \textbf{integral curve} of $X$ through a point $x$ of $S$ is a curve $c:I\rmap S$ such that $I$ is an interval containing $0$ and \[\frac{d}{d\epsilon}f(c(\epsilon))=X(f)(c(t))\] for all $f\in \F_S$ and all $\epsilon\in I$. If the domain of $c$ is maximal with this property, we say that $c$ is a \textbf{maximal integral curve}. By convention, the map $c:\{0\}\to S,\ 0\mapsto x$ is an integral curve of any derivation. 
\end{definition}

\begin{theorem}[Theorem 3.2.1 in \cite{Sniat-book}] Let $(S,\F_S)$ be a subcartesian space and let $X$ be a derivation of $\F_S$. Then for any $x$ in $S$, there is a unique maximal integral curve $c$ of $X$ through $x$ such that $c(0)=x$.
\end{theorem}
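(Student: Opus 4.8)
The plan is to reduce the statement to the classical existence-and-uniqueness theorem for ordinary differential equations on $\R^n$, using the subcartesian structure to pass between $S$ and a singular subset of Euclidean space. First I would localize: since $(S,\F_S)$ is subcartesian, the point $x$ has an open neighbourhood $V$ together with a diffeomorphism $\iota$ onto a locally closed subset $Z\subset\R^n$, so that for some open $U\subset\R^n$ with $Z$ closed in $U$, the algebra $\F_S|_V$ consists precisely of the restrictions to $Z$ of functions in $\CC(U)$. Because the integral-curve condition is local and phrased purely in terms of $\F_S$, it suffices to produce a unique integral curve of $X$ with initial value $x$ on some small time interval, and then patch.

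Second I would represent the derivation by a vector field on $\R^n$. Using the local structure of derivations of a subcartesian space, the restriction $X|_V$ is induced by a smooth vector field $Y$ on $U$ (after shrinking $U$) which is \emph{tangent} to $Z$, in the sense that $Y(f)$ vanishes on $Z$ whenever $f\in\CC(U)$ vanishes on $Z$; moreover $Y$ agrees with $X$ on restrictions, i.e. $(Yf)|_Z = X(f|_Z)$. This representation is the technical heart of the reduction and the place where the subcartesian hypothesis is indispensable.

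With $Y$ in hand, uniqueness is the easy direction. If $c:I\rmap V$ is any integral curve of $X$ with $c(0)=x$, then for each coordinate function $x_i\in\CC(U)$ the composite $\gamma := \iota\circ c$ satisfies
\[
\frac{d}{d\epsilon}\,x_i(\gamma(\epsilon)) \;=\; X(x_i|_Z)(c(\epsilon)) \;=\; (Yx_i)(\gamma(\epsilon)),
\]
so $\gamma$ solves the smooth ODE $\dot\gamma = Y(\gamma)$ in $U$ with $\gamma(0)=\iota(x)$. Classical uniqueness for smooth ODEs pins down $\gamma$, and since $\iota$ is injective it pins down $c$; this yields uniqueness on a neighbourhood of $x$. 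For existence one solves $\dot\gamma = Y(\gamma)$, $\gamma(0)=\iota(x)$ in $U$ by Picard--Lindel\"of, and must then check that the solution $\gamma$ remains in $Z$, so that $c:=\iota^{-1}\circ\gamma$ is defined and is an integral curve of $X$.

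The main obstacle is precisely this invariance step: showing that the flow of the tangent extension $Y$ preserves the singular, locally closed set $Z$. Tangency alone does not formally force invariance via a naive Gr\"onwall estimate — one needs a Nagumo-type argument adapted to the fact that $Z$ is locally closed (so that a solution cannot instantaneously leave $Z$ while remaining near $x$), combined with the specific way $Y$ arises from a derivation of $\F_S$. Once local existence, uniqueness and invariance are established, the maximal integral curve is obtained in the standard way: by uniqueness any two integral curves through $x$ agree on the intersection of their domains, and Hausdorffness of $S$ ensures the agreement set is closed as well as open, so they coincide there; hence the union of the domains of all integral curves through $x$ is an interval carrying a single well-defined integral curve, which is maximal by construction. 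The degenerate case $I=\{0\}$ is covered by the stated convention.
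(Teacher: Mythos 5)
The paper does not actually prove this statement --- it quotes it from \cite{Sniat-book} --- so I am measuring your proposal against the standard argument there. Your localization, the representation of the derivation by a smooth vector field $Y$ on an ambient open set $U$ with $(YF)|_Z=X(F|_Z)$, and your uniqueness and patching discussion are all sound. The genuine gap is the existence step, and it is not merely an unproved lemma: the invariance claim you isolate as ``the main obstacle'' --- that the flow of $Y$ preserves $Z$, so that the ODE solution $\gamma$ through $\iota(x)$ stays in $Z$ for an interval of times around $0$ --- is false in general, and no Nagumo-type argument can rescue it, because the derivation hypothesis gives tangency in the algebraic sense only and nothing about subtangent cones at boundary or singular points. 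Take $S=[0,1]\subset\R$ with the derivation induced by $d/dx$ (a bona fide derivation of $\CC([0,1])$) and $x=1$: the extension is $Y=d/dx$ on $\R$, whose flow leaves $Z=[0,1]$ instantly in forward time even though $Z$ is closed, and the maximal integral curve through $1$ is $\epsilon\mapsto 1+\epsilon$ on $[-1,0]$. In worse examples the domain collapses to $\{0\}$, which is exactly why the statement carries the convention that $c:\{0\}\to S$ counts as an integral curve.

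The correct existence argument therefore does not force $\gamma$ to stay in $Z$; it keeps whatever part does. Let $\gamma$ be the maximal solution of $\dot{\gamma}=Y(\gamma)$ with $\gamma(0)=\iota(x)$, and let $I$ be the connected component of $\gamma^{-1}(Z)$ containing $0$; this is an interval, possibly degenerate, and $c:=\iota^{-1}\circ\gamma|_{I}$ satisfies the integral-curve identity for every $f\in\F_S$, since $f$ extends to $F\in\CC(U)$ and $\frac{d}{d\epsilon}F(\gamma(\epsilon))=(YF)(\gamma(\epsilon))=X(f)(c(\epsilon))$ on $I$. Conversely, your own uniqueness computation shows that any integral curve of $X$ through $x$ pushes forward to a solution of the same ODE, hence is a restriction of $\gamma$; its domain is an interval containing $0$ and contained in $\gamma^{-1}(Z)$, so it lies in $I$, and $c$ is the maximal integral curve within the chart. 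Your patching argument then globalizes this. In short: replace ``prove the flow preserves $Z$'' by ``restrict to the largest interval around $0$ on which the flow happens to remain in $Z$'', and the proof closes; as you wrote it, the existence step rests on a claim that is not true.
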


Although derivations of $\F_S$ always admit maximal integral curves, they may fail to induce local one-parameter groups of local diffeomorphisms. The ones that do are called vector fields:

\begin{definition}%[Vector field]
Let $(S, \F_S)$ be  a subcartesian space. A \textbf{vector field on $S$} is a derivation $X$ of $\F_S$ such that for every $x\in S$, there exists a neighbourhood $U$ of $x$ in S and $\epsilon>0$ such that for all $t\in(-\epsilon,\epsilon)$, the map $\exp(tX)$ is defined on $U$ and its restriction to $U$ is a diffeomorphism of $U$ with an open subset of $S$. 
\end{definition}

\begin{definition}%[Orbit of family of vector fields]
Let $(S, \F_S)$ be a subcartesian space, let $\mathfrak{F}$ be a family of vector fields on $S$ and let $x\in S$. The \textbf{orbit of the family $\mathfrak{F}$ through $x$} is the set of all points $y\in S$ of the form \[y=\exp(t_nX_n)\circ\ldots\circ\exp(t_1X_1)(x),\] for some $t_1,\ldots,t_n\in \R$ and some $X_1,\ldots,X_n\in \mathfrak{F }$.
\end{definition}

\begin{theorem}[Theorem 3.4.5 in \cite{Sniat2}] Any orbit $O$ of a family $\mathfrak{F}$ of vector fields on a subcartesian space $S$ is a smooth manifold. Moreover, in the topology of $O$ given by the manifold structure, the Sikorski structure induced on $O$ by the inclusion on $S$ coincides  with the manifold structure.
\end{theorem}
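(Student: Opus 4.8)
The plan is to follow the orbit theorem for families of vector fields (Sussmann's theorem), transported to the subcartesian setting. The flows $\exp(tX)$, $X\in\mathfrak{F}$, are by definition local diffeomorphisms of $S$, so the family $\mathfrak{F}$ generates a pseudogroup $\mathcal{P}$ of local diffeomorphisms, and the orbit $O$ is precisely a single orbit of the $\mathcal{P}$-action; in particular $\mathcal{P}$ acts transitively on $O$. The strategy is to manufacture a smooth manifold structure on $O$ whose charts come from composing flows, and then to check that this structure is compatible with the Sikorski structure that $O$ inherits from $S$.

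First I would build the charts. Fix $x\in O$ and, for a finite tuple $\mathbf{X}=(X_1,\dots,X_k)$ of elements of $\mathfrak{F}$, consider the map
\[
\xi_{\mathbf{X}}(t_1,\dots,t_k)=\bigl(\exp(t_1X_1)\circ\cdots\circ\exp(t_kX_k)\bigr)(x),
\]
defined near the origin of $\R^k$ and taking values in $O$. Using a local diffeomorphism of a neighbourhood of $x$ onto a subset of some $\R^n$ (which exists because $S$ is subcartesian), each $\xi_{\mathbf{X}}$ becomes a smooth $\R^n$-valued map, so it has a well-defined rank. Let $d$ be the supremum of these ranks over all tuples and all base points; I would first argue that this supremum is attained, say at $x$ by a tuple $\mathbf{X}$, and that the corresponding $\xi_{\mathbf{X}}$ restricts to an injective immersion of a $d$-dimensional cube whose image is a neighbourhood of $x$ in $O$. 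This image, with the parametrisation $\xi_{\mathbf{X}}$, is the candidate chart.

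The key step is to show that the maximal rank $d$ is the same at every point of $O$ and that two such charts overlap smoothly. Constancy of $d$ follows from transitivity: any $y\in O$ equals $\Phi(x)$ for some $\Phi\in\mathcal{P}$, and since $\Phi$ is a local diffeomorphism it carries rank-$d$ flow maps at $x$ to rank-$d$ flow maps at $y$, so the maximal rank is an invariant of the orbit. Smooth compatibility of the charts is the heart of the argument: one shows that the $d$-dimensional \emph{accessible distribution} spanned at each point by the push-forwards $(d\Phi)(X)$ of the generating vector fields is invariant under $\mathcal{P}$ and that the maximal-rank images are integral submanifolds of it, so that two overlapping charts parametrise the same $d$-dimensional piece and the transition map is smooth. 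This endows $O$ with a Hausdorff (inherited from $S$) $d$-dimensional smooth manifold structure in which every flow $\exp(tX)$ restricts to a diffeomorphism.

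Finally I would compare the two structures on $O$. The inclusion $O\hookrightarrow S$ induces a Sikorski structure whose smooth functions are those that are locally restrictions of elements of $\F_S$; I would verify that, relative to the manifold topology built above, these are exactly the functions $f$ on $O$ with $f\circ\xi_{\mathbf{X}}$ smooth for all charts, which is the manifold smooth structure. This uses that each $\xi_{\mathbf{X}}$ is smooth as a map into $S$ and that functions in $\F_S$ pull back to smooth functions of the flow-parameters. The main obstacle is the compatibility-of-charts step: on an honest manifold it is Sussmann's orbit theorem, but on a subcartesian space one has no global tangent bundle, so I expect the delicate part to be making sense of the accessible distribution and its $\mathcal{P}$-invariance through the singular local embeddings into $\R^n$, and showing the maximal rank is genuinely locally constant rather than merely upper semicontinuous.
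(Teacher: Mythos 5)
The paper does not prove this statement: it is quoted verbatim as Theorem 3.4.5 of the cited reference \cite{Sniat2}, so there is no in-paper argument to compare against. Your proposal is the standard strategy of that source, namely Sussmann's orbit theorem transported to the subcartesian setting: charts from compositions of flows, a maximal-rank argument made meaningful through the local singular embeddings into $\R^n$, constancy of the rank along the orbit via push-forward by the pseudogroup, and the integral-manifold property of maximal-rank images as the compatibility-of-charts step. You also correctly isolate the genuinely delicate point (making sense of the accessible distribution and its invariance without a global tangent bundle). One caution: when you say the image of a maximal-rank map ``is a neighbourhood of $x$ in $O$,'' this cannot be meant in the subspace topology from $S$ --- orbits are only immersed, and can accumulate on themselves --- so that sentence is really the \emph{definition} of the manifold topology on $O$, and the fact that this definition is consistent is exactly the chart-compatibility lemma you defer to; stating it as something to be ``argued'' before the charts are compared puts the cart slightly before the horse, but the logical content is the same.
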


\begin{proposition}[Proposition 4.1.2 in \cite{Sniat-book}] The partition of a subcartesian space $S$ by orbits of the family of all vector fields on $S$ satisfies the frontier condition.
\end{proposition}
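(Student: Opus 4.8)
The plan is to reduce the whole statement to a single invariance property: that the closure of any orbit is invariant under the local flows that generate the orbits. Write $\mathfrak{F}$ for the family of all vector fields on $S$, and let $O_1, O_2$ be two orbits of $\mathfrak{F}$ with $O_1 \cap \overline{O_2} \neq \emptyset$; the goal is to show $O_1 \subseteq \overline{O_2}$. First I would record the elementary fact that orbits are invariant under the flows of vector fields in $\mathfrak{F}$: if $y$ lies in an orbit $O$ and $X \in \mathfrak{F}$, then $\exp(tX)(y) \in O$ for every $t$ at which it is defined, directly from the description of an orbit as the set of points reachable from a given one by finite compositions $\exp(t_nX_n)\circ \cdots \circ \exp(t_1X_1)$.

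The key step is the claim that $\overline{O_2}$ is invariant under each $\exp(tX)$, $X \in \mathfrak{F}$. I would prove this first for small $t$. Given $y \in \overline{O_2}$, the definition of a vector field provides a neighbourhood $U$ of $y$ and $\epsilon > 0$ such that $\exp(tX)$ is a diffeomorphism of $U$ onto an open subset of $S$ for $|t| < \epsilon$. Since $S$ is locally homeomorphic to a subset of some $\R^n$ and is therefore first countable at $y$, I can pick a sequence $y_k \in O_2$ with $y_k \to y$; eventually $y_k \in U$, and continuity of the diffeomorphism $\exp(tX)$ gives $\exp(tX)(y_k) \to \exp(tX)(y)$. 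As each $\exp(tX)(y_k) \in O_2$ by flow-invariance of orbits, the limit $\exp(tX)(y)$ lies in $\overline{O_2}$. To pass from small $t$ to an arbitrary $t$ in the domain of the flow through $y$, I would cover the compact trajectory $\{\exp(sX)(y) : s \in [0,t]\}$ by finitely many flow-boxes, split $\exp(tX)$ into a finite composition of small-time flows, and apply the small-time statement at each step, carrying the approximating sequence of orbit points along the trajectory.

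With the invariance of $\overline{O_2}$ established, the conclusion is immediate. Choose $x \in O_1 \cap \overline{O_2}$. Since $O_1$ is the orbit of $\mathfrak{F}$ through $x$, every $z \in O_1$ has the form $z = \exp(t_nX_n)\circ \cdots \circ \exp(t_1X_1)(x)$; starting from $x \in \overline{O_2}$ and invoking the invariance property once for each factor shows $z \in \overline{O_2}$. Hence $O_1 \subseteq \overline{O_2}$, which is precisely the frontier condition in the sense required here.

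The main obstacle I expect is the passage from small-time to arbitrary-time invariance of $\overline{O_2}$: the definition of a vector field only guarantees flows on a neighbourhood for a short time, so making the flow-box decomposition precise — and in particular ensuring that the approximating sequence of points of $O_2$ can be transported along the entire trajectory while remaining in the successive flow-boxes — is the delicate technical point. Everything else is a formal consequence of orbits being flow-invariant and the flows being local homeomorphisms (indeed diffeomorphisms).
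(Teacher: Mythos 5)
Your proof is correct; note that the paper itself gives no argument here --- it imports the statement from Śniatycki's book --- and what you have written is essentially the standard proof from that source: reduce the frontier condition to the invariance of $\overline{O_2}$ under the local flows, establish that invariance for small time using the flow-box from the definition of a vector field, and iterate. The one point you flag as delicate is in fact not: you need not transport a single approximating sequence along the whole trajectory, since after each small-time step the new intermediate point is already known to lie in $\overline{O_2}$, so you may simply choose a fresh approximating sequence from $O_2$ there and repeat; the only compactness input is a Lebesgue-number argument on $[0,t]$ producing a partition $0=s_0<\dots<s_m=t$ with each increment smaller than the $\epsilon$ of the flow-box at the corresponding intermediate point.
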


\begin{corollary}[Corollary 4.1.3 in \cite{Sniat-book}] The partition of a subcartesian space $S$ by orbits of the family of all vector fields on $S$ is a stratification if and only if the partition is locally finite and the orbits are locally closed.
\end{corollary}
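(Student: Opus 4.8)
The plan is to unwind Definition \ref{dfn-stratification} and check its clauses one at a time against the properties of the orbit partition that have already been established, since the reverse direction needs no new idea beyond the results quoted above, while the forward direction is essentially a tautology. Indeed, if the orbit partition is a stratification, then by the very Definition \ref{dfn-stratification} it is locally finite and each orbit is a locally closed subspace, hence locally closed; so that implication requires no argument.

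For the converse, I would assume the partition is locally finite and that every orbit is locally closed, and then verify the requirements of Definition \ref{dfn-stratification} in turn. Each orbit $O$ carries a canonical smooth manifold structure, compatible with the induced Sikorski structure, by the orbit theorem (Theorem 3.4.5 in \cite{Sniat2}); it is connected because, by construction, any point of $O$ is joined to the base point by a finite concatenation of integral curves of vector fields in the family, each of which is a continuous path; and it is locally closed by hypothesis. Local finiteness is assumed, and the ambient point-set hypotheses (Hausdorff, second-countable, paracompact) hold since $S$ is subcartesian. Finally, the frontier condition in its weak form --- that $\overline{O}\setminus O$ is a union of orbits --- is exactly the content of Proposition 4.1.2 in \cite{Sniat-book}. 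This establishes every clause of Definition \ref{dfn-stratification} except for the strict-dimension refinement built into the frontier condition.

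The main obstacle is therefore to promote the weak frontier condition to the strict one: whenever $O_1\subset \overline{O_2}$ with $O_1\neq O_2$, one must show $\dim O_1 < \dim O_2$. Here I would exploit that the family consists of \emph{all} vector fields on $S$, together with lower semicontinuity of the orbit dimension: for $p\in O_1$ the dimension of the orbit through points near $p$ is at least $\dim O_1$, and since points of $O_2$ accumulate at $p$ this already yields $\dim O_1 \le \dim O_2$. To rule out equality I would use that $O_2$ is locally closed, hence open in $\overline{O_2}$: if $\dim O_1 = \dim O_2$, the accumulation of the equidimensional manifold $O_2$ onto $O_1$ near $p$ would force $p$ to lie in $O_2$, contradicting $O_1\cap O_2=\emptyset$. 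Making this last step fully rigorous --- controlling precisely how an orbit limits onto a lower piece in the subcartesian setting --- is the delicate point, and is exactly where the argument of \cite{Sniat-book} does the real work; everything else is bookkeeping over the quoted results.
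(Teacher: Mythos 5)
The paper gives no proof of this corollary: it is quoted verbatim from \cite{Sniat-book}, where it is an immediate consequence of the two results quoted just before it. Orbits are connected by construction and are manifolds whose induced structure agrees with the manifold structure (Theorem 3.4.5 in \cite{Sniat2}), the frontier condition always holds (Proposition 4.1.2 in \cite{Sniat-book}), and the only remaining clauses in the definition of a stratification are exactly local finiteness and local closedness of the pieces. Your ``bookkeeping'' portion reproduces this intended argument correctly, and you are right to notice that it only delivers the frontier condition in its weak form.

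The problem is the step you yourself flag as delicate. The strict dimension drop required by Definition \ref{dfn-stratification} is not merely hard to establish in this generality --- it is false, and your proposed argument fails at exactly the point you suspect. Take $S\subset\mathbb{R}^2\setminus\{0\}$ (writing $\mathbb{R}^2=\mathbb{C}$) to be the union of the unit circle $O_1$ and the spiral $O_2=\{(1+e^{-t})e^{it}\ :\ t\in\mathbb{R}\}$. The ambient vector field $X(z)=iz-(|z|-1)z/|z|$ is tangent to both pieces; in polar coordinates its flow is $\dot{r}=-(r-1)$, $\dot{\theta}=1$, so it preserves $S$, restricts to a vector field on the subcartesian space $S$, and acts transitively on each piece. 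No flow can move a point from one piece to the other, since $S$ is locally connected at spiral points but at no circle point. Hence the orbit partition is exactly $\{O_1,O_2\}$: it is finite, $O_1$ is closed in $S$, $O_2$ is open in $S$, and $O_1\subset\overline{O_2}\setminus O_2$, yet $\dim O_1=\dim O_2=1$. This refutes your claim that local closedness of $O_2$ forces $p\in O_2$ when the dimensions agree; lower semicontinuity of the orbit dimension gives only the non-strict inequality, and nothing upgrades it. Consequently the corollary is correct only for a notion of stratification whose frontier condition omits the dimension clause (which is the notion used in \cite{Sniat-book}); read against Definition \ref{dfn-stratification} literally, the ``if'' direction fails, and no argument can repair it. For the orbit spaces of proper groupoids that the paper is actually concerned with, the dimension drop is obtained separately from the Morita-type analysis, so nothing downstream depends on this point.
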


An important example of a case where orbits of all vector fields do indeed define a stratification is that of orbit spaces of proper actions.

\begin{theorem}[Theorem 4.3.10 in \cite{Sniat-book}] Let $G$ be a Lie group acting properly on a manifold $M$. The partition of the subcartesian space $M/G$ by orbits of the family of all vector fields on $M/G$ is the canonical stratification $\cS(M/G)$.
\end{theorem}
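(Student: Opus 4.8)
The plan is to compare two partitions of $M/G$: the partition $\mathfrak{O}$ by orbits of the family $\mathfrak{F}$ of all vector fields on the subcartesian space $M/G$ (Proposition \ref{Prop-Var2}), and the canonical stratification $\cS(M/G)$ by connected components of orbit types, and to show they coincide by proving each refines the other. All the local analysis is carried out in the normal form supplied by the Tube theorem (Theorem \ref{thm-tube}): I identify a neighbourhood of a point $\bar y$ (the image of $y\in M$) with $V/K$, where $V=\N_y$ and $K=G_y$ is the compact isotropy group acting linearly. Under this identification the stratum of $\cS(M/G)$ through $\bar y$ corresponds near $\bar y$ to the linear subspace $V^{K}$ of $K$-fixed vectors, since by Lemma \ref{stratif-lemma-2} these are exactly the nearby points whose isotropy equals $K$.

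First I would show that $\mathfrak{O}$ refines $\cS(M/G)$, i.e. each orbit of $\mathfrak{F}$ lies in a single stratum. The flow of any vector field on $M/G$ is a diffeomorphism of the subcartesian space, and the point is that such diffeomorphisms preserve orbit types. The cleanest route is by lifting: for a proper action every vector field on $M/G$ is the projection of a $G$-invariant vector field $X$ on $M$, and the flow $\phi_t$ of such an $X$ is $G$-equivariant, so $\phi_t(g\cdot p)=g\cdot\phi_t(p)$ forces $G_{\phi_t(p)}=G_p$; hence $\phi_t$ maps each orbit type to itself. Since $t\mapsto\phi_t$ is continuous with $\phi_0=\mathrm{id}$, the induced flow on $M/G$ keeps each point inside the connected component of its orbit type, that is, inside its stratum. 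As orbits of $\mathfrak{F}$ are generated by such flows, each orbit is contained in one stratum.

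For the reverse inclusion I would show that the vector fields act transitively on each connected stratum $S$. At an arbitrary $\bar z\in S$ pass to the tube around a lift $z$; in the model $V_z/G_z$ the stratum $S$ is locally $V_z^{G_z}$, and for each $w\in V_z^{G_z}$ the constant vector field $\partial_w$ on $V_z$ is $G_z$-invariant, so it descends to a vector field on $M/G$ whose flow is translation by $tw$, tangent to $S$; these span $T_{\bar z}S=V_z^{G_z}$. Cutting off the corresponding invariant vector fields on $M$ by an invariant bump function, obtained by averaging over the proper action, globalises them while keeping them tangent to $S$ near $\bar z$ and flow-invariant on all strata. Hence the orbit through any point of $S$ contains a neighbourhood of that point in $S$, so it is open in $S$; since the orbits partition $S$ and $S$ is connected, $S$ is a single orbit. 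Combining the two inclusions gives $\mathfrak{O}=\cS(M/G)$; in particular the orbits of $\mathfrak{F}$ form a genuine stratification, consistent with Main Theorem 2 (Theorem \ref{thm-MT2}) applied to the action groupoid.

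The main obstacle is the first inclusion, and specifically the input that every vector field on the orbit space lifts to a $G$-invariant vector field on $M$ (equivalently, that an arbitrary vector field on $M/G$ preserves the canonical stratification). The transitivity step is concrete once the local model is in place, but controlling the full derivation algebra of $C^\infty(M/G)$, as opposed to the distinguished invariant fields that I construct by hand, is where the real content lies. An alternative to the lifting lemma is to prove directly that the canonical stratification is intrinsic to the subcartesian structure of $M/G$ and therefore preserved by every diffeomorphism, but this in turn demands a diffeomorphism-invariant characterisation of the individual strata.
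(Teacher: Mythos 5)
The paper does not actually prove this statement: it is quoted as Theorem 4.3.10 of \cite{Sniat-book}, so there is no in-paper argument to compare against. Judged on its own, your two-inclusion strategy is the standard one and matches the structure of the cited proof. The direction ``each stratum of $\cS(M/G)$ is a single orbit of the family of all vector fields'' is essentially complete as you present it: descending the constant fields $\partial_w$ for $w\in V_z^{G_z}$ from the tube $G\times_{G_z}V_z$, cutting off by an invariant bump function, and then using that each orbit of the family is open in the connected stratum is a correct and self-contained argument, and the projected fields are genuine vector fields in the Sikorski sense because they come from complete equivariant flows upstairs.

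The other direction contains a genuine gap, which you yourself flag: you need every vector field on $M/G$ to preserve the orbit-type strata, and you propose to deduce this from the claim that every vector field on $M/G$ is the projection of a $G$-invariant vector field on $M$. That lifting statement is true for proper actions, but it is the real content of the theorem: it rests on Schwarz's isotopy-lifting theorem together with the reduction to the linear slice model, and it does not follow from the equivariance computation you give, which only applies to vector fields already known to be lifts. As written, your argument reduces the theorem to another theorem of comparable depth. Note also that the integrability condition in the definition of a vector field on a subcartesian space must enter essentially: for the reflection action of $\mathbb{Z}_2$ on $\R$, with $C^\infty(\R)^{\mathbb{Z}_2}$ identified with $C^\infty([0,\infty))$ in the variable $y=x^2$, the derivation $\partial_y$ does not lift to any smooth invariant vector field on $\R$ (a lift would have to be $\tfrac{1}{2x}\partial_x$) and is correspondingly not a vector field on $[0,\infty)$, since it admits no local flow through the origin for negative times. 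So a complete proof must either establish the lifting lemma for honest vector fields (following Schwarz and \cite{Sniat-book}), or give an intrinsic, diffeomorphism-invariant characterisation of the strata of $\cS(M/G)$ inside the subcartesian structure; your sketch correctly identifies this fork but resolves neither branch.
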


Note that this means that for the orbit space of a proper Lie group action, the canonical stratification is completely determined by the smooth structure on $M/G$.
Since the orbit space of a proper Lie groupoid is locally diffeomorphic (as a subcartesian space) to the orbit space of a representation of a compact Lie group, we obtain the following (cf. Theorem 4.14 in \cite{Watts2}).

\begin{corollary} Let $X$ be the orbit space of a proper Lie groupoid. Then the canonical stratification $\cS(X)$ coincides with the partition by the orbits of the family of all vector fields on $X$, seen as a subcartesian space. 
\end{corollary}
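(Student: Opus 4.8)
The plan is to reduce this global statement to the local model and then invoke Śniatycki's theorem for compact group actions. Both partitions under comparison are built out of connected pieces: the strata of $\cS(X)$ are connected by definition, and the orbits of the family of all vector fields are connected because they are generated by flows. Hence it suffices to show that the two partitions agree in a neighbourhood of every point of $X$, after which one concludes by the same reasoning used in the converse direction of Lemma \ref{stratif-lemma-1}.

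First I would fix $\O\in X$ with a representative $x$ in the base and invoke the local model (Proposition \ref{prop-local_model_at_point} and Remark \ref{rmk-morita_types_around_point}, together with Proposition \ref{Prop-Var2}) to obtain a neighbourhood $U$ of $\O$ that is diffeomorphic, as a subcartesian space, to the orbit space $W/\G_x$ of the linear action of the compact group $\G_x$ on an invariant open ball $W\subset\N_x$. Under this diffeomorphism the canonical stratification $\cS(X)$ restricted to $U$ corresponds to the canonical stratification $\cS(W/\G_x)$: this is precisely the local description of Morita types furnished by Lemma \ref{lemma-reducemorita} and Proposition \ref{prop-morita_types_action_groupoid}. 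On the other hand, since $\G_x$ is compact the action is proper, so Theorem 4.3.10 in \cite{Sniat-book} identifies $\cS(W/\G_x)$ with the partition of $W/\G_x$ by orbits of the family of all vector fields.

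It then remains to compare the orbits of all vector fields on $U$ with the restriction to $U$ of the orbits of all vector fields on $X$; this is the step requiring the most care. On the one hand, any global vector field on $X$ restricts to a vector field on the open subset $U$, and its local flow through a point of $U$ stays in $U$ for small time, so each orbit of the global family, intersected with a small enough neighbourhood, is contained in an orbit of the family on $U$. The reverse inclusion is the subtle point, since a vector field defined only on $U$ need not extend to $X$. I would handle this by a cut-off argument: multiplying a vector field on $U$ by a smooth bump function supported in $U$ (such functions exist because $\CC(X)$ is normal, Proposition \ref{CC(X)isnormal}, and subcartesian spaces admit smooth partitions of unity) produces a derivation agreeing with the original near the chosen point and extending by zero to a vector field on $X$. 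This would show that the two partitions induce the same germ of partition at every point, which is exactly the locality of the vector-field-orbit partition recorded in Theorem 4.14 of \cite{Watts2}.

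Combining the three steps, in the neighbourhood $U$ the partition of $X$ by orbits of all vector fields coincides with $\cS(W/\G_x)$, hence with $\cS(X)|_{U}$. Since this local agreement holds around every point and both partitions consist of connected, locally closed subsets, the argument in the proof of Lemma \ref{stratif-lemma-1} forces the two global partitions to coincide. I expect the genuine difficulty to lie entirely in the cut-off and extension step for vector fields on subcartesian spaces, where one must verify that the bump-function modification really yields a vector field in the sense of Śniatycki, that is, that it still integrates to local one-parameter groups of local diffeomorphisms, and not merely a derivation of $\F_X$.
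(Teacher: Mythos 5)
Your proposal is correct and follows essentially the same route as the paper, which simply observes that $X$ is locally diffeomorphic as a subcartesian space to the orbit space of a compact group representation and then invokes Theorem 4.3.10 of \cite{Sniat-book} (citing \cite{Watts2} for details). The extra care you devote to comparing orbits of vector fields on the open set $U$ with orbits of the global family — the cut-off and extension step — is exactly the detail the paper delegates to the cited reference, and your identification of it as the only genuinely delicate point is apt.
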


In the case of a classical orbifold, i.e., one which is presented by an effective proper groupoid (cf. e.g. \cite{ieke_mrcun}), the following result of Watts shows that we can actually recover all the information from the smooth structure:

\begin{theorem}[Main theorem in \cite{Watts2}] Given an effective orbifold $X$, an orbifold atlas for it can be constructed out of invariants of the ring of smooth functions $C^\infty(X)$.
\end{theorem}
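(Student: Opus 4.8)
The plan is to reconstruct an orbifold atlas chart by chart from the smooth structure and then to patch the charts together. Present $X$ by an effective proper \'etale groupoid $\G\arrows M$; since $\G$ is \'etale the orbits are discrete, so by the Linearization theorem (Theorem \ref{lin}) and Proposition \ref{prop-local_model_at_point} every point of $X$ has a neighbourhood isomorphic, as a reduced differentiable (equivalently subcartesian, Proposition \ref{Prop-Var2}) space, to $V/\Gamma$, where $V=\N_x\cong\R^n$ and $\Gamma=\G_x$ is a \emph{finite} group acting effectively and---after averaging a metric---orthogonally on $V$. Under this identification the local sections of $\C^\infty_X$ correspond to the invariant algebra $\CC(V)^\Gamma$. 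The whole problem therefore reduces to the following local reconstruction: recover the pair $(V,\Gamma)$, up to the equivalence of orbifold charts, from the algebra $\CC(V)^\Gamma$; and then to check that the charts so obtained are mutually compatible.

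First I would recover the intrinsic data that does not yet see the isotropy. By Theorem \ref{thm-reconstructionX} the topological space $X$ is the real spectrum of $\CC(X)$, and by the preceding Corollary the canonical stratification $\cS(X)$ is precisely the partition of $X$ into orbits of the family of all vector fields of the subcartesian space $X$; both are thus determined by $\CC(X)$ alone. In particular the principal stratum $X^{\mathrm{reg}}$---the locus where the local group acts freely---is singled out as the unique open dense stratum, and it carries an intrinsic manifold structure of some dimension $n$, which is the dimension of the orbifold.

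The heart of the argument is the local reconstruction of the branched covering $V\to V/\Gamma$ from $\CC(V)^\Gamma$, and this is exactly where effectiveness is used. Writing $V^{\circ}\subset V$ for the free locus and $X^{\mathrm{reg}}_{\mathrm{loc}}=V^{\circ}/\Gamma$, the projection $V^{\circ}\to X^{\mathrm{reg}}_{\mathrm{loc}}$ is a covering realising $X^{\mathrm{reg}}_{\mathrm{loc}}$ as the quotient of $V^{\circ}$ by the free $\Gamma$-action. The manifold structure on $X^{\mathrm{reg}}$, already extracted from $\CC(X)$, determines this covering together with its automorphism group via the Galois correspondence $\Gamma\cong\mathrm{Aut}\big(V^{\circ}\!\to X^{\mathrm{reg}}_{\mathrm{loc}}\big)$, and $V$ is then recovered by filling in the branch locus (of codimension $\geq 1$, the codimension-$1$ mirror strata corresponding to reflections in $\Gamma$). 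Effectiveness guarantees that the recovered group is exactly $\Gamma$: if the action had a non-trivial kernel $K$, that kernel would act trivially on $V$ and hence $\CC(V)^\Gamma=\CC(V)^{\Gamma/K}$ would record only the effective quotient $\Gamma/K$, so only an effective action can be read off from its invariants. The orthogonality normalisation fixes the linear chart up to the $O(n)$-ambiguity that is inherent in the very notion of an orbifold chart.

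The step I expect to be the main obstacle is assembling these local reconstructions into a genuine orbifold atlas. One must verify that on overlaps the two reconstructed charts $(V,\Gamma)$ and $(V',\Gamma')$ are related by an orbifold chart embedding---an open embedding of the covers intertwining the group actions---so that the resulting collection is an effective proper \'etale groupoid presenting $X$, well defined up to Morita equivalence. This gluing is delicate precisely because the branched-cover reconstruction is only canonical up to the chart ambiguity above, and because the reflection (codimension-$1$) strata, where $V^{\circ}$ fails to be connected, require care in matching the mirror structure. The effectiveness hypothesis is not merely convenient but necessary: for a non-effective orbifold the invariant algebra cannot record the generically acting kernel---the trivial action of any finite group on a point has the same invariants as the trivial group---so $\CC(X)$ alone cannot determine the atlas. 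Thus the entire argument rests on the translation of ``effective'' into the statement that the inclusion $\CC(V)^\Gamma\hookrightarrow\CC(V)$ remembers the covering $V\to V/\Gamma$.
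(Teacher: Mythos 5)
The paper does not prove this statement: it is quoted verbatim as the main theorem of \cite{Watts2} and used as an external citation, so there is no internal proof to compare your argument against. Judged on its own terms, your outline identifies the right ingredients (reconstruction of the topology via Theorem \ref{thm-reconstructionX}, of the stratification via orbits of vector fields, and a local reduction to $V/\Gamma$ with $\Gamma$ finite and effective), but the two steps that carry the actual mathematical content are asserted rather than proved, and one of them fails as stated.

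The central gap is the claim that ``the manifold structure on $X^{\mathrm{reg}}$ determines the covering $V^{\circ}\to X^{\mathrm{reg}}_{\mathrm{loc}}$ together with its automorphism group via the Galois correspondence.'' A Galois correspondence presupposes that you already possess the total space $V^{\circ}$, which is precisely what is to be reconstructed; from the ring you only have the quotient. What one can form intrinsically is the fundamental group of $U\cap X^{\mathrm{reg}}$ for a small deleted neighbourhood $U$, and this recovers $\Gamma$ only when the singular set of the $\Gamma$-action has codimension at least $2$ (so that $V^{\circ}$ is connected and simply connected and the covering is universal). In the reflection case, which you flag but do not resolve, the topological argument gives the \emph{wrong} answer: for $\Z_2$ acting on $\R$ by reflection, $V^{\circ}/\Gamma$ near the fixed point is an interval with trivial fundamental group, so no covering-space argument can see $\Gamma$. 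There the reconstruction must use the ring itself (e.g.\ that $\CC(\R)^{\Z_2}$ is the even functions, distinguishing the half-line quotient from a boundary chart of a manifold), not merely the stratification. The second gap is the gluing: compatibility of two reconstructed charts requires lifting a diffeomorphism of differentiable spaces $V/\Gamma\supset U\to U'\subset V'/\Gamma'$ to an equivariant open embedding of the covers. This is a nontrivial lifting theorem in the spirit of Schwarz and Bierstone, and it—together with the fact that an effective orthogonal representation of a finite group is determined up to equivalence by its quotient differentiable space—is where the real work in a proof of this statement lies. Your proposal names the obstacle but does not supply either ingredient, so as written it is an accurate plan rather than a proof.
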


\bibliography{mybiblio}{}
\bibliographystyle{plain}

\end{document}